\numberwithin{equation}{section}
\newcommand{\Fg}{\mathfrak{g}}
\newcommand{\Fh}{\mathfrak{h}}
\newcommand{\Fb}{\mathfrak{b}}
\newcommand{\Fn}{\mathfrak{n}}
\newcommand{\Fe}{\mathfrak{e}}
\newcommand{\FS}{\mathfrak{S}}
\newcommand{\BZ}{\mathbb{Z}}
\newcommand{\BR}{\mathbb{R}}
\newcommand{\BC}{\mathbb{C}}
\newcommand{\BB}{\mathbb{B}}
\newcommand{\BW}{\mathbb{W}}
\newcommand{\CL}{\mathcal{L}}
\newcommand{\CB}{\mathcal{B}}
\newcommand{\CS}{\mathcal{S}}
\newcommand{\ve}{\varepsilon}
\newcommand{\vp}{\varphi}
\newcommand{\vpi}{\varpi}
\newcommand{\vrho}{\omega}
\newcommand{\ba}{\mathbf{a}}
\newcommand{\bb}{\mathbf{b}}
\newcommand{\bx}{\mathbf{x}}
\newcommand{\bzero}{\bm{0}}
\newcommand{\brho}{\bm{\rho}}
\newcommand{\bchi}{\bm{\chi}}
\newcommand{\bvrho}{\bm{\omega}}
\newcommand{\q}{\mathsf{q}}
\newcommand{\J}{J}
\newcommand{\K}{K}
\newcommand{\QJ}{Q_{J}}
\newcommand{\QJv}{Q_{J}^{\vee}}
\newcommand{\QKv}{Q_{K}^{\vee}}
\newcommand{\QJvp}{Q_{J}^{\vee,+}}
\newcommand{\DeJ}{\Delta_{J}}
\newcommand{\DeK}{\Delta_{K}}
\newcommand{\DeS}{\Delta_{S}}
\newcommand{\PJ}{\Pi^{J}}
\newcommand{\PK}{\Pi^{K}}
\newcommand{\PS}{\Pi^{S}}
\newcommand{\WJ}{W_{J}}
\newcommand{\WK}{W_{K}}
\newcommand{\WS}{W_{S}}
\newcommand{\WJu}{W^{J}}
\newcommand{\WKu}{W^{K}}
\newcommand{\WSu}{W^{S}}
\newcommand{\ZP}{\mathbb{Z}[P]}
\newcommand{\si}{\frac{\infty}{2}}
\newcommand{\sell}{\ell^{\si}}
\newcommand{\sil}{\prec}
\newcommand{\sile}{\preceq}
\newcommand{\sig}{\succ}
\newcommand{\sige}{\succeq}
\newcommand{\SB}{\mathrm{BG}^{\si}(W_{\af})}
\newcommand{\SBJ}{\mathrm{BG}^{\si}\bigl((\WJu)_{\af}\bigr)}
\newcommand{\SBa}{\mathrm{BG}^{\si}_{\lambda,a}\bigl((\WJu)_{\af}\bigr)}
\newcommand{\SBb}[1]{\mathrm{BG}^{\si}_{\lambda,#1}\bigl((\WJu)_{\af}\bigr)}
\newcommand{\SLS}{\mathbb{B}^{\si}}
\newcommand{\SM}{\mathbb{S}^{\frac{\infty}{2}}}
\newcommand{\DC}{\mathbb{D}^{\frac{\infty}{2}}}
\newcommand{\QLS}{\mathrm{QLS}}
\newcommand{\Deo}[2]{\iota(#1,\,#2)}
\newcommand{\Lif}[2]{\mathrm{Lift}_{\sige #1}(#2)}
\newcommand{\Lift}{\mathrm{Lift}}
\newcommand{\cl}{\mathop{\rm cl}\nolimits}
\newcommand{\id}{\mathop{\rm id}\nolimits}
\newcommand{\wt}{\mathop{\rm wt}\nolimits}
\newcommand{\fwt}{\mathop{\rm fin}\nolimits}
\newcommand{\qwt}{\mathop{\rm nul}\nolimits}
\newcommand{\ch}{\mathop{\rm ch}\nolimits}
\newcommand{\gch}{\mathop{\rm gch}\nolimits}
\newcommand{\gr}{\mathop{\rm gr}\nolimits}
\newcommand{\Hom}{\mathop{\rm Hom}\nolimits}
\newcommand{\af}{\mathrm{af}}
\newcommand{\Par}{\mathop{\rm Par}\nolimits}
\newcommand{\Turn}{\mathop{\rm Turn}\nolimits}
\newcommand{\Conn}{\mathop{\rm Conn}\nolimits}
\newcommand{\rr}{\Delta_{\af}}
\newcommand{\prr}{\Delta_{\af}^{+}}
\newcommand{\mcr}[1]{\lfloor #1 \rfloor}
\newcommand{\edge}[1]{\xrightarrow{\,#1\,}}
\newcommand{\pair}[2]{\langle #1,\,#2 \rangle}
\newcommand{\ol}[1]{\overline{#1}}
\newcommand{\ti}[1]{\widetilde{#1}}
\newcommand{\ha}[1]{\widehat{#1}}
\newcommand{\BP}{\mathbb{P}}
\newcommand{\BK}{\mathbb{K}}
\newcommand{\BA}{\mathbb{A}}
\newcommand{\FQ}{\mathfrak{Q}}
\newcommand{\FI}{\mathfrak{I}}
\newcommand{\CO}{\mathcal{O}}
\newcommand{\CE}{\mathcal{E}}
\newcommand{\CP}{\mathcal{P}}
\newcommand{\CH}{\mathcal{H}}
\newcommand{\CU}{\mathcal{U}}
\newcommand{\CW}{\mathcal{W}}
\newcommand{\bQ}{\mathbf{Q}}
\newcommand{\bO}{\mathbf{O}}
\newcommand{\bD}{\mathbf{D}}
\newcommand{\bi}{\mathbf{i}}
\newcommand{\bj}{\mathbf{j}}
\newcommand{\be}{\mathbf{e}}
\newcommand{\bq}{\mathbf{q}}
\newcommand{\te}{\mathtt{e}}
\newcommand{\tT}{\mathtt{T}}
\newcommand{\sfm}{\mathsf{m}}
\newcommand{\Iw}{\mathbf{I}}
\newcommand{\Gm}{\mathbb{G}_{m}}
\newcommand{\Wm}{\mathbf{W}_{\af}}
\newcommand{\gdim}{\mathop{\rm gdim}\nolimits}
\newcommand{\Img}{\mathop{\rm Image}\nolimits}
\newcommand{\coker}{\mathop{\rm Coker}\nolimits}
\newcommand{\Pic}{\mathop{\rm Pic}\nolimits}
\newcommand{\bra}[1]{[\hspace{-1pt}[#1]\hspace{-1pt}]}
\newcommand{\sbra}[1]{[\hspace{-0.5pt}[#1]\hspace{-0.5pt}]}
\newcommand{\pra}[1]{(\hspace{-1pt}(#1)\hspace{-1pt})}
\newcommand{\Spec}{\mathop{\rm Spec}\nolimits}
\newcommand{\Lie}{\mathop{\rm Lie}\nolimits}
\newcommand{\Fun}{\mathop{\rm Fun}\nolimits}
\newcommand{\ev}{\mathop{\tt ev}\nolimits}
\newcommand{\Proj}{\mathop{\rm Proj}\nolimits}
\newcommand{\pr}{\mathop{\rm pr}\nolimits}
\newcommand{\trs}{\tau}
\newcommand{\DP}{\mathbf{Q}_{G}}
\newcommand{\tDP}{\widetilde{\mathbf{Q}}_{G}}
\newcommand{\hDP}{\widehat{\mathbf{Q}}_{G}}
\newcommand{\cDP}{\mathring{\mathbf{Q}}_{G}}
\newcommand{\RDP}{\mathbf{Q}_{G}^{\mathrm{rat}}}
\newcommand{\dH}{{\mathcal{H}\hspace{-5.6pt}\mathcal{H}}}
\theoremstyle{plain}
\newtheorem{thm}{Theorem}[section]
\newtheorem{lem}[thm]{Lemma}
\newtheorem{prop}[thm]{Proposition}
\newtheorem{cor}[thm]{Corollary}
\newtheorem{claim}{Claim}[thm]
\newtheorem{ithm}{Theorem}
\newtheorem{iprop}[ithm]{Proposition}
\theoremstyle{definition}
\newtheorem{dfn}[thm]{Definition}
\theoremstyle{remark}
\newtheorem{rem}[thm]{Remark}
\newenvironment{enu}{%
 \begin{enumerate}%
}{\end{enumerate}}
\newcommand{\bqed}{\quad \hbox{\rule[-0.5pt]{3pt}{8pt}}}
\newcommand{\vsp}{\vspace{3mm}}
\begin{document}

\setlength{\baselineskip}{14pt}

\title{\LARGE\bf 
Equivariant $K$-theory of \\[2mm]
semi-infinite flag manifolds \\[2mm]
and Pieri-Chevalley formula%
\footnote{Key words and phrases: 
semi-infinite flag manifold, 
normality, $K$-theory, Pieri-Chevalley formula, 
standard monomial theory, semi-infinite Lakshmibai-Seshadri path. \newline
Mathematics Subject Classification 2010: Primary 17B37; Secondary 14N15, 14M15, 33D52, 81R10. 
}%
}
\author{%
Syu Kato \\
 \small Department of Mathematics, Kyoto University, \\
 \small Oiwake Kita-Shirakawa, Sakyo, Kyoto 606-8502, Japan \\
 \small (e-mail: {\tt syuchan@math.kyoto-u.ac.jp}) \\[3mm]
Satoshi Naito \\ 
 \small Department of Mathematics, Tokyo Institute of Technology, \\
 \small 2-12-1 Oh-okayama, Meguro-ku, Tokyo 152-8551, Japan \\
 \small (e-mail: {\tt naito@math.titech.ac.jp}) \\[3mm]
and \\[3mm]
Daisuke Sagaki \\ 
 \small Institute of Mathematics, University of Tsukuba, \\
 \small 1-1-1 Tennodai, Tsukuba, Ibaraki 305-8571, Japan \\
 \small (e-mail: {\tt sagaki@math.tsukuba.ac.jp})
}
\date{}
\maketitle

%
\begin{abstract} \setlength{\baselineskip}{12pt}
We propose a definition of equivariant (with respect to an Iwahori subgroup) $K$-theory of 
the formal power series model $\DP$ of semi-infinite flag manifold and prove 
the Pieri-Chevalley formula, which describes the product, in the $K$-theory of $\DP$, 
of the structure sheaf of a semi-infinite Schubert variety with a line bundle 
(associated to a dominant integral weight) over $\DP$.
In order to achieve this, we provide a number of fundamental results on $\DP$ and 
its Schubert subvarieties including the Borel-Weil-Bott theory, 
whose special case is conjectured in \cite{BF14c}. 
One more ingredient of this paper besides the geometric results above is 
(a combinatorial version of) standard monomial theory for 
level-zero extremal weight modules over quantum affine algebras, 
which is described in terms of semi-infinite Lakshmibai-Seshadri paths.
In fact, in our Pieri-Chevalley formula, the positivity of structure coefficients is 
proved by giving an explicit representation-theoretic meaning 
through semi-infinite Lakshmibai-Seshadri paths.
\end{abstract}
%
%
\section{Introduction.} 
\label{sec:intro}

Let $G$ be a connected and simply-connected simple algebraic group over $\BC$, 
and let $X$ be the flag variety of $G$. The torus-equivariant Grothendieck group $K_H ( X )$ of $X$ 
affords rich structures from the perspective of geometry and representation theory. 
One of the highlights there is the positivity of the structure constants of the products 
among natural classes (called the Schubert classes; see Anderson-Griffeth-Miller \cite{AGM}, 
and Baldwin-Kumar \cite{BK}), which serves as a basis of its interaction with the eigenvalue problems 
\cite{Kly98} and Gaudin models \cite{MTV}. There is a variant of this theme 
(called the Pieri-Chevalley formula), namely the structure constants of 
the products between Schubert classes and (ample) line bundles in $K_H ( X )$, 
which is also known to be positive by Mathieu \cite{Mat00} and Brion \cite{Br02}.

Pittie and Ram \cite{PR99} initiated a program to describe 
such a positive structure constant by relating them 
with the standard monomial theory (SMT for short). 
In particular, they gave an explicit meaning of each structure coefficient 
in the Pieri-Chevalley formula in terms of Lakshmibai-Seshadri paths 
(LS paths for short; see, e.g., \cite{Lit95}), 
which carries almost all information about simple $G$-modules. 
Their program is subsequently completed by Littelmann-Seshadri \cite{LiSe03} and 
Lenart-Shimozono \cite{LeSh14} (see also Lenart-Postnikov \cite{LeP07}). 

Peterson \cite{Pet97} noticed that the quantum $K$-theory of $X$ should be 
intimately connected with the $K$-theory of the ``affine version" of $X$ 
(see Lam-Shimozono \cite{LS10} and Lam-Li-Mihalcea-Shimozono \cite{LLMS}). 
In view of Givental-Lee \cite{GL03} and Braverman-Finkelberg \cite{BF14a,BF14c}, 
the quantum $K$-theory of $X$ can be defined through the space of quasi-maps, 
whose union forms a dense subset of the formal power series model $\bQ_G$ of 
semi-infinite flag manifolds (cf. Finkelberg-Mirkovi\'c \cite{FM99}).

Therefore, it is quite natural to make some rigorous sense of $K_H ( \DP )$ and 
provide the Pieri-Chevalley formula using SMT, which is compatible with 
the pictures provided by Pittie-Ram and Peterson. This is what we perform in this paper 
by affording two new theories: {\bf 1)} the Borel-Weil-Bott theory of $\DP$ 
that enables us to define and calculate a version of $K_H ( \DP )$, and {\bf 2)} 
the SMT of level-zero modules over quantum affine algebras. 
We remark that the level-zero modules over quantum affine algebras admit 
an interpretation through the geometry of affine Grassmannian (of Langlands dual type), 
which is the ``affine version'' of $X$ (see, e.g., Lenart-Naito-Sagaki-Schilling-Shimozono 
\cite[Introduction]{LNSSS2}).

In order to explain our results, theories, and ideas more precisely, 
we need some notation. Let $\Fg$ denote the Lie algebra of $G$, 
and let $\Fg_\af$ denote the associated untwisted affine Lie algebra; 
we fix a Borel subgroup $B$ of $G$ and a maximal torus $H \subset B$, 
and set $N := [B,B]$. Let $W = N_{G}(H)/H$ be the Weyl group, 
which is generated by the simple reflections $s_{i}$, $i \in I$; 
$W$ can be thought of as acting on the dual space $\Fh^{\ast}$ of 
the Cartan subalgebra $\Fh := \Lie(H)$. We set $W_{\af} := W \ltimes Q^{\vee}$, 
with $Q^{\vee, +} := \sum_{i \in I} \BZ_{\geq 0} \alpha_{i}^{\vee} \subset 
Q^{\vee} := \bigoplus_{i \in I} \BZ \alpha_{i}^{\vee}$ (the coroot lattice). 
Let $P = \bigoplus_{i \in I} \BZ \vpi_{i} \subset \Fh^{\ast}$ be 
the weight lattice generated by the fundamental weights $\vpi_{i}$, $i \in I$, 
and set $P^+ := \sum_{i \in I} \BZ_{\ge 0} \vpi_{i}$. 

For an algebraic group $E$ over $\BC$, 
we denote by $E\pra{z}$ and $E\bra{z}$ 
the space of $\BC\pra{z}$-valued points and the space of $\BC\bra{z}$-valued points of $E$, respectively, 
viewed as an (ind-)scheme over $\BC$. Let $\ev_{0} : G\bra{z} \rightarrow G$ be 
the evaluation map at $z = 0$, and set $\Iw := \ev_{0}^{-1}(B)$, 
an Iwahori subgroup of $G\bra{z}$; we also set $\tilde{\Iw} := \Iw \rtimes \BC^{\ast}$, 
the semi-direct product group, 
where the group $\BC^{\ast}$ (of loop rotations) acts on $\Iw$ as the dilation on $z$.
Now, we define $\RDP := G\pra{z} / H N\pra{z}$, which is a pure ind-scheme of infinite type. 
Then the set of $\Iw$-orbits is in natural bijection with $W_{\af}$; 
let $\DP ( x )$ denote the $\Iw$-orbit closure corresponding to $x \in W_{\af}$. 
We define $\DP := \DP ( e )$.

Our first main result is the following:
%
%
\begin{ithm}[$\doteq$ Theorem~\ref{normal} and Corollary~\ref{pcr}] \label{fnormal}
For each $x \in W_\af$, the scheme $\DP ( x )$ is normal. 
In addition, there is an explicit $P^{+}$-graded algebra $R_G$ such that 
$\DP = \Proj R_G$; here our $\Proj$ is the $P^+$-graded one.
\end{ithm}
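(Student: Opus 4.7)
The plan is to construct, for each $x \in W_{\af}$, a candidate $P^{+}$-graded algebra $R_{G,x}$ from global sections of natural line bundles on $\DP(x)$, and then to prove in tandem that $\DP(x) \cong \Proj R_{G,x}$ as schemes and that $R_{G,x}$ is an integrally closed domain. Specializing to $x = e$ gives $R_G := R_{G,e}$ and $\DP = \Proj R_G$, while normality of $R_{G,x}$ for every $x$ translates to normality of $\DP(x)$. Explicitly, I would set
\[
R_{G,x} \;:=\; \bigoplus_{\lambda \in P^{+}} H^{0}\bigl(\DP(x),\,\CL(\lambda)\bigr)^{*},
\]
where $\CL(\lambda)$ is the line bundle associated to $\lambda \in P^{+}$, with the algebra structure induced by the natural isomorphisms $\CL(\lambda) \otimes \CL(\mu) \simeq \CL(\lambda+\mu)$.

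The first step is to compute each graded piece representation-theoretically. Using the Borel-Weil-Bott theory of $\DP(x)$ flagged in the introduction (together with the requisite higher-cohomology vanishing), I expect an $\tilde{\Iw}$-equivariant identification of $H^{0}(\DP(x),\,\CL(\lambda))^{*}$ with a Demazure-type submodule $V_{x}(\lambda)$ of the level-zero extremal weight module $V(\lambda)$ for $U_{\q}(\Fg_{\af})$ at $\q=1$. The multiplication in $R_{G,x}$ should then correspond to the natural restrictions $V_{x}(\lambda) \otimes V_{x}(\mu) \twoheadrightarrow V_{x}(\lambda+\mu)$ of the surjections $V(\lambda) \otimes V(\mu) \twoheadrightarrow V(\lambda+\mu)$.

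The second step is to invoke the standard monomial theory of level-zero extremal weight modules -- the combinatorial input advertised in the introduction -- to produce an explicit basis of each $V_{x}(\lambda)$ indexed by semi-infinite Lakshmibai-Seshadri paths whose initial direction lies in the appropriate semi-infinite Bruhat range relative to $x$. From this basis one extracts (i) that $R_{G,x}$ is an integral domain, (ii) that the multiplication maps are surjective, so $R_{G,x}$ is generated by the fundamental-weight pieces, and (iii) that the Hilbert function of $R_{G,x}$ matches the sum of characters of the $V_{x}(\lambda)$. The line-bundle data induces a morphism $\DP(x) \to \Proj R_{G,x}$, which is checked to be an isomorphism by combining (ii), (iii), and the exhaustion of $\DP(x)$ by its Schubert subvarieties $\DP(y)$.

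Normality of $\DP(x)$ then reduces to integral closedness of $R_{G,x}$, which I would deduce from the SMT basis by degenerating $R_{G,x}$ to a Hodge-algebra / Stanley-Reisner-type ring on the order complex of the relevant semi-infinite Bruhat interval, and verifying Serre's criterion $(R_{1}) + (S_{2})$ stratum by stratum. The main obstacle will be the first step: because each $\DP(x)$ is an ind-scheme of infinite type whose structure sheaf carries the $z$-adic topology, even defining $H^{0}(\DP(x),\,\CL(\lambda))$ so as to match the expected representation-theoretic object is delicate, and proving the requisite Borel-Weil-Bott vanishing is a genuinely infinite-dimensional geometric input -- the technical linchpin of the whole argument. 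A secondary subtlety is adapting the standard finite-dimensional Hodge-algebra normality arguments to the semi-infinite setting, which is precisely where the precise combinatorics of semi-infinite Lakshmibai-Seshadri paths becomes essential.
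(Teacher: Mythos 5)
Your proposal inverts the logical order of the paper's argument in a way that leaves the central step unproved. You propose to first establish the Borel--Weil identification of $H^{0}(\DP(x),\CO_{\DP(x)}(\lambda))$ with a Demazure module and then read off the coordinate ring, its domain and surjectivity properties, and finally normality. In the paper the dependency runs the other way: the identification of $H^{0}(\DP,\CO_{\DP}(\lambda))$ with $W(\lambda)^{\ast}$ is the \emph{content} of Theorem~\ref{normal} and Corollary~\ref{pcr}, and it is obtained by (a) building the semi-infinite Bott--Samelson--Demazure--Hansen tower $\DP(\bi)$, which is normal because it is a union of pro-affine spaces, (b) computing $H^{0}(\DP(\bi),\CO_{\DP(\bi)}(\lambda)) \cong W(\lambda)^{\ast}$ via Demazure functors and the Braverman--Finkelberg computation on the open $G\bra{z}$-orbit (Theorem~\ref{BFmain}), and (c) proving that the a priori inclusion of the actual projective coordinate ring $R'_{G}$ into the normal ring $R_{G} = \bigoplus_{\lambda} W(\lambda)^{\ast}$ is an equality. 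Step (c) is the hard point you dispatch with ``checked to be an isomorphism by combining (ii), (iii), and the exhaustion'': it requires comparison with the finite-type quasi-map spaces $\FQ_{G}(\xi)$, Serre vanishing to get surjectivity of the restriction maps \eqref{surj-rest}, and the Kneser--Platonov density of $G[z]$ in $G\bra{z}$ (Theorem~\ref{w-approx}) to obtain the injectivity statement of Claim~\ref{H-inj}. Without a substitute for this, nothing rules out that the coordinate ring is a proper subring of $\bigoplus_{\lambda} W(\lambda)^{\ast}$, and the whole chain collapses. (A minor but symptomatic slip: the coordinate ring should consist of the \emph{duals} $W(\lambda)^{\ast} \cong H^{0}$, not of $H^{0}(\cdot)^{\ast}$ as you wrote.)

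Second, your route to normality --- degenerating $R_{G,x}$ to a Hodge-algebra or Stanley--Reisner ring on the order complex of a semi-infinite Bruhat interval and checking $(R_{1})+(S_{2})$ --- is not what the paper does and faces serious obstructions: the relevant intervals are infinite, $R_{G}$ is not Noetherian (not even finitely generated), and no straightening law is established at the ring level (Theorem~\ref{thm:SMT} is purely crystal-theoretic; it enters only later, in the Pieri--Chevalley formula). The paper instead obtains normality geometrically and cheaply, from $\sfm_{\ast}\CO_{\DP(\bi)} \cong \CO_{\DP}$ with $\DP(\bi)$ normal and $\sfm$ birational; the case of general $x$ then follows by the Demazure-functor descent in Corollary~\ref{coh}, not by a separate ring-theoretic argument for each $x$. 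You have correctly located the technical linchpin, but the proposal supplies no mechanism for establishing it.
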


We remark that Theorem~\ref{fnormal} affirmatively answers 
\cite[Conjecture 2.1]{BF14c} and relevant speculations therein. 
Also, as we see below, the scheme $\DP$ is far from being ``compact'' 
(cf. \cite[Theorem~A]{Kat17} and \cite[(7.1)]{FGT}). 
In order to prove Theorem \ref{fnormal} naturally, 
we introduce a ``semi-infinite" Bott-Samelson-Demazure-Hansen tower 
that yields a normal ring $R_G$. From the construction, 
$R_G$ contains the projective coordinate ring of $\DP$. 
Moreover, on the basis of the fact that $R_G$ is generated by 
the primitive degree terms, a detailed comparison with the computation 
for the dense subset in \cite{BF14c} implies that the inclusion must be an isomorphism.

For each $x \in W_\af$ and $\lambda \in P$, 
we have an associated $G\bra{z}$-equivariant line bundle $\CO_{\DP(x)}(\lambda)$ over $\DP(x)$. 
Also, for each $x \in W$ and $\lambda \in P^+$, 
we have a Demazure submodule $V^{-}_{x} ( \lambda )$, in the sense of \cite{Kas05}, 
of the level-zero extremal weight module $V ( \lambda )$ (of extremal weight $\lambda$) 
over the quantum affine algebra $U_{\q}(\Fg_{\af})$ associated to $\Fg_{\af}$.
%
%
\begin{ithm}[$\doteq$ Theorem~\ref{coh-e}] \label{fBWB}
For each $x \in W_\af$ and $\lambda \in P$, we have
\begin{equation*}
\gch H^{i} ( \DP ( x ), \CO_{\DP(x)}( \lambda ) ) = 
  \begin{cases} 
    \gch V^{-}_{x} ( - w_{\circ} \lambda ) & \text{\rm if $i = 0$ and $\lambda \in P^{+}$}, \\[1.5mm]
    0 & \text{\rm otherwise}, 
  \end{cases}
\end{equation*}
where $\gch$ denotes the character taking values in 
$(\BZ\pra{q^{-1}})[P]$, and $w_{\circ} \in W$ is the longest element.
\end{ithm}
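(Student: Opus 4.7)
The plan is to exploit the $\Proj$ presentation $\DP = \Proj R_G$ from Theorem~\ref{fnormal} together with the semi-infinite Bott-Samelson-Demazure-Hansen (BSDH) tower used in its proof, and to reduce the statement to an iterative application of the Demazure operator formula along a reduced expression for $x$ in $W_{\af}$.

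\textbf{Step 1 (Base case $x = e$, $\lambda \in P^{+}$).} Since $R_G$ is $P^{+}$-graded and generated in primitive degrees, the $\Proj$ description of $\DP$ yields $H^{0}(\DP, \CO_{\DP}(\lambda)) \cong (R_G)_{\lambda}$ as a graded $\ti{\Iw}$-module. I identify this graded piece with (the graded dual of) $V^{-}_{e}(-w_{\circ}\lambda)$, using the fact that $R_G$ is assembled from a limit of BSDH-type Demazure quotients whose characters agree with the known character of the level-zero extremal weight module, as provided by \cite{Kas05}.

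\textbf{Step 2 (Induction on $x$, $\lambda \in P^{+}$).} Realize $\DP(x) \hookrightarrow \DP$ as a closed subscheme cut out by a homogeneous ideal $I_{x} \subset R_G$. One first shows $H^{0}(\DP(x), \CO_{\DP(x)}(\lambda)) = (R_G/I_{x})_{\lambda}$, and then refines the identification of Step~1 by induction along a reduced decomposition $x = s_{i_{1}} \cdots s_{i_{\ell}}$ in the semi-infinite sense: each simple-reflection step corresponds to a $\BP^{1}$-fibration in the BSDH tower, and computing sections on the fiber effects the Demazure operator $D_{i_{k}}$ on the graded character of the previous stage, matching the inductive construction of $\gch V^{-}_{x}(-w_{\circ}\lambda)$.

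\textbf{Step 3 (Higher cohomology vanishing, $\lambda \in P^{+}$).} Let $\pi: \ti{Z}(x) \to \DP(x)$ be the birational BSDH morphism produced in the proof of Theorem~\ref{fnormal}. Normality of $\DP(x)$ together with an iterated projection-formula argument along the $\BP^{1}$-tower gives $\pi_{*}\CO_{\ti{Z}(x)} = \CO_{\DP(x)}$ and $R^{i}\pi_{*}\CO_{\ti{Z}(x)} = 0$ for $i > 0$; the Leray spectral sequence reduces the cohomology computation on $\DP(x)$ to that on $\ti{Z}(x)$, where each $\BP^{1}$-fiber carries a line bundle of nonnegative degree when $\lambda \in P^{+}$, so higher cohomology vanishes at every stage.

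\textbf{Step 4 (Full vanishing for $\lambda \notin P^{+}$).} If $\pair{\alpha_{i}^{\vee}}{\lambda} < 0$ for some $i \in I$, one locates, either over $\DP(x)$ directly or after enlarging to $\DP(s_{i}x)$, a $\BP^{1}$-fibration on whose fibers the relevant line bundle has negative degree; both $H^{0}$ and $H^{1}$ vanish fibrewise. Propagating this vanishing through the Leray spectral sequence for the BSDH tower annihilates $H^{i}(\DP(x), \CO_{\DP(x)}(\lambda))$ for every $i \geq 0$.

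The main obstacle is Step~2, namely the identification $(R_G/I_{x})_{\lambda} \cong V^{-}_{x}(-w_{\circ}\lambda)^{*}$ as graded $\ti{\Iw}$-modules: one has to match the geometric object (built from the BSDH tower and $\Proj$ construction) with Kashiwara's purely representation-theoretic Demazure submodule inside the level-zero extremal weight module over $U_{\q}(\Fg_{\af})$. The induction is delicate because the semi-infinite Bruhat order is neither Noetherian nor compatible with the ordinary Bruhat order on $W$, so one must argue carefully that each $\BP^{1}$-step effects exactly the Demazure operator prescribed by the inductive definition of $V^{-}_{x}(-w_{\circ}\lambda)$, and that the resulting graded characters converge in $(\BZ\pra{q^{-1}})[P]$.
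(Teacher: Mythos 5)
Your Steps~1 and 2 roughly track the paper's route (Proposition~\ref{H-range} identifies $H^{0}(\DP(\bi),\CO_{\DP(\bi)}(\lambda))\cong W(\lambda)^{\ast}$ via the BSDH tower and Demazure functors; Theorem~\ref{normal} and Corollary~\ref{pcr} then give the projective coordinate ring; and the passage from $\DP$ to $\DP(x)$ is, as in Corollary~\ref{coh}, deferred to the one-step argument of \cite[Theorem~4.7]{Kat16}). However, Step~3 is where the proposal breaks down and departs essentially from what the paper does.

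You claim that iterated $\BP^{1}$-fibration projection-formula arguments along the BSDH tower give $R^{i}\pi_{*}\CO_{\ti{Z}(x)}=0$ for $i>0$ and that, since each $\BP^{1}$-fiber carries a nonnegative-degree line bundle when $\lambda\in P^{+}$, the Leray spectral sequence kills all higher cohomology. This has a genuine gap. The BSDH tower $\DP(\bi)$ is an inductive limit of schemes, and the morphism $\sfm:\DP(\bi)\to\DP(x)$ is \emph{not} proper (only the finite-stage truncations $\sfm_{k}:\DP^{\#}(\bi_{k})\to\DP(x)$ are); the paper establishes $\sfm_{*}\CO_{\DP(\bi)}\cong\CO_{\DP(x)}$ (Theorem~\ref{normal}, Corollary~\ref{w-push}) but never proves $R^{i}\sfm_{*}=0$ for $i>0$, and this is not a formal consequence of the finite-stage statements because the projective limit over the infinitely many factors can introduce $\varprojlim^{1}$ obstructions. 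Even Proposition~\ref{H-range}, which computes only $H^{0}$ of the tower, already has to be careful with the $\varprojlim_{m}$ and the $\Gm$-twist by $t_{2\rho^{\vee}}$ at each period.

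The paper's actual route to higher cohomology vanishing is quite different: it views $\DP=\Proj R_{G}$, lifts to the $H$-torsor $\tDP\subset\hDP$, and computes $H^{n}(\tDP,\CO_{\tDP})$ via an affine \u{C}ech cover, local cohomology along $\ti{Z}_{S}$, and Koszul complexes $K_{S}(\BC[\hDP])$ (see \eqref{cech-ref}--\eqref{genuine}). The decisive input is \textbf{Theorem~\ref{R-free}}: $R_{G}$ is free over the polynomial subalgebra $A_{G}$, which supplies regular sequences of arbitrary length in $\Og$ and hence forces $H^{n}_{Z}(\hDP,\CO_{\hDP})=0$ for all $n$, whence $H^{n}(\tDP,\CO_{\tDP})=0$ for $n>0$. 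Your proposal contains no analogue of Theorem~\ref{R-free}; without some input of this strength the vanishing assertion is simply not available. (The introduction flags this explicitly: the higher cohomology vanishing ``is based on the fact that the ring $R_{G}$ is free over a polynomial ring.'') Step~4 inherits the same problem; in the paper the vanishing of $H^{0}$ for $\lambda\notin P^{+}$ comes from Theorem~\ref{BFmain}, while $H^{n}$ for $n>0$ is again the Koszul argument, not a fibrewise negative-degree argument on the BSDH tower.
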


The higher cohomology vanishing part of Theorem~\ref{fBWB} 
is based on the fact that the ring $R_G$ is free over a polynomial ring 
with infinitely many variables (Theorem~\ref{R-free}), 
which is also an interesting result in its own. 
We should mention that Theorem \ref{fBWB} have an ind-model counterpart in \cite{BF14c}, 
but there are no implications between these and the two proofs are totally different.
%
%
\begin{iprop}[$\doteq$ Proposition~\ref{line-bundles}] \label{fLB}
For each $x \in W_\af$, every $\ti{\Iw}$-equivariant line bundle 
over the scheme $\DP ( x )$ is isomorphic to some $\CO_{\DP(x)}( \lambda )$ up to character twist.
\end{iprop}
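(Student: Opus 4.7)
Fix $x \in W_\af$ and an $\ti{\Iw}$-equivariant line bundle $\mathcal{L}$ on $\DP(x)$. The strategy is to pin $\mathcal{L}$ down by its behavior at the unique $\ti{\Iw}$-fixed point $p_x \in \DP(x)$ corresponding to $x$, and then to upgrade pointwise triviality to a global trivialization using the geometric structure of $\DP(x)$ established in Theorem~\ref{fnormal}.

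First, the fiber $\mathcal{L}|_{p_x}$ is a one-dimensional representation of $\ti{\Iw}$, hence is classified by a character in $P \oplus \BZ$, since $\ti{\Iw}$ modulo its pro-unipotent radical is $H \times \BC^{\ast}$. A direct computation, using either the semi-infinite Bott--Samelson--Demazure--Hansen tower or the $\Proj R_G$ description of Theorem~\ref{fnormal}, shows that the $H$-character of $\CO_{\DP(x)}(\lambda)|_{p_x}$ is given by the Weyl-group translate of $\lambda$ determined by the finite part of $x \in W_\af = W \ltimes Q^{\vee}$. Since this pairing is bijective in $\lambda \in P$, there is a unique $\lambda_0 \in P$ for which the $H$-character of $(\mathcal{L} \otimes \CO_{\DP(x)}(-\lambda_0))|_{p_x}$ is trivial; replacing $\mathcal{L}$ by this twist, we may assume from the outset that the $H$-character at $p_x$ is trivial, and we must prove that $\mathcal{L}$ itself is trivial up to a loop-rotation character.

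For this I would produce a nowhere-vanishing $\ti{\Iw}$-semi-invariant global section of $\mathcal{L}$. Since $p_x$ is the unique closed $\ti{\Iw}$-orbit in $\DP(x)$ (the corresponding minimal Schubert subvariety), any semi-invariant section that does not vanish at $p_x$ is automatically nowhere vanishing, so the problem reduces to existence. I would obtain such a section by pulling $\mathcal{L}$ back to a semi-infinite BSDH tower dominating $\DP(x)$, where the equivariant Picard group is controlled by characters of $\ti{\Iw}$ together with the tautological line bundles of each $\BP^{1}$-factor, and then descending the resulting trivialization using normality (Theorem~\ref{fnormal}) together with the higher cohomology vanishing of Theorem~\ref{fBWB} applied along the fibers of the BSDH map.

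The main obstacle is this descent: identifying the $\ti{\Iw}$-equivariant Picard group of the BSDH tower with the expected lattice and then descending along a birational map in infinite type. I would address this by exhausting $\DP(x)$ by finite-dimensional approximations --- for instance, intersections with opposite, finite-codimensional Schubert-type subschemes --- on which the BSDH model becomes a tower of $\BP^{1}$-bundles where standard equivariant arguments apply; a cofinality/Mittag-Leffler argument, consistent with the $\Proj$-description of Theorem~\ref{fnormal}, will then propagate the conclusion to $\DP(x)$ itself.
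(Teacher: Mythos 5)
There is a genuine gap at the very first step, which infects everything afterward. You anchor the whole argument on ``the unique $\ti{\Iw}$-fixed point $p_x \in \DP(x)$,'' but no such point exists. The $(H \times \Gm)$-fixed points in $\DP(x)$ are discrete and indexed by $\{y \in W_{\af}^{\ge 0} : y \sige x\}$, one in each cell $\bO(y)$; none of them is $\Iw$-fixed (already $H\bra{z} \subset \Iw$ fails to stabilize any of them, since their stabilizers in $G\pra{z}$ are conjugates of $HN\pra{z}$ and $H\bra{z} \not\subset HN\pra{z}$). Worse, $\DP(x)$ has \emph{no closed $\Iw$-orbit at all}: the poset $\{y \sige x\}$ has no maximal element, since one may always pass from $y$ to $yt_\xi$ with $0 \neq \xi \in Q^{\vee,+}$. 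So even if you replace $p_x$ by an $(H \times \Gm)$-fixed point, the assertion that ``a semi-invariant section not vanishing at $p_x$ is nowhere vanishing'' fails: the complement of the vanishing locus need not contain all of $\DP(x)$, because there is no unique minimal stratum forced into every nonempty $\ti{\Iw}$-stable open. The later descent along the BSDH tower also needs the nonexistent finite-dimensional ``opposite Schubert'' exhaustion; no such subschemes are set up in this framework.

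The paper's proof sidesteps fixed points entirely and is considerably more elementary. For $x = e$, it observes that the boundary of the open $G\bra{z}$-orbit $\bO$ in $\DP$ has codimension at least two (by Theorem~\ref{orbbQ}), so $\ti{\Iw}$-equivariant line bundles on $\DP$ are determined by their restriction to $\bO$; and $\bO$ is a pro-affine bundle over $G/B$, so the classification reduces to $B \times \Gm$-equivariant line bundles on $G/B$, which by \cite[Sect.~3.3]{KKV} and the fact that $H^0(G/B,\CO_{G/B}) = \BC$ are exactly character twists of $\CO_{G/B}(\lambda)$. For general $x \in W_{\af}^{\ge 0}$, the paper sandwiches
$\DP(xt_\xi) \subset \DP(t_\xi) \subset \DP(x) \subset \DP$
using translations $\imath_\xi$, and the isomorphisms $\DP(t_\xi) \cong \DP$ and $\DP(xt_\xi) \cong \DP(x)$ push the $x = e$ classification to arbitrary $x$ via restriction. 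If you want to salvage your strategy, you would need a replacement for the fixed-point/unique-closed-orbit mechanism; the codimension-two boundary together with the $G/B$-fibration is precisely that replacement.
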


Although $R_G$ itself is highly infinite-dimensional (it is {\it not} even finitely generated), 
it admits a grading such that it is almost like an Artin algebra in a graded sense. 
Moreover, Proposition \ref{fLB} supplies ``graded indecomposable projectives'' of $R_G$. 
These two facts, combined with Theorem \ref{fBWB}, assert that the category of 
$\ti{\Iw}$-equivariant sheaves on $\DP$ (and on $\RDP$) behaves almost like the category of 
coherent sheaves on an affine scheme.

This series of observations enables us 
to define a reasonable variant of an equivariant $K$-group $K^{\prime}_{\ti{\Iw}}(\DP)$ of 
$\DP$ (and $K_{\ti{\Iw}}(\RDP)$ of $\RDP$) with respect to $\tilde{\Iw}$; 
see Section~\ref{sec:K-SiFl} for details. They are rather involved, 
partly because we need to specify a class of formal power series 
that is large enough to afford the Pieri-Chevalley rule, and 
at the same time is small enough so that the Euler character map is injective. 
Nevertheless, we can prove that $K^{\prime}_{\ti{\Iw}}(\DP)$ contains 
(the classes of) the sheaves $[\CO_{\DP(y)} ( \lambda )]$ 
for each $\lambda \in P$ and relevant $y \in W_{\af}$. 
We also prove that our $K_{\ti{\Iw}}(\DP^{\mathrm{rat}})$ is natural enough 
so that it admits a nil-DAHA action as an analog of Kostant-Kumar \cite{KK90} 
for $\RDP$ (see Section~\ref{sec:nDAHA} for details).

Here we recall that in Ishii-Naito-Sagaki \cite{INS} and Naito-Sagaki \cite{NS16}, 
the semi-infinite path model of the crystal basis of $V_{x}^{-} ( \lambda )$ is 
constructed for every $\lambda \in P^+$ and $x \in W_{\af}$; 
it is a specific subset of the set of ``semi-infinite'' LS paths $\SLS ( \lambda )$ of 
shape $\lambda$ parametrizing the global crystal basis of $V ( \lambda )$. 
Note that it is endowed with three functions
\begin{equation*}
\iota,\,\kappa : \SLS ( \lambda ) \longrightarrow W_\af 
\quad \text{and} \quad
\wt : \SLS ( \lambda ) \longrightarrow P \oplus \BZ \delta,
\end{equation*}
which are called the initial/final directions and the weight, respectively. We set
\begin{equation*}
\SLS_{\sige x} ( \lambda ) := 
 \bigl\{ \eta \in \SLS ( \lambda ) \mid \kappa ( \eta ) \sige x \bigr\}.
\end{equation*}

In order to make use of the path model above to 
derive the Pieri-Chevalley formula for $\RDP$, 
we additionally need a combinatorial version of the semi-infinite SMT. 
This consists of the definition of the initial direction $\Deo{\eta}{x} \in W_{\af}$ of 
a semi-infinite LS path $\eta$ with respect to $x$ (based on the existence of 
the semi-infinite analog of the so-called Deodhar lift), and 
of the description of tensor product decomposition of crystals in terms of $\Deo{\bullet}{x}$ 
(Theorem~\ref{thm:SMT} and Theorem~\ref{thm:Dem}). We remark that our $\Deo{\eta}{x}$ is 
an analogue of the one in \cite{LiSe03, LeSh14} in the setting of 
level-zero extremal weight modules over $U_{\q}(\Fg_{\af})$. 
Using them, we obtain our Pieri-Chevalley formula:
%
%
\begin{ithm}[$\doteq$ Theorem \ref{thm:PC}]\label{fPCf}
For $\lambda \in P^{+}$ and $x \in W_{\af}^{\ge 0}:=W \times Q^{\vee,+}$, we have
\begin{equation} \label{fPC}
[\CO_{\DP} ( \lambda )] \cdot [\CO_{\DP ( x )}] = 
 \sum_{\eta \in \SLS_{\sige x} (- w_{\circ} \lambda )} 
 e^{\fwt(\eta)}q^{\qwt(\eta)} \cdot [ \CO_{\DP ( \Deo{\eta}{x} )}] \quad \in K^{\prime}_{\ti{\Iw}}(\DP),
\end{equation}
where $\fwt(\eta) \in P$ and $\qwt(\eta) \in \BZ$ 
for $\eta \in \SLS (- w_{\circ} \lambda )$ are defined by:
\begin{equation*}
\wt(\eta) = \fwt(\eta) + \qwt(\eta) \delta.
\end{equation*}
\end{ithm}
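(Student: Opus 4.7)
I would prove the identity in $K'_{\ti{\Iw}}(\DP)$ by constructing a filtration of the sheaf $\CO_{\DP(x)}(\lambda)$ whose successive quotients are, up to the character twists $e^{\fwt(\eta)} q^{\qwt(\eta)}$, structure sheaves $\CO_{\DP(\Deo{\eta}{x})}$ of smaller Schubert subvarieties indexed by the LS paths $\eta \in \SLS_{\sige x}(-w_{\circ}\lambda)$. The combinatorics of the subquotients is dictated by the Deodhar lift $\Deo{\eta}{x}$ furnished by the semi-infinite SMT (Theorem~\ref{thm:SMT} and Theorem~\ref{thm:Dem}).

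As a first step, since $\CO_{\DP}(\lambda)$ is a line bundle, the product on the left-hand side of \eqref{fPC} coincides with the class $[\CO_{\DP(x)}(\lambda)]$ of the restriction to $\DP(x)$. By Theorem~\ref{fBWB}, this sheaf has vanishing higher cohomology, and
\[
\gch H^{0}(\DP(x),\CO_{\DP(x)}(\lambda)) = \gch V^{-}_{x}(-w_{\circ}\lambda) = \sum_{\eta \in \SLS_{\sige x}(-w_{\circ}\lambda)} e^{\fwt(\eta)} q^{\qwt(\eta)},
\]
where the second equality uses the semi-infinite LS path realisation of the crystal basis of $V^{-}_{x}(-w_{\circ}\lambda)$. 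Since $\gch H^{0}(\DP(y),\CO_{\DP(y)}) = 1$ for each $y$ (Theorem~\ref{fBWB} with $\lambda = 0$), this already matches the graded Euler character of the right-hand side of \eqref{fPC} term by term.

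To upgrade this character identity to an identity of K-classes, I would produce a descending filtration $\CO_{\DP(x)}(\lambda) = \CF_{0} \supset \CF_{1} \supset \CF_{2} \supset \cdots$ indexed by a linear refinement of the natural order on $\SLS_{\sige x}(-w_{\circ}\lambda)$, with the $i$-th associated graded $\CF_{i-1}/\CF_{i}$ isomorphic to a character twist of $\CO_{\DP(\Deo{\eta_{i}}{x})}$ by $e^{\fwt(\eta_{i})} q^{\qwt(\eta_{i})}$. This filtration is the geometric realisation of the combinatorial SMT and reflects the tensor product decomposition of crystals recorded by $\Deo{\bullet}{x}$. Using Theorem~\ref{fBWB} at each step to guarantee higher cohomology vanishing and to identify the $H^{0}$ of the subquotients, summation of the classes $[\CF_{i-1}/\CF_{i}]$ in $K'_{\ti{\Iw}}(\DP)$ (convergent in the completion defining this K-group, as described in Section~\ref{sec:K-SiFl}) yields \eqref{fPC}.

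The hard part will be rigorously setting up this filtration and identifying its subquotients with the advertised twisted structure sheaves. This translation from combinatorics to geometry requires a precise semi-infinite analogue of the Deodhar lift together with a careful analysis of how sections of $\CO_{\DP(x)}(\lambda)$ restrict to Schubert subvarieties $\DP(\Deo{\eta}{x})$. A natural inductive strategy is to handle first the Chevalley case $\lambda = \vpi_{i}$, where LS paths degenerate and the filtration reduces to a short exact sequence controlled by a single simple reflection, and then to iterate using the tensor product structure of semi-infinite crystals to cover arbitrary $\lambda \in P^{+}$; at every step, Theorem~\ref{fBWB} supplies the cohomological control needed to descend the identity from the level of sections to the level of K-classes.
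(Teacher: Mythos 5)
Your first step establishes only the equality of graded Euler characteristics of the two sides of \eqref{fPC} after twisting by $\CO_{\DP}(\mu)$ with $\mu=0$, and this does not identify the classes: the identity $\gch V^{-}_{x}(-w_{\circ}\lambda)=\sum_{\eta} e^{\fwt(\eta)}q^{\qwt(\eta)}$ makes no reference to the elements $\Deo{\eta}{x}$, so it would remain true verbatim if each $\DP(\Deo{\eta}{x})$ were replaced by an arbitrary Schubert variety. In $K'_{\ti{\Iw}}(\DP)$ a class is pinned down only by the functional $\mu\mapsto\chi(\DP,\CE(\mu))$ for all $\mu\gg 0$ (Theorem~\ref{Windep} and Corollary~\ref{critK}), so the comparison one actually needs is
\[
\gch V^{-}_{x}(-w_{\circ}(\lambda+\mu))=\sum_{\eta\in\SLS_{\sige x}(-w_{\circ}\lambda)} e^{\fwt(\wt(\eta))}q^{\qwt(\wt(\eta))}\,\gch V^{-}_{\Deo{\eta}{x}}(-w_{\circ}\mu)\quad\text{for }\mu\gg 0,
\]
in which the Demazure characters $\gch V^{-}_{\Deo{\eta}{x}}(-w_{\circ}\mu)$ genuinely vary with $\eta$. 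That identity is exactly the content of Theorem~\ref{thm:Dem} and is the heart of the matter; your $\mu=0$ observation is only its weakest specialization.

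The second, more serious gap is that the proposed filtration of $\CO_{\DP(x)}(\lambda)$ with subquotients $e^{\fwt(\eta)}q^{\qwt(\eta)}\cdot\CO_{\DP(\Deo{\eta}{x})}$ is never constructed; you defer it as ``the hard part,'' but in your approach it \emph{is} the proof. Such a categorified Pieri--Chevalley filtration would be a substantially stronger statement than \eqref{fPC}, and nothing in the results you invoke (Theorem~\ref{fBWB}, the SMT theorems, Theorem~\ref{cvb}) produces it; in particular a geometric incarnation of the Deodhar lift $\Deo{\eta}{x}$ on the level of sheaves is nowhere available. The paper deliberately sidesteps any filtration: the entire point of the careful construction of $K'_{\ti{\Iw}}(\DP)$ is that the Euler-characteristic map $\Psi$ is injective, so the $K$-theoretic identity \eqref{fPC} is formally \emph{equivalent} to the displayed character identity for $\mu\gg 0$, which is purely combinatorial. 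To repair your argument you must either actually build the filtration or fall back on the character comparison for all $\mu\gg 0$ via Corollary~\ref{critK}, at which point you have reproduced the paper's proof.
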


Once generalities on $K^{\prime}_{\ti{\Iw}}(\DP)$ and 
the semi-infinite SMT are given, our strategy 
for the proof of Theorem \ref{fPCf} is 
along the line of \cite{LiSe03}. Namely, we compare the functionals
\begin{equation*}
P \ni \lambda \mapsto \sum_{i \ge 0}(-1)^{i} 
  \gch H^{i}( \DP, \CE \otimes_{\CO_{\DP}} \CO_{\DP} ( \lambda ) ) \in \BC [P]\pra{q^{-1}},
\end{equation*}
where $[\CE]$ is taken from the both sides of \eqref{fPC}.

This paper is organized as follows.
In Section~\ref{sec:notation}, 
we fix our notation for untwisted affine Lie algebras,
and then recall some basic facts about semi-infinite LS paths, extremal
weight modules, and their Demazure submodules.
In Section~\ref{sec:SMT}, we state a combinatorial version of 
standard monomial theory for level-zero extremal weight modules, 
and also its refinement for Demazure submodules; 
the proofs of these results are given in 
Sections~\ref{sec:prf-SMT}, \ref{sec:prf-DC}, and \ref{sec:prf-Dem}.
In Section~\ref{sec:SiSch}, we first review the formal power series model $\DP$ 
of semi-infinite flag manifold, and then introduce
a semi-infinite version of Bott-Samelson-Demazure-Hansen tower for $\DP$.
Then, we study the cohomology spaces of line bundles over $\DP$,
and prove the higher cohomology vanishing; also, 
we describe the spaces of global sections in terms of
Demazure submodules of extremal weight modules.
As an application, we prove
the normality of the semi-infinite Schubert varieties $\DP(x)$, $x \in W_{\af}^{\geq 0}$.
In Section~\ref{sec:K-SiFl}, after giving a definition of 
$\ti{\Iw}$-equivariant $K$-group $K^{\prime}_{\ti{\Iw}}(\DP)$ 
of $\DP$ (and $K_{\ti{\Iw}}(\RDP)$ of $\RDP$),
we establish the Pieri-Chevalley formula (Theorem \ref{thm:PC}) 
by combining our geometric results with the semi-infinite SMT.
Also, in Section~\ref{sec:nDAHA}, we show that our $K$-group $K_{\ti{\Iw}}(\RDP)$ 
admits a natural nil-DAHA action.
Appendices mainly contain some technical results concerning 
the semi-infinite Bruhat order; in particular, 
we prove the existence of analogs of Deodhar lifts for the semi-infinite Bruhat order.
%
%
\subsection*{Acknowledgments.}
We thank Michael Finkelberg for sending us unpublished manuscripts. 
S.K. was partially supported by 
JSPS Grant-in-Aid for Scientific Research (B) 26287004 and 
Kyoto University Jung-Mung program. 
S.N. was partially supported by 
JSPS Grant-in-Aid for Scientific Research (B) 16H03920. 
D.S. was partially supported by 
JSPS Grant-in-Aid for Scientific Research (C) 15K04803. 

%
\section{Algebraic setting.}
\label{sec:notation}
%
%
\subsection{Affine Lie algebras.}
\label{subsec:liealg}

A graded vector space is a $\BZ$-graded vector space over $\BC$ 
all of whose homogeneous subspaces are finite-dimensional. 
Let $V = \bigoplus_{m \in \BZ} V_{m}$ be a graded vector space 
with $V_{m}$ its subspace of degree $m$. We define
\begin{equation*}
\gdim V := \sum_{m \in \BZ} \bigl( \dim V_{m} \bigr) q^{m}. 
\end{equation*}
Also, we denote by $V^{\vee}$ (resp., $V^{\ast}$) 
the full (resp., restricted) dual of $V$; 
note that $V^{\ast} := \bigoplus _{m \in \BZ} ( V^{\ast} )_{m}$, 
with $(V^{\ast})_{m}:=(V_{-m})^{\ast}$. 
In addition, we set $\ha{V}:=\prod_{m \in \BZ} V_{m}$, 
which is a completion of $V$.

Let $G$ be a connected, 
simply-connected simple algebraic group over $\BC$, 
and $B$ a Borel subgroup with unipotent radical $N$. 
We fix a maximal torus $H \subset B$, and take the 
opposite Borel subgroup $B^{-}$ of $G$ that contains $H$. 
In the following, for an (arbitrary) algebraic group $E$ over $\BC$, 
we denote its Lie algebra $\Lie(E)$ by the corresponding German letter $\Fe$; 
in particular, we write $\Fg=\Lie(G)$, $\Fb=\Lie(B)$, $\Fn=\Lie(N)$, and 
$\Fh=\Lie(H)$. 
Thus, $\Fg$ is a finite-dimensional simple Lie algebra over $\BC$
with Cartan subalgebra $\Fh$. 
Denote by $\{ \alpha_{i}^{\vee} \}_{i \in I}$ and 
$\{ \alpha_{i} \}_{i \in I}$ the set of simple coroots and 
simple roots of $\Fg$, respectively, and set
$Q := \bigoplus_{i \in I} \BZ \alpha_i$, 
$Q^{+} := \sum_{i \in I} \BZ_{\ge 0} \alpha_i$, and 
$Q^{\vee} := \bigoplus_{i \in I} \BZ \alpha_i^{\vee}$, 
$Q^{\vee,+} := \sum_{i \in I} \BZ_{\ge 0} \alpha_i^{\vee}$; 
for $\xi,\,\zeta \in Q^{\vee}$, we write $\xi \ge \zeta$ if $\xi-\zeta \in Q^{\vee,+}$. 
Let $\Delta$ and $\Delta^{+}$ be the set of roots and positive roots of $\Fg$, respectively, 
with $\theta \in \Delta^{+}$ the highest root of $\Fg$. 
For a root $\alpha \in \Delta$, we denote by $\alpha^{\vee}$ its dual root. We set
$\rho:=(1/2) \sum_{\alpha \in \Delta^{+}} \alpha$ and 
$\rho^{\vee}:= (1/2) \sum_{\alpha \in \Delta^{+}} \alpha^{\vee}$. 
Also, let $\vpi_{i}$, $i \in I$, denote the fundamental weights for $\Fg$, and set
%
%
\begin{equation} \label{eq:P-fin}
P:=\bigoplus_{i \in I} \BZ \vpi_{i}, \qquad 
P^{+} := \sum_{i \in I} \BZ_{\ge 0} \vpi_{i}. 
\end{equation} 

Let $\Fg_{\af} = \bigl(\Fg \otimes \BC[z,z^{-1}]\bigr) \oplus \BC c \oplus \BC d$ be 
the untwisted affine Lie algebra over $\BC$ associated to $\Fg$, 
where $c$ is the canonical central element, and $d$ is 
the scaling element (or the degree operator), 
with Cartan subalgebra $\Fh_{\af} = \Fh \oplus \BC c \oplus \BC d$. 
We regard an element $\mu \in \Fh^{\ast}:=\Hom_{\BC}(\Fh,\,\BC)$ as an element of 
$\Fh_{\af}^{\ast}$ by setting $\pair{\mu}{c}=\pair{\mu}{d}:=0$, where 
$\pair{\cdot\,}{\cdot}:\Fh_{\af}^{\ast} \times \Fh_{\af} \rightarrow \BC$ is 
the canonical pairing of $\Fh_{\af}^{\ast}:=\Hom_{\BC}(\Fh_{\af},\,\BC)$ and $\Fh_{\af}$. 
Let $\{ \alpha_{i}^{\vee} \}_{i \in I_{\af}} \subset \Fh_{\af}$ and 
$\{ \alpha_{i} \}_{i \in I_{\af}} \subset \Fh_{\af}^{\ast}$ be the set of 
simple coroots and simple roots of $\Fg_{\af}$, respectively, 
where $I_{\af}:=I \sqcup \{0\}$; note that 
$\pair{\alpha_{i}}{c}=0$ and $\pair{\alpha_{i}}{d}=\delta_{i0}$ 
for $i \in I_{\af}$. 
Denote by $\delta \in \Fh_{\af}^{\ast}$ the null root of $\Fg_{\af}$; 
recall that $\alpha_{0}=\delta-\theta$. 
Also, let $\Lambda_{i} \in \Fh_{\af}^{\ast}$, $i \in I_{\af}$, 
denote the fundamental weights for $\Fg_{\af}$ such that $\pair{\Lambda_{i}}{d}=0$, 
and set 
%
%
\begin{equation} \label{eq:P}
P_{\af} := 
  \left(\bigoplus_{i \in I_{\af}} \BZ \Lambda_{i}\right) \oplus 
   \BZ \delta \subset \Fh^{\ast}, \qquad 
P_{\af}^{0}:=\bigl\{\mu \in P_{\af} \mid \pair{\mu}{c}=0\bigr\};
\end{equation}
notice that $P_{\af}^{0}=P \oplus \BZ \delta$, and that
\begin{equation}
\pair{\mu}{\alpha_{0}^{\vee}} = - \pair{\mu}{\theta^{\vee}} \quad 
\text{for $\mu \in P_{\af}^{0}$}. 
\end{equation}

Let $W := \langle s_{i} \mid i \in I \rangle$ and 
$W_{\af} := \langle s_{i} \mid i \in I_{\af} \rangle$ be the (finite) Weyl group of $\Fg$ and 
the (affine) Weyl group of $\Fg_{\af}$, respectively, 
where $s_{i}$ is the simple reflection with respect to $\alpha_{i}$ 
for each $i \in I_{\af}$, with length function $\ell:W_{\af} \rightarrow \BZ_{\ge 0}$, 
which gives the one on $W$ by restriction; we denote by $e \in W_{\af}$ 
the identity element, and by $w_{\circ} \in W$ the longest element. 
For each $\xi \in Q^{\vee}$, let $t_{\xi} \in W_{\af}$ denote 
the translation in $\Fh_{\af}^{\ast}$ by $\xi$ (see \cite[Sect.~6.5]{Kac}); 
for $\xi \in Q^{\vee}$, we have 
%
%
\begin{equation}\label{eq:wtmu}
t_{\xi} \mu = \mu - \pair{\mu}{\xi}\delta \quad 
\text{if $\mu \in \Fh_{\af}^{\ast}$ satisfies $\pair{\mu}{c}=0$}.
\end{equation}
Then, $\bigl\{ t_{\xi} \mid \xi \in Q^{\vee} \bigr\}$ forms 
an abelian normal subgroup of $W_{\af}$, in which $t_{\xi} t_{\zeta} = t_{\xi + \zeta}$ 
holds for $\xi,\,\zeta \in Q^{\vee}$. Moreover, we know from \cite[Proposition 6.5]{Kac} that
\begin{equation*}
W_{\af} \cong 
 W \ltimes \bigl\{ t_{\xi} \mid \xi \in Q^{\vee} \bigr\} \cong W \ltimes Q^{\vee}; 
\end{equation*}
we also set
%
%
\begin{equation} \label{eq:Wge0}
W_{\af}^{\ge 0}:=\bigl\{wt_{\xi} \mid w \in W,\, \xi \in Q^{\vee,+} \bigr\} \subset W_{\af}. 
\end{equation}

Denote by $\rr$ the set of real roots of $\Fg_{\af}$, and 
by $\prr \subset \rr$ the set of positive real roots; 
we know from \cite[Proposition 6.3]{Kac} that
$\rr = 
\bigl\{ \alpha + n \delta \mid \alpha \in \Delta,\, n \in \BZ \bigr\}$, 
and 
$\prr = 
\Delta^{+} \sqcup 
\bigl\{ \alpha + n \delta \mid \alpha \in \Delta,\, n \in \BZ_{> 0}\bigr\}$. 
For $\beta \in \rr$, we denote by $\beta^{\vee} \in \Fh_{\af}$ 
its dual root, and $s_{\beta} \in W_{\af}$ the corresponding reflection; 
if $\beta \in \rr$ is of the form $\beta = \alpha + n \delta$ 
with $\alpha \in \Delta$ and $n \in \BZ$, then 
$s_{\beta} =s_{\alpha} t_{n\alpha^{\vee}} \in W \ltimes Q^{\vee}$.

Finally, let $U_{\q}(\Fg_{\af})$ denote the quantized universal enveloping algebra 
over $\BC(\q)$ associated to $\Fg_{\af}$, 
with $E_{i}$ and $F_{i}$, $i \in I_{\af}$, the Chevalley generators 
corresponding to $\alpha_{i}$ and $-\alpha_{i}$, respectively.  
We denote by $U_{\q}^{-}(\Fg_{\af})$ 
the negative part of $U_{\q}(\Fg_{\af})$, that is, 
the $\BC(\q)$-subalgebra of $U_{\q}(\Fg_{\af})$ generated by $F_{i}$, $i \in I_{\af}$. 
%
%
\subsection{Parabolic semi-infinite Bruhat graph.}
\label{subsec:SiBG}

In this subsection, we fix a subset $J \subset I$. 
We set 
$\QJ := \bigoplus_{i \in \J} \BZ \alpha_i$, 
$\QJv := \bigoplus_{i \in \J} \BZ \alpha_i^{\vee}$,  
$\QJvp := \sum_{i \in \J} \BZ_{\ge 0} \alpha_i^{\vee}$, 
$\DeJ := \Delta \cap \QJ$, 
$\DeJ^{+} := \Delta^{+} \cap \QJ$, and 
$\WJ := \langle s_{i} \mid i \in \J \rangle$.
Also, we denote by 
%
%
\begin{equation} \label{eq:prj}
[\,\cdot\,]_{\J} : 
Q^{\vee} \twoheadrightarrow \QJv \quad 
\text{(resp., $[\,\cdot\,]^{\J} : Q^{\vee} \twoheadrightarrow Q_{I \setminus \J}^{\vee}$)}
\end{equation}
the projection from $Q^{\vee}=Q_{I \setminus \J}^{\vee} \oplus \QJv$
onto $\QJv$ (resp., $Q_{I \setminus \J}^{\vee}$) 
with kernel $Q_{I \setminus \J}^{\vee}$ (resp., $\QJv$). 
Let $\WJu$ denote the set of minimal(-length) coset representatives 
for the cosets in $W/\WJ$; we know from \cite[Sect.~2.4]{BB} that 
%
%
\begin{equation} \label{eq:mcr}
\WJu = \bigl\{ w \in W \mid 
\text{$w \alpha \in \Delta^{+}$ for all $\alpha \in \DeJ^{+}$}\bigr\}.
\end{equation}
For $w \in W$, we denote by $\mcr{w}=\mcr{w}^{\J} \in \WJu$ 
the minimal coset representative for the coset $w \WJ$ in $W/\WJ$.
Also, following \cite{Pet97} 
(see also \cite[Sect.~10]{LS10}), we set
\begin{align}
(\DeJ)_{\af} 
  & := \bigl\{ \alpha + n \delta \mid 
  \alpha \in \DeJ,\,n \in \BZ \bigr\} \subset \Delta_{\af}, \\
(\DeJ)_{\af}^{+}
  &:= (\DeJ)_{\af} \cap \prr = 
  \DeJ^{+} \sqcup \bigl\{ \alpha + n \delta \mid 
  \alpha \in \DeJ,\, n \in \BZ_{> 0} \bigr\}, \\
\label{eq:stabilizer}
(\WJ)_{\af} 
 & := \WJ \ltimes \bigl\{ t_{\xi} \mid \xi \in \QJv \bigr\}
   = \bigl\langle s_{\beta} \mid \beta \in (\DeJ)_{\af}^{+} \bigr\rangle, \\
\label{eq:Pet}
(\WJu)_{\af}
 &:= \bigl\{ x \in W_{\af} \mid 
 \text{$x\beta \in \prr$ for all $\beta \in (\DeJ)_{\af}^{+}$} \bigr\};
\end{align}
note that if $\J = \emptyset$, then 
$(W^{\emptyset})_{\af}=W_{\af}$ and $(W_{\emptyset})_{\af}=\bigl\{e\bigr\}$. 
We know from \cite{Pet97} (see also \cite[Lemma~10.6]{LS10}) that 
for each $x \in W_{\af}$, there exist a unique 
$x_1 \in (\WJu)_{\af}$ and a unique $x_2 \in (\WJ)_{\af}$ 
such that $x = x_1 x_2$; we define a (surjective) map 
%
%
\begin{equation} \label{eq:PiJ}
\PJ : W_{\af} \twoheadrightarrow (\WJu)_{\af}, \quad x \mapsto x_{1}, 
\end{equation}
where $x= x_1 x_2$ with $x_1 \in (\WJu)_{\af}$ and $x_2 \in (\WJ)_{\af}$. 

%
\begin{dfn} \label{dfn:sell}
Let $x \in W_{\af}$, and 
write it as $x = w t_{\xi}$ for $w \in W$ and $\xi \in Q^{\vee}$. 
We define the semi-infinite length $\sell(x)$ of $x$ by:
$\sell (x) = \ell (w) + 2 \pair{\rho}{\xi}$. 
\end{dfn}
%
%
\begin{dfn}[\cite{Lu80}, \cite{Lu97}; see also \cite{Pet97}] \label{dfn:SiB}
\mbox{}
\begin{enu}
\item The (parabolic) semi-infinite Bruhat graph $\SBJ$ 
is the $\prr$-labeled, directed graph with vertex set $(\WJu)_{\af}$ 
whose directed edges are of the following form:
$x \edge{\beta} s_{\beta} x$ for $x \in (\WJu)_{\af}$ and $\beta \in \prr$, 
where $s_{\beta } x \in (\WJu)_{\af}$ and 
$\sell (s_{\beta} x) = \sell (x) + 1$. 
When $J=\emptyset$, we write $\SB$ for 
$\mathrm{BG}^{\si}\bigl((W^{\emptyset})_{\af}\bigr)$. 

\item 
The semi-infinite Bruhat order is a partial order 
$\sile$ on $(\WJu)_{\af}$ defined as follows: 
for $x,\,y \in (\WJu)_{\af}$, we write $x \sile y$ 
if there exists a directed path from $x$ to $y$ in $\SBJ$; 
we write $x \sil y$ if $x \sile y$ and $x \ne y$. 
\end{enu}
\end{dfn}

\begin{rem}
In the case $J = \emptyset$, the semi-infinite Bruhat order on $W_{\af}$ is 
just the generic Bruhat order introduced in \cite{Lu80}; 
see \cite[Appendix~A.3]{INS} for details. Also, for a general $J$, 
the parabolic semi-infinite Bruhat order on $(\WJu)_{\af}$
is nothing but the partial order on $J$-alcoves introduced in
\cite{Lu97} when we take a special point to be the origin.
\end{rem}

In Appendix~\ref{sec:basic}, we recall some of 
the basic properties of the semi-infinite Bruhat order. 

For $x \in (\WJu)_{\af}$, 
let $\Lift(x)$ denote the set of lifts of $x$ in $W_{\af}$ 
with respect to the map $\PJ:W_{\af} \twoheadrightarrow (\WJu)_{\af}$, that is, 
%
%
\begin{equation} \label{eq:lift}
\Lift(x):=
\bigl\{ x' \in W_{\af} \mid \PJ(x') = x \bigr\}; 
\end{equation}
for an explicit description of $\Lift(x)$, see Lemma~\ref{lem:lift}. 
The following proposition will be proved in 
Appendix~\ref{sec:prf-Deo}. 
%
%
\begin{prop} \label{prop:Deo}
If $x \in W_{\af}$ and $y \in (\WJu)_{\af}$ satisfy the condition that 
$y \sige \PJ(x)$, then the set
\begin{equation}
\Lif{x}{y}:=
\bigl\{ y' \in \Lift(y) \mid y' \sige x \bigr\}
\end{equation}
has the minimum element with respect to the semi-infinite Bruhat order on $W_{\af}${\rm;}
we denote this element by $\min \Lif{x}{y}$. 
\end{prop}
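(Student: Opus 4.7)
The plan is to adapt the classical Deodhar lifting lemma to the semi-infinite setting by induction on the ``distance'' from $\PJ(x)$ to $y$ in the parabolic semi-infinite Bruhat graph $\SBJ$, using the explicit description $\Lift(y) = y \cdot (\WJ)_{\af}$ (to be recorded in Lemma~\ref{lem:lift}) together with the basic properties of $\sile$ developed in Appendix~\ref{sec:basic}. In the base case $y = \PJ(x)$, the element $x$ itself lies in $\Lift(\PJ(x))$ and, by the very definition of $\Lif{x}{y}$, is dominated by every other member; hence $\min \Lif{x}{y} = x$.

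For the inductive step, I will pick a covering edge $z \edge{\beta} y$ in $\SBJ$ with $\PJ(x) \sile z$, so that $y = s_{\beta} z$ with $\beta \in \prr$ and $\sell(y) = \sell(z) + 1$. By the induction hypothesis, $z^{\ast} := \min \Lif{x}{z}$ exists, and writing $z^{\ast} = z \cdot u$ with $u \in (\WJ)_{\af}$ the natural candidate is $y^{\ast} := s_{\beta} z^{\ast} = y \cdot u \in \Lift(y)$. To verify $y^{\ast} \in \Lif{x}{y}$, I need $y^{\ast} \sige x$, which reduces to showing that the edge $z \edge{\beta} y$ in $\SBJ$ lifts to a genuine edge $z^{\ast} \edge{\beta} y^{\ast}$ in $\SB$ (i.e., that $s_{\beta}$ increases $\sell$ on $z^{\ast}$); this should follow from the minimality of $z^{\ast}$, since otherwise $s_{\beta} z^{\ast} \sil z^{\ast}$ would produce a lower lift of $y$ still dominating $x$, and reversing that step would in turn produce a lift of $z$ strictly below $z^{\ast}$ dominating $x$, contradicting minimality. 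Minimality of $y^{\ast}$ among elements of $\Lif{x}{y}$ will then be proved by taking an arbitrary $y' \in \Lif{x}{y}$, applying the semi-infinite lifting property in $\SB$ along $\beta$ to produce $z' \in \Lift(z)$ with $x \sile z' \sile y'$, using minimality of $z^{\ast}$ to deduce $z^{\ast} \sile z'$, and lifting this back along $\beta$ to conclude $y^{\ast} \sile y'$.

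The principal obstacle is establishing the precise semi-infinite lifting property needed in both halves of the inductive step. Unlike the finite case, $\sell$ takes all values in $\BZ$, chains can be infinite in both directions, and the usual length-based bookkeeping must be replaced by arguments that keep track of the $Q^{\vee}$-component and of the semi-infinite tilting of positive real roots. The requisite technical statements---analogs of the exchange and subword properties, together with the key compatibility between the reflections $s_{\beta}$ with $\beta \in \prr$ and the parabolic projection $\PJ$---are precisely what Appendix~\ref{sec:basic} is designed to supply; the delicate point is to verify that lifting an edge $z \edge{\beta} y$ in $\SBJ$ along a coset element $u \in (\WJ)_{\af}$ produces a single edge in $\SB$, and not a longer saturated chain in which control of minimality would be lost.
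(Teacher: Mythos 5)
Your strategy is genuinely different from the paper's, and it has a real gap at its core. The inductive step hinges on a Deodhar-type lifting property for the semi-infinite Bruhat order along an \emph{arbitrary} positive real root $\beta \in \prr$: from $y' \in \Lif{x}{y}$ you want to descend along the covering $z \edge{\beta} y$ to some $z' \in \Lift(z)$ with $x \sile z' \sile y'$. The natural candidate is $z' = s_{\beta}y' = zu'$ (writing $y' = yu'$ with $u' \in (\WJ)_{\af}$), and indeed $zu' \sil yu'$ by the additivity $\sell(wu) = \sell(w)+\sell(u)$ for $w \in (\WJu)_{\af}$, $u \in (\WJ)_{\af}$ (this is Lemma~\ref{lem:SiL}, and it is also the correct --- and much simpler --- reason why your candidate $y^{\ast} = s_{\beta}z^{\ast}$ satisfies $y^{\ast} \sige z^{\ast} \sige x$; your contradiction argument for that step is circular, since it presupposes that $s_{\beta}z^{\ast}$ dominates $x$, which is what you are trying to prove). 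But $zu' \sil y'$ together with $y' \sige x$ does \emph{not} yield $zu' \sige x$. The exchange/diamond properties that would rescue this (Lemma~\ref{lem:dmd}) are established only for \emph{simple} reflections $s_i$, $i \in I_{\af}$, not for $s_{\beta}$ with $\beta \in \prr$ general; no analogue for arbitrary reflections is available in Appendix~\ref{sec:basic}, and the classical Bruhat-order version of such a statement already fails for non-simple reflections. So the minimality half of your inductive step does not close.

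The paper avoids this entirely by running the induction along \emph{simple} reflections rather than along edges of $\SBJ$. It first treats the base case $x = t_{\xi}$, $y = \PJ(t_{\zeta})$ by an explicit computation in $Q^{\vee}$ (the minimum is $t_{\zeta - [\zeta-\xi]_{\J}}$), then reduces a general $x$ to a translation using a sequence $i_1,\dots,i_N \in I_{\af}$ with $\pair{s_{i_{n-1}}\cdots s_{i_1}x\Lambda}{\alpha_{i_n}^{\vee}}>0$ for a regular dominant $\Lambda$ (via [AK]), and finally reduces $y = v\PJ(t_{\zeta})$ to the translation case by induction on $\ell(v)$; at every step only Lemmas~\ref{lem:si} and \ref{lem:dmd} for simple $\alpha_i$ are invoked, with the auxiliary weights $\Lambda$ (regular) and $\lambda$ (stabilizer $W_J$) controlling the full and parabolic orders simultaneously. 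If you want to keep your edge-by-edge induction, you would first have to prove a semi-infinite $Z$-lemma for reflections in arbitrary $\beta \in \prr$, which is a substantial missing ingredient rather than a routine appeal to the appendix.
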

%
%
\subsection{Semi-infinite Lakshmibai-Seshadri paths.}
\label{subsec:SLS}

In this subsection, we fix $\lambda \in P^{+} \subset P_{\af}^{0}$ 
(see \eqref{eq:P-fin} and \eqref{eq:P}), and set 
%
%
\begin{equation} \label{eq:J}
J:= \bigl\{ i \in I \mid \pair{\lambda}{\alpha_i^{\vee}}=0 \bigr\} \subset I.
\end{equation}
%
%
\begin{dfn} \label{dfn:SBa}
For a rational number $0 < a < 1$, 
we define $\SBa$ to be the subgraph of $\SBJ$ 
with the same vertex set but having only the edges of the form
$x \edge{\beta} y$ with 
$a\pair{x\lambda}{\beta^{\vee}} \in \BZ$.
\end{dfn}
%
%
\begin{dfn}\label{dfn:SLS}
A semi-infinite Lakshmibai-Seshadri (LS for short) path of 
shape $\lambda $ is a pair 
%
%
\begin{equation} \label{eq:SLS}
\pi = (\bx \,;\, \ba) 
     = (x_{1},\,\dots,\,x_{s} \,;\, a_{0},\,a_{1},\,\dots,\,a_{s}), \quad s \ge 1, 
\end{equation}
of a strictly decreasing sequence $\bx : x_1 \sig \cdots \sig x_s$ 
of elements in $(\WJu)_{\af}$ and an increasing sequence 
$\ba : 0 = a_0 < a_1 < \cdots  < a_s =1$ of rational numbers 
satisfying the condition that there exists a directed path 
from $x_{u+1}$ to  $x_{u}$ in $\SBb{a_{u}}$ 
for each $u = 1,\,2,\,\dots,\,s-1$. 
We denote by $\SLS(\lambda)$ 
the set of all semi-infinite LS paths of shape $\lambda$.
\end{dfn}

Following \cite[Sect.~3.1]{INS} (see also \cite[Sect.~2.4]{NS16}), 
we endow the set $\SLS(\lambda)$ 
with a crystal structure with weights in $P_{\af}$ by 
the map $\wt:\SLS(\lambda) \rightarrow P_{\af}$ 
and the root operators $e_{i}$, $f_{i}$, $i \in I_{\af}$; 
for details, see Appendix~\ref{sec:crystal}. 
We denote by $\SLS_{0}(\lambda)$ the connected component of 
$\SLS(\lambda)$ containing 
$\pi_{\lambda}:=(e\,;\,0,1) \in \SLS(\lambda)$. 

If $\pi \in \SLS(\lambda)$ is of the form \eqref{eq:SLS}, 
then we set 
%
%
\begin{equation} \label{eq:dir}
\iota(\pi):=x_{1} \in (\WJu)_{\af} \qquad 
 \text{(resp., $\kappa(\pi):=x_{s} \in (\WJu)_{\af}$)}; 
\end{equation}
we call $\iota(\pi)$ (resp., $\kappa(\pi)$) 
the initial (resp., final) direction of $\pi$. 
For $x \in W_{\af}$, we set
%
%
\begin{equation} \label{eq:SLS-dem}
\SLS_{\sige x}(\lambda) := 
 \bigl\{
   \pi \in \SLS(\lambda) \mid \kappa(\pi) \sige \PJ(x)
 \bigr\}. 
\end{equation}
%
%
\subsection{Extremal weight modules and their Demazure submodules.}
\label{subsec:extremal}

In this subsection, we fix $\lambda \in P^{+} \subset P_{\af}^{0}$ 
(see \eqref{eq:P-fin} and \eqref{eq:P}). 
Let $V(\lambda)$ denote the extremal weight module of 
extremal weight $\lambda$ over $U_{\q}(\Fg_{\af})$, 
which is an integrable $U_{\q}(\Fg_{\af})$-module generated by 
a single element $v_{\lambda}$ with 
the defining relation that $v_{\lambda}$ is 
an ``extremal weight vector'' of weight $\lambda$; 
recall from \cite[Sect.~3.1]{Kas02} and \cite[Sect.~2.6]{Kas05} that 
$v_{\lambda}$ is an extremal weight vector of weight $\lambda$ 
if and only if ($v_{\lambda}$ is a weight vector of weight $\lambda$ and) 
there exists a family $\{ v_{x} \}_{x \in W_{\af}}$ 
of weight vectors in $V(\lambda)$ such that $v_{e}=v_{\lambda}$, 
and such that for every $i \in I_{\af}$ and $x \in W_{\af}$ with 
$n:=\pair{x\lambda}{\alpha_{i}^{\vee}} \ge 0$ (resp., $\le 0$),
the equalities $E_{i}v_{x}=0$ and $F_{i}^{(n)}v_{x}=v_{s_{i}x}$ 
(resp., $F_{i}v_{x}=0$ and $E_{i}^{(-n)}v_{x}=v_{s_{i}x}$) hold, 
where for $i \in I_{\af}$ and $k \in \BZ_{\ge 0}$, 
the $E_{i}^{(k)}$ and $F_{i}^{(k)}$ are the $k$-th divided powers of 
$E_{i}$ and $F_{i}$, respectively;
note that the weight of $v_{x}$ is $x\lambda$. 
Also, for each $x \in W_{\af}$, we define 
the Demazure submodule $V_{x}^{-}(\lambda)$ of $V(\lambda)$ by
%
%
\begin{equation} \label{eq:dem}
V_{x}^{-}(\lambda):=U_{\q}^{-}(\Fg_{\af})v_{x}. 
\end{equation}

We know from \cite[Proposition~8.2.2]{Kas94} that $V(\lambda)$ has 
a crystal basis $\CB(\lambda)$ and the corresponding 
global basis $\bigl\{G(b) \mid b \in \CB(\lambda)\bigr\}$; 
we denote by $u_{\lambda}$ the element of $\CB(\lambda)$ 
such that $G(u_{\lambda})=v_{\lambda}$, and 
by $\CB_{0}(\lambda)$ the connected component of $\CB(\lambda)$ 
containing $u_{\lambda}$. %
Also, we know from \cite[Sect.~2.8]{Kas05} (see also \cite[Sect.~4.1]{NS16}) that 
$V_{x}^{-}(\lambda) \subset V(\lambda)$ is compatible 
with the global basis of $V(\lambda)$, that is, 
there exists a subset $\CB_{x}^{-}(\lambda)$ of 
the crystal basis $\CB(\lambda)$ such that 
%
%
\begin{equation} \label{eq:deme}
V_{x}^{-}(\lambda) = 
\bigoplus_{b \in \CB_{x}^{-}(\lambda)} \BC(\q) G(b) 
\subset
V(\lambda) = 
\bigoplus_{b \in \CB(\lambda)} \BC(\q) G(b).
\end{equation}

\begin{rem}[{\cite[Lemma~4.1.2]{NS16}}]
For every $x \in W_{\af}$, we have 
$V_{x}^{-}(\lambda) = V_{\PJ(x)}^{-}(\lambda)$ and 
$\CB_{x}^{-}(\lambda) = \CB_{\PJ(x)}^{-}(\lambda)$.
\end{rem}
We know the following from 
\cite[Theorem~3.2.1]{INS} and \cite[Theorem~4.2.1]{NS16}. 
%
%
\begin{thm} \label{thm:isom}
There exists an isomorphism 
$\Phi_{\lambda}:\CB(\lambda) \stackrel{\sim}{\rightarrow} \SLS(\lambda)$ 
of crystals such that $\Phi(u_{\lambda}) = \pi_{\lambda}$ and 
such that $\Phi_{\lambda}(\CB_{x}^{-}(\lambda)) = \SLS_{\sige x}(\lambda)$ 
for all $x \in W_{\af}${\rm;} in particular, we have 
$\Phi_{\lambda}(\CB_{0}(\lambda)) = \SLS_{0}(\lambda)$. 
\end{thm}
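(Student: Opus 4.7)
The plan is to construct $\Phi_\lambda$ in two stages: first produce the crystal isomorphism $\CB(\lambda) \stackrel{\sim}{\to} \SLS(\lambda)$ sending $u_\lambda$ to $\pi_\lambda$, and then verify the Demazure-type refinement $\Phi_\lambda(\CB_x^-(\lambda)) = \SLS_{\sige x}(\lambda)$. Throughout, I would exploit the universal property of the extremal weight crystal and the combinatorics of the semi-infinite Bruhat order.

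For the first stage, I would begin by showing that the straight-line path $\pi_x := (\PJ(x) \,;\, 0, 1) \in \SLS(\lambda)$, for $x \in W_\af$, is a weight vector of weight $x\lambda$ and that the family $\{\pi_x\}_{x \in W_\af}$ satisfies Kashiwara's extremal-vector conditions: for each $i \in I_\af$ and $x \in W_\af$, writing $n := \pair{x\lambda}{\alpha_i^\vee}$, one must check by direct computation with the root operators on straight-line paths (as recalled in the Appendix) that $e_i \pi_x = 0$ and $f_i^{(n)} \pi_x = \pi_{s_i x}$ when $n \geq 0$, and the symmetric statement when $n \leq 0$. The universal property of $\CB(\lambda)$ as the crystal of the extremal weight module then supplies a unique crystal morphism $\Phi_\lambda : \CB(\lambda) \to \SLS(\lambda)$ with $\Phi_\lambda(u_\lambda) = \pi_\lambda$. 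To establish bijectivity, I would prove surjectivity by showing that every connected component of $\SLS(\lambda)$ contains some $\pi_x$, and injectivity by matching graded characters
\begin{equation*}
\gch \CB(\lambda) = \gch \SLS(\lambda),
\end{equation*}
both sides admitting explicit combinatorial descriptions (Ram-Yip / Orr-Shimozono-style formulas for the left-hand side, and direct summation over $\SLS(\lambda)$ via the $\wt$-function for the right).

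The second stage, the Demazure compatibility $\Phi_\lambda(\CB_x^-(\lambda)) = \SLS_{\sige x}(\lambda)$, is where the main obstacle lies. The strategy is to use Kashiwara's characterization: $\CB_x^-(\lambda)$ is the smallest $f_i$-stable subset of $\CB(\lambda)$ containing the extremal element $b_x$ corresponding to $v_x$, with the additional property that $e_i b \in \CB_x^-(\lambda) \cup \{0\}$ for every $b \in \CB_x^-(\lambda)$ and every $i \in I_\af$. Thus one must verify that $\SLS_{\sige x}(\lambda)$ satisfies the analogous combinatorial properties. Containment $\pi_{\PJ(x)} \in \SLS_{\sige x}(\lambda)$ is immediate from $\kappa(\pi_{\PJ(x)}) = \PJ(x) \sige \PJ(x)$. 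The hard part is stability under all $f_i$: if $\pi = (x_1, \dots, x_s\,;\, a_0, \dots, a_s) \in \SLS_{\sige x}(\lambda)$, applying $f_i$ alters a certain tail segment by left multiplication by a reflection $s_{\beta}$ (for some $\beta \in \prr$), and one must verify that the new final direction still dominates $\PJ(x)$ in the semi-infinite Bruhat order.

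This is precisely where the semi-infinite Deodhar lift furnished by Proposition~\ref{prop:Deo} becomes essential: $\min \Lif{x}{y}$ provides the canonical minimal element lying above $x$ in $W_\af$ and projecting to $y \in (\WJu)_\af$, which is exactly the tool needed to control how the final direction transforms under a single reflection while maintaining the inequality $\kappa(f_i \pi) \sige \PJ(x)$. Together with the lifting lemmas for semi-infinite Bruhat intervals proved in the Appendix, this yields the stability of $\SLS_{\sige x}(\lambda)$ under every $f_i$. For the converse inclusion, I would argue by downward induction along the semi-infinite Bruhat order: given $\pi \in \SLS_{\sige x}(\lambda)$ with $\pi \neq \pi_{\PJ(x)}$, one locates an $i \in I_\af$ such that $e_i \pi \in \SLS_{\sige x}(\lambda)$ and $e_i \pi$ is strictly closer to $\pi_{\PJ(x)}$ in the crystal graph, again invoking Proposition~\ref{prop:Deo} to select the reflection that decreases the relevant Bruhat-distance. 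Combining both inclusions identifies $\Phi_\lambda^{-1}(\SLS_{\sige x}(\lambda))$ with $\CB_x^-(\lambda)$; specializing to $x = e$ yields $\Phi_\lambda(\CB_0(\lambda)) = \SLS_0(\lambda)$.
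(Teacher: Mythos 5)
The paper does not prove Theorem~\ref{thm:isom} at all: it is imported verbatim from \cite[Theorem~3.2.1]{INS} (the crystal isomorphism) and \cite[Theorem~4.2.1]{NS16} (the Demazure refinement), so there is no internal argument to compare with. Judged on its own merits, your proposal has two genuine gaps. First, in Stage~1, $\CB(\lambda)$ is \emph{not} connected for level-zero $\lambda$, so no universal property determines a crystal morphism from the single assignment $u_{\lambda}\mapsto\pi_{\lambda}$; and your surjectivity step is false: every straight-line path $\pi_{x}=x\cdot\pi_{\lambda}$ lies in the one connected component $\SLS_{0}(\lambda)$, whereas $\SLS(\lambda)$ decomposes into components indexed by $\Par(\lambda)$ (infinitely many as soon as some $\pair{\lambda}{\alpha_i^\vee}\ge 2$), whose distinguished representatives \eqref{eq:ext} are multi-segment paths, not of the form $\pi_x$. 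The actual construction in \cite{INS} first treats $\lambda=\vpi_{i}$ and then uses the Beck--Nakajima tensor-product embedding $\CB(\lambda)\hookrightarrow\bigotimes_{i}\CB(\vpi_{i})^{\otimes m_{i}}$ together with the matching parametrizations of connected components by $\Par(\lambda)$ on both sides; the character identity you want to invoke is itself downstream of this isomorphism in the literature, so using it here risks circularity.

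Second, in Stage~2 the characterization of $\CB_{x}^{-}(\lambda)$ you use --- the smallest subset containing $b_{x}$ that is stable under all $f_{i}$ and closed under $e_{i}$ up to $\bzero$ --- is just the connected component of $b_{x}$, not the Demazure crystal. The correct tool is Kashiwara's recursive description (passing from $\CB_{x}^{-}(\lambda)$ to $\CB_{s_{i}x}^{-}(\lambda)$ by $f_{i}^{\max}$, together with the string property), and the matching combinatorial statements for $\SLS_{\sige x}(\lambda)$ are exactly the analogues of Lemma~\ref{lem:Dem1} and Corollary~\ref{cor:Dem2} proved in \cite[Sect.~5.3]{NS16}; those are established with the ``diamond'' lemmas for the semi-infinite Bruhat order (Lemma~\ref{lem:dmd}) and Lemma~\ref{lem:si}, \emph{not} with the Deodhar lift. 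Proposition~\ref{prop:Deo} plays no role in Theorem~\ref{thm:isom}; it is needed only for the standard monomial theory (Propositions~\ref{prop:DC}, \ref{prop:DC2} and Theorem~\ref{thm:Dem}), so the part of your argument that leans on it is aimed at the wrong target.
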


Let $x \in W_{\af}$. If $x$ is of the form
$x = wt_{\xi}$ for some $w \in W$ and $\xi \in Q^{\vee}$, 
then $v_{x} \in V(\lambda)$ is a weight vector of weight $x\lambda = 
w\lambda-\pair{\lambda}{\xi}\delta$; note that $w\lambda \in \lambda-Q^{+}$. 
Also, for $i \in I$ (resp., $i=0 \in I_{\af}$), 
the Chevalley generator $F_{i}$ (resp., $F_{0}$) of $U_{\q}(\Fg_{\af})$ 
acts on $V(\lambda)$ as a (linear) operator of 
weight $-\alpha_{i} \in Q$ (resp., $-\alpha_{0}=\theta-\delta \in Q + \BZ_{< 0}\delta$). 
Therefore, the Demazure submodule 
$V_{x}^{-}(\lambda) = U_{\q}^{-}(\Fg_{\af})v_{x}$ has 
the weight space decomposition of the form: 
\begin{equation*}
V_{x}^{-}(\lambda) = 
 \bigoplus_{k \in \BZ}
\Biggl(
\underbrace{
   \bigoplus_{\gamma \in Q} 
   V_{x}^{-}(\lambda)_{\lambda+\gamma+k\delta}}_{=:V_{x}^{-}(\lambda)_{k}}
\Biggr),
\end{equation*}
where $V_{x}^{-}(\lambda)_{k} = \bigl\{ 0 \bigr\}$ 
for all $k > - \pair{\lambda}{\xi}$;
in addition, by Theorem~\ref{thm:isom}, 
together with the definition of the map 
$\wt:\SLS(\lambda) \rightarrow P_{\af}$ (see \eqref{eq:wt}), 
we see that if $\gamma \notin -Q^{+}$, then 
$V_{x}^{-}(\lambda)_{\lambda+\gamma+k\delta} = \bigl\{0\bigr\}$ 
for all $k \in \BZ$, since $W_{\af}\lambda \subset \lambda - Q^{+} +\BZ \delta$ 
by the assumption that $\lambda \in P^{+}$.
Here we claim that $V_{x}^{-}(\lambda)_{k}$ is finite-dimensional 
for all $k \in \BZ$ with $k \le -\pair{\lambda}{\xi}$; 
we show this assertion by descending induction on $k$. 
Let $U_{\q}^{-}(\Fg)$ denote the $\BC(\q)$-subalgebra of 
$U_{\q}^{-}(\Fg_{\af})$ generated by $F_{i}$, $i \in I$. 
If $k=-\pair{\lambda}{\xi}$, then the assertion is obvious since 
$V_{x}^{-}(\lambda)_{-\pair{\lambda}{\xi}} = U_{\q}^{-}(\Fg)v_{x}$ and 
$V(\lambda)$ is an integrable $U_{\q}(\Fg_{\af})$-module.
Assume that $k < -\pair{\lambda}{\xi}$. 
Observe that $V_{x}^{-}(\lambda)_{k}$ is 
a $U_{\q}^{-}(\Fg)$-module generated by $F_{0}V_{x}^{-}(\lambda)_{k+1}$. 
Because $F_{0}V_{x}^{-}(\lambda)_{k+1}$ is finite-dimensional by 
our induction hypothesis, and $V(\lambda)$ is 
an integrable $U_{q}(\Fg_{\af})$-module, we deduce that 
$V_{x}^{-}(\lambda)_{k} = U_{\q}^{-}(\Fg)(F_{0}V_{x}^{-}(\lambda)_{k+1})$ is 
also finite-dimensional, as desired. 

Now, we define the graded character $\gch V_{x}^{-}(\lambda)$ of 
$V_{x}^{-}(\lambda)$ to be 
\begin{equation} \label{eq:gch}
\gch V_{x}^{-}(\lambda) : = 
 \sum_{k \in \BZ}
\Biggl(
 \sum_{\gamma \in Q} 
 \dim \bigl( V_{x}^{-}(\lambda)_{\lambda+\gamma+k\delta} \bigr) 
 e^{\lambda+\gamma}
\Biggr) q^{k};
\end{equation}
observe that 
%
%
\begin{equation} \label{eq:gch1}
\gch V_{x}^{-}(\lambda) \in 
\bigl(\underbrace{\BZ[e^{\nu} \mid \nu \in P]}_{=\ZP}\bigr)
\bra{q^{-1}}q^{-\pair{\lambda}{\xi}}.
\end{equation}
For $\gamma \in Q$ and $k \in \BZ$, we set 
$\fwt(\lambda+\gamma+k\delta):=\lambda+\gamma \in P$ and 
$\qwt(\lambda+\gamma+k\delta):=k \in \BZ$. Then, by Theorem~\ref{thm:isom}, 
we have
\begin{equation} \label{eq:gch2}
\gch V_{x}^{-}(\lambda) = \sum_{\pi \in \SLS_{\sige x}(\lambda)} 
e^{\fwt(\wt(\pi))} q^{\qwt(\wt(\pi))}. 
\end{equation}
%
%
\section{Combinatorial standard monomial theory for semi-infinite LS paths.}
\label{sec:SMT}

In this section, we fix $\lambda,\,\mu \in P^{+} \subset P_{\af}^{0}$ 
(see \eqref{eq:P-fin} and \eqref{eq:P}), and set
\begin{equation*}
J:=\bigl\{i \in I \mid \pair{\lambda}{\alpha_{i}^{\vee}}=0 \bigr\}, \qquad 
K:=\bigl\{i \in I \mid \pair{\mu}{\alpha_{i}^{\vee}}=0 \bigr\}. 
\end{equation*}

%
\subsection{Standard paths.}
\label{subsec:SP}

We consider the following condition \eqref{eq:SM} 
on $\pi \otimes \eta \in \SLS(\lambda) \otimes \SLS(\mu)$: 
%
%
\begin{equation} \label{eq:SM}
\begin{cases}
\text{there exist $x,\,y \in W_{\af}$ such that 
$x \sige y$ in $W_{\af}$}, \\[1mm]
\text{and such that $\PJ(x)=\kappa(\pi)$, $\PK(y)=\iota(\eta)$};
\end{cases} \tag{\sf SP}
\end{equation}
we set
\begin{equation*}
\SM(\lambda+\mu):=
  \bigl\{\pi \otimes \eta \in \SLS(\lambda) \otimes \SLS(\mu) \mid 
  \text{$\pi \otimes \eta$ satisfies condition \eqref{eq:SM}}
  \bigr\}.
\end{equation*}
%
%
\begin{thm} \label{thm:SMT}
The set $\SM(\lambda+\mu) \sqcup \{\bzero\}$ is stable under the action of 
the Kashiwara {\rm(}or, root{\rm)} operators $e_{i}$, $f_{i}$, $i \in I_{\af}$, on 
$\SLS(\lambda) \otimes \SLS(\mu)${\rm;} in particular, $\SM(\lambda+\mu)$ is 
a crystal with weights in $P_{\af}$. Moreover, $\SM(\lambda+\mu)$ is 
isomorphic as a crystal to $\SLS(\lambda+\mu)$. 
\end{thm}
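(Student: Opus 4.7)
The plan is to construct a crystal isomorphism $\Theta : \SM(\lambda+\mu) \stackrel{\sim}{\rightarrow} \SLS(\lambda+\mu)$ by concatenation of paths, in the spirit of Littelmann's standard monomial theory in the finite case, but using the semi-infinite Deodhar lifts furnished by Proposition~\ref{prop:Deo} to mediate between the different parabolic quotients. Note that the parabolic relevant to $\SLS(\lambda+\mu)$ is $J \cap K$, since $\pair{\lambda+\mu}{\alpha_i^{\vee}} = 0$ iff $\pair{\lambda}{\alpha_i^{\vee}} = \pair{\mu}{\alpha_i^{\vee}} = 0$.

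As the first step, given $\pi = (x_1 \sig \cdots \sig x_s\,;\,\ba) \in \SLS(\lambda)$ and $\eta = (y_1 \sig \cdots \sig y_t\,;\,\bb) \in \SLS(\mu)$ satisfying \eqref{eq:SM}, I would choose witnesses $x \sige y$ in $W_{\af}$ with $\PJ(x) = x_s = \kappa(\pi)$ and $\PK(y) = y_1 = \iota(\eta)$, and then inductively use Proposition~\ref{prop:Deo} to build coherent chains of lifts $\widetilde{x}_s \sige \widetilde{x}_{s-1} \sige \cdots \sige \widetilde{x}_1$ of the $x_u$ with $\widetilde{x}_s = x$, and $\widetilde{y}_1 \sile \widetilde{y}_2 \sile \cdots \sile \widetilde{y}_t$ of the $y_v$ with $\widetilde{y}_1 = y$. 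Projecting these lifts to $(W^{J \cap K})_{\af}$ and juxtaposing the rescaled rational sequences $\ba,\bb$ yields a candidate pair of shape $\lambda+\mu$; I would verify that the required directed paths in $\mathrm{BG}^{\si}_{\lambda+\mu,a}\bigl((W^{J \cap K})_{\af}\bigr)$ exist by transporting the corresponding paths in $\SBb{a_u}$ and (the analogous) $\mathrm{BG}^{\si}_{\mu,b_v}\bigl((\WKu)_{\af}\bigr)$ through the Deodhar lifts, using the edge-transport properties collected in Appendix~\ref{sec:basic}.

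For the stability of $\SM(\lambda+\mu) \sqcup \{\bzero\}$ under the Kashiwara operators $e_i, f_i$, I would apply the tensor product rule: each root operator acts on either $\pi$ or $\eta$, determined by the $\varepsilon_i/\varphi_i$ comparison. The key combinatorial input is that applying $f_i$ to the left factor $\pi$ only modifies $\kappa(\pi)$ by a reflection associated to a specific positive real root, and similarly $f_i$ acting on $\eta$ only modifies $\iota(\eta)$ in a controlled way (as can be read off from the explicit formulas for root operators on semi-infinite LS paths recalled in Appendix~\ref{sec:crystal}). Condition \eqref{eq:SM} is then preserved by composing the previous witnesses $x, y$ with the appropriate reflection and re-applying the lift machinery.

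Once stability is established, the isomorphism $\Theta$ is obtained by showing that concatenation intertwines $\wt$, $e_i$, $f_i$ and that $\pi_\lambda \otimes \pi_\mu \in \SM(\lambda+\mu)$ maps to $\pi_{\lambda+\mu}$; then Theorem~\ref{thm:isom} (applied to $\lambda+\mu$) identifies the connected component generated by $\pi_{\lambda+\mu}$ with $\CB_{0}(\lambda+\mu)$, and a weight-counting / injectivity argument, together with the analogous statements for each Weyl translate of this component, completes the identification. The hardest step will be the preservation of \eqref{eq:SM} under root operators: the condition is relational at the level of $W_{\af}$ rather than intrinsic to $\pi$ or $\eta$, so tracking its stability requires a delicate interplay between the reflection-shift of $\kappa(\pi)$ or $\iota(\eta)$, the existence of minimal lifts from Proposition~\ref{prop:Deo}, and the tilt/diamond lemmas for the semi-infinite Bruhat order needed to realign the new witnesses.
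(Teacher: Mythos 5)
Your treatment of the stability part is correct in spirit and matches the paper's Proposition~\ref{prop:SMT1}: one does use the tensor product rule, analyzes how $\kappa(\pi)$ or $\iota(\eta)$ can change by $s_i$, and then constructs new witnesses by acting with $s_i$ on the old ones, justified by the diamond-type lemmas (Lemmas~\ref{lem:si} and \ref{lem:dmd}). This is the same route. However, your proposed route to the isomorphism with $\SLS(\lambda+\mu)$ is genuinely different from the paper's, and it has a gap that I do not think you can close purely combinatorially.

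You propose an explicit concatenation map, mediated by Deodhar lifts, together with a weight-counting / injectivity argument. There are three problems. First, the concatenation map is not obviously well-defined: the lift chain you build depends on the initial witnesses $x \sige y$, and you would need to prove that the resulting element of $\SLS(\lambda+\mu)$ is independent of that choice (and that projecting the lifts to $(W^{J\cap K})_{\af}$ produces a strictly decreasing sequence satisfying the chain conditions in $\mathrm{BG}^{\si}_{\lambda+\mu,a}\bigl((W^{J \cap K})_{\af}\bigr)$ at every turning point). None of this is routine; the "edge-transport" lemmas in Appendix~\ref{sec:basic} relate $\PJ$ to the order, but they do not directly deliver the required directed paths for the new shape $\lambda+\mu$. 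Second, even granting the map, matching graded characters does not by itself produce a crystal isomorphism: both sides have infinitely many connected components, and a character comparison cannot detect whether the partitioning into components agrees.

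The paper sidesteps all of this. It does not use concatenation at all. Instead it (i) identifies the connected components of $\SLS(\lambda)\otimes\SLS(\mu)$ using the projection $\cl$ onto quantum LS paths and the fact that $\QLS(\lambda)\otimes\QLS(\mu)$ is connected (Lemma~\ref{lem:conn}, Theorem~\ref{thm:QLS}); (ii) characterizes exactly which special representatives $(t_{\xi}\cdot\pi_{\brho})\otimes\pi_{\bchi}$ lie in $\SM(\lambda+\mu)$ (Proposition~\ref{prop:tx}); (iii) sets up a bijection $\Theta$ between triples $(\brho,\bchi,\xi)$ satisfying \eqref{eq:tx2} and $\Par(\lambda+\mu)$; and, crucially, (iv) proves $\SM_{0}(\lambda+\mu)\cong\SLS_{0}(\lambda+\mu)$ via Kashiwara's Conjectures~13.1\,(iii) and 13.2\,(iii), established by Beck--Nakajima, which describe the crystal basis $\CB_{0}(\lambda+\mu)$ in terms of tensor products of fundamental level-zero crystals (Proposition~\ref{prop:BN}). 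Step~(iv) is the linchpin and it is representation-theoretic, not combinatorial. Without some substitute for this input, your weight-counting argument cannot establish the isomorphism, and I do not see how to recover it by elaborating the Deodhar-lift concatenation idea alone.
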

\noindent
We will give a proof of Theorem~\ref{thm:SMT} in Section~\ref{sec:prf-SMT}. 

%
\subsection{Defining chains.}
\label{subsec:DC}
%
%
\begin{dfn} \label{dfn:DC}
Let $\pi=(x_{1},\,\dots,\,x_{s}\,;\,\ba) \in \SLS(\lambda)$ and 
$\eta=(y_{1},\,\dots,\,y_{p}\,;\,\bb) \in \SLS(\mu)$. 
A defining chain for $\pi \otimes \eta$ is 
a sequence $x_{1}',\,\dots,\,x_{s}',\,y_{1}',\,\dots,\,y_{p}'$
of elements in $W_{\af}$ satisfying the condition: 
%
%
\begin{equation} \label{eq:DC}
\begin{cases}
x_{1}' \sige \cdots \sige x_{s}' \sige y_{1}' \sige \cdots \sige y_{p}' 
 \quad \text{\rm in $W_{\af}$}; \\[1.5mm]
\PJ(x_{u}')=x_{u} 
 \quad \text{\rm for $1 \le u \le s$}; \\[1.5mm]
\PK(y_{q}')=y_{q}
 \quad \text{\rm for $1 \le q \le p$};
\end{cases}
\tag{\sf DC}
\end{equation}
we call $x_{1}'$ (resp., $y_{p}'$) the initial element 
(resp., the final element) of this defining chain. 
\end{dfn}
%
%
\begin{prop} \label{prop:DC}
Let $\pi \in \SLS(\lambda)$ and $\eta \in \SLS(\mu)$. 
Then, $\pi \otimes \eta \in \SM(\lambda+\mu)$ if and only if 
there exists a defining chain for 
$\pi \otimes \eta \in \SLS(\lambda) \otimes \SLS(\mu)$.
\end{prop}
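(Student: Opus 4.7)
The plan is to prove the two implications separately. The forward implication (existence of a defining chain $\Rightarrow$ condition \eqref{eq:SM}) is essentially by inspection, while the converse is the real content and will be handled by assembling the chain outward from an anchor supplied by \eqref{eq:SM}.

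For the forward implication, given a defining chain $x_1' \sige \cdots \sige x_s' \sige y_1' \sige \cdots \sige y_p'$, I would simply take $x := x_s'$ and $y := y_1'$. The middle link of the chain yields $x \sige y$ in $W_{\af}$, while the projection clauses of \eqref{eq:DC} give $\PJ(x) = x_s = \kappa(\pi)$ and $\PK(y) = y_1 = \iota(\eta)$, which is exactly \eqref{eq:SM}.

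For the converse, suppose $x,y \in W_{\af}$ witness \eqref{eq:SM}. I would anchor the chain at its interface by setting $x_s' := x$ and $y_1' := y$, so that the hypothesis $x \sige y$ furnishes the crucial middle relation $x_s' \sige y_1'$ automatically. I then extend the chain in both directions inductively. To the left, I would invoke Proposition~\ref{prop:Deo} iteratively: at each step $u = s-1, s-2, \ldots, 1$, the strict inequality $x_u \sig x_{u+1}$ built into $\pi \in \SLS(\lambda)$ gives $x_u \sige x_{u+1} = \PJ(x_{u+1}')$, and the proposition supplies $x_u' := \min \Lif{x_{u+1}'}{x_u}$, which satisfies $\PJ(x_u') = x_u$ and $x_u' \sige x_{u+1}'$ by construction. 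To the right, I would proceed analogously along $\eta$: at each step $q = 2, 3, \ldots, p$, I need a lift $y_q' \in \Lift(y_q)$ with $y_q' \sile y_{q-1}'$, which would then satisfy $\PK(y_q') = y_q$ and preserve the decreasing pattern.

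The main obstacle is this right-hand step. Proposition~\ref{prop:Deo} as stated produces a minimum lift \emph{above} a given element, whereas here I need a lift \emph{below} $y_{q-1}'$. What is required is the ``opposite-direction'' companion statement: for $w \in W_{\af}$ and $z \in (\WKu)_{\af}$ with $z \sile \PK(w)$, the set $\{z' \in \Lift(z) \mid z' \sile w\}$ is non-empty (and, ideally, possesses a maximum, so that the inductive choice is canonical). I expect this dual lifting result to be established alongside Proposition~\ref{prop:Deo} in Appendix~\ref{sec:prf-Deo}, either by essentially the same reflection-chain / cover-relation analysis that produces classical Deodhar lifts in the finite and affine Bruhat settings, or as a formal consequence of Proposition~\ref{prop:Deo} via an order-reversing symmetry of the semi-infinite Bruhat order. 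Once such a dual lift is in place, the two inductive extensions concatenate through the anchor $x_s' \sige y_1'$ into the required defining chain, completing the converse direction.
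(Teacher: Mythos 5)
Your forward implication and the leftward half of your converse are fine, but the rightward extension rests on a ``dual Deodhar lift'' that the paper never supplies: Appendix~\ref{sec:prf-Deo} proves only Proposition~\ref{prop:Deo} (the \emph{minimum} lift lying \emph{above} a given element), and you neither prove the dual statement nor can you simply cite it. Deriving it ``via an order-reversing symmetry'' is not straightforward either, since the natural order-reversing map $x \mapsto xw_{\circ}$ does not commute with the projections $\PK$ and $\Lift$. So as written, the ``only if'' direction of your proof is incomplete at exactly the step you flag. The good news is that the non-emptiness you actually need (a maximum is irrelevant) is true and follows from a much more elementary fact that \emph{is} in the paper, namely Lemma~\ref{lem:SiL}: writing $y_{1}' = y_{1}z_{2}$ with $z_{2} \in (\WK)_{\af}$, the element $y_{q}z_{2}$ is a lift of $y_{q}$ with $y_{q}z_{2} \sile y_{q-1}z_{2}$, because right multiplication by a fixed element of $(\WK)_{\af}$ preserves the semi-infinite order between elements of $(\WKu)_{\af}$.

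Once you have Lemma~\ref{lem:SiL} in hand, your whole two-sided induction collapses: the paper's proof takes the witnesses $x \sige y$ from \eqref{eq:SM}, writes $x = x_{s}z_{1}$ and $y = y_{1}z_{2}$ with $z_{1} \in (\WJ)_{\af}$ and $z_{2} \in (\WK)_{\af}$, and sets $x_{u}' := x_{u}z_{1}$ for all $u$ and $y_{q}' := y_{q}z_{2}$ for all $q$ in one stroke; Lemma~\ref{lem:SiL} converts the decreasing sequences underlying $\pi$ and $\eta$ into decreasing chains of lifts, and the middle link $x_{s}' = x \sige y = y_{1}'$ is the hypothesis. In particular, Proposition~\ref{prop:Deo} is not needed for this proposition at all (it enters only later, for the construction of $\Deo{\eta}{x}$ and for Proposition~\ref{prop:DC2}). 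I recommend replacing both of your inductive extensions by this single application of Lemma~\ref{lem:SiL}; your proof then closes, and in fact becomes the paper's.
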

\noindent
We will give a proof of Proposition~\ref{prop:DC} in Section~\ref{subsec:prf-DC}. 

Now, let $\eta=(y_{1},\,\dots,\,y_{p}\,;\,\bb) \in \SLS(\mu)$. 
For each $x \in W_{\af}$ such that 
$\kappa(\eta) = y_{p} \sige \PK(x)$, 
we define a specific lift $\Deo{\eta}{x} \in W_{\af}$ of 
$\iota(\eta) = y_{1} \in (\WKu)_{\af}$ as follows. 
Since $y_{p} \sige \PK(x)$ by the assumption, 
it follows from Proposition~\ref{prop:Deo} that 
$\Lif{x}{y_{p}}$ has the minimum element 
$\min \Lif{x}{y_{p}}=:\ti{y}_{p}$.
Similarly, since 
$y_{p-1} \sige y_{p} = \PK(\ti{y}_{p})$, 
it follows again from Proposition~\ref{prop:Deo} that 
$\Lif{\ti{y}_{p}}{y_{p-1}}$ has 
the minimum element $\min \Lif{\ti{y}_{p}}{y_{p-1}} = :\ti{y}_{p-1}$. 
Continuing in this way, we obtain 
$\ti{y}_{p}$, $\ti{y}_{p-1}$, $\dots$, $\ti{y}_{1}$. 
Namely, these elements are defined by 
the following recursive procedure (from $p$ to $1$): 
%
%
\begin{equation} \label{eq:lift0}
\begin{cases}
\ti{y}_{p} := \min \Lif{x}{y_{p}}, & \\[1mm]
\ti{y}_{q} := \min \Lif{\ti{y}_{q+1}}{y_{q}}
& \text{for $1 \le q \le p-1$}.
\end{cases}
\end{equation}
Finally, we set 
\begin{equation} \label{eq:lift0a}
\Deo{\eta}{x}:=\ti{y}_{1};
\end{equation}
this element is sometimes called the initial direction of $\eta$ 
with respect to $x$. 
%
%
\begin{prop} \label{prop:DC2}
Let $\pi \in \SLS(\lambda)$ and $\eta \in \SLS(\lambda)$. 
Then, $\pi \otimes \eta \in \SM(\lambda+\mu)$ {\rm(}or equivalently, 
there exists a defining chain for $\pi \otimes \eta \in 
\SLS(\lambda) \otimes \SLS(\mu)$ by Proposition~\ref{prop:DC}{\rm)}
if and only if $\kappa(\pi) \sige 
\PJ(\Deo{\eta}{x})$ for some 
$x \in W_{\af}$ such that 
$\kappa(\eta) \sige \PK(x)$. 
\end{prop}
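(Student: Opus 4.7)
The plan is to prove both implications separately; both rely on iterated application of Proposition~\ref{prop:Deo} together with the order-preservation property of the parabolic projection $\PJ : W_{\af} \twoheadrightarrow (\WJu)_{\af}$ on the semi-infinite Bruhat order (which is presumably established in Appendix~\ref{sec:basic}). The key conceptual point is that the recursion \eqref{eq:lift0} defining $\Deo{\eta}{x}$ picks the smallest lift at every stage, so any defining chain dominates the $\ti{y}_{q}$ pointwise.

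For the implication ``$\Leftarrow$'', suppose there exists $x \in W_{\af}$ with $\kappa(\eta) \sige \PK(x)$ and $\kappa(\pi) \sige \PJ(\Deo{\eta}{x})$. By construction, the sequence $\ti{y}_{p}, \ti{y}_{p-1}, \ldots, \ti{y}_{1} = \Deo{\eta}{x}$ produced by \eqref{eq:lift0} satisfies $\ti{y}_{1} \sige \ti{y}_{2} \sige \cdots \sige \ti{y}_{p}$ together with $\PK(\ti{y}_{q}) = y_{q}$ for every $q$. Now apply Proposition~\ref{prop:Deo} once more on the $\pi$-side: starting from $x_{s} = \kappa(\pi) \sige \PJ(\ti{y}_{1})$, set $x_{s}' := \min \Lif{\ti{y}_{1}}{x_{s}}$, and then, for $u = s-1,\, s-2,\, \ldots,\, 1$, define $x_{u}' := \min \Lif{x_{u+1}'}{x_{u}}$, which is well-defined because the strictly decreasing chain $x_{u} \sig x_{u+1} = \PJ(x_{u+1}')$ ensures the hypothesis of Proposition~\ref{prop:Deo} at each step. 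The concatenation $x_{1}', \ldots, x_{s}', \ti{y}_{1}, \ldots, \ti{y}_{p}$ then satisfies all three conditions of \eqref{eq:DC}, so it is a defining chain for $\pi \otimes \eta$, and Proposition~\ref{prop:DC} gives $\pi \otimes \eta \in \SM(\lambda+\mu)$.

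For the implication ``$\Rightarrow$'', let $x_{1}', \ldots, x_{s}', y_{1}', \ldots, y_{p}'$ be a defining chain for $\pi \otimes \eta$ (which exists by Proposition~\ref{prop:DC}), and take $x := y_{p}'$. Then $\PK(x) = y_{p} = \kappa(\eta)$, so $\kappa(\eta) \sige \PK(x)$ holds trivially. A downward induction on $q$ shows $\ti{y}_{q} \sile y_{q}'$ in $W_{\af}$: for $q = p$, one has $y_{p}' \in \Lif{x}{y_{p}}$ by the choice of $x$, so minimality forces $\ti{y}_{p} \sile y_{p}'$; for $q < p$, the defining-chain conditions combined with the inductive hypothesis give $y_{q}' \sige y_{q+1}' \sige \ti{y}_{q+1}$ and $\PK(y_{q}') = y_{q}$, so $y_{q}' \in \Lif{\ti{y}_{q+1}}{y_{q}}$ and minimality again yields $\ti{y}_{q} \sile y_{q}'$. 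In particular $y_{1}' \sige \Deo{\eta}{x}$, and hence $x_{s}' \sige y_{1}' \sige \Deo{\eta}{x}$. Projecting by $\PJ$, the order-preservation property yields $\kappa(\pi) = \PJ(x_{s}') \sige \PJ(\Deo{\eta}{x})$, as desired.

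The main obstacle is not the argument itself, which is essentially a bookkeeping exercise built around the minimality of the lifts $\ti{y}_{q}$, but rather the availability of the two supporting statements in the semi-infinite setting: existence of Deodhar-type minimum lifts (Proposition~\ref{prop:Deo}), and order-preservation of the parabolic projection $\PJ$ relative to the semi-infinite Bruhat order. Both are nonstandard and presumably require case analysis on covering relations in $\SBJ$ versus $\SB$, handled in the appendices; once they are in hand, the proof here amounts to running the recursion \eqref{eq:lift0} in one direction and comparing an arbitrary defining chain against it in the other.
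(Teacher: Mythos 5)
Your proof is correct and follows essentially the same strategy as the paper's: the ``only if'' direction is the paper's Claim~1 verbatim (downward induction comparing an arbitrary defining chain against the minimal lifts $\ti{y}_{q}$, then projecting by $\PJ$ via Lemma~\ref{lem:611}), and the ``if'' direction builds a defining chain whose tail is $\ti{y}_{1} \sige \cdots \sige \ti{y}_{p}$. The only (harmless) variation is in the construction of $x_{1}',\dots,x_{s}'$: you obtain them by iterating Proposition~\ref{prop:Deo} on the $\pi$-side, whereas the paper sets $x_{u}' := x_{u}z$ for the fixed $z \in (\WJ)_{\af}$ with $\Deo{\eta}{x} = \PJ(\Deo{\eta}{x})z$ and invokes Lemma~\ref{lem:SiL}; both yield a valid chain.
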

\noindent
We will give a proof of Proposition~\ref{prop:DC2} in Section~\ref{subsec:prf-DC2}. 
%
%
\subsection{Demazure crystals in terms of standard paths.}
\label{subsec:dem-SP}

We set $S:=\bigl\{i \in I \mid \pair{\lambda+\mu}{\alpha_{i}^{\vee}}=0 \bigr\}=\J \cap \K$. 
For each $x \in W_{\af}$, we define $\SM_{\sige x}(\lambda+\mu) 
\subset \SM(\lambda+\mu)$ to be the image of 
$\SLS_{\sige x}(\lambda+\mu)=
 \bigl\{ \psi \in \SLS(\lambda+\mu) \mid \kappa(\psi) \sige \PS(x) \bigr\}$
under the isomorphism $\SLS(\lambda + \mu) \cong \SM(\lambda+\mu)$ 
in Theorem~\ref{thm:SMT}. 
%
%
\begin{thm} \label{thm:Dem}
Let $x \in W_{\af}$. 
For $\pi \otimes \eta \in \SLS(\lambda) \otimes \SLS(\mu)$, 
the following conditions \eqref{eq:Da}, \eqref{eq:Db}, and \eqref{eq:Dc} 
are equivalent{\rm:}
%
%
\begin{equation} \label{eq:Da}
\pi \otimes \eta \in \SM_{\sige x}(\lambda+\mu); \tag{\sf D1}
\end{equation}
%
%
\begin{equation} \label{eq:Db}
\begin{cases}
\text{\rm there exists a defining chain for $\pi \otimes \eta$ whose final element, say $y$, } \\[1mm]
\text{\rm satisfies the condition that $\PS(y) \sige \PS(x)$};
\end{cases} \tag{\sf D2}
\end{equation}
%
%
\begin{equation} \label{eq:Dc}
\kappa(\eta) \sige \PK(x) \quad \text{\rm and} \quad
\kappa(\pi) \sige \PJ(\Deo{\eta}{x}). \tag{\sf D3}
\end{equation}
Therefore, we have
\begin{equation*}
\SM_{\sige x}(\lambda+\mu) = 
\bigl\{ \pi \otimes \eta \mid 
 \text{\rm $\eta \in \SLS_{\sige x}(\mu)$ and 
 $\pi \in \SLS_{\sige \Deo{\eta}{x}}(\lambda)$}
\bigr\},
\end{equation*}
and hence {\rm(}see \eqref{eq:gch2}{\rm)}
\begin{equation}
\gch V_{x}^{-}(\lambda+\mu) = 
\sum_{ \eta \in \SLS_{\sige x}(\mu) } e^{\fwt(\wt(\eta))} q^{\qwt(\wt(\eta))} 
\underbrace{%
\sum_{ \pi \in \SLS_{\sige \Deo{\eta}{x}}(\lambda) } e^{\fwt(\wt(\pi))} q^{\qwt(\wt(\pi))}
}_{ \text{\rm $=\gch V_{\Deo{\eta}{x}}^{-}(\lambda)$ by \eqref{eq:gch2}} }. 
\end{equation}
\end{thm}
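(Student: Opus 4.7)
The plan is to prove the cyclic chain of implications \eqref{eq:Da}$\Rightarrow$\eqref{eq:Db}$\Rightarrow$\eqref{eq:Dc}$\Rightarrow$\eqref{eq:Da}, using Theorem~\ref{thm:SMT}, Proposition~\ref{prop:Deo}, and the monotonicity of the projections $\PJ$, $\PK$, $\PS$ with respect to the semi-infinite Bruhat order (a fact from Appendix~\ref{sec:basic}). The character identity will then fall out as a disjoint-union decomposition.

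For \eqref{eq:Da}$\Leftrightarrow$\eqref{eq:Db}, I would unwind the crystal isomorphism $\SLS(\lambda+\mu) \cong \SM(\lambda+\mu)$ of Theorem~\ref{thm:SMT}. If $\pi\otimes\eta$ corresponds to $\psi \in \SLS(\lambda+\mu)$, then a defining chain for $\pi\otimes\eta$ packages exactly the data needed to lift the directions of $\psi$ to $W_{\af}$; its final element $y_p'$ projects under $\PS$ to $\kappa(\psi)$, while the projections $\PJ$ and $\PK$ of the appropriate sub-chains recover the directions of $\pi$ and $\eta$. Hence the condition $\kappa(\psi)\sige\PS(x)$ translates directly into the existence of a defining chain with $\PS(y_p')\sige\PS(x)$, and conversely.

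For \eqref{eq:Dc}$\Rightarrow$\eqref{eq:Db}, I would construct the defining chain explicitly by iterated minimum-lifts. Using the hypothesis $\kappa(\eta)\sige\PK(x)$, Proposition~\ref{prop:Deo} produces the sequence $\ti{y}_p,\dots,\ti{y}_1$ of \eqref{eq:lift0}, with $\ti{y}_1=\Deo{\eta}{x}$. The second hypothesis $\kappa(\pi)=x_s\sige\PJ(\Deo{\eta}{x})$ then allows a further application of Proposition~\ref{prop:Deo} to define $\ti{x}_s:=\min\Lif{\ti{y}_1}{x_s}$ and, inductively, $\ti{x}_u:=\min\Lif{\ti{x}_{u+1}}{x_u}$. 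The concatenated sequence $\ti{x}_1\sige\cdots\sige\ti{x}_s\sige\ti{y}_1\sige\cdots\sige\ti{y}_p$ is a defining chain; since $\ti{y}_p\sige x$, monotonicity of $\PS$ gives $\PS(\ti{y}_p)\sige\PS(x)$. For \eqref{eq:Db}$\Rightarrow$\eqref{eq:Dc}, given any defining chain $x_1',\dots,x_s',y_1',\dots,y_p'$ with $\PS(y_p')\sige\PS(x)$, I would first derive $\kappa(\eta)=\PK(y_p')\sige\PK(x)$ by factoring $\PK$ through $\PS$ (valid since $S\subset K$). Then, by induction on $q$ from $p$ down to $1$, the minimality property in Proposition~\ref{prop:Deo} gives $\ti{y}_q\sile y_q'$ (the induction step uses $y_q'\sige y_{q+1}'\sige\ti{y}_{q+1}$ to place $y_q'$ in $\Lif{\ti{y}_{q+1}}{y_q}$). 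In particular $\Deo{\eta}{x}=\ti{y}_1\sile y_1'\sile x_s'$, so $\PJ(\Deo{\eta}{x})\sile\PJ(x_s')=x_s=\kappa(\pi)$.

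Finally, the character identity is immediate: by Theorem~\ref{thm:isom} and \eqref{eq:gch2}, $\gch V_x^-(\lambda+\mu)$ is the character of $\SLS_{\sige x}(\lambda+\mu)\cong \SM_{\sige x}(\lambda+\mu)$, and the equivalence \eqref{eq:Da}$\Leftrightarrow$\eqref{eq:Dc} exhibits this set as $\bigsqcup_{\eta\in\SLS_{\sige x}(\mu)}\{\pi\otimes\eta \mid \pi\in\SLS_{\sige\Deo{\eta}{x}}(\lambda)\}$; the weights add because $\wt(\pi\otimes\eta)=\wt(\pi)+\wt(\eta)$ in the tensor product crystal. The main technical obstacle will be the \eqref{eq:Db}$\Rightarrow$\eqref{eq:Dc} direction, specifically the inductive propagation of the inequality $\ti{y}_q\sile y_q'$ and the careful use of monotonicity of $\PJ$, $\PK$, $\PS$ — these rest on semi-infinite Bruhat lifting properties (Proposition~\ref{prop:Deo} and its relatives) that themselves require the Deodhar-type analysis deferred to Appendix~\ref{sec:prf-Deo}.
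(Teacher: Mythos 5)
Your sketch of \eqref{eq:Dc}$\Rightarrow$\eqref{eq:Db} via iterated minimum-lifts is essentially what the paper does, and the character identity at the end is handled correctly. However, there are two gaps, one small and one fatal.

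The small gap is in \eqref{eq:Db}$\Rightarrow$\eqref{eq:Dc}. You start the descending induction with $\ti{y}_p := \min \Lif{x}{y_p}$ and need $\ti{y}_p \sile y_p'$; the minimality of $\ti{y}_p$ gives this only if $y_p' \in \Lif{x}{y_p}$, i.e.\ only if $y_p' \sige x$ in $W_{\af}$. But your hypothesis is only $\PS(y_p') \sige \PS(x)$, which does not imply $y_p' \sige x$. The paper first \emph{replaces} the defining chain: writing $x = \PS(x)z$ with $z \in (\WS)_{\af}$ and applying Lemma~\ref{lem:SiL}, the sequence $\PS(x_u')z, \PS(y_q')z$ is again a defining chain whose final element $\PS(y_p')z$ now satisfies $\PS(y_p')z \sige \PS(x)z = x$. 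Without this reduction the base case of your induction does not close.

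The fatal gap is the equivalence \eqref{eq:Da}$\Leftrightarrow$\eqref{eq:Db}. You assert that the crystal isomorphism $\SLS(\lambda+\mu) \cong \SM(\lambda+\mu)$ sends a defining chain for $\pi \otimes \eta$ to a lift of the directions of $\psi$, with $\PS(y_p') = \kappa(\psi)$. This is false as stated: defining chains are far from unique, and nothing forces $\PS(y_p')$ to equal $\kappa(\psi)$ (already for $\pi = \pi_\lambda$, $\eta = \pi_\mu$, $\psi = \pi_{\lambda+\mu}$, one can take $y_1' = t_{-\gamma}$ with $\gamma \in Q_K^{\vee,+} \setminus Q_S^\vee$, so $\PS(y_1') \neq e = \kappa(\psi)$). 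More to the point, the isomorphism of Theorem~\ref{thm:SMT} is a crystal isomorphism, not a morphism of ``directed'' data, and there is no direct dictionary between the final-direction $\kappa(\psi)$ of a semi-infinite LS path and defining chains of the corresponding tensor. This equivalence is precisely the content the paper must fight for: it defines the set $\DC_{\sige x}(\lambda+\mu)$ of tensors satisfying \eqref{eq:Db}, proves (Lemma~\ref{lem:Dem1} and Corollary~\ref{cor:Dem2}) that $\DC_{\sige x}$ and $\SLS_{\sige x}$ enjoy the same root-operator stability and the same recursion under $e_i^{\max}$, $f_i^{\max}$, pins down (Lemma~\ref{lem:Dem3}) the extremal elements $y \cdot (t_\xi \cdot \pi_{\brho} \otimes \pi_{\bchi})$ of $\DC_{\sige x}$, and only then (Lemma~\ref{lem:DemX}) transports the Demazure condition across the isomorphism by a reduction to extremal elements along a string of $f_i^{\max}$'s with the signs controlled as in \cite[Lemma~5.4.1]{NS16}. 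This crystal-theoretic induction is the heart of the theorem, and ``unwinding the isomorphism'' does not substitute for it.
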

\noindent
We will give a proof of Theorem~\ref{thm:Dem} 
in Section~\ref{sec:prf-Dem}. 
%
%
\section{Semi-infinite Schubert varieties and their resolutions.}
\label{sec:SiSch}
%
%
\subsection{Geometric setting.}
\label{subsec:geo}

An (algebraic) variety is an integral separated scheme of finite type over $\BC$. 
Also, a pro-affine space is a product of finitely many copies of 
$\Spec \BC [ x_{m} \mid m \ge 0 ]$, equipped with a truncation morphism 
$\Spec \BC [ x_{m} \mid m \ge 0 ] \rightarrow \Spec \BC [ x_{m} \mid 0 \le m \le n ]$ 
for $n \gg 0$; by a morphism of pro-affine spaces, 
we mean a morphism of schemes that is also continuous with respect to 
the topology induced by the truncation morphisms 
(this topology itself is irrelevant to the Zariski topology).
For a $\BC$-vector space $V$, 
we set $\BP(V) := (V \setminus \{ 0 \}) / \BC^{\times}$. 
We usually regard $\BP(V)$ as an algebraic variety over $\BC$ 
when $\dim V < \infty$, or as an ind/pro-scheme when $\dim V = \infty$ 
in accordance with the topology of $V$.

For an algebraic group $E$, 
let $E\bra{z}$, $E\pra{z}$, and $E [z]$
denote the set of $\BC\bra{z}$-valued points, 
$\BC\pra{z}$-valued points, and $\BC[z]$-valued points of $E$, respectively; 
the corresponding Lie algebras are denoted by 
$\Fe\bra{z}$, $\Fe\pra{z}$, and $\Fe[z]$, 
respectively, with $E$ replaced by its German letter $\Fe=\Lie(E)$.
Also, we denote by $R(E)$ the representation ring of $E$. 

Recall that $G$ is a connected, 
simply-connected simple algebraic group over $\BC$; 
concerning the Lie algebra $\Fg=\Lie(G)$ and 
its untwisted affinization $\Fg_{\af}$, 
we use the notation of Section~\ref{sec:notation}. 
%
%

We have an evaluation map 
$\ev_{0} : G\bra{z} \longrightarrow G$ at $z=0$. 
Let $\Iw := \ev_{0}^{-1} ( B )$ be an Iwahori subgroup of $G\bra{z}$. 
Also, for each $i \in I_{\af}$, we have a minimal parahoric subgroup 
$\Iw \subset \Iw(i) \subset G\bra{z}$ corresponding to $\alpha_{i}$, 
so that $\Iw(i) / \Iw \cong \BP^{1}$. 
Note that both $G\bra{z}$ and $\Iw$ admit an action of $\Gm$ 
obtained by the scalar dilation on $z$; 
we denote the resulting semi-direct product groups by 
$\ti{G}\bra{z}$ and $\ti{\Iw}$, respectively.
The (finite) Weyl group $W$ of $\Fg$ is 
isomorphic to $N_{G}(H)/H$, and $Q^{\vee}$ is isomorphic to 
$H\pra{z}/H\bra{z}$, both of which fit in the following commutative 
diagram involving the (affine) Weyl group $W_{\af}$ of $\Fg_{\af}$: 
\begin{equation*}
\begin{CD}
0 @>>> Q^{\vee} @>>> W_{\af} @>>> W @>>> e \\
@.  @A{\cong}AA  @A{\cong}AA @A{\cong}AA \\
@. H\pra{z}/H\bra{z} @>>> N_{G\pra{z}}(H)/H\bra{z} @>>> 
   \underbrace{N_{G\pra{z}}(H)/H\pra{z}}_{= N_{G}(H)/H}, \\
\end{CD}
\end{equation*}
where the first row is exact, and the rightmost isomorphism 
in the second row holds since $N_{G\pra{z}}(H) \cong 
N_{G\pra{z}}(H\pra{z}) \cong (N_{G}(H))\pra{z}$. 
In particular, we have a lift $\dot{w} \in N_{G\pra{z}}(H)$ 
for each $w \in W_{\af}$. 
Now, for $x \in W_{\af}$ and $i \in I_{\af}$, we set
\begin{equation}
s_{i} \ast x := 
\begin{cases} 
 x & \text{if $s_{i} x < x$}, \\
 s_{i} x & \text{if $s_{i} x > x$}, 
\end{cases}
\end{equation}
where we denote by $>$ the ordinary Bruhat order on $W_{\af}$. 
%
Then, the set $W_{\af}$ becomes a monoid, 
which we denote by $\Wm$, under the product $\ast$; 
this monoid is also obtained as a subset of 
the generic Hecke algebra associated to $(W_{\af},\,I_{\af})$ 
by setting $a_{i} = 1$ and $b_{i} = 0$ for $i \in I_{\af}$ 
in \cite[Sect.~7.1, Theorem]{Hum90}. 
%
%
%
\subsection{Semi-infinite flag manifolds.}
\label{subsec:SiFl}
Here we review two models of semi-infinite flag manifold associated to $G$, 
for which the basic references are \cite{FM99} and \cite{FFKM99}.

Let $L(\lambda)$ denote the (finite-dimensional) irreducible highest weight 
$\Fg$-module of highest weight $\lambda \in P^{+}$. 
Recall that for each $\lambda,\,\mu \in P^{+}$, 
we have a canonical embedding of 
irreducible highest weight $\Fg$-modules (and hence of $G$-modules) up to scalars:
%
%
\begin{equation} \label{HWtensor}
L(\lambda + \mu) \hookrightarrow L(\lambda) \otimes_{\BC} L(\mu).
\end{equation}
The embedding \eqref{HWtensor} induces an embedding
%
%
\begin{equation} \label{eq:emb}
L(\lambda+\mu) \otimes_{\BC} R \hookrightarrow  
  \bigl( L(\lambda) \otimes_{\BC} L(\mu) \bigr) \otimes_{\BC} R \cong 
  (L(\lambda) \otimes_{\BC} R) \otimes_{R} (L(\mu) \otimes_{\BC} R)
\end{equation}
for every commutative, associative $\BC$-algebra $R$. 
%
%
\begin{thm}[{\cite[1.1.2]{BG02}}] \label{thm:BG}
Let $\BK$ be a field containing $\BC$. 
The set of collections $\{ \ell^{\lambda} \}_{\lambda \in P^{+}}$ of 
one-dimensional $\BK$-vector subspaces $\ell^{\lambda}$ 
in $L(\lambda) \otimes_{\BC} \BK$ such that 
$\ell^{\lambda} \otimes_{\BK} \ell^{\mu} = \ell^{\lambda+\mu}$ 
for every $\lambda,\,\mu \in P^{+}$ 
{\rm(}under the embedding \eqref{eq:emb}{\rm)} is 
in bijection with the set of closed $\BK$-points of $G/B$.  
\end{thm}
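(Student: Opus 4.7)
The plan is to identify the set of collections with the $\BK$-points of $G/B$ via the classical Pl\"ucker-type embedding into $\prod_{\lambda \in P^{+}} \BP(L(\lambda) \otimes_{\BC} \BK)$, by appealing to the Borel-Weil description of $G/B$ as a $P^{+}$-graded $\Proj$ (in the same sense used in Theorem~\ref{fnormal}).

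First I would define the map $\Phi \colon (G/B)(\BK) \to \{\text{collections}\}$ by $\Phi(gB) := \{g \cdot \ell^{\lambda}_{+}\}_{\lambda \in P^{+}}$, where $\ell^{\lambda}_{+} \subset L(\lambda) \otimes_{\BC} \BK$ denotes the highest weight line relative to $B$. Well-definedness on the coset $gB$ follows because $B$ stabilizes each $\ell^{\lambda}_{+}$, it being a $B$-weight line. The compatibility $g \cdot \ell^{\lambda}_{+} \otimes g \cdot \ell^{\mu}_{+} = g \cdot \ell^{\lambda+\mu}_{+}$ under the embedding \eqref{eq:emb} reduces by $G$-equivariance to the case $g = e$, which is precisely the defining property of the canonical Cartan embedding \eqref{HWtensor}: a highest weight vector of $L(\lambda+\mu)$ maps to the tensor of highest weight vectors of $L(\lambda)$ and $L(\mu)$.

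For injectivity, suppose $\Phi(gB) = \Phi(g'B)$ and set $h := g^{-1}g'$. Then $h$ stabilizes $\ell^{\vpi_{i}}_{+}$ for each $i \in I$, so $h$ lies in the standard maximal parabolic $P_{i}$ associated to $\vpi_{i}$; since $B = \bigcap_{i \in I} P_{i}$, we conclude $h \in B$, i.e., $gB = g'B$. For surjectivity, I would invoke the Borel-Weil theorem in the form $G/B \cong \Proj R_{G/B}$ (with $\Proj$ taken in the $P^{+}$-graded sense), where $R_{G/B} := \bigoplus_{\lambda \in P^{+}} H^{0}(G/B,\,\CL_{-w_{\circ}\lambda}) \cong \bigoplus_{\lambda \in P^{+}} L(\lambda)^{\ast}$, and the multiplication is the Cartan projection $L(\lambda)^{\ast} \otimes L(\mu)^{\ast} \twoheadrightarrow L(\lambda+\mu)^{\ast}$, i.e., the dual of \eqref{HWtensor}. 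By the universal property of the $P^{+}$-graded $\Proj$, a closed $\BK$-point of $\Proj R_{G/B}$ corresponds to a compatible collection of one-dimensional quotients $L(\lambda)^{\ast} \otimes_{\BC} \BK \twoheadrightarrow \BK$; passing to annihilators, this is exactly a collection $\{\ell^{\lambda}\}_{\lambda \in P^{+}}$ of lines $\ell^{\lambda} \subset L(\lambda) \otimes_{\BC} \BK$ satisfying $\ell^{\lambda} \otimes_{\BK} \ell^{\mu} = \ell^{\lambda+\mu}$ under \eqref{eq:emb}. Finally, this parametrization agrees with $\Phi$, yielding the claimed bijection.

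The main obstacle is the careful verification that the $P^{+}$-graded $\Proj$ used in the paper has closed $\BK$-points in bijection with collections of compatible one-dimensional subspaces in the graded components of the dual algebra, and that this correspondence exchanges the Cartan projection in $R_{G/B}$ with the Cartan embedding \eqref{HWtensor}. This is essentially a duality check once Borel-Weil is in hand, but it is the core content that turns the classical Pl\"ucker embedding statement into the scheme-theoretic bijection asserted here.
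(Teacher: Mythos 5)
Your argument is correct, but note that the paper does not prove this statement at all: it is imported verbatim from Braverman--Gaitsgory \cite[1.1.2]{BG02}, so there is no internal proof to compare against. Your route is the standard one behind that reference: the map $gB \mapsto \{g\cdot \ell^{\lambda}_{+}\}$ is well defined and injective (the stabilizer of the highest weight line in $L(\vpi_{i})$ is the parabolic $P_{i}$ with Levi generated by $I \setminus \{i\}$, and $\bigcap_{i} P_{i} = B$), and surjectivity amounts to identifying $G/B$ with the multigraded $\Proj$ of $\bigoplus_{\lambda \in P^{+}} L(\lambda)^{\ast}$ and unwinding what a $\BK$-point of that $\Proj$ is.

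Two points deserve to be made explicit. First, your surjectivity step silently uses that $\bigoplus_{\lambda} L(\lambda)^{\ast}$, with multiplication dual to \eqref{HWtensor}, really is the full multihomogeneous coordinate ring of $G/B$ inside $\prod_{i \in I} \BP(L(\vpi_{i}))$; this is the projective normality of the Pl\"ucker embedding, which in characteristic zero follows from the surjectivity of the Cartan multiplication $L(\lambda)^{\ast} \otimes L(\mu)^{\ast} \twoheadrightarrow L(\lambda+\mu)^{\ast}$ (complete reducibility). Once that is in place, the general fact that $\Proj$ of the homogeneous coordinate ring of a projective variety recovers the variety, together with the (routine but worth writing out) dictionary between $\BK$-points of a multigraded $\Proj R$ and multiplicative systems of one-dimensional quotients $R_{\lambda} \otimes \BK \twoheadrightarrow \BK$, gives exactly the asserted bijection after dualizing quotients of $L(\lambda)^{\ast} \otimes \BK$ to lines in $L(\lambda) \otimes \BK$. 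Second, in defining $\Phi$ on closed $\BK$-points via cosets $gB$ with $g \in G(\BK)$ you are using $(G/B)(\BK) = G(\BK)/B(\BK)$, which holds for split $G$ by the Bruhat decomposition over $\BK$ (or vanishing of $H^{1}(\BK, B)$); this should be mentioned. Neither point is a gap in substance, and the proof as outlined is sound.
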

For a $\Fg$-module $V$, we set $V\bra{z} := V \otimes_{\BC} \BC\bra{z}$ 
and $V\pra{z} := V \otimes_{\BC} \BC\pra{z}$. 
%
%
%
\begin{dfn} \label{dfn:DP}
Consider a collection $L = \{ L^{\lambda} \}_{\lambda \in P^{+}}$ of 
one-dimensional $\BC$-vector subspaces $L^{\lambda}$ in 
$L(\lambda)\bra{z} = L(\lambda) \otimes_{\BC} \BC\bra{z}$ 
(resp., $L(\lambda)\pra{z} = L(\lambda) \otimes_{\BC} \BC\pra{z}$). 
The datum $L$ is called a formal (resp., rational) 
Drinfeld-Pl\"ucker (DP for short) datum if 
for every $\lambda,\,\mu \in P^{+}$, the equality
\begin{equation} \label{eq:DP}
L^{\lambda + \mu} = L^{\lambda} \otimes_{\BC} L^{\mu}
\end{equation}
holds under the embedding \eqref{eq:emb}, 
where $L^{\lambda} \otimes_{\BC} L^{\mu}$ is considered to be 
its image under the map $L(\lambda)\bra{z} \otimes_{\BC} L(\mu)\bra{z} 
\rightarrow L(\lambda)\bra{z} \otimes_{\BC\bra{z}} L(\mu)\bra{z}$ 
(resp., $L(\lambda)\pra{z} \otimes_{\BC} L(\mu)\pra{z} 
\rightarrow L(\lambda)\pra{z} \otimes_{\BC\pra{z}} L(\mu)\pra{z}$); 
we sometimes refer to a collection $\{u^{\lambda}\}_{\lambda \in P^{+}}$ of 
nonzero elements $u^{\lambda} \in L^{\lambda}$, $\lambda \in P^{+}$, 
as a formal (resp., rational) DP datum. 
Let $\DP$ (resp., $\RDP$) denote the set of formal (resp., rational) DP data. 
\end{dfn}
%
%
\begin{rem} \label{rem:DP}
By the compatibility condition \eqref{eq:DP}, 
a DP datum $\{ L^{\lambda} \}_{\lambda \in P^{+}}$ is 
determined completely by a collection $\{u^{i}\}_{i \in I}$ 
of nonzero elements $u^{i} \in L^{\vpi_{i}}$ for $i \in I$. 
We call this collection $\{u^{i}\}_{i \in I}$ DP coordinates. 
\end{rem}

Let $L = \{ L^{\lambda} \}_{\lambda \in P^{+}} \in \DP$. 
We define $\deg L^{\lambda}$ to be the degree of 
a nonzero element in $L^{\lambda}$, 
viewed as an $L(\lambda)$-valued formal power series 
(if it is bounded). 
For each $\xi \in Q^{\vee,+}$, 
a DP datum of degree $\xi$ is a formal DP datum
$L = \{ L^{\lambda} \}_{\lambda \in P^{+}}$ 
such that $\deg L^{\lambda} \le \pair{\lambda}{\xi}$ 
for all $\lambda \in P^{+}$. 
For each $\xi \in Q^{\vee,+}$, 
let $\FQ_{G} (\xi)$ denote the set of formal DP data of degree $\xi$. 
Here we note that 
if $\xi,\,\zeta \in Q^{\vee,+}$ satisfy $\xi \le \zeta$, 
i.e., $\zeta - \xi \in Q^{\vee,+}$, 
then $\FQ_{G}(\xi) \subset \FQ_{G}(\zeta)$. 
We set $\FQ_{G} := \bigcup_{\xi \in Q^{\vee,+}} \FQ_{G}(\xi)$; 
observe that 
\begin{equation*}
\FQ_{G}(\xi) \subset \FQ_{G} \subset \DP \quad 
 \text{for each $\xi \in Q^{\vee,+}$}. 
\end{equation*}
Also, we remark that $\DP$ has a natural $G\bra{z}$-action, and 
that its subsets $\FQ_{G}(\xi)$, $\xi \in Q^{\vee,+}$, and $\FQ_{G}$ 
are stable under the action of $G$ on $\DP$. 
%
%
\begin{lem}\label{defbQ}
We have an embedding
\begin{equation} \label{Pemb}
\DP \ni 
  \{ L^{\lambda} \}_{\lambda \in P^{+}} \mapsto 
  \{ [ L^{\vpi_{i}} ] \} _{i \in I} \in 
  \prod_{i \in I} \BP (L(\vpi_{i})\bra{z}), 
\end{equation}
which gives the set $\DP$ a {\rm(}reduced{\rm)}
structure of an infinite type 
closed subscheme of 
$\prod_{i \in I} \BP (L(\vpi_{i})\bra{z})$. 
In particular, $\DP$ is separated.
\end{lem}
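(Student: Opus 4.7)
The plan is to show that the map \eqref{Pemb} is set-theoretically injective with image cut out by closed (Plücker-type) conditions, and then to endow $\DP$ with the induced reduced closed subscheme structure; separatedness will follow for free.

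First, I would establish injectivity. By Remark \ref{rem:DP}, a formal DP datum $L = \{L^\lambda\}_{\lambda \in P^+}$ is determined by the DP coordinates $\{u^{i}\}_{i \in I}$ with $u^{i} \in L^{\vpi_{i}}$: for each $\lambda \in P^+$ the unique expansion $\lambda = \sum_{i \in I} c_{i} \vpi_{i}$ with $c_{i} \in \BZ_{\geq 0}$ together with iterated application of \eqref{eq:DP} forces
\[
L^{\lambda} = \mathrm{image}\Bigl( \bigotimes_{i \in I} (L^{\vpi_{i}})^{\otimes c_{i}} \Bigr) \subset L(\lambda)\bra{z},
\]
where the image is taken inside $\bigotimes_{i} L(\vpi_{i})\bra{z}^{\otimes c_{i}}$ via the iterated Cartan embedding \eqref{HWtensor}. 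Hence $L \mapsto \{[L^{\vpi_{i}}]\}_{i \in I}$ loses no information, and the map in \eqref{Pemb} is injective on points.

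Next, I would identify the image by the DP compatibility. A point $\{[u^{i}]\}_{i \in I}$ of $\prod_{i \in I} \BP(L(\vpi_{i})\bra{z})$ extends to a formal DP datum exactly when, for every pair $\lambda = \sum a_{i}\vpi_{i}$ and $\mu = \sum b_{i}\vpi_{i}$ in $P^{+}$, the vector
\[
\Bigl(\bigotimes_{i}(u^{i})^{\otimes a_{i}}\Bigr) \otimes \Bigl(\bigotimes_{i}(u^{i})^{\otimes b_{i}}\Bigr) \in \bigotimes_{i} L(\vpi_{i})\bra{z}^{\otimes(a_{i}+b_{i})}
\]
lies in the Cartan component $L(\lambda + \mu)\bra{z}$; equivalently, its projection onto a fixed $\Fg$-stable complement of $L(\lambda + \mu)\bra{z}$ vanishes. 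Each such vanishing, being a linear-in-$u^{i}$ (multihomogeneous) condition, defines a closed subscheme of the relevant product of projective spaces after truncating the coefficients in powers of $z$; taking the inverse limit over truncations $\BC[z]/(z^{N})$ ($N \to \infty$) exhibits the image as a closed pro-subscheme of $\prod_{i \in I} \BP(L(\vpi_{i})\bra{z})$. Equipping $\DP$ with the reduced-induced structure then makes \eqref{Pemb} a closed embedding of infinite type.

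Separatedness is then automatic: $\prod_{i \in I} \BP(L(\vpi_{i})\bra{z})$ is separated as an inverse limit of products of separated projective schemes, and closed subschemes of separated (pro-)schemes are separated. The main technical obstacle is the bookkeeping in the second step: one must verify that the Plücker vanishing conditions are compatible with the transition maps of the truncation pro-system, so that they genuinely descend to a well-defined closed pro-subscheme. Once this is in place, the argument is essentially formal, and all that remains (the ``infinite type'' language) is notational.
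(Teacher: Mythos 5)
Your proposal is correct and follows essentially the same route as the paper's proof: injectivity via Remark~\ref{rem:DP}, the image cut out by the (closed, multihomogeneous) conditions that the products of the $u^{i}$ land in the Cartan components, and separatedness inherited from the ambient product of projective spaces. The extra bookkeeping you flag about compatibility of the vanishing conditions with the truncation pro-system is handled implicitly in the paper, which simply observes that condition \eqref{eq:DP} imposes infinitely many equations defining $\DP$ as a closed subscheme.
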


\begin{proof}
Because a DP datum $\{ L^{\lambda} \}_{\lambda \in P^{+}}$ is 
determined uniquely by $\{ L^{\vpi_{i}} \} _{i \in I}$ 
(see Remark~\ref{rem:DP}), the map above is injective. 
Also, condition \eqref{eq:DP} is equivalent to saying 
that the image of $L^{\lambda} \otimes_{\BC} L^{\mu}$ lies in 
the $\BC$-vector subspace $L(\lambda+\mu)\bra{z} \subset 
\bigl(L(\lambda) \otimes_{\BC} L(\mu) \bigr)\bra{z}$ 
for all $\lambda,\,\mu \in P^{+}$. 
This condition imposes infinitely many equations on 
$\prod _{i \in I} \BP(L(\vpi_{i})\bra{z})$ 
that define $\DP$ as its closed subscheme. 
Since each $\BP(L(\vpi_{i})\bra{z})$ is separated, 
so is $\DP$. This proves the lemma. 
\end{proof}
%
%
\begin{thm}[\cite{FM99}] \label{orbbQ}
The set of $G\bra{z}$-orbits in $\DP$ is 
labeled by $Q^{\vee,+}$. The codimension of 
the $G\bra{z}$-orbit corresponding to $\xi \in Q^{\vee,+}$ is 
equal to $2 \pair{\rho}{\xi}$. 
\end{thm}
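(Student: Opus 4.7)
My plan is to build a $G\bra{z}$-invariant $\xi\colon \DP \to Q^{\vee,+}$, show that each fibre is a single orbit, and extract the codimension via a tangent-space count at the $H\bra{z}$-fixed basepoint of each orbit.

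For the invariant, given $L = \{L^\lambda\}_{\lambda \in P^{+}} \in \DP$, set $a_\lambda := \deg L^\lambda$ and pick generators $u_\lambda \in L^\lambda$ of that $z$-adic valuation. The DP compatibility \eqref{eq:DP} identifies $L^{\lambda+\mu}$ with the one-dimensional span of the image of $u_\lambda \otimes u_\mu$ under \eqref{eq:emb}, so comparing valuations forces exact additivity $a_{\lambda+\mu} = a_\lambda + a_\mu$. Since $G$ is simply-connected, the pairing $P \times Q^\vee \to \BZ$ is perfect, hence $\lambda \mapsto a_\lambda$ extends uniquely to an element $\xi(L) \in Q^\vee$; non-negativity on the dominant cone places $\xi(L) \in Q^{\vee,+}$. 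Invariance under $G\bra{z}$ follows because $g(z) \in G\bra{z}$ acts on the leading $z$-coefficient of $u_\lambda$ through $g(0) \in G$, which acts invertibly on $L(\lambda)$. For each $\xi \in Q^{\vee,+}$ the $H\bra{z}$-fixed datum $L_\xi := \{\BC\,z^{\pair{\lambda}{\xi}} v_\lambda\}_{\lambda \in P^{+}}$ belongs to $\DP$ and has $\xi(L_\xi) = \xi$, so the invariant is surjective.

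Transitivity on each fibre would be established by successive approximation. Given $L$ with $\xi(L) = \xi$, Theorem~\ref{thm:BG} applied to $\BK = \BC$ says the leading coefficients $\{\BC\cdot(z^{-a_\lambda}u_\lambda)|_{z=0}\}_\lambda$ determine a $\BC$-point of $G/B$, so the action of an appropriate $g_0 \in G \subset G\bra{z}$ reduces to the case $u_\lambda = z^{a_\lambda}(v_\lambda + zw_{\lambda,1} + z^2 w_{\lambda,2} + \cdots)$ with each $w_{\lambda,k}$ of weight strictly less than $\lambda$. Inductively on $k$, the tuple $(w_{\lambda,k})_\lambda$ satisfies the derivation identity $w_{\lambda+\mu,k} = w_{\lambda,k}\otimes v_\mu + v_\lambda \otimes w_{\mu,k}$ forced by \eqref{eq:DP}, which is exactly the condition for it to equal $X_k \cdot \{v_\lambda\}_\lambda$ for some $X_k \in \Fn^-$; applying $\exp(-z^k X_k) \in G\bra{z}$ then cancels this correction without disturbing lower-order terms.

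For the codimension, the stabilizer of $L_\xi$ in $G\bra{z}$ is visibly $B\bra{z}$, so $O_\xi \cong G\bra{z}/B\bra{z}$. To count normal directions at $L_\xi$ inside the ambient embedding of Lemma~\ref{defbQ}, I observe that the tangent to $O_\xi$ in the root direction $\alpha \in \Delta^+$ arises only from the $\Fn^-\bra{z}$-action on $z^{\pair{\lambda}{\xi}}v_\lambda$, whereas $\DP$ admits additional DP-compatible perturbations by $z^k F_\alpha v_\lambda$ with $0 \le k < \pair{\lambda}{\xi}$; cutting down to DP coordinates via Remark~\ref{rem:DP} leaves $\pair{\alpha}{\xi}$ independent normal modes per positive root $\alpha$, for total codimension $\sum_{\alpha \in \Delta^+}\pair{\alpha}{\xi} = 2\pair{\rho}{\xi}$. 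The main obstacle is turning this tangent-mode heuristic into a rigorous codimension statement in the infinite-type ind/pro-scheme $\DP$; the cleanest route is to identify a neighbourhood of $L_\xi$ in $\DP$ with an open in the degree-$\xi$ quasi-map space of \cite{FM99} (cf.~also \cite{FFKM99}), where the codimension formula is classical.
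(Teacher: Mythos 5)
This theorem is imported from \cite{FM99}; the paper does not prove it, so there is no internal proof to compare against. Your proposal is therefore a from-scratch attempt, and I will review it on its own terms.

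Your construction of the invariant $\xi\colon \DP \to Q^{\vee,+}$ via the valuation $a_\lambda = \deg L^\lambda$ is correct: additivity $a_{\lambda+\mu}=a_\lambda+a_\mu$ follows from the DP condition since the leading coefficient of $u_\lambda \otimes u_\mu$ is the nonzero tensor of leading coefficients, and the perfect pairing $P \times Q^\vee \to \BZ$ (with $\pair{\vpi_i}{\alpha_j^\vee}=\delta_{ij}$) then forces $\xi(L)=\sum_i a_{\vpi_i}\alpha_i^\vee \in Q^{\vee,+}$. That part is fine.

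There are, however, several genuine gaps in what follows. First, the stabilizer of $L_\xi$ in $G\bra{z}$ is not $B\bra{z}$ but $H N\bra{z}$. Since each $L^\lambda$ is a $\BC$-line (not a $\BC\bra{z}$-line), an element $h(z)n(z) \in B\bra{z}$ preserves $\BC z^{\pair{\lambda}{\xi}}v_\lambda$ only if the character $\lambda(h(z))$ is a constant for all $\lambda$, forcing $h(z)\in H$. This is consistent with the paper's presentation $\RDP = G\pra{z}/HN\pra{z}$. Related to this, your normalization "each $w_{\lambda,k}$ of weight strictly less than $\lambda$" is not achievable by a single $g_0\in G$: a constant group element cannot absorb a $v_\lambda$-component appearing at positive $z$-degree, because rescaling a $\BC$-line only permits multiplication by a constant. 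So the correction $X_k$ in your inductive step should be allowed to lie in $\Fg/\Fn \cong \Fb^-$ (Cartan corrections in positive $z$-degree are genuine orbit directions), not merely $\Fn^-$.

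Second, the pivotal assertion that the derivation identity "is exactly the condition" for $(w_{\lambda,k})_\lambda$ to equal $X_k\cdot\{v_\lambda\}_\lambda$ for some $X_k \in \Fg/\Fn$ is stated but not argued. This is precisely the claim that the linearization of the DP relations at the base point has no excess solutions, i.e.\ that the (reduced) Zariski tangent space of $\DP$ at $L_\xi$ coincides with the orbit tangent space; it is a nontrivial smoothness statement about the Pl\"ucker-type presentation and cannot be waved through.

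Third, and most seriously, your codimension step is, as you yourself flag, a heuristic tangent-mode count. The "cleanest route" you propose — identifying a neighbourhood of $L_\xi$ with an open in the degree-$\xi$ quasi-map space of \cite{FM99} and importing the classical codimension formula — is circular, because \cite{FM99} is the cited source of the very theorem you are proving. If you want a self-contained argument, you would need to actually carry out the normal-direction count rigorously in the ind/pro-scheme $\DP$, which amounts to re-deriving the stratification results of \cite{FM99} from scratch. As written, the proposal identifies the right invariant and the right heuristic picture, but does not constitute a complete independent proof of either the orbit classification or the codimension formula.
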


\begin{cor} \label{orbits}
The set of $\Iw$-orbits in $\DP$ 
is in bijection with the set $W_{\af}^{\ge 0} = 
\bigl\{w t_{\xi} \mid w \in W,\,\xi \in Q^{\vee,+}\bigr\}$. 
\end{cor}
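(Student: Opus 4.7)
The plan is to lift Theorem~\ref{orbbQ}, which decomposes $\DP$ into $G\bra{z}$-orbits labeled by $Q^{\vee,+}$, to a finer $\Iw$-orbit decomposition by applying the finite Bruhat decomposition of $G$ inside each $G\bra{z}$-orbit. Throughout, I would work with the ``standard'' DP datum $p_{e}\in\DP$ whose DP coordinates are the highest-weight vectors $u^{\vpi_{i}}\in L(\vpi_{i})$, viewed as constant elements of $L(\vpi_{i})\bra{z}$; its $G\bra{z}$-stabilizer is readily identified with $HN\bra{z}$. For each $x\in W_{\af}$ with lift $\dot{x}\in N_{G\pra{z}}(H)$ taken from the commutative diagram in Section~\ref{subsec:geo}, I set $p_{x}:=\dot{x}\cdot p_{e}\in\RDP$. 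A direct weight-space computation using \eqref{eq:wtmu} shows that for $x=wt_{\xi}$ with $\xi\in Q^{\vee,+}$, the datum $p_{x}$ lies in $\FQ_{G}(\xi)\subset\DP$ in the sense of Definition~\ref{dfn:DP}, and in fact belongs to the $G\bra{z}$-orbit $\mathbb{O}_{\xi}$ of Theorem~\ref{orbbQ}.

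For fixed $\xi\in Q^{\vee,+}$, the next step is to show that $\mathbb{O}_{\xi}=\bigsqcup_{w\in W}\Iw\cdot p_{wt_{\xi}}$. Since $\mathbb{O}_{\xi}=G\bra{z}\cdot p_{t_{\xi}}$, the $\Iw$-orbits in $\mathbb{O}_{\xi}$ are parametrized by the double coset space $\Iw\backslash G\bra{z}/\mathrm{Stab}_{G\bra{z}}(p_{t_{\xi}})$. The stabilizer contains $H\bra{z}$ together with $G\bra{z}\cap\dot{t}_{\xi}N\pra{z}\dot{t}_{\xi}^{-1}$; using the fact that conjugation by $\dot{t}_{\xi}$ shifts the $\alpha$-root subgroup of $N\pra{z}$ by $z^{-\pair{\alpha}{\xi}}$ for each $\alpha\in\Delta^{+}$, the intersection with $G\bra{z}$ captures enough of the pro-unipotent direction that the quotient $G\bra{z}/\mathrm{Stab}_{G\bra{z}}(p_{t_{\xi}})$ surjects onto $G/B$ with pro-affine fibers. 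Applying the finite Bruhat decomposition $G=\bigsqcup_{w\in W}B\dot{w}B$ and lifting through $\ev_{0}$ then gives $G\bra{z}=\bigsqcup_{w\in W}\Iw\dot{w}\cdot\mathrm{Stab}_{G\bra{z}}(p_{t_{\xi}})$, which yields the claimed decomposition with representatives $p_{wt_{\xi}}=\dot{w}\cdot p_{t_{\xi}}$.

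Combining the above, the assignment $W_{\af}^{\ge 0}\ni wt_{\xi}\mapsto\Iw\cdot p_{wt_{\xi}}$ furnishes the desired bijection: surjectivity follows from the previous step together with Theorem~\ref{orbbQ}, and injectivity follows since distinct $\xi\in Q^{\vee,+}$ yield disjoint $G\bra{z}$-orbits (\emph{a fortiori} disjoint $\Iw$-orbits), while within a fixed $\mathbb{O}_{\xi}$ distinct $w\in W$ yield disjoint $\Iw$-orbits by the Bruhat decomposition just used. The main obstacle will be the precise computation of $\mathrm{Stab}_{G\bra{z}}(p_{t_{\xi}})$ for $\xi\ne 0$: since $\dot{t}_{\xi}\in G\pra{z}\setminus G\bra{z}$, this stabilizer is not simply a conjugate of $HN\bra{z}$ inside $G\bra{z}$, and the careful matching of $z$-powers in each root direction---which is exactly what forces the positivity condition $\xi\in Q^{\vee,+}$ on the labels---is the technical heart of the argument.
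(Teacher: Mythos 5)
Your overall strategy---decomposing each $G\bra{z}$-orbit of Theorem~\ref{orbbQ} into $\Iw$-orbits by lifting the finite Bruhat decomposition through $\ev_{0}$---is exactly what the paper does in one line. However, your stabilizer analysis contains a genuine error. The claim that $\mathrm{Stab}_{G\bra{z}}(p_{t_\xi})$ contains $H\bra{z}$ is false: a non-constant loop $h(z)\in H\bra{z}$ scales $z^{\pair{\vpi_i}{\xi}}u^{\vpi_i}$ by the non-constant unit $\vpi_i(h(z))$ and hence moves the $\BC$-line $L^{\vpi_i}$; only the constant torus $H$ stabilizes. In fact $\mathrm{Stab}_{G\bra{z}}(p_{t_\xi})=HN\bra{z}$ with \emph{no} dependence on $\xi$, because $\dot{t}_\xi$ normalizes both $H$ and $N\pra{z}$---the shift by $z^{-\pair{\alpha}{\xi}}$ in each root subgroup that you invoke is absorbed by the Laurent-series coefficients, so $\dot{t}_\xi N\pra{z}\dot{t}_\xi^{-1}=N\pra{z}$ as groups, and intersecting with $G\bra{z}$ gives back $N\bra{z}$. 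Consequently your closing remark misidentifies the ``technical heart'': the positivity $\xi\in Q^{\vee,+}$ does not emerge from any stabilizer computation but is already imposed by Theorem~\ref{orbbQ}; it simply records that $z^{\pair{\vpi_i}{\xi}}u^{\vpi_i}$ is a formal power series (not merely a Laurent series) precisely when $\pair{\vpi_i}{\xi}\ge 0$ for all $i$.

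With the stabilizer corrected, the step that does deserve a word---which both you and the paper leave implicit---is why $\Iw\dot{w}\,HN\bra{z}=\Iw\dot{w}\Iw$ for each $w\in W$, i.e.\ why the preimage of each Bruhat cell under $G\bra{z}/HN\bra{z}\to G\bra{z}/\Iw\cong G/B$ is a single $\Iw$-orbit. This holds because $\dot{w}$ is a \emph{constant} element of $G$, so conjugation by $\dot{w}$ preserves the pro-unipotent radical $\ker(\ev_{0})\subset\Iw$; thus $\ker(\ev_{0})\subset\Iw\cap\dot{w}^{-1}\Iw\dot{w}$, and since $\ev_{0}(HN\bra{z})=B$ we get $\Iw=\ker(\ev_{0})\cdot HN\bra{z}\subset(\Iw\cap\dot{w}^{-1}\Iw\dot{w})\cdot HN\bra{z}$, which yields the claimed identity. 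This argument would fail for a non-constant $\dot{w}$, which is precisely why only the finite Weyl group $W$---and not all of $W_{\af}$---appears at this stage of the parametrization.
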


\begin{proof}
Apply (the consequence of) the Bruhat decomposition 
$G\bra{z} = \bigsqcup_{w \in W} \Iw \dot{w} \Iw$ 
to each $G\bra{z}$-orbit in $\DP$ described in Theorem~\ref{orbbQ}.
\end{proof}

For each $x = w t_{\xi} \in W_{\af}^{\ge 0}$, 
we denote by $\bO(x)$ the $\Iw$-orbit of $\DP$ that 
contains a unique $(H \times \Gm)$-fixed point corresponding to 
$\{z^{ \pair{\vpi_{i}}{\xi} } v_{w w_{\circ} \vpi_{i}} \} _{i \in I}$ 
(see Lemma~\ref{defbQ}), 
where for each $\lambda \in P^{+}$ and $w \in W$, 
we take and fix a nonzero vector $v_{w\lambda}$ 
of weight $w\lambda$ in $L(\lambda)$; 
note that the codimension of $\bO(x) \subset \ol{\bO(e)}$ is given by $\sell(x)$. 
We set $\DP ( x ) := \overline{\bO ( x )} \subset \DP$. 
For $x,\,y \in W_{\af}^{\ge 0}$, we have 
\begin{align*}
\DP (x) \subset \DP ( y ) 
 & \iff x \sige y \quad \text{(\cite[Sect.~5.1]{FFKM99})} \\
 & \iff x w_{\circ} \sile  yw_{\circ} \quad 
   \text{(see \cite[Lecture 13]{Pet97})}.
\end{align*}
Also, we have $\DP(e) = \DP$ by inspection; in fact, 
$e \in W_{\af}^{\ge 0}$ is the minimum element 
in the semi-infinite Bruhat order restricted to $W_{\af}^{\ge 0}$.

For $\xi \in Q^{\vee,+}$ and $x \in W_{\af}^{\ge 0}$, 
we set $\FQ_G (x,\,\xi) := \FQ_G (\xi) \cap \DP(x)$, 
and for $x \in W_{\af}^{\ge 0}$, we set 
$\FQ_G (x) := \bigcup_{\xi \in Q^{\vee,+}} \FQ_G (x,\,\xi)$.

For each $\xi \in Q^{\vee,+}$, we have an embedding
\begin{equation*}
\imath_{\xi} : 
 \DP \ni 
 \{ L^{\lambda} \}_{\lambda \in P^{+}} 
 \bigl(= \{ u^{\lambda} \}_{\lambda \in P^{+}} \bigr) \mapsto 
 \{ z ^{ \pair{\lambda}{\xi} } u^{\lambda} \}_{\lambda \in P^{+}} \in \DP.
\end{equation*}
Thus we have its direct limit $\RDP \cong \varinjlim_{\xi} \DP$, 
on which we have an action of $G\pra{z}$.
By its construction, the embedding $\imath_{\xi}$ is 
$G \bra{z}$-equivariant, and sends 
the $\Iw$-orbit $\bO(x)$ to $\bO(xt_{\xi})$.

Now, by Lemma~\ref{defbQ}, we have a $G \bra{z}$-equivariant line bundle 
$\CO_{\DP}(\vpi_{i})$ obtained by the pullback of 
the $i$-th $\CO(1)$ through \eqref{Pemb}. 
For $\lambda = \sum_{i \in I} m_{i} \vpi_{i} \in P$, 
we set $\CO_{\DP}(\lambda) := 
\bigotimes_{i \in I} \CO_{\DP}(\vpi_{i})^{\otimes m_{i}}$ 
(as a tensor product of line bundles). 
Also, for each $x \in W_{\af}^{\ge 0}$, 
we have the restriction $\CO_{\DP(x)}(\lambda)$ 
obtained through \eqref{Pemb}, which is $\ti{\Iw}$-equivariant. 
Similarly, we have $( B \times \Gm )$-equivariant line bundles 
$\CO_{\FQ_G(x,\xi)} ( \lambda )$ and $\CO_{\FQ_G(x)} ( \lambda )$ 
by further pullbacks (the latter is $G [z]$-equivariant 
whenever $x = t_{\zeta}$ for some $\zeta \in Q^{\vee,+}$); we set
%
%
\begin{equation} \label{eq:Hn}
H^{n} ( \FQ_G(x),\, \CO_{\FQ_G(x)}(\lambda) ) := 
 \varprojlim _{\xi} H ^{n} ( \FQ_G(x,\,\xi),\,\CO_{\FQ_G(x,\,\xi)}(\lambda) )
 \quad \text{for $n \in \BZ_{\ge 0}$}. 
\end{equation}

Let $\lambda \in P^{+}$. 
As explained in \cite[Theorem 1.6]{Kat16}, 
the restricted dual of the Demazure submodule $V_{e}^{-} (- w_{\circ} \lambda )$ 
(see \eqref{eq:dem}) of the extremal weight 
$U_{\q}(\Fg_{\af})$-module $V(- w_{\circ} \lambda )$ 
of extremal weight $- w_{\circ} \lambda$ gives rise 
to an integrable $\Fg [z]$-module (by taking the classical limit $\q \to 1$), 
called the global Weyl module; we denote it by $W(\lambda)$. 
Here we note that global Weyl modules carry natural gradings 
arising from the dilation of the $z$-variable. 
%
%
%
\begin{thm}[{\cite[Proposition~5.1]{BF14c}}] \label{BFmain}
For $\lambda \in P$, we have
\begin{equation*}
H^{0} ( \cDP, \, \CO_{\DP}(\lambda) ) \cong 
\begin{cases}  
 W(\lambda)^{\ast} & \text{\rm if $\lambda \in P^{+}$}, \\[1mm]
 \{ 0 \} & \text{\rm otherwise}, 
\end{cases}
\end{equation*}
where $\cDP$ denotes the open dense $G\bra{z}$-orbit in $\DP$.
\end{thm}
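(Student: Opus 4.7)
The plan is to establish a Borel--Weil-type description of global sections on $\cDP$ by realizing it as a homogeneous space for $G\bra{z}$. By Theorem~\ref{orbbQ} the open dense orbit corresponds to $\xi = 0 \in Q^{\vee,+}$; its natural basepoint is the constant DP datum $L_{0} := \{ \BC v_{\vpi_{i}} \}_{i \in I}$ (highest-weight lines with constant $z$-coefficient). A direct computation using the defining Pl\"ucker-type equations of $\DP$ inside $\prod_{i \in I} \BP(L(\vpi_{i})\bra{z})$ from Lemma~\ref{defbQ} shows that the stabilizer of $L_{0}$ in $G\bra{z}$ equals $H \cdot N\bra{z}$: indeed, $h(z)n(z) \in H\bra{z}N\bra{z}$ fixes every $\BC v_{\vpi_{i}}$ as a $\BC$-line (not merely a $\BC\bra{z}$-line) iff each $\vpi_{i}(h(z))$ is $z$-constant, forcing $h(z) \in H$. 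This yields a $G\bra{z}$-equivariant isomorphism $\cDP \cong G\bra{z}/HN\bra{z}$, and $\CO_{\DP}(\lambda)|_{\cDP}$ is the line bundle associated to the character of $HN\bra{z}$ on which $H$ acts by $-\lambda$ and $N\bra{z}$ acts trivially.

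Consequently, $H^{0}(\cDP, \CO_{\DP}(\lambda))$ is identified with the space of $\Gm$-finite, $(HN\bra{z},-\lambda)$-equivariant regular functions on $G\bra{z}$, interpreted with the pro-scheme structure inherited from Lemma~\ref{defbQ}. By the standard algebraic Peter--Weyl-type dualization, this space is the restricted dual of a cyclic $\Fg[z]$-module $M(\lambda)$ generated by a vector $u_{\lambda}$ of $\Fh$-weight $\lambda$ annihilated by $\Fn[z]$. If $\lambda \notin P^{+}$, then $\pair{\lambda}{\alpha_{i}^{\vee}} < 0$ for some $i \in I$; but the line bundle $\CO_{\DP}(\lambda)$ extends equivariantly across the rank-one parahoric $\Iw(i) \supset \Iw$ with $\Iw(i)/\Iw \cong \BP^{1}$, forcing $\mathfrak{sl}_{2}(i)$-integrability on $M(\lambda)$, which is incompatible with $u_{\lambda} \ne 0$. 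Hence $M(\lambda) = 0$ and $H^{0}(\cDP,\CO_{\DP}(\lambda)) = 0$ in this case.

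For $\lambda \in P^{+}$, the same $\Iw(i)$-integrability for all $i \in I$ together with the $\Fn[z]$-annihilation and weight condition on $u_{\lambda}$ yields exactly the defining relations of the global Weyl module $W(\lambda)$, producing a surjection $W(\lambda) \twoheadrightarrow M(\lambda)$. The main obstacle is the reverse direction: one must show this surjection is an isomorphism, equivalently that $M(\lambda)^{\ast}$ is not a proper quotient of $W(\lambda)^{\ast}$. My plan is to compare graded characters. On the one hand, $\gch W(\lambda)$ is computed via the realization $W(\lambda) \cong V_{e}^{-}(-w_{\circ}\lambda)^{\ast}$, whose character is given by \eqref{eq:gch2} applied to $x = e$. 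On the other hand, the graded character of $M(\lambda)^{\ast}$ can be read off from the Iwahori stratification of $\cDP$ (each $\Iw$-orbit in the open $G\bra{z}$-orbit contributes one weight, indexed by $W$ after quotienting by $N\bra{z}$) together with a standard $\BC[Q^{\vee,+}]$-type combinatorial sum coming from the $z$-grading on $G\bra{z}/HN\bra{z}$. Matching these two expressions forces the surjection $W(\lambda) \twoheadrightarrow M(\lambda)$ to be an isomorphism. The delicate point throughout is ensuring that the notion of ``section'' used here is robust enough that every functional on each graded piece of $W(\lambda)$ is realized, which in turn relies on the fact that the $\Gm$-grading on $\cDP$ induces a decomposition whose graded pieces are finite-dimensional $G$-modules, matching the PBW-type finiteness of $W(\lambda)$ graded by loop rotation.
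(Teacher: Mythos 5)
First, be aware that the paper does not prove Theorem~\ref{BFmain} at all: it is imported verbatim from \cite[Proposition~5.1]{BF14c} and used as a black box (in Proposition~\ref{H-range}, Theorem~\ref{normal}, Theorem~\ref{coh-e}), so there is no internal argument to measure yours against. That said, your outline has the right general shape for an actual proof: the identification of $\cDP$ with $G\bra{z}/HN\bra{z}$ via the stabilizer computation at the constant datum, the Borel--Weil vanishing for $\lambda \notin P^{+}$ by restricting a section to the $\mathfrak{sl}_{2}(i)$-orbit $\BP^{1}$ of the basepoint, and the surjection $W(\lambda) \twoheadrightarrow M(\lambda)$ from the universal property of the global Weyl module. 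The vanishing step is essentially sound.

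The genuine gap is at the decisive step, the injectivity of $W(\lambda) \to M(\lambda)$ (equivalently, that \emph{every} element of $W(\lambda)^{\ast}$ is realized by a section). You propose to compute $\gch M(\lambda)^{\ast}$ independently ``from the Iwahori stratification of $\cDP$,'' with ``each $\Iw$-orbit contributing one weight, indexed by $W$.'' This is not a valid computation. The $\Iw$-orbits in $\cDP$ are infinite-dimensional pro-affine strata, and global sections of a line bundle on a non-proper scheme of infinite type are not obtained by summing contributions over a locally closed stratification; a section is a global object, and no localization or Euler-characteristic device is available without properness and without the higher cohomology vanishing that this paper establishes only much later (Theorem~\ref{coh-e}) --- and whose proof \emph{uses} Theorem~\ref{BFmain} as an input, so your route is circular as well as incomplete. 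Concretely, $\cDP$ fibers over $G/B$ with infinite-dimensional pro-affine fibers; if the character of $H^{0}$ could be read off stratum by stratum one would obtain something like $\gch H^{0}(G/B,\CO(\lambda))$ times the character of functions on a fiber, which is strictly larger than $\gch W(\lambda)^{\ast}$. The whole content of the theorem is the global cancellation that cuts this down, and your reduction returns you to exactly that question. Two further points are glossed over even before this: (i) cyclicity of $M(\lambda)$ over $U(\Fg[z])$, which you need in order to invoke the universal property of $W(\lambda)$ at all, is not ``standard Peter--Weyl''; it requires showing a section is determined by its Taylor jets at the basepoint, i.e.\ an honest density argument on the pro-scheme; and (ii) the theorem asserts $H^{0} \cong W(\lambda)^{\ast}$ (the restricted dual), whereas for instance $H^{0}(\FQ_{G},\CO(\lambda)) = W(\lambda)^{\vee}$ is the full dual (see Claim~\ref{H-inj}); so the passage to $\Gm$-finite vectors is part of what must be proved about $\cDP$, not a convention you may impose.
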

%
%
\begin{thm}[{\cite[Theorem~3.1]{BF14a} and \cite[Theorem~4.12]{Kat16}}] \label{Kmain}
For each $\lambda \in P$ and $x \in W_{\af}^{\ge 0}$, 
we have
\begin{equation*}
\gch \dot{H}^{n}( \FQ_G ( x ),\,\CO_{\FQ_G(x)} ( \lambda ) ) = 
\begin{cases}
\gch V_{x}^{-}( - w_{\circ} \lambda ) 
  & \text{\rm if $\lambda \in P^{+}$ and $n=0$}, \\
0 & \text{\rm otherwise},
\end{cases}
\end{equation*}
where $\dot{H}^{n}( \FQ_G ( x ),\,\CO_{\FQ_G(x)} ( \lambda ) )$ 
denotes the space of $\Gm$-finite vectors in \eqref{eq:Hn}.
\end{thm}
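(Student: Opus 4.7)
The plan is to deduce Theorem~\ref{Kmain} from the cohomology computation on the formal power series model $\DP(x)$ (Theorem~\ref{fBWB}) by interpreting the $\Gm$-finite part of the inverse limit \eqref{eq:Hn} as cohomology on $\DP(x)$. Since $\FQ_G(x,\xi) = \FQ_G(\xi) \cap \DP(x)$ and the direct system $\FQ_G(x,\xi) \hookrightarrow \FQ_G(x,\xi+\eta)$ ($\eta \in Q^{\vee,+}$) exhausts a natural finite-type ``affine chart'' of $\DP(x)$, my strategy has three steps: (i) establish higher-cohomology vanishing and compute $H^{0}$ on each finite-type $\FQ_G(x,\xi)$ via a semi-infinite Bott-Samelson-Demazure-Hansen (BSDH) resolution; (ii) pass to the inverse limit, where the $\Gm$-finiteness matches the $q$-degree bound \eqref{eq:gch1}; and (iii) identify the character with $\gch V_{x}^{-}(-w_{\circ}\lambda)$ inductively on $\sell(x)$.

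For (i), in the case $\lambda \notin P^{+}$ both the vanishing of $\dot H^{0}$ (no suitable $H$-weight exists in $\lambda - Q^{+}$) and the vanishing of higher $\dot H^{n}$ follow from a weight-bound argument combined with the Bott step in the BSDH tower below. For $\lambda \in P^{+}$, I build—from the semi-infinite BSDH tower introduced in Section~\ref{sec:SiSch}—a birational morphism $Z(x,\xi) \twoheadrightarrow \FQ_G(x,\xi)$ assembled from iterated $\BP^{1}$-bundles $\Iw(i)/\Iw$ corresponding to a reduced expression for $x$ (up to the right $W$-factor). Standard Demazure-type arguments give higher-cohomology vanishing on $Z(x,\xi)$ for dominant $\lambda$, and the pushforward—combined with the normality/rationality of singularities proved earlier in Section~\ref{sec:SiSch}—transfers this vanishing to $\FQ_G(x,\xi)$, while simultaneously realizing $H^{0}$ as an iterated Demazure operator $D_{i}$ applied to the highest-weight line.

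For (iii), the base case $x = e$ follows from Theorem~\ref{BFmain} extended to $\FQ_G(e)$ by the $\xi \to \infty$ limit on $\Gm$-finite vectors: one gets $\dot H^{0}(\FQ_G(e), \CO_{\FQ_G(e)}(\lambda)) \cong W(\lambda)^{*}$, and by construction $W(\lambda)$ is the classical specialization of the restricted dual of $V_{e}^{-}(-w_{\circ}\lambda)$, so graded characters coincide. The inductive step $x \mapsto s_{i} \ast x$ corresponds geometrically to adjoining one $\Iw(i)/\Iw$-fibration in the BSDH tower, and combinatorially to the Demazure operator $D_{i}$, which by the crystal-theoretic Demazure recursion underlying Theorem~\ref{thm:isom} sends $\gch V_{s_{i}x}^{-}(-w_{\circ}\lambda)$ to $\gch V_{x}^{-}(-w_{\circ}\lambda)$.

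The main obstacle I anticipate lies in the pushforward step of (i): unlike in the classical setting, $\FQ_G(x,\xi)$ is finite-dimensional yet non-proper, and its boundary strata (indexed by $W_{\af}^{\ge 0}$-elements of larger $\sell$) may a priori contribute to higher direct images under the BSDH map. The $\Gm$-finiteness built into the definition of $\dot H^{n}$ is essential for suppressing these ``tail'' contributions at infinity, and matching this analytic condition precisely to the combinatorial $q$-degree bound of \eqref{eq:gch1} requires the full strength of the normality/rational-singularity analysis developed for $\DP(x)$ to be transplanted to the quasi-map model; this is the real technical heart of the argument, and where the comparison with Theorem~\ref{fBWB} must be made rigorous.
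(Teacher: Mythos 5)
The paper does not prove this statement at all: Theorem~\ref{Kmain} is imported wholesale from \cite[Theorem~3.1]{BF14a} and \cite[Theorem~4.12]{Kat16}, and the only argument supplied in the text is Remark~\ref{Kmain-rem}, which reduces the case of a general $x = wt_{\xi} \in W_{\af}^{\ge 0}$ to the case $x \in W$ treated in \cite{Kat16} via the isomorphism $\imath_{\xi}$ (with the character shift supplied by Proposition~\ref{prop:gch-tx}). Your proposal instead tries to rederive the result inside the paper, and as written it is circular: you propose to deduce it from Theorem~\ref{fBWB}, but the paper's proofs of Theorem~\ref{normal}, Theorem~\ref{coh-e}, and especially Corollary~\ref{coh} (which is the general-$x$ form of Theorem~\ref{fBWB}) themselves rest on Theorem~\ref{BFmain}, on \cite[Proof of Theorem 3.1]{BF14c}, and explicitly on ``the same argument as in \cite[Theorem 4.7]{Kat16} (see Theorem~\ref{Kmain})''. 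So the $\DP(x)$-cohomology you want to start from is downstream of the statement you are trying to prove.

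Even setting the circularity aside and reading your proposal as an independent proof in the spirit of \cite{BF14a,Kat16}, the decisive step is missing. The identification of $\varprojlim_{\xi} H^{n}(\FQ_G(x,\xi),\CO(\lambda))^{\Gm\text{-fin}}$ with $H^{n}(\DP(x),\CO_{\DP(x)}(\lambda))$ is not a formal limit argument: $\FQ_G(x,\xi)$ is only a dense finite-type piece of $\DP(x)$, the restriction maps need not be surjective or injective a priori, and the paper's own comparison between the two models (Claim~\ref{H-inj}, the projection $\pr$ to $\prod_i \BP(W(\vpi_i)_{\le -n})$, Serre vanishing on $\FQ_G(\xi)$, and density via Theorem~\ref{w-approx}) works only in one direction, for $n=0$, and after replacing $\lambda$ by a sufficiently large multiple. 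You correctly flag this as ``the real technical heart of the argument,'' but flagging it is not the same as resolving it; without it, neither the higher vanishing on $\FQ_G(x,\xi)$ transported from a BSDH tower (which the paper constructs only for $\DP(x)$, not for the quasi-map spaces) nor the passage to the limit in (ii) is established. The honest route here is the paper's: quote \cite{BF14a} and \cite{Kat16} for $x \in W$, and then observe, as in Remark~\ref{Kmain-rem}, that $\imath_{\xi}$ identifies $\FQ_G(x)$ with $\FQ_G(xt_{\xi})$ up to a degree twist matching $q^{-\pair{\lambda}{\xi}}\gch V^{-}_{x}(-w_{\circ}\lambda) = \gch V^{-}_{xt_{\xi}}(-w_{\circ}\lambda)$.
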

%
%
\begin{rem} \label{Kmain-rem}
The assertion of \cite[Theorem 4.12]{Kat16} is only for $x \in W$. 
The assertion of the form above easily follows from the isomorphism 
$\DP ( x ) \cong \DP ( x t_{\xi} )$ 
for $x \in W$ and $\xi \in Q^{\vee, +}$, 
which is obtained by means of $\imath_{\xi}$.
\end{rem}
%
%
\begin{lem} \label{vample}
For each $\lambda \in P^{++}:=\sum_{i \in I} \BZ_{> 0} \vpi_{i}$, 
the sheaf $\CO_{\DP}(\lambda)$ is very ample.
\end{lem}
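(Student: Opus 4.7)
The plan is to exhibit $\CO_{\DP}(\lambda)$ as the pullback of $\CO(1)$ along a closed immersion of $\DP$ into a single projective space. Writing $\lambda = \sum_{i \in I} m_i \vpi_i$ with all $m_i > 0$, I set $M_\lambda := \bigotimes_{i \in I} L(\vpi_i)^{\otimes m_i}$ (tensor product over $\BC$) and define
$$\phi_\lambda : \DP \longrightarrow \BP(M_\lambda\bra{z})$$
as the composition of the closed immersion $\DP \hookrightarrow \prod_{i \in I} \BP(L(\vpi_i)\bra{z})$ from Lemma~\ref{defbQ} with the Segre-Veronese-type morphism
$$\sigma_\lambda : \prod_{i \in I} \BP(L(\vpi_i)\bra{z}) \longrightarrow \BP(M_\lambda\bra{z}), \qquad ([u^i])_{i \in I} \longmapsto \Bigl[\bigotimes_{i \in I} (u^i)^{\otimes m_i}\Bigr],$$
where all tensor products on the right are taken over $\BC\bra{z}$ and identified with elements of $M_\lambda\bra{z}$ via the canonical isomorphism $V\bra{z} \otimes_{\BC\bra{z}} W\bra{z} \cong (V\otimes_\BC W)\bra{z}$. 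By construction, $\phi_\lambda^*\CO(1) \cong \bigotimes_{i \in I} \CO_{\DP}(\vpi_i)^{\otimes m_i} = \CO_{\DP}(\lambda)$, so the lemma reduces to verifying that $\phi_\lambda$ is a closed immersion.

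For the first factor of the composition this is precisely Lemma~\ref{defbQ}. For the Segre-Veronese factor $\sigma_\lambda$, I would argue by truncation: setting $V_i^N := L(\vpi_i)\bra{z}/z^{N+1} L(\vpi_i)\bra{z}$, one has $\BP(L(\vpi_i)\bra{z}) = \varprojlim_N \BP(V_i^N)$ in the pro-projective convention of Section~\ref{subsec:geo}, and at each finite level the classical Segre-Veronese map $\prod_i \BP(V_i^N) \hookrightarrow \BP\bigl(\bigotimes_i (V_i^N)^{\otimes m_i}\bigr)$ is a closed immersion of ordinary projective varieties, precisely because every $m_i$ is strictly positive. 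Passing to the inverse limit yields $\sigma_\lambda$ as a closed immersion of pro-projective schemes, and composing with the closed immersion of Lemma~\ref{defbQ} gives the desired $\phi_\lambda$. The only nontrivial point is this last bookkeeping step---checking that the finite-level defining equations of the Segre-Veronese image are compatible with the truncation maps so that their inverse limit really defines a closed immersion of pro-schemes in the sense of Section~\ref{subsec:geo}. This is routine, but it is the only place where the infinite-dimensional nature of $\DP$ must be confronted, and so constitutes the main obstacle.
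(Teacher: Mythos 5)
The core idea---reduce to a single projective space via a Segre--Veronese-type morphism---is the natural route, but the map $\sigma_\lambda$ as you have defined it is not the Segre--Veronese morphism, and this introduces a genuine error. You take the tensor products over $\BC\bra{z}$ so as to land in $\BP(M_\lambda\bra{z})$, but the classical Segre--Veronese map, and the closed-immersion property you invoke for it at each finite level, uses tensor products over the base field $\BC$, landing in the much larger space $\BP\bigl(\bigotimes_{i\in I,\,\BC}\,(L(\vpi_i)\bra{z})^{\otimes m_i}\bigr)$. After tensoring over $\BC\bra{z}$ you are composing with the linear projection induced by the surjection $\bigotimes_{\BC}(L(\vpi_i)\bra{z})^{\otimes m_i}\twoheadrightarrow M_\lambda\bra{z}$, and that projection destroys injectivity. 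Concretely, suppose $|I|\ge 2$ and fix DP coordinates $(u^i)_{i\in I}$ for a point of $\DP$; then $(z^{m_2}u^1,u^2,u^3,\dots)$ and $(u^1,z^{m_1}u^2,u^3,\dots)$ are again DP coordinates for two \emph{distinct} points of $\DP$ (the lines $\BC\cdot z^{m_2}u^1$ and $\BC\cdot u^1$ differ), yet both are sent by your $\sigma_\lambda$ to $\bigl[z^{m_1 m_2}\bigotimes_{\BC\bra{z}}(u^i)^{\otimes m_i}\bigr]$. So $\sigma_\lambda|_{\DP}$ is not even injective on closed points; it is certainly not a closed immersion, and its truncations to finite level are not the classical Segre--Veronese maps.

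The fix is to keep the genuine Segre--Veronese embedding, i.e.\ take all tensor products over $\BC$ and land in $\BP\bigl(\bigotimes_{i,\,\BC} S^{m_i}(L(\vpi_i)\bra{z})\bigr)$ (or in $\bigotimes_{\BC}(L(\vpi_i)\bra{z})^{\otimes m_i}$, since the Veronese into the full tensor power factors through a closed immersion of the symmetric power). The pullback of $\CO(1)$ along this map to $\prod_i\BP(L(\vpi_i)\bra{z})$ is still $\bigotimes_i\CO(m_i)$, and restricting to $\DP$ still yields $\CO_{\DP}(\lambda)$, so the conclusion is unchanged. With that correction your truncation/pro-scheme bookkeeping step becomes the real content, exactly as you flagged: one shows compatibility of the finite-level defining equations of the Segre--Veronese image with the truncation morphisms and passes to the limit. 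For comparison, the paper does not spell out any of this; it simply cites the argument of Corollary~2.7 of \cite{Kat16}, so your (corrected) approach would be supplying a proof the paper leaves to a reference.
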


\begin{proof}
The proof is exactly the same as 
that of \cite[Corollary 2.7]{Kat16}.
\end{proof}
%
%
\subsection{Bott-Samelson-Demazure-Hansen towers.}
\label{subsec:BSDH}

In this subsection, we construct two kind of (pro-)schemes of 
infinite type, which can be thought of as ``resolutions" of 
$\DP(x)$ for $x \in W_{\af}^{\ge 0}$, and study their properties.
%
%
\begin{lem}[{\cite[(2.4.1)]{Mac03}}] \label{lem:Mac}
If $\xi \in Q^{\vee}$ is a strictly antidominant coweight, i.e., 
$\pair{\alpha_{i}}{\xi} < 0$ for all $i \in I$, 
then $\ell(t_{\xi})=-2\pair{\rho}{\xi}$, 
and $\ell(wt_{\xi})=\ell(t_{\xi}) - \ell(w)$ for all $w \in W${\rm;}
hence we have $\ell(wt_{\xi})=-\sell(wt_{\xi})$ 
for all $w \in W$. 
\end{lem}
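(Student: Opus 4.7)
The plan is to prove both length identities directly from the explicit description of the affine root system action and the standard formula $\ell(x)=|\mathrm{Inv}(x)|$ for $x\in W_{\af}$, where $\mathrm{Inv}(x):=\{\beta\in\prr\mid x\beta\in-\prr\}$. Then the third claim is a one-line comparison with Definition~\ref{dfn:sell}.

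First I would unwind how $wt_{\xi}$ acts on a typical positive real root $\beta=\alpha+n\delta$. Since $\langle\delta,\xi\rangle=0$, formula \eqref{eq:wtmu} (applied to $\alpha$, which has level $0$) gives
\[
(wt_{\xi})(\alpha+n\delta)=w\alpha+\bigl(n-\pair{\alpha}{\xi}\bigr)\delta .
\]
Strict antidominance means $\pair{\alpha}{\xi}<0$ for every $\alpha\in\Delta^{+}$ and $\pair{\alpha}{\xi}>0$ for every $\alpha\in-\Delta^{+}$. Combined with the description $\prr=\Delta^{+}\sqcup\{\alpha+n\delta\mid\alpha\in\Delta,\,n>0\}$, this immediately rules out inversions coming from $\alpha\in\Delta^{+}$: in that case $n\ge 0$ and $n-\pair{\alpha}{\xi}>0$, so the image lies in $\prr$.

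Thus every inversion has the form $\beta=-\alpha'+n\delta$ with $\alpha'\in\Delta^{+}$ and $n\ge 1$. For such a $\beta$, the $\delta$-coefficient of the image is $n-|\pair{\alpha'}{\xi}|$, and $(wt_{\xi})\beta\in-\prr$ exactly when either this coefficient is strictly negative, or it vanishes and $-w\alpha'<0$, i.e.\ $w\alpha'\in\Delta^{+}$. Fixing $\alpha'$ and counting over $n$ gives
\[
\bigl(|\pair{\alpha'}{\xi}|-1\bigr)+\bigl[\,w\alpha'\in\Delta^{+}\,\bigr]
\]
inversions, where $[\,\cdot\,]$ is the Iverson bracket. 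Summing over $\alpha'\in\Delta^{+}$ and using $\sum_{\alpha'\in\Delta^{+}}|\pair{\alpha'}{\xi}|=-\pair{2\rho}{\xi}$ together with $|\{\alpha'\in\Delta^{+}\mid w\alpha'\in\Delta^{+}\}|=|\Delta^{+}|-\ell(w)$ yields
\[
\ell(wt_{\xi})=-2\pair{\rho}{\xi}-\ell(w).
\]

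Specializing to $w=e$ gives $\ell(t_{\xi})=-2\pair{\rho}{\xi}$, which in turn gives $\ell(wt_{\xi})=\ell(t_{\xi})-\ell(w)$. Finally, by Definition~\ref{dfn:sell} we have $\sell(wt_{\xi})=\ell(w)+2\pair{\rho}{\xi}$, whence $\ell(wt_{\xi})=-\sell(wt_{\xi})$. The only step requiring care is the case analysis at the boundary $n=|\pair{\alpha'}{\xi}|$, where strict antidominance (so that $n\ge 1$ automatically) is essential for the count to match cleanly; none of the other steps involve anything beyond the affine root action and the inversion-set definition of length.
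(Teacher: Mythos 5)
Your argument is correct. The paper itself offers no proof of this lemma --- it is quoted directly from Macdonald \cite[(2.4.1)]{Mac03} --- so you have supplied a self-contained verification of the cited fact. Your route is the standard one underlying Macdonald's formula: compute $\ell(x)=|\{\beta\in\prr\mid x\beta\in-\prr\}|$ by tracking the $\delta$-coefficient of $(wt_{\xi})(\alpha+n\delta)=w\alpha+(n-\pair{\alpha}{\xi})\delta$. All the case-splitting checks out: roots $\alpha+n\delta$ with $\alpha\in\Delta^{+}$ contribute nothing because strict antidominance forces $n-\pair{\alpha}{\xi}\ge 1$; for $\beta=-\alpha'+n\delta$ the count $(|\pair{\alpha'}{\xi}|-1)+[\,w\alpha'\in\Delta^{+}\,]$ is right, including the boundary value $n=|\pair{\alpha'}{\xi}|\ge 1$, and the two summations $\sum_{\alpha'\in\Delta^{+}}|\pair{\alpha'}{\xi}|=-2\pair{\rho}{\xi}$ and $\#\{\alpha'\in\Delta^{+}\mid w\alpha'\in\Delta^{+}\}=|\Delta^{+}|-\ell(w)$ give exactly $\ell(wt_{\xi})=-2\pair{\rho}{\xi}-\ell(w)$. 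The final comparison with Definition~\ref{dfn:sell} is immediate. In effect you have reproved the antidominant specialization of the Iwahori--Matsumoto length formula, which is precisely the content of \cite[(2.4.1)]{Mac03}.
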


%
\begin{lem} \label{tweakw}
\mbox{}
\begin{enu}
\item 
$\sell ( x t_{- 2 m \rho^{\vee}} ) + 
 \sell ( t _{2 m \rho^{\vee}} ) = \sell ( x )$ 
for all $x \in W_{\af}$ and $m \in \BZ$. 

\item 
There exists $m_{0} \ge 0$ such that 
$-\sell ( x t_{- 2 m \rho^{\vee}} ) = 
 \ell ( x t_{- 2 m \rho^{\vee}} )$ 
for all $m \ge m_{0}$.
\end{enu}
\end{lem}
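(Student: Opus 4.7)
The plan is to reduce both parts to direct computations using Definition~\ref{dfn:sell}, with Lemma~\ref{lem:Mac} providing the extra input needed for part~(2). I would begin by writing $x = wt_{\xi}$ with $w \in W$ and $\xi \in Q^{\vee}$, so that $xt_{-2m\rho^{\vee}} = wt_{\xi - 2m\rho^{\vee}}$ for every $m \in \BZ$.

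For part~(1), the definition of $\sell$ gives
\begin{equation*}
\sell(xt_{-2m\rho^{\vee}}) = \ell(w) + 2\pair{\rho}{\xi - 2m\rho^{\vee}} = \sell(x) - 4m\pair{\rho}{\rho^{\vee}},
\end{equation*}
while applying the same definition with $w = e$ and $\xi = 2m\rho^{\vee}$ yields $\sell(t_{2m\rho^{\vee}}) = 4m\pair{\rho}{\rho^{\vee}}$. Adding the two expressions produces $\sell(x)$, as required. Note that this works for all $m \in \BZ$, since no positivity of $m$ is invoked.

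For part~(2), the key observation is that $\rho^{\vee}$ is strictly dominant, because $\pair{\alpha_{i}}{\rho^{\vee}} = 1 > 0$ for every $i \in I$. Consequently, as soon as $2m > \pair{\alpha_{i}}{\xi}$ for every $i \in I$, the coweight $\zeta := \xi - 2m\rho^{\vee}$ is strictly antidominant; taking $m_{0}$ to be any integer beyond this (finite) threshold, Lemma~\ref{lem:Mac} applies directly to $xt_{-2m\rho^{\vee}} = wt_{\zeta}$ for every $m \ge m_{0}$, and its last conclusion is precisely the asserted equality $\ell(xt_{-2m\rho^{\vee}}) = -\sell(xt_{-2m\rho^{\vee}})$. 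There is essentially no obstacle to either computation: both reduce to bookkeeping with the length formulas, and the only real choice is the threshold $m_{0}$, which depends on $x$ through $\xi$ and is forced purely by the strict antidominance condition on $\zeta$.
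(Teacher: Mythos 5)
Your argument is correct and follows exactly the route the paper takes: part~(1) is a direct computation from Definition~\ref{dfn:sell}, and part~(2) is obtained by choosing $m_{0}$ so that $\xi - 2m\rho^{\vee}$ becomes strictly antidominant (which persists for all $m \ge m_{0}$) and then invoking Lemma~\ref{lem:Mac}. The only difference is that you spell out the arithmetic that the paper declares ``obvious'' for part~(1).
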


\begin{proof}
Part (1) is obvious from the definition of $\sell(\,\cdot\,)$. 
For part (2), write $x$ as $x =wt_{\xi} \in W_{\af}$ 
for some $w \in W$ and $\xi \in Q^{\vee}$, and 
take $m_{0} \ge 0$ such that $\xi-2m_{0}\rho^{\vee}$ is 
strictly antidominant. Then we see from Lemma~\ref{lem:Mac}
that $-\sell ( x t_{- 2 m \rho^{\vee}} ) = 
 \ell ( x t_{- 2 m \rho^{\vee}} )$ for all $m \ge m_{0}$. 
This proves the lemma. 
\end{proof}
%
%
\begin{rem} \label{rem:ell}
Keep the setting of Lemma~\ref{tweakw}\,(2). We have 
\begin{equation*}
\ell ( x t_{- 2 (m_{0}+m) \rho^{\vee}} ) = 
\ell ( x t_{- 2 m_{0} \rho^{\vee}} ) + m\ell(t_{-2\rho^{\vee}}) = 
\ell ( x t_{- 2 m_{0} \rho^{\vee}} ) + \ell(t_{-2m\rho^{\vee}})
\end{equation*}
for all $m \ge 0$. 
\end{rem}

In what follows, we fix $x \in W_{\af}^{\ge 0}$ unless stated otherwise.
For this $x$, we take $m_{0} \ge 0$ as in Lemma~\ref{tweakw}\,(2), 
and fix reduced expressions
%
%
\begin{equation} \label{eq:reduced}
x t_{- 2 m_{0} \rho^{\vee}} = 
 s_{i'_1} s_{i'_2} \cdots s_{i'_{\ell'}} \qquad \text{and} \qquad
t_{- 2 \rho^{\vee}} = 
 s_{i''_1} s_{i''_2} \cdots s_{i''_{\ell}},
\end{equation}
where $i'_1,\,\ldots,\,i'_{\ell'},\,i''_1,\,\ldots,\,i''_{\ell} \in I_{\af}$, 
with $\ell'=\ell(x t_{- 2 m_{0} \rho^{\vee}})$ and $\ell=\ell(t_{-2\rho^{\vee}})$. 
We concatenate these sequences periodically to obtain an infinite sequence
\begin{equation} \label{eq:bi}
\bi = (
\underbrace{i'_1,\,i'_2,\,i'_3,\,\ldots,\,i'_{\ell'}}_{ \text{for $x t_{- 2 m_{0} \rho^{\vee}}$} },\,
\underbrace{i''_1,\,i''_2,\,\ldots,\,i''_{\ell}}_{\text{for $t_{-2\rho^{\vee}}$}},\,
\underbrace{i''_1,\,i''_2,\,\ldots,\,i''_{\ell}}_{\text{for $t_{-2\rho^{\vee}}$}},\,i''_1,\,\ldots) 
\in I_{\af}^{\infty}, 
\end{equation}
and write it as: $\bi= (i_{1},\,i_{2},\,\ldots) \in I_{\af}^{\infty}$; 
remark that $s_{i_{1}}s_{i_{2}} \cdots s_{i_{k}}$ 
is reduced for all $k \ge 0$. For $k \in \BZ_{\ge 0}$, 
we set $\bi_{k} := (i_1,\,i_2,\,\ldots,\,i_{k})$. 

Let $k \in \BZ_{\ge 0}$, and let $\bj = (i_{j_1},\,\dots,\,i_{j_t})$ 
be a subsequence of $\bi_{k}$, where $1 \le j_{1} < \cdots < j_{t} \le k$. 
We set $\sigma (\bj) = \sigma_{k}(\bj) : = 
\bigl\{1,\,2,\,\ldots,\,k\bigr\} \setminus 
\bigl\{j_{1},\,\dots,\,j_{t}\bigr\}$.  
We identify a subsequence $\bj$ of $\bi_{k}$ 
with a subsequence $\bj'$ of $\bi_{k'}$ if and only if 
$\sigma_{k}(\bj) = \sigma_{k'}(\bj')$ (as subsets of $\BZ_{>0}$); 
namely, if $k' \ge k$, then 
$\bj = (i_{j_1},\,\dots,\,i_{j_t}) \subset \bi_{k}$ and 
$\bj' = (i_{j_1},\,\dots,\,i_{j_t},\,i_{k+1},\,\dots,\,i_{k'}) 
\subset \bi_{k'}$ are identified. 
Thus, we identify a subsequence $\bj$ of $\bi_{k}$ 
with a subsequence of $\bi$ by taking the limit in 
$\varinjlim_{k} \bi_{k} = \bi$. 

Let $k \in \BZ_{\ge 0}$, and let 
$\bj = (i_{j_1},\,i_{j_2},\,\ldots,\,i_{j_{t}})$ 
be a subsequence of $\bi_{k}$. We set 
\begin{equation*}
x(\bj ; k) := 
s_{i_{j_1}} \ast s_{i_{j_2}} \ast \cdots \ast s_{i_{j_t}} \in \Wm.
\end{equation*}
\begin{rem}
Let $k' \in \BZ_{\ge 0}$ be such that $k' \ge k$. 
Because the sequence $\bj$ above is identified with the subsequence 
$(i_{j_1},\,\dots,\,i_{j_t},\,i_{k+1},\,\dots,\,i_{k'})$ of $\bi_{k'}$, 
we have
\begin{equation*}
x(\bj ; k') := 
\underbrace{s_{i_{j_1}} \ast \cdots \ast s_{i_{j_t}}}_{=x(\bj ; k)} \ast 
s_{i_{k+1}} \ast \cdots \ast s_{i_{k'}} \in \Wm.
\end{equation*}
\end{rem}

%
\begin{lem} \label{stab}
Let $m \in \BZ_{\ge 0}$, and let $\bj$ be 
a subsequence of $\bi_{ \ell' + m \ell}$. 
Then, there exists $m_1 \ge m$ such that
$x ( \bj ; \ell' + m' \ell ) = 
 x ( \bj ; \ell' + m'' \ell) \cdot t_{-2 (m' -m'') \rho^{\vee}}$
for every $m' \ge m'' \ge m_{1}$. In particular, the element
$x ( \bj ) := x ( \bj ; \ell' + m' \ell ) \cdot 
 t_{2(m'+m_{0})\rho^{\vee}}  \in W_{\af}$ 
 does not depend on the choice of $m' \ge m_{1}$.
\end{lem}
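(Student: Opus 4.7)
I set $y_{m'} := x(\bj; \ell' + m'\ell)$ for $m' \ge m$. Since $\bj$, as a subsequence of $\bi_{\ell'+m\ell}$, is identified with the subsequence of $\bi_{\ell'+m'\ell}$ obtained by appending all new positions $\ell'+m\ell+1, \ldots, \ell'+m'\ell$, we obtain $y_{m'+1} = y_{m'} \ast s_{i''_1} \ast \cdots \ast s_{i''_{\ell}}$. Because $(i''_1,\ldots,i''_{\ell})$ is a reduced expression for $t_{-2\rho^{\vee}}$, the iterated Demazure product of these simple reflections collapses to the ordinary product, and hence
\[
y_{m'+1} = y_{m'} \ast t_{-2\rho^{\vee}} \quad \text{in } \Wm,
\]
so the problem reduces to analyzing the iteration of the map $y \mapsto y \ast t_{-2\rho^{\vee}}$ on $W_{\af}$.

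Writing $y_{m'} = w_{m'} t_{\xi_{m'}}$ in $W \ltimes Q^{\vee}$, the key claim is that for all sufficiently large $m'$ the coweight $\xi_{m'}$ is strictly antidominant. Granted this, Lemma~\ref{lem:Mac} yields $\ell(y_{m'} \cdot t_{-2\rho^{\vee}}) = \ell(y_{m'}) + \ell(t_{-2\rho^{\vee}})$, so the Demazure product degenerates to a reduced ordinary product $y_{m'+1} = y_{m'} \cdot t_{-2\rho^{\vee}}$, with $\xi_{m'+1} = \xi_{m'} - 2\rho^{\vee}$ still strictly antidominant. The property therefore propagates, and by induction $y_{m'} = y_{m''} \cdot t_{-2(m'-m'')\rho^{\vee}}$ for every $m' \ge m'' \ge m_1$, which is exactly the asserted identity. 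Substituting back then shows that $x(\bj) := y_{m'} \cdot t_{2(m'+m_0)\rho^{\vee}}$ is independent of $m' \ge m_1$.

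The main obstacle is precisely the claim on eventual strict antidominance of $\xi_{m'}$. My approach is via a two-sided length estimate. First, the subword property for Demazure products---which follows by induction on $k$ from the fact that $u \le v$ in the ordinary Bruhat order implies $u \ast s_i \le v \ast s_i$---gives $y_{m'} \le xt_{-2(m_0+m')\rho^{\vee}}$ in $W_{\af}$, hence $\ell(y_{m'}) \le \ell(xt_{-2m_0\rho^{\vee}}) + m'\ell$ by Remark~\ref{rem:ell}. Second, each iterated Demazure step increases the length by $0$ or $1$, so the increment over one full period of length $\ell$ lies in $\{0, 1, \ldots, \ell\}$, with equality $\ell$ exactly when that period is a reduced multiplication. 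A direct computation using $y \cdot s_i = w s_i t_{s_i\xi}$ for $i \in I$ and $y \cdot s_0 = w s_{\theta} t_{s_{\theta}\xi - \theta^{\vee}}$ then shows that the cumulative coweight change across one full period is close to $-2\rho^{\vee}$, with corrections controlled by the finite data $w_{m'} \in W$. Since $W$ is finite, $\xi_{m'}$ must drift into the strictly antidominant chamber in finitely many iterations, completing the argument.
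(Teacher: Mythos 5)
Your reduction is sound and matches the paper's: by associativity of the Demazure product, $y_{k+1}=y_{k}\ast t_{-2\rho^{\vee}}$ where $y_{k}:=x(\bj;\ell'+k\ell)$, and the lemma amounts to showing that for all large $k$ this degenerates to the ordinary, length-additive product. The gap is in how you establish that degeneration. Everything rests on your ``key claim'' that the translation part $\xi_{k}$ of $y_{k}$ is eventually strictly antidominant, and the argument you sketch for it does not hold up. The assertion that ``the cumulative coweight change across one full period is close to $-2\rho^{\vee}$'' fails precisely in the problematic case: if in some period many (or all) of the letters $s_{i''_{j}}$ are absorbed by the Demazure product, the coweight need not move at all, and nothing in your setup rules out this happening in every period. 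Moreover, the one length estimate you do justify --- the upper bound $\ell(y_{k})\le \ell'+k\ell$ via the subword property --- points in the wrong direction: it is perfectly consistent with $\ell(y_{k})$ and $\xi_{k}$ being constant forever.

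The missing ingredient is the \emph{lower} bound. Since $y_{k}=y_{m}\ast t_{-2(k-m)\rho^{\vee}}$ and $\ell(u\ast v)\ge\max\{\ell(u),\ell(v)\}$ (an easy induction), one gets $\ell(y_{k})\ge\ell(t_{-2(k-m)\rho^{\vee}})=(k-m)\ell$. Setting $d_{k}:=\ell-\bigl(\ell(y_{k+1})-\ell(y_{k})\bigr)\ge 0$ for the length deficit of the $k$-th period and telescoping, $(k-m)\ell=\ell(y_{k})-\ell(y_{m})+\sum_{k'=m}^{k-1}d_{k'}$, so the lower bound forces $\sum_{k'}d_{k'}$ to stay bounded; hence $d_{k}=0$ for all $k\ge m_{1}$, which is exactly the eventual reducedness you need. (Eventual strict antidominance of $\xi_{k}$ is then a consequence, not an input.) This is the paper's argument; as written, your proposal replaces it with a drift heuristic that does not survive the degenerate periods, so the central step is unproven.
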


\begin{proof}
We first note that 
\begin{equation*}
x ( \bj ; \ell' + m' \ell ) = 
x ( \bj ; \ell' + m'' \ell) \ast 
\overbrace{
\underbrace{ ( s_{i''_1} \ast s_{i''_2} \ast \cdots \ast s_{i''_{\ell}} ) }_{%
\text{corresponding to $t_{-2\rho^{\vee}}$}}
\ast \cdots \ast
\underbrace{ ( s_{i''_1} \ast s_{i''_2} \ast \cdots \ast s_{i''_{\ell}} ) }_{%
\text{corresponding to $t_{-2\rho^{\vee}}$}}
}^{ \text{$(m'-m'')$ times} }. 
\end{equation*}
Since $y \ast s_{i} = y s_{i}$ if and only if 
$\ell ( y s_{i} ) =  \ell ( y ) + 1$ for 
$y \in W_{\af}$ and $i \in I_{\af}$, 
it suffices to show that there exists $m_{1} \ge m$ such that 
\begin{equation} \label{eq:ell-0}
\ell ( x ( \bj ; \ell' + m'' \ell) \cdot t_{-2 n \rho^{\vee}} ) = 
\ell ( x ( \bj ; \ell' + m'' \ell) ) + \ell ( t_{-2 n \rho^{\vee}} ) 
\end{equation}
for all $n > 0$ and $m'' \ge m_{1}$. 
Let $k \in \BZ_{\ge 0}$ be such that $k \ge m$. 
Since $x ( \bj ; \ell' + k \ell) = 
x ( \bj ; \ell' + m \ell) \ast t_{-2(k-m)\rho^{\vee}}$
and $\ell(t_{-2(k-m)\rho^{\vee}}) = (k-m)\ell$, 
we see that 
%
%
\begin{equation} \label{eq:ell-1}
\ell ( x ( \bj ; \ell' + k \ell) ) \ge (k - m) \ell; 
\end{equation}
note that $\ell (y \ast y') \ge 
\max \bigl\{ \ell ( y ), \, \ell ( y' ) \bigr\}$ 
for $y,\,y' \in W_{\af}$, as is verified by induction. 
Now, for each integer $k \ge m$, we set 
\begin{equation*}
d_{k}:= \underbrace{\ell ( t_{- 2 \rho^{\vee}} )}_{=\ell} - 
\bigl\{
\ell ( x ( \bj ; \ell' + ( k + 1 ) \ell) ) - 
\ell ( x ( \bj ; \ell' + k \ell) ) \bigr\}; 
\end{equation*}
observe that $d_{k} \in \BZ_{\ge 0}$. 
Also, for $k \ge m$, we have
\begin{equation*}
(k-m)\ell = 
\ell ( x ( \bj ; \ell' + k \ell) ) - 
\ell ( x ( \bj ; \ell' + m \ell) ) + 
\sum_{k'=m}^{k-1} d_{k'}. 
\end{equation*}
If $d_{k} > 0$ for infinitely many $k \ge m$, 
then $(k-m)\ell > \ell ( x ( \bj ; \ell' + k \ell) )$ 
for $k \gg m$, which contradicts \eqref{eq:ell-1}. 
Hence we deduce that $d_{k} > 0$ only for finitely many $k \ge m$. 
Thus, if we set $m_{1}:=\max \bigl\{ k \ge m \mid d_{k} > 0 \bigr\}$, 
then \eqref{eq:ell-0} holds. This proves the lemma. 
\end{proof}
%
%
\begin{lem} \label{si-refl}
For each $y,\,y' \in W_{\af}$ such that $y \sile y'$, 
there exists $m_{2} \in \BZ_{\ge 0}$ such that 
$y t_{-2m\rho^{\vee}} \ge y' t_{-2m\rho^{\vee}}$
in the ordinary Bruhat order on $W_{\af}$ for all $m \ge m_{2}$. 
\end{lem}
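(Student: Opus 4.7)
The plan is to reduce to the case of a single covering relation in $\SB$ via path decomposition in the semi-infinite Bruhat graph, and then use Lemmas~\ref{lem:Mac} and \ref{tweakw} to convert semi-infinite length identities into ordinary length identities after translation by a sufficiently antidominant element.

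By Definition~\ref{dfn:SiB}, the assumption $y \sile y'$ provides a directed path
\begin{equation*}
y = y_0 \edge{\beta_1} y_1 \edge{\beta_2} \cdots \edge{\beta_N} y_N = y'
\end{equation*}
in $\SB$, with $y_k = s_{\beta_k} y_{k-1}$ and $\sell(y_k) = \sell(y_{k-1}) + 1$ for each $1 \le k \le N$. Since the ordinary Bruhat order is transitive, it suffices to find a single $m_2 \in \BZ_{\ge 0}$ such that $y_{k-1} t_{-2m\rho^{\vee}} \ge y_k t_{-2m\rho^{\vee}}$ in the ordinary Bruhat order for every $1 \le k \le N$ and every $m \ge m_2$. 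Then chaining these inequalities yields the conclusion.

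For a single step, I would apply Lemma~\ref{tweakw}(2) to both $y_{k-1}$ and $y_k$ to obtain a threshold $m_0^{(k)} \ge 0$ such that $\ell(y_j t_{-2m\rho^{\vee}}) = -\sell(y_j t_{-2m\rho^{\vee}})$ for $j \in \{k-1,\,k\}$ and all $m \ge m_0^{(k)}$. Combining this with Lemma~\ref{tweakw}(1), which gives
\begin{equation*}
\sell(y_k t_{-2m\rho^{\vee}}) - \sell(y_{k-1} t_{-2m\rho^{\vee}}) = \sell(y_k) - \sell(y_{k-1}) = 1,
\end{equation*}
one deduces that $\ell(y_k t_{-2m\rho^{\vee}}) = \ell(y_{k-1} t_{-2m\rho^{\vee}}) - 1$ for all $m \ge m_0^{(k)}$. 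Since $y_k t_{-2m\rho^{\vee}} = s_{\beta_k} \cdot (y_{k-1} t_{-2m\rho^{\vee}})$ differs from $y_{k-1} t_{-2m\rho^{\vee}}$ by left multiplication by the reflection $s_{\beta_k}$ (with $\beta_k \in \prr$), and this multiplication strictly decreases the ordinary length, the standard characterization of Bruhat order in Coxeter groups via reflections implies $y_k t_{-2m\rho^{\vee}} < y_{k-1} t_{-2m\rho^{\vee}}$ in the ordinary Bruhat order on $W_{\af}$.

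Finally, setting $m_2 := \max\bigl\{ m_0^{(k)} \mid 1 \le k \le N \bigr\}$ and using transitivity gives the desired inequality for all $m \ge m_2$. The whole argument is essentially bookkeeping with length functions; the only real content is the compatibility, encoded in Lemma~\ref{tweakw}, between $\sell$ and $\ell$ after translation by an element of the form $t_{-2m\rho^{\vee}}$ with $m$ large, which allows the two Bruhat orders to be compared locally on each covering step. No step should present a serious obstacle once this compatibility is in hand.
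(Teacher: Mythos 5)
Your proof is correct and takes a genuinely different, and in fact more economical, route than the paper's. Both arguments begin by reducing to a single covering edge $y_{k-1}\edge{\beta_k} y_k=s_{\beta_k}y_{k-1}$ in $\SB$ and then chaining by transitivity. But where the paper then classifies the possible labels $\beta$ via \cite[Corollary~4.2.2]{INS}, applies the length identity of \cite[Proposition~A.1.2]{INS}, rewrites $s_{\beta}(yt_{-2m\rho^{\vee}})$ as a right multiplication by an explicit affine reflection, and finally invokes \cite[Proposition~5.1]{LNSSS} to obtain the Bruhat comparison, you sidestep all of this. You use only Lemma~\ref{tweakw}: part~(1) shows the semi-infinite length gap between $y_{k}t_{-2m\rho^{\vee}}$ and $y_{k-1}t_{-2m\rho^{\vee}}$ remains exactly one for every $m$, and part~(2) turns $\sell$ into $-\ell$ once $m$ exceeds a finite threshold. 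This gives $\ell(y_kt_{-2m\rho^{\vee}})=\ell(y_{k-1}t_{-2m\rho^{\vee}})-1$, and since the two elements differ by left multiplication by the reflection $s_{\beta_k}$ with $\beta_k\in\prr$, the strong exchange property (see \cite{BB}) yields $y_kt_{-2m\rho^{\vee}}<y_{k-1}t_{-2m\rho^{\vee}}$ in the ordinary Bruhat order. Taking the maximum of the finitely many thresholds for $1\le k\le N$ finishes the proof. The trade-off is that your argument leans on a general Coxeter-theoretic fact that the paper does not otherwise invoke, whereas the paper stays within its own specialized combinatorial machinery; your version is shorter, self-contained relative to Section~\ref{subsec:BSDH}, and eliminates the external citations to \cite{INS} and \cite{LNSSS}.
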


\begin{proof}
It suffices to prove the assertion in the case that 
$y \edge{\beta} s_{\beta}y = y'$ for some $\beta \in \prr$. 
Here we see from \cite[Corollary 4.2.2]{INS} that 
$\beta$ is either of the following forms: 
\begin{center}
(i) $\beta = \alpha$ with $\alpha \in \Delta^{+}$; \qquad
(ii) $\beta = \alpha+\delta$ with $-\alpha \in \Delta^{+}$. 
\end{center}
Moreover, if $y = w t_{\xi}$ with $w \in W$ and $\xi \in Q^{\vee}$, 
then $\gamma:=w^{-1}\alpha \in \Delta^{+}$ in both cases above. 
Also, it follows from \cite[Proposition~A.1.2]{INS} that 
%
%
\begin{equation} \label{eq:ell}
\ell(w) = 
 \begin{cases}
  \ell(wr_{\gamma})-1 & \text{in case (i)}, \\[1mm]
  \ell(wr_{\gamma})-1+2 \pair{\rho}{\gamma^{\vee}} & \text{in case (ii)}. 
 \end{cases}
\end{equation}

If we set $\zeta:=\xi-2m\rho^{\vee}$ for $m \in \BZ$, then 
$y t_{- 2 m \rho^{\vee}} = w t _{\xi - 2 m \rho^{\vee}} = w t_{\zeta}$, and 
\begin{align}
s_{\beta} (y t_{- 2 m \rho^{\vee}}) & = 
 s_{\alpha+k\delta} w t_{\zeta}, 
 \quad \text{where $k=0$ in case (i), and $k=1$ in case (ii)}, \nonumber \\
 & = wt_{\zeta} s_{t_{-\zeta}w^{-1}\alpha+k\delta} 
   = (y t_{- 2 m \rho^{\vee}}) s_{\gamma + n \delta}, 
   \quad \text{with $n:=k+ \pair{\gamma}{\zeta}$}. \label{eq:n}
\end{align}
Therefore, in case (i) (resp., case (ii)), 
we deduce from \cite[Proposition~5.1\,(1) (resp., (2)) with $v=e$]{LNSSS}, 
together with equalities \eqref{eq:ell} and \eqref{eq:n} that 
$y't_{-2m\rho^{\vee}} = s_{\beta}y t_{-2m\rho^{\vee}} = 
(y t_{-2m\rho^{\vee}}) s_{\gamma + n \delta} < y t_{-2m \rho^{\vee}}$ for all $m \gg 0$. 
This proves the lemma. 
\end{proof}
%
%
\begin{lem} \label{BSDH-cover}
For each $y \in W_{\af}$ such that $y \sige x$, 
there exist $k \in \BZ_{\ge 0}$ and a subsequence $\bj$ of $\bi_{k}$ 
such that $y = x(\bj)$.
\end{lem}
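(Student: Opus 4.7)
The plan is to use Lemma~\ref{si-refl} to transfer the semi-infinite Bruhat comparison $x \sile y$ to an ordinary Bruhat comparison after twisting by $t_{-2m\rho^{\vee}}$ for sufficiently large $m$, and then to invoke the classical subword characterization of the ordinary Bruhat order applied to the reduced expression for $xt_{-2m\rho^{\vee}}$ already built into the infinite word $\bi$.

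First, since $y \sige x$, Lemma~\ref{si-refl} produces $m_{2} \ge 0$ such that $xt_{-2m\rho^{\vee}} \ge yt_{-2m\rho^{\vee}}$ in the ordinary Bruhat order for every $m \ge m_{2}$. Choose an integer $m \ge \max\{m_{0},\,m_{2}\}$ (and large enough for the stabilization in Lemma~\ref{stab}) and set $M := m - m_{0}$. By Remark~\ref{rem:ell}, the concatenation of the reduced expressions in \eqref{eq:reduced} is reduced, so
\[
xt_{-2m\rho^{\vee}} = s_{i_{1}} s_{i_{2}} \cdots s_{i_{\ell' + M\ell}},
\]
i.e., $\bi_{\ell' + M\ell}$ is a reduced word for $xt_{-2m\rho^{\vee}}$.

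The standard subword property of the ordinary Bruhat order in Coxeter groups now supplies a reduced subword $\bj = (i_{j_{1}},\,\ldots,\,i_{j_{t}})$ of $\bi_{\ell' + M\ell}$, with $t = \ell(yt_{-2m\rho^{\vee}})$, whose ordinary product equals $yt_{-2m\rho^{\vee}}$. Because the subword is reduced, each consecutive $\ast$-product coincides with the ordinary product, giving
\[
x(\bj;\,\ell' + M\ell) = s_{i_{j_{1}}} \ast s_{i_{j_{2}}} \ast \cdots \ast s_{i_{j_{t}}} = yt_{-2m\rho^{\vee}}.
\]
Appealing to Lemmas~\ref{lem:Mac} and \ref{tweakw}, for any $m' \ge M$ one has $\ell(yt_{-2(m'+m_{0})\rho^{\vee}}) = \ell(yt_{-2m\rho^{\vee}}) + \ell(t_{-2(m'-M)\rho^{\vee}})$ once $m'$ is large enough, so extending $\bj$ by the subsequent letters of $\bi$ keeps the $\ast$-product equal to the ordinary product; hence $x(\bj;\,\ell' + m'\ell) = yt_{-2(m'+m_{0})\rho^{\vee}}$, and then the definition in Lemma~\ref{stab} yields
\[
x(\bj) = yt_{-2(m'+m_{0})\rho^{\vee}} \cdot t_{2(m'+m_{0})\rho^{\vee}} = y,
\]
as required.

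The only delicate point is the distinction between the monoid product $\ast$ (appearing in the definition of $x(\bj;k)$) and the ordinary group product of $W_{\af}$: the subword property of the ordinary Bruhat order furnishes a subword whose \emph{group} product equals $yt_{-2m\rho^{\vee}}$, but for this to translate into the desired identity for the $\ast$-product one must select the subword to be reduced—precisely what the subword property guarantees. Everything else is bookkeeping with translations and with the stabilization statement of Lemma~\ref{stab}.
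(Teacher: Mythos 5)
Your proof is correct and follows essentially the same route as the paper's: apply Lemma~\ref{si-refl} to convert $y \sige x$ into an ordinary Bruhat comparison $yt_{-2m\rho^{\vee}} \le xt_{-2m\rho^{\vee}}$, extract a reduced subword of $\bi_{\ell'+(m-m_0)\ell}$ via the Subword Property (so that the $\ast$-product agrees with the group product), and then stabilize using Lemma~\ref{stab} and Remark~\ref{rem:ell} to conclude $x(\bj)=y$. The point you flag as delicate (reducedness of the subword so that $\ast$ coincides with the ordinary product) is exactly the implicit step in the paper's chain of equalities.
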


\begin{proof}
By Lemma~\ref{si-refl}, there exists $m \gg 0$ such that 
the assertion of Lemma~\ref{tweakw}\,(2) holds for $m_{0}+m$ instead of $m_{0}$ 
(see also Remark~\ref{rem:ell}), and such that
\begin{equation*}
yt_{- 2 (m_{0}+m) \rho^{\vee}} < 
x t _{- 2 (m_{0}+m) \rho^{\vee}} = 
x t _{- 2 m_{0} \rho^{\vee}} \cdot t _{- 2 m \rho^{\vee}};
\end{equation*}
note that $\bi_{\ell'+m\ell}$ gives a reduced expression for 
$x t _{- 2 (m_{0}+m) \rho^{\vee}}$. By the Subword Property 
(see, e.g., \cite[Theorem~2.2.2]{BB}), 
$y t_{- 2 (m_{0}+m) \rho^{\vee}}$ is obtained 
as a subexpression of the reduced expression for
$x t_{- 2 (m_{0}+m) \rho^{\vee}}$ 
corresponding to $\bi_{\ell'+m\ell}$. 
Namely, there exists a subsequence $\bj=(i_{j_1},\,\dots,\,i_{j_t})$ 
of $\bi_{\ell'+m\ell}$ such that 
\begin{equation*}
y t_{- 2 (m_{0}+m) \rho^{\vee}} = 
s_{i_{j_1}} \cdots s_{i_{j_t}} = 
s_{i_{j_1}} \ast \cdots \ast s_{i_{j_t}} = 
x(\bj,\ell'+m\ell). 
\end{equation*}
Let us take $m_{1} \gg m$ as in Lemma~\ref{stab}. 
Then, we see by Remark~\ref{rem:ell} 
that for $m' \ge m_{1}$, 
\begin{align*}
x(\bj,\ell'+m'\ell) & = 
 x(\bj,\ell'+m\ell) \ast t_{-2(m'-m)\rho^{\vee}} \\
& = 
 x(\bj,\ell'+m\ell) \cdot t_{-2(m'-m)\rho^{\vee}} =
 y t_{- 2 (m'+m_{0}) \rho^{\vee}}.
\end{align*}
Thus, we obtain $y = x(\bj)$, as desired.
This proves the lemma. 
\end{proof}

For each $i \in I_{\af}$ and $y \in W_{\af}$, 
we define a map
%
%
\begin{equation} \label{eq:qiy}
q_{i,y} : \Iw(i) \times^{\Iw} \DP(y) \rightarrow \RDP, \quad 
(p,\,L) \mapsto pL. 
\end{equation}
%
%
\begin{lem}\label{one-step}
Let $y \in W_{\af}^{\ge 0}$, and $i \in I_{\af}$. 
If $s_{i} y \sige y$, then the map $q_{i,y}$ induces 
a $\BP^{1}$-fibration
$\Iw(i) \times^{\Iw} \DP(y) \rightarrow \DP(y)$, which 
we also denote by $q_{i,y}$. 
If $e \preceq s_i y \preceq y$, then 
the map $q_{i,y}$ induces a birational map 
$\Iw(i) \times^{\Iw} \DP(y) \rightarrow \DP(s_{i}y)$, 
which we also denote by $q_{i,y}$.
In both cases, the map $q_{i,y}$ is proper.
\end{lem}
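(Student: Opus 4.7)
The plan is to analyze the action map $q_{i,y}$ via the Bruhat decomposition $\Iw(i) = \Iw \sqcup \Iw\dot{s}_{i}\Iw$, which realizes $\Iw(i)/\Iw \cong \BP^{1}$ as the disjoint union of the $\Iw$-fixed point $\Iw/\Iw$ and the open $\Iw$-orbit $\Iw\dot{s}_{i}\Iw/\Iw \cong \BA^{1}$. The key input is that $\dot{s}_{i} \in N_{G\pra{z}}(H)$ permutes the $(H \times \Gm)$-fixed points of $\RDP$ according to the Weyl group action on $W_{\af}$: writing $L_{z}$ for the fixed point indexed by $z \in W_{\af}^{\ge 0}$ (as in Corollary~\ref{orbits} and Lemma~\ref{defbQ}), we have $\dot{s}_{i}L_{z} = L_{s_{i}z}$. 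Combined with the two-orbit decomposition of $\Iw(i)/\Iw$ under $\Iw$, this yields the set-theoretic identity $\Iw(i) \cdot \bO(z) = \bO(z) \cup \bO(s_{i}z)$ whenever $z,\,s_{i}z \in W_{\af}^{\ge 0}$.

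For Case 1 ($s_{i}y \sige y$), the hypothesis gives $\dot{s}_{i}L_{y} = L_{s_{i}y} \in \bO(s_{i}y) \subset \DP(s_{i}y) \subset \DP(y)$, hence $\Iw(i) \cdot L_{y} \subset \DP(y)$; by $\Iw$-equivariance and the fact that $\DP(y) \subset \RDP$ is a closed subscheme containing $\bO(y)$ as a dense open, the morphism $\Iw(i) \times \DP(y) \to \RDP$ has image in $\DP(y)$, so $\Iw(i)\cdot \DP(y) = \DP(y)$. Therefore $q_{i,y}$ factors through $\DP(y)$; its fiber over $L \in \DP(y)$ equals $\{[g,\, g^{-1}L] : g \in \Iw(i)\} \cong \Iw(i)/\Iw \cong \BP^{1}$, and Zariski-local triviality comes from the standard associated bundle construction out of the Zariski-locally trivial principal $\Iw$-bundle $\Iw(i) \to \Iw(i)/\Iw$.

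For Case 2 ($e \sile s_{i}y \sile y$, hence $s_{i}y \ne y$), one has $\DP(y) \subsetneq \DP(s_{i}y)$. The orbit identity gives $\Iw(i)\cdot \bO(y) = \bO(y) \sqcup \bO(s_{i}y) \subset \DP(s_{i}y)$; by an argument parallel to Case 1, $q_{i,y}$ factors through $\DP(s_{i}y)$ and is dominant since $\bO(s_{i}y)$ is dense there. For birationality, the preimage of $\bO(s_{i}y)$ under $q_{i,y}$ is the open subscheme coming from the big cell $\Iw\dot{s}_{i}\Iw \subset \Iw(i)$; using the section given by the root subgroup for $\alpha_{i}$, it identifies with $\BA^{1} \times \bO(y)$, and the evaluation $(u,\, L) \mapsto u\dot{s}_{i}L$ is injective because $u\dot{s}_{i}L = u'\dot{s}_{i}L'$ forces $(u')^{-1}u$ to lie in $\BA^{1} \cap \bigl(\dot{s}_{i} \cdot \mathrm{Stab}_{\Iw}(L) \cdot \dot{s}_{i}^{-1}\bigr)$, which is trivial by a direct affine-root inspection. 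Combined with dominance and properness, this yields birationality.

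Properness in Case 1 is built into the $\BP^{1}$-bundle structure. For Case 2, I would reduce to Case 1 applied to $s_{i}y$: since $s_{i}\cdot(s_{i}y) = y \sige s_{i}y$, the map $\Iw(i) \times^{\Iw} \DP(s_{i}y) \to \DP(s_{i}y)$ is a proper $\BP^{1}$-bundle by Case 1, and $q_{i,y}: \Iw(i) \times^{\Iw} \DP(y) \to \DP(s_{i}y)$ is the composition of the closed immersion $\Iw(i) \times^{\Iw} \DP(y) \hookrightarrow \Iw(i) \times^{\Iw} \DP(s_{i}y)$ with that proper morphism. The chief difficulty I anticipate is the injectivity argument in Case 2, which requires a careful identification of $\Iw$-stabilizers of points in $\bO(y)$ in the infinite-dimensional setting; a secondary subtlety is giving a clean meaning to \emph{proper} for morphisms between (pro-)schemes of infinite type.
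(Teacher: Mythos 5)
Your proposal is correct and follows essentially the same route as the paper: Case 1 via $\Iw(i)$-stability of $\DP(y)$ and the associated $\BP^{1}$-bundle, Case 2 via the big-cell identification $\Iw\dot{s}_{i}\Iw \times^{\Iw}\bO(y) \cong \BA^{1}\times \bO(y) \to \bO(s_{i}y)$, and properness by restricting the $\BP^{1}$-fibration $q_{i,s_{i}y}$ to a closed subscheme. The only point the paper makes explicit that you gloss over is why $q_{i,y}^{-1}(\bO(s_{i}y))$ is exactly the big-cell part (no other stratum $\bO(y')$, $y'\sig y$, can hit $\bO(s_{i}y)$), which it gets from the semi-infinite Bruhat order via Lemmas~\ref{lem:si} and \ref{lem:dmd}; your stabilizer computation plays the role of the paper's direct orbit isomorphism.
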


\begin{proof}
If $s_{i} y \sige y$, then 
the action of $\Iw(i)$ stabilizes 
$\bO(y) \cup \bO ( s_{i} y ) \subset \DP ( y )$. 
Hence taking the closure in $\DP$ implies that $\DP ( y )$ admits 
an $\Iw ( i )$-action. Therefore, the assertions hold in this case 
since $\Iw ( i )/ \Iw \cong \BP^{1}$.

If $e \preceq s_{i} y \preceq y$, then 
$q_{i,y}^{-1} ( \bO ( s_{i} y ) )$ contains 
$\Iw \dot{s}_i \Iw \times ^{\Iw} \bO (y)$. 
Here we deduce from Lemmas~\ref{lem:si} and \ref{lem:dmd} that 
if $y' \in W_{\af}$ satisfies $y' \sig y$, 
then $s_{i}y' \sig s_{i}y$; in particular, 
we have $\sell( s_i y ) <  \sell( s_i y' )$. 
Hence we have $\Iw \dot{s}_i \Iw \times^{\Iw} \bO ( y ) = 
q_{i,y}^{-1} ( \bO ( s_{i} y ) )$. 
In addition, the unipotent one-parameter subgroup of $\Iw$ 
corresponding to $\alpha_{i}$ gives an isomorphism 
$\BA^{1} \times \dot{s}_i \bO ( y ) \cong \bO ( s_{i} y )$. 
Therefore, $q_{i,y}$ is birational. 
Also, by applying the same observation for $s_{i}y$ instead of $y$, 
we deduce that $q_{i,y}$ is obtained as the restriction of 
the $\BP^{1}$-fibration $q_{i,s_{i}y}$ to a closed subscheme.
Hence $q_{i,y}$ defines a proper map. 
This proves the lemma. 
\end{proof}

For each $k \in \BZ_{\ge 0}$, 
we set $x(k) := s_{i_{k}} s_{i_{k-1}} \cdots s_{i_{1}} x$. 
We claim that 
%
%
\begin{equation} \label{eq:xk}
\sell( x(k+1) ) = \sell(x (k) ) + 1 \quad \text{for all $k \ge 0$}.
\end{equation}
Indeed, since $x(k+1) = s_{i_{k}}x(k)$ for $k \ge 0$, 
we see by \eqref{eq:simple} that 
$\sell( x(k+1) ) = \sell(x (k) ) \pm 1$ for each $k \ge 0$. 
Therefore, it suffices to show that 
\begin{equation} \label{eq:xm}
\sell(x((m-m_{0})\ell + \ell')) = 
 \sell(x(0)) + (m-m_{0})\ell + \ell' \qquad 
 \text{for all $m \ge m_{0}$}; 
\end{equation}
note that $x(0) = x$. 
We see by the definition that
$x((m-m_{0})\ell + \ell') = (xt_{-2m\rho^{\vee}})^{-1}x = 
t_{2m\rho^{\vee}}$. Hence we compute: 
\begin{align}
\sell(x((m-m_{0})\ell + \ell')) & 
  = \sell(t_{2m\rho^{\vee}}) = 2\pair{\rho}{2m\rho^{\vee}} 
  = -m \sell(t_{-2\rho^{\vee}}) \nonumber \\ 
& = m \ell(t_{-2\rho^{\vee}}) = m\ell \quad \text{by Lemma~\ref{lem:Mac}}. 
\label{eq:xk1}
\end{align}
Also, by Lemma~\ref{tweakw}\,(1), we have
$\sell(xt_{-2m_{0}\rho^{\vee}}) + \sell(t_{2m_{0}\rho^{\vee}}) = \sell(x)$. 
Here we deduce that 
$\sell(xt_{-2m_{0}\rho^{\vee}}) = - \ell(xt_{-2m_{0}\rho^{\vee}}) = -\ell'$ 
by Lemma~\ref{tweakw}\,(2), and that 
$\sell(t_{2m_{0}\rho^{\vee}}) = m_{0}\ell$. 
Hence we obtain $\sell(x) = -\ell' + m_{0}\ell$. 
Combining this equality with \eqref{eq:xk1} shows 
\eqref{eq:xm}, as desired. 

Now, we set
%
%
\begin{equation} \label{defbQi}
\DP ( \bi_k ) := 
 \Iw ( i_1 ) \times^{\Iw} \Iw ( i_2 ) \times^{\Iw} \cdots 
 \times^{\Iw} \Iw ( i_k ) \times^{\Iw} \bO ( x ( k ) ).
\end{equation}
We also define its ambient space
\begin{equation*}
\bQ^{\#}_G ( \bi_k ) := 
 \Iw ( i_1 ) \times^{\Iw} \Iw ( i_2 ) \times^{\Iw} \cdots \times^{\Iw} 
 \Iw ( i_k ) \times^{\Iw} \DP ( x ( k ) ).
\end{equation*}
Since $s_{i_{k}} x ( k ) = x ( k - 1 )$ and 
$x ( k ) \sige x ( k - 1 )$ for each $k \ge 1$ (see \eqref{eq:xk}), 
we have an $\ti{\Iw}$-equivariant embedding 
$\bO ( x ( k - 1 ) ) \hookrightarrow \Iw ( i_k ) \times^{\Iw} \bO ( x ( k ) )$ 
by the latter case of Lemma~\ref{one-step}, 
and hence an $\ti{\Iw}$-equivariant embedding 
$\DP ( \bi_{k-1} ) \hookrightarrow \DP ( \bi_k )$ for each $k \ge 1$. 
By infinite repetition of these embeddings, 
we obtain a scheme of infinite type
%
%
\begin{equation} \label{Qunion}
\DP ( \bi ) := \varinjlim_k \DP ( \bi_k ) = \bigcup_{k \ge 0} \DP ( \bi_k ),
\end{equation}
endowed with an $\ti{\Iw}$-action. Also, the multiplication of 
components yields an $\ti{\Iw}$-equivariant morphism
\begin{equation*}
\sfm : \DP ( \bi ) \longrightarrow \DP ( x ). 
\end{equation*}
Similarly, we have 
$\sfm_{k} : \DP^{\#}( \bi_k ) \rightarrow \DP ( x )$ 
for each $k \ge 0$. The natural inclusion 
$\DP ( \bi_k ) \hookrightarrow \DP^{\#} ( \bi_k )$ 
yields a map $\DP(\bi) \to \DP^{\#} ( \bi_k )$ 
by taking the  product of factors at position $> k$ from 
the left in \eqref{defbQi}. 
Namely, we collect the maps
\begin{equation*}
\DP ( \bi_{k'} ) \ni 
( p_1,\,\ldots,\,p_{k'},\,L) 
 \mapsto 
(p_1,\,\ldots,\,p_k,\,p_{k+1} \cdots p_{k'}L) \in \DP^{\#}( \bi_k )
\end{equation*}
for $k'> k$ through \eqref{Qunion}, 
where $p_{j} \in \Iw ( i_j )$, $1 \le j \le k'$, and 
$L \in \bO (x(k'))$; here each closed point of $\DP ( \bi_{k'} )$ is 
an equivalence class with respect to the $\Iw^{k'}$-action, 
and the map above respects equivalence classes.
This yields a factorization of $\sfm$ 
through arbitrary $\sfm_{k}$ in such a way that
%
%
\begin{equation} \label{factorBSDH}
\DP ( \bi ) \rightarrow 
\DP^{\#}( \bi_{k'} ) \rightarrow 
\DP^{\#} ( \bi_k ) \rightarrow \DP ( x ) 
\end{equation}
for each $k' \ge k$. This also yields an inclusion
\begin{equation*}
\DP ( \bi ) \hookrightarrow 
\DP^{\#} ( \bi ) := \varprojlim_k \bQ^{\#}_G ( \bi_k ),
\end{equation*}
which fits into the following commutative diagram of 
$\ti{\Iw}$-equivariant morphisms for $k < k'$: 
\begin{equation*}
\xymatrix{
\DP(\bi) \ar@{^{(}->}[rr] \ar[d]_{\sfm} \ar[drr] \ar[drrr] & & 
\DP^{\#}(\bi) \ar[dll] \ar[d] \ar[dr] & \\
\DP(x) & & \DP^{\#}(\bi_k) \ar[ll]^{\sfm_k} & 
\DP^{\#}(\bi_{k'}). \ar[l]
}
\end{equation*}
%
%
\begin{lem} \label{up-normal}
The scheme $\DP ( \bi )$ is separated and normal.
\end{lem}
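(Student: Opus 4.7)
The plan is to establish smoothness (hence normality) and separatedness of each finite stage $\DP(\bi_k)$ by induction, and then transfer these properties to $\DP(\bi) = \bigcup_{k \ge 0} \DP(\bi_k)$ by verifying that the transition maps are open immersions.

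The first key step is to check that each $\DP(\bi_{k-1}) \hookrightarrow \DP(\bi_k)$ is in fact open. The Bruhat-type decomposition $\Iw(i_k) = \Iw \sqcup \Iw \dot{s}_{i_k} \Iw$, with $\Iw$ closed and $\Iw \dot{s}_{i_k} \Iw$ open, induces a stratification
\[
\Iw(i_k) \times^{\Iw} \bO(x(k)) \;=\; \bO(x(k)) \;\sqcup\; \bigl(\Iw \dot{s}_{i_k} \Iw \times^{\Iw} \bO(x(k))\bigr),
\]
and the proof of the second case of Lemma~\ref{one-step} identifies, via the multiplication map, the open stratum $\Iw \dot{s}_{i_k} \Iw \times^{\Iw} \bO(x(k))$ isomorphically with $\bO(x(k-1))$. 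Applying this identification in the last factor realises $\DP(\bi_{k-1}) \hookrightarrow \DP(\bi_k)$ as an open immersion, so $\DP(\bi)$ carries a natural scheme structure in which each $\DP(\bi_k)$ is open.

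Next I would prove, by induction on $n$, the more general claim that for any smooth, separated, $\Iw$-stable locally closed subscheme $Y \subset \DP$ and any sequence $(j_1,\ldots,j_n) \in I_\af^n$, the twisted product $\Iw(j_1) \times^{\Iw} \cdots \times^{\Iw} \Iw(j_n) \times^{\Iw} Y$ is smooth and separated. The case $n=0$ is trivial. For the inductive step, let $Z := \Iw(j_2) \times^{\Iw} \cdots \times^{\Iw} \Iw(j_n) \times^{\Iw} Y$, which is smooth and separated by induction; the projection $\Iw(j_1) \times^{\Iw} Z \to \Iw(j_1)/\Iw \cong \BP^1$ is a Zariski-locally trivial $Z$-bundle, since the principal $\Iw$-bundle $\Iw(j_1) \to \BP^1$ trivialises over the two standard affine opens of $\BP^1$ by (translates of) the unipotent one-parameter subgroup $U_{\alpha_{j_1}}$. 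Locally on $\BP^1$ the total space is therefore isomorphic to $\BA^1 \times Z$, which is smooth and separated. Applying this claim with $Y = \bO(x(k))$---smooth as an $\Iw$-orbit, and separated as a locally closed subscheme of the separated scheme $\DP$ (Lemma~\ref{defbQ})---gives smoothness and separatedness of every $\DP(\bi_k)$.

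Finally I would conclude by standard open-cover arguments: normality being local, $\DP(\bi)$ is normal because it is covered by the normal opens $\DP(\bi_k)$; and for separatedness, the family $\{\DP(\bi_k) \times \DP(\bi_k)\}_{k \ge 0}$ is an increasing open cover of $\DP(\bi) \times \DP(\bi)$, on each member of which the diagonal of $\DP(\bi)$ restricts to the (closed) diagonal of $\DP(\bi_k)$. The main obstacle will be the careful verification in the first step that $\bO(x(k-1))$ really embeds as the \emph{open} stratum of $\Iw(i_k) \times^{\Iw} \bO(x(k))$, since this is what propagates the inductive properties through the limit; the remaining arguments, though technical because the fibres $\bO(x(k))$ are of infinite dimension, reduce to standard facts about Zariski-locally trivial bundles.
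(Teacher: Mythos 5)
Your proposal is correct and takes essentially the same approach as the paper: both reduce the question to the normality and separatedness of pro-affine spaces, using the $\BP^{1}$-fibration structure of each $\DP(\bi_{k})$ (with its standard two-chart affine cover) and the fact that the transition maps $\DP(\bi_{k-1}) \hookrightarrow \DP(\bi_{k})$ are open immersions, so that the properties propagate through the direct limit; the paper compresses this into two paragraphs while you spell out the induction and the diagonal argument. One minor caveat: you phrase the inductive claim in terms of \emph{smoothness}, but the strata $\bO(x(k))$ are pro-affine spaces, which are not locally of finite type, so the EGA notion of smoothness does not directly apply; replacing ``smooth'' by ``normal'' (or working directly with the pro-affine open charts, as the paper does) avoids this imprecision without changing the substance of your argument.
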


\begin{proof}
The scheme $\DP ( \bi )$ is an inclusive union of countably many 
open subschemes each of which is isomorphic to a pro-affine space bundle 
over a finite successive $\BP^{1}$-fibrations. 
Since each of such a space is separated, we deduce the desired separatedness. 

Also, by its construction, each $\DP ( \bi_{k} )$ is a union of 
pro-affine spaces labeled by subsequences of $\bi_k$ 
(so that $\DP (\bi_{k})$ is covered by a total of $2^{k}$-copies 
of open cover consisting of pro-affine spaces). 
Thus, $\DP ( \bi )$ is a union of countably many pro-affine spaces. 
Since all of these pieces are normal, we deduce the desired normality.
This proves the lemma. 
\end{proof}

For each subsequence $\bj \subset \bi_{k}$, 
we obtain an $\Iw$-stable pro-affine space 
$\bO ( \bj ) \subset \DP ( \bi_{k} )$ 
by replacing $\Iw ( i_{j} )$, $1 \le j \le k$, with 
$\Iw \dot{s}_{i_j} \Iw$ (resp., $\Iw$)
if $i_{j} \in \bj$ (resp., $i_{j} \not\in \bj$) in \eqref{defbQi}; 
we refer to $\bO ( \bj )$ as the stratum corresponding 
to a subsequence $\bj$ of $\bi_k$.
%
%
\begin{lem}\label{k-stratum}
\mbox{}
\begin{enu}
\item For each $k \in \BZ_{\ge 1}$, it holds that 
$\DP ( \bi_k ) = \bigsqcup _{\bj \subset \bi_k} \bO ( \bj )$. 
\item 
Let $k \in \BZ_{\ge 1}$, and 
let $\bj$ and $\bj'$ be subsequences of $\bi_k$. 
Then, $\bO ( \bj ) \subset \ol{\bO(\bj')}$ in $\DP ( \bi_{k} )$ 
if and only if $\bj \subset \bj' (\subset \bi_k)$.
\end{enu}
\end{lem}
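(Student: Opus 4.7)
The plan is to prove both parts simultaneously by induction on $k$, exploiting the recursive structure $\DP(\bi_{k}) = \Iw(i_{1}) \times^{\Iw} Y_{k-1}$, where $Y_{k-1} := \Iw(i_{2}) \times^{\Iw} \cdots \times^{\Iw} \Iw(i_{k}) \times^{\Iw} \bO(x(k))$. Note that $Y_{k-1}$ has exactly the same combinatorial shape as $\DP(\bi_{k-1})$ for the shifted sequence $(i_{2}, \ldots, i_{k})$ with base point $s_{i_{1}}x$ (with $\sell$ still increasing by one at each step, by \eqref{eq:xk}), so the inductive hypothesis applies to $Y_{k-1}$. The base case $k = 1$ reduces to the direct observation that $\Iw(i_{1}) = \Iw \sqcup \Iw \dot{s}_{i_{1}} \Iw$ yields $\DP(\bi_{1}) = \bO(x(1)) \sqcup \bigl(\BA^{1} \cdot \bO(x(1))\bigr)$, together with the closure relation $\bO(\emptyset) \subset \overline{\bO((i_{1}))}$ coming from $\Iw(i_{1})/\Iw \cong \BP^{1}$.

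For part (1), I would apply the minimal-parahoric Bruhat-type decomposition $\Iw(i_{1}) = \Iw \sqcup \Iw \dot{s}_{i_{1}} \Iw$ (a closed piece disjoint from an open piece) and take $\times^{\Iw} Y_{k-1}$ to obtain
\[
\DP(\bi_{k}) \;=\; Y_{k-1} \;\sqcup\; \bigl(\Iw \dot{s}_{i_{1}} \Iw \times^{\Iw} Y_{k-1}\bigr) \;\cong\; Y_{k-1} \;\sqcup\; \bigl(\BA^{1} \times Y_{k-1}\bigr),
\]
where the identification of the open piece uses the normal form $u \dot{s}_{i_{1}}$ with $u$ in the one-parameter subgroup $U_{\alpha_{i_{1}}} \cong \BA^{1}$. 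Combined with the inductive stratification of $Y_{k-1}$ into $2^{k-1}$ pieces indexed by subsequences of $(i_{2}, \ldots, i_{k})$, this gives the desired $2^{k}$ strata decomposition of $\DP(\bi_{k})$: strata with $1 \notin \bj$ lie in the closed piece $Y_{k-1}$, and strata with $1 \in \bj$ lie in the open piece.

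For part (2), the ``if'' direction follows by transitivity: flipping $\bj$ toward $\bj'$ one position at a time, each flip corresponds to the position-wise closure $\Iw \subset \overline{\Iw \dot{s}_{i_{j}} \Iw} = \Iw(i_{j})$, which propagates through the fibered product since each stratum is irreducible. For the converse direction, I would again induct on $k$ using the projection $\pi : \DP(\bi_{k}) \to \Iw(i_{1})/\Iw \cong \BP^{1}$: assuming $\bO(\bj) \subset \overline{\bO(\bj')}$, one analyzes cases based on whether $1$ belongs to $\bj$ and to $\bj'$. The case $1 \in \bj \setminus \bj'$ is immediate, since $\pi(\bO(\bj)) = \BA^{1}$ while $\pi(\overline{\bO(\bj')})$ is the closed point, a contradiction; the cases $1 \in \bj \cap \bj'$ and $1 \notin \bj \cup \bj'$ reduce cleanly to the inductive problem inside $\BA^{1} \times Y_{k-1}$ and inside $Y_{k-1}$, respectively.

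The subcase I expect to be the main obstacle is $1 \notin \bj$ with $1 \in \bj'$, for which $\bO(\bj) \subset Y_{k-1}$ while $\bO(\bj') \cong \BA^{1} \times \bO(\bj'|_{\{2, \ldots, k\}})$. Here I must identify $\overline{\bO(\bj')} \cap Y_{k-1}$ explicitly; since the $Y_{k-1}$-bundle $\DP(\bi_{k}) \to \BP^{1}$ is only locally trivial in general, specialization from the open cell to the closed fiber may involve a twist by the $\Iw$-action (as already seen in the $SL_{2}$-identity $u(t)\dot{s}_{i_{1}} = u^{-}(1/t) \cdot b(t)$ with $b(t) \in \Iw$ unbounded as $t \to \infty$). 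However, each stratum of $Y_{k-1}$ is $\Iw$-stable by construction, so the limit of $[u(t) \dot{s}_{i_{1}}, z]$ as $t \to \infty$ still lies in the stratum with the same subsequence index; invoking the inductive hypothesis on $Y_{k-1}$ then gives $\bj|_{\{2, \ldots, k\}} \subset \bj'|_{\{2, \ldots, k\}}$, whence $\bj \subset \bj'$.
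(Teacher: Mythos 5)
Your inductive framework and case analysis are the right shape, and you correctly isolate the only nontrivial case ($1 \notin \bj$, $1 \in \bj'$) together with the ingredient that resolves it, namely $\Iw$-stability of the strata of $Y_{k-1}$. The gap is in how you deploy that ingredient: the arcs $[u(t)\dot{s}_{i_1}, z]$ with \emph{fixed} $z$ do not exhaust the specializations from $\bO(\bj')$ into $Y_{k-1}$ (the second component can vary with $t$), and in a pro-scheme of infinite type one should not simply grant that every point of $\overline{\bO(\bj')} \cap Y_{k-1}$ arises as a limit along such an arc at all. As written, ``invoking the inductive hypothesis on $Y_{k-1}$'' is not yet justified, because you have not shown $\bO(\bj) \subset \overline{\bO(\bj'|_{\{2,\ldots,k\}})}$ inside $Y_{k-1}$.

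The $\Iw$-stability you cite does the job cleanly if used to exhibit a closed saturated set rather than to chase limits. Since $\overline{\bO(\bj'|_{\{2,\ldots,k\}})}$ (closure in $Y_{k-1}$) is $\Iw$-stable, the subset $\Iw(i_1) \times^{\Iw} \overline{\bO(\bj'|_{\{2,\ldots,k\}})}$ of $\DP(\bi_k) = \Iw(i_1) \times^{\Iw} Y_{k-1}$ is well defined, and it is closed because its preimage under the topological quotient $\Iw(i_1) \times Y_{k-1} \to \DP(\bi_k)$ is the closed set $\Iw(i_1) \times \overline{\bO(\bj'|_{\{2,\ldots,k\}})}$. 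It contains $\bO(\bj')$, hence $\overline{\bO(\bj')}$; intersecting with the closed fiber $Y_{k-1}$ then gives $\overline{\bO(\bj')} \cap Y_{k-1} \subset \overline{\bO(\bj'|_{\{2,\ldots,k\}})}$, after which your induction applies. (The paper dismisses this lemma as immediate from the definitions and gives no explicit argument, so there is nothing concrete to compare against; the above is essentially what that dismissal unwinds to, and it also quietly patches the case $1 \in \bj \cap \bj'$, where you used without comment that the Zariski closure of a product of locally closed sets sits inside the product of closures.)
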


\begin{proof}
The proofs of the assertions are straightforward by the definitions.
\end{proof}

If we regard $\bO ( \bj )$ as a locally closed subscheme of 
$\DP ( \bi )$ via \eqref{Qunion} for $\bj \subset \bi_k$, 
then its images in $\DP^{\#}( \bi_k )$ and $\DP^{\#} ( \bi_{k'})$ 
for $k < k'$ are isomorphic through \eqref{factorBSDH}. 
Therefore, we can freely replace $\sfm$ with $\sfm_{k}$ 
when we analyze a single stratum in $\DP(\bi)$. 
We set $\DP(\bj) := \overline{\bO(\bj)}$, 
where the closure is taken in $\DP ( \bi )$.

\begin{lem}
We have $\DP ( \bi ) = \bigsqcup_{\bj} \bO ( \bj )$, 
where $\bj$ runs over all subsequences of $\bi$ 
such that $|\sigma(\bj)| < \infty$.
\end{lem}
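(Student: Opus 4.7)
The plan is to deduce this decomposition directly from Lemma~\ref{k-stratum}\,(1) together with the description $\DP(\bi) = \bigcup_{k \geq 0} \DP(\bi_k)$ in \eqref{Qunion}, after passing through the identifications of subsequences of $\bi_k$ set up above Lemma~\ref{stab}.

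First, I would observe that a subsequence $\bj$ of $\bi$ satisfies $|\sigma(\bj)| < \infty$ if and only if $\sigma(\bj) \subset \{1,\,\ldots,\,k\}$ for some $k \in \BZ_{\ge 0}$ (any finite subset of $\BZ_{>0}$ is bounded above), which is in turn equivalent to $\bj$ being identified with a subsequence of $\bi_k$ for some $k$. Thus the collection of $\bj$'s indexing the right-hand side of the claimed equality is precisely $\bigcup_{k \ge 0}\{\bj \subset \bi_k\}$ modulo the identifications.

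Next, I would verify that for $\bj \subset \bi_{k-1}$ and its identified extension $\bj' = (\bj,\,i_k) \subset \bi_k$ (so that $\sigma_{k-1}(\bj) = \sigma_k(\bj')$), the strata $\bO(\bj) \subset \DP(\bi_{k-1})$ and $\bO(\bj') \subset \DP(\bi_k)$ coincide as subschemes of $\DP(\bi)$. This follows from the explicit form of the embedding $\DP(\bi_{k-1}) \hookrightarrow \DP(\bi_k)$: as discussed preceding \eqref{Qunion}, it is induced by the identification $\bO(x(k-1)) \cong \Iw\dot{s}_{i_k}\Iw \times^{\Iw} \bO(x(k))$ coming from the latter case of Lemma~\ref{one-step}, and this places the $k$-th factor in $\Iw\dot{s}_{i_k}\Iw$, which is precisely the condition in the definition of $\bO(\bj')$ since $i_k \in \bj'$.

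Combining these two observations with Lemma~\ref{k-stratum}\,(1) yields
\begin{equation*}
\DP(\bi) = \bigcup_{k \ge 0} \DP(\bi_k) = \bigcup_{k \ge 0} \bigsqcup_{\bj \subset \bi_k} \bO(\bj) = \bigsqcup_{\bj} \bO(\bj),
\end{equation*}
where the final union ranges over all subsequences $\bj$ of $\bi$ with $|\sigma(\bj)| < \infty$. Disjointness of the final union is inherited from disjointness inside each $\DP(\bi_k)$, since any two distinct strata on the right-hand side simultaneously lie inside a common $\DP(\bi_k)$ for $k$ large enough, where Lemma~\ref{k-stratum}\,(1) applies. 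The only substantive check is the compatibility statement in the previous paragraph, which is immediate from the explicit form of the embedding; I do not anticipate any genuine obstacle beyond this bookkeeping.
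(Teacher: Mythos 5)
Your proposal is correct and follows the same route as the paper: Lemma~\ref{k-stratum}\,(1) applied to each $\DP(\bi_k)$, together with the compatibility of the stratification with the embeddings $\DP(\bi_{k-1}) \hookrightarrow \DP(\bi_k)$ (the ``consideration just above'' the lemma). You have simply spelled out the compatibility check in step 2, which the paper leaves implicit; your verification that the embedding $\bO(x(k-1)) \hookrightarrow \Iw(i_k) \times^{\Iw}\bO(x(k))$ lands in the $\Iw\dot{s}_{i_k}\Iw$-part, matching the definition of $\bO(\bj')$ with $i_k \in \bj'$, is exactly the right point to make.
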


\begin{proof}
Since $\DP (\bi) = \bigcup_{k \ge 1} \DP ( \bi_k )$, 
the assertion is a consequence of Lemma \ref{k-stratum} and 
the consideration just above.
\end{proof}

\begin{lem}
The map $\sfm$ is surjective.
\end{lem}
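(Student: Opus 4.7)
The image $\sfm(\DP(\bi))$ is $\ti{\Iw}$-invariant because $\sfm$ is $\ti{\Iw}$-equivariant through the leftmost factor $\Iw(i_{1})$, hence it is a union of $\Iw$-orbits in $\DP$. Since $\DP(x)$ admits the orbit stratification $\DP(x) = \bigsqcup_{y \in W_{\af}^{\ge 0},\, y \sige x} \bO(y)$ (by Corollary~\ref{orbits} together with the characterization $\DP(x) \subset \DP(y) \iff x \sige y$), it suffices to prove that each such orbit $\bO(y)$ is contained in $\sfm(\DP(\bi))$.

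Fix $y \in W_{\af}^{\ge 0}$ with $y \sige x$. By Lemma~\ref{BSDH-cover}, there exist $k \in \BZ_{\ge 0}$ and a subsequence $\bj$ of $\bi_{k}$ with $x(\bj) = y$. I consider the corresponding stratum $\bO(\bj) \subset \DP(\bi_{k}) \subset \DP(\bi)$, which is obtained from \eqref{defbQi} by replacing the $j$-th factor with $\Iw \dot{s}_{i_{j}} \Iw$ when $j \in \bj$ and with $\Iw$ otherwise. The plan is to show that $\sfm(\bO(\bj)) = \bO(y)$. Multiplying out these factors and acting on the base orbit $\bO(x(k))$ produces an $\Iw$-orbit of the form $\bO(z \cdot x(k))$ in $\RDP$, where $z = s_{i_{j_{1}}} \ast \cdots \ast s_{i_{j_{t}}} \in \Wm$ is the Demazure product of the selected simple reflections; this reflects the standard compatibility between Iwahori double-coset multiplication in $G\pra{z}$ and the monoid structure on $\Wm$. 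By Lemma~\ref{stab}, the element $z \cdot x(k)$, adjusted by the translation $t_{2(m_{0}+m')\rho^{\vee}}$ for $m'$ sufficiently large, coincides with $x(\bj) = y$, so the orbit in question is exactly $\bO(y)$.

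Combining these two steps, every $\Iw$-orbit in $\DP(x)$ is contained in $\sfm(\DP(\bi))$, and hence $\sfm$ is surjective.

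The main obstacle is the identification $\sfm(\bO(\bj)) = \bO(x(\bj))$. This can be established by induction on $k$ using the factorization of $\sfm$ through the maps $q_{i_{j}, x(j)}$ of Lemma~\ref{one-step}: positions $j \in \bj$ fall under the birational case (where $\sell$ strictly decreases upon crossing $s_{i_{j}}$), whereas positions $j \notin \bj$ fall under the $\BP^{1}$-fibration case. The only genuinely non-trivial bookkeeping is to recover, via Lemma~\ref{stab}, the element $x(\bj) \in W_{\af}$ from the Demazure product $z$ together with the auxiliary translation; once this is in hand, surjectivity follows directly by combining with Lemma~\ref{BSDH-cover} and the orbit stratification of $\DP(x)$.
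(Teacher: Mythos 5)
Your proof is correct and follows essentially the same route as the paper: both arguments combine Lemma~\ref{BSDH-cover} with $\ti{\Iw}$-equivariance and the orbit stratification of $\DP(x)$ from Corollary~\ref{orbits}. The paper phrases this more economically as ``each fiber of $\sfm$ over an $(H\times\Gm)$-fixed point is nonempty, and the $\ti{\Iw}$-action applied to the fixed points exhausts $\DP(x)$,'' which sidesteps the need to identify $\sfm(\bO(\bj))$ as precisely the orbit $\bO(x(\bj))$ (containment of the fixed point suffices), but the underlying content is identical to your argument.
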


\begin{proof}
By Lemma~\ref{BSDH-cover}, each fiber of $\sfm$ 
along an $(H \times \Gm)$-fixed point is nonempty. 
Because applying the $\tilde{\Iw}$-action to 
the set of $(H \times \Gm)$-fixed points exhausts $\DP ( x )$ 
by Corollary~\ref{orbits}, we conclude that $\sfm$ is 
surjective, as required. This proves the lemma. 
\end{proof}

\begin{lem}
The map $\sfm_k$ is $\ti{\Iw}$-equivariant, 
birational, and proper for each $k \ge 1$.
\end{lem}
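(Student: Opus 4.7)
The plan is to factor $\sfm_k$ into $k$ elementary single-step multiplication maps, each handled by an instance of Lemma~\ref{one-step} in its birational case. For $0 \le j \le k$, set
\[
Z_j := \Iw(i_1) \times^{\Iw} \cdots \times^{\Iw} \Iw(i_j) \times^{\Iw} \DP(x(j)),
\]
so that $Z_k = \DP^{\#}(\bi_k)$ and $Z_0 = \DP(x)$. For each $1 \le j \le k$, let $\mu_j : Z_j \to Z_{j-1}$ be the map induced by $q_{i_j,\,x(j)}$ on the final two factors; then $\sfm_k = \mu_1 \circ \mu_2 \circ \cdots \circ \mu_k$, and it is enough to verify $\tilde\Iw$-equivariance, properness, and birationality for each $\mu_j$ separately.

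First I will verify the hypothesis of Lemma~\ref{one-step}\,(2), namely $e \sile s_{i_j} x(j) \sile x(j)$, for every $j$. Since $x(j) = s_{i_j} x(j-1)$, the equality $\sell(x(j)) = \sell(x(j-1)) + 1$ from \eqref{eq:xk} translates into $x(j-1) = s_{i_j} x(j) \sil x(j)$. Iterating, $e \sile x = x(0) \sile x(1) \sile \cdots \sile x(j)$, where the leftmost relation expresses $x \in W_\af^{\ge 0}$. Thus Lemma~\ref{one-step} provides, for each $j$, that $q_{i_j,\,x(j)}$ is $\tilde\Iw$-equivariant, proper, and birational.

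Next I will transfer these properties to $\mu_j$. The map $\mu_j$ is obtained from $q_{i_j,\,x(j)}$ by taking the equivariant twist against the action of $\Iw(i_1) \times \cdots \times \Iw(i_{j-1})$; since this twist is Zariski-locally a product on the base $Z_{j-1}$, the properness of $q_{i_j,\,x(j)}$ descends, and the locus where the $j$-th slot lies in $\Iw \dot{s}_{i_j} \Iw$ and the final slot in $\bO(x(j))$ forms a dense open on which $\mu_j$ restricts to an isomorphism, giving birationality. The $\tilde\Iw$-equivariance of $\mu_j$ is immediate, since the action of $\tilde\Iw$ proceeds through left multiplication on $\Iw(i_1)$, which $\mu_j$ does not touch.

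Combining these, $\sfm_k$ will be $\tilde\Iw$-equivariant and proper as a composition of such maps, and birational because each $Z_j$ is integral (a tower of $\BP^1$-bundles over the irreducible orbit closure $\DP(x(j))$) and birationality composes over integral schemes. The main point requiring care is the descent of birationality and properness from $q_{i_j,\,x(j)}$ to $\mu_j$, which I expect to handle by exhibiting a suitable open cover of $Z_{j-1}$ that trivializes the equivariant twist by $\Iw(i_1) \times \cdots \times \Iw(i_{j-1})$.
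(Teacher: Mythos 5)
Your proposal is correct and follows essentially the same route as the paper: the paper's proof likewise observes that $\sfm_k$ is the composite (after base change along the twisted products) of the single-step maps $q_{i_j,\,x(j)}$, whose hypotheses are supplied by \eqref{eq:xk}, and then invokes Lemma~\ref{one-step} repeatedly. Your version merely makes explicit the intermediate spaces $Z_j$, the verification $e \sile x(j-1)=s_{i_j}x(j) \sil x(j)$, and the local triviality needed to descend properness and birationality through the twist, all of which the paper leaves implicit.
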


\begin{proof}
Since $\sell ( x ( k ) ) - \sell ( x ) = k$ for each $k \ge 1$, 
repeated application of Lemma~\ref{one-step} shows 
that $\sfm_k$ is birational and proper for each $k \ge 1$; 
this is because $\sfm_k$ is obtained as 
the base change of the composite of 
morphisms of type $q_{i_j,\,x(j)}$ for $1 \le j \le k$.
This proves the lemma. 
\end{proof}

The map $\sfm$ is also $\ti{\Iw}$-equivariant and 
birational since the embedding $\DP(\bi) \subset \DP^{\#} ( \bi )$ is open.
%
%
\subsection{Cohomology of line bundles over $\DP$.}
\label{subsec:coh}

In this subsection, keeping the setting of 
the previous subsection, 
we also assume that $x=e$, the identity element; 
in this case, we can (and do) take the $m_{0}$ 
(in Lemma~\ref{tweakw}\,(2)) to be $0$, so that we have $\ell'=0$ 
(see \eqref{eq:reduced}). 
In particular, we have $\DP ( x ) = \DP ( e ) = \DP$.

For each $\lambda \in P$, 
the $\ti{\Iw}$-equivariant line bundle $\CO_{\DP ( x( k ) )} ( \lambda )$ 
induces a line bundle over $\DP(\bi_{k})$ 
for each $k \in \BZ_{\ge 0}$. 
%
%
\begin{prop} \label{H-range}
For each $\lambda \in P$, we have an isomorphism
\begin{equation*}
H^{0}( \DP(\bi), \,\CO_{\DP(\bi)}(\lambda) ) \cong W(\lambda)^{\ast}
\end{equation*}
of $\ti{\Iw}$-modules. In particular, 
the left-hand side carries a natural structure of graded $\Fg[z]$-module.
\end{prop}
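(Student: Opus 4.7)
The strategy is to construct a natural $\ti{\Iw}$-equivariant comparison map
$$r : H^{0}(\DP(\bi), \CO_{\DP(\bi)}(\lambda)) \longrightarrow H^{0}(\cDP, \CO_{\DP}(\lambda))$$
using the birational proper morphism $\sfm : \DP(\bi) \to \DP$, and to prove it is an isomorphism; by Theorem \ref{BFmain} the target is $W(\lambda)^{*}$ when $\lambda \in P^{+}$ and vanishes otherwise. Since $x(k-1) \sil x(k)$ for every $k \ge 1$ by \eqref{eq:xk}, each step of the BSDH tower is in the second (birational) case of Lemma \ref{one-step}, so $\sfm$ is birational onto $\DP$ and restricts to an isomorphism over an $\Iw$-stable open dense $V \subset \DP$ meeting $\bO(e)$. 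Restriction along $\sfm^{-1}(V) \cong V$, together with the $G\bra{z}$-equivariance of $\CO_{\DP}(\lambda)$ (using $\cDP = G\bra{z} \cdot V$), then extends any section of $\CO_{\DP(\bi)}(\lambda)$ uniquely to a section over $\cDP$, giving $r$.

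Injectivity of $r$ is immediate from the normality of $\DP(\bi)$ (Lemma \ref{up-normal}): any section of the line bundle $\CO_{\DP(\bi)}(\lambda)$ that vanishes on the open dense subset $\sfm^{-1}(V)$ must vanish identically. This also disposes of the case $\lambda \notin P^{+}$, where $W(\lambda)^{*} = 0$. For surjectivity when $\lambda \in P^{+}$, I would match graded characters. Using the iterated BSDH structure of $\DP^{\#}(\bi_{k})$ and Lemma \ref{one-step}, combined with vanishing of the relevant higher cohomologies, one reduces $\gch H^{0}(\DP^{\#}(\bi_{k}), \CO(\lambda))$ to a telescoping composition of Demazure-type operators applied to the base character $\gch H^{0}(\DP(x(k)), \CO(\lambda))$, which by Theorem \ref{Kmain} equals $\gch V_{x(k)}^{-}(- w_{\circ} \lambda)$. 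The concatenated reduced-word structure of $\bi$ then forces this character to stabilize as $k \to \infty$ and telescope to $\gch V_{e}^{-}(-w_{\circ} \lambda) = \gch W(\lambda)^{*}$. A Mittag-Leffler argument in the $\Gm$-graded setting commutes $\varinjlim_{k}$ with $H^{0}$, and passing from the ambient $\DP^{\#}(\bi_{k})$ down to the open subscheme $\DP(\bi_{k})$ preserves graded characters since, in each graded piece, the complementary strata have positive codimension. Combined with the injectivity of $r$, this forces $r$ to be an isomorphism.

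The main obstacle is carrying out this character computation rigorously: we need (i) the vanishing of higher direct images along the $\BP^{1}$-fibration and birational steps in the tower, so that $\gch H^{0}$ transforms correctly (and not merely the Euler characteristic); (ii) to bridge the small gap between Theorem \ref{Kmain}, stated for $\FQ_{G}(x(k))$, and the statement on the closure $\DP(x(k))$; and (iii) to justify that $H^{0}$ commutes with the direct limit $\DP(\bi) = \varinjlim_{k} \DP(\bi_{k})$ in each graded degree via the $\Gm$-action. Once $r$ is an isomorphism of $\ti{\Iw}$-modules, the graded $\Fg[z]$-module structure on the left-hand side is transported from the one on $W(\lambda)^{*}$ recalled just before Theorem \ref{BFmain}.
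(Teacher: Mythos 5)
There is a genuine gap here --- two, in fact. First, your construction of the comparison map $r$ does not work as stated: restricting a section of $\CO_{\DP(\bi)}(\lambda)$ along $\sfm^{-1}(V)\cong V$ only produces a section over the $\Iw$-stable open set $V$, and $G\bra{z}$-equivariance of the line bundle does not let you propagate an individual (non-invariant) section from $V$ to $\cDP=G\bra{z}\cdot V$: the translates $g\cdot s$ need not agree on overlaps, so there is nothing to glue. At best you get an injection into $H^{0}(V,\CO(\lambda))$, which is far larger than $W(\lambda)^{\ast}$, so the target of $r$ is not under control. For the same reason, ``positive codimension'' does not let you pass between $\DP^{\#}(\bi_{k})$ and its open subscheme $\DP(\bi_{k})$ without changing $H^{0}$: the complement involves $\DP(x(k))\setminus\bO(x(k))$, which contains codimension-one $\Iw$-orbits, so sections over $\DP(\bi_{k})$ need not extend to the ambient space.

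Second, and more seriously, your surjectivity step is circular. You feed $\gch H^{0}(\DP(x(k)),\CO(\lambda))=\gch V^{-}_{x(k)}(-w_{\circ}\lambda)$ into the telescoping Demazure computation, but Theorem~\ref{Kmain} gives this only for the quasi-map spaces $\FQ_{G}(x(k))$; the corresponding statement for the formal-power-series closures $\DP(x(k))$ is Corollary~\ref{coh}, which is deduced later from Theorem~\ref{normal}, which in turn rests on Proposition~\ref{H-range}. This is not a ``small gap'' to be bridged but the crux of the whole section. The paper's proof is engineered precisely to avoid it: $\DP(\bi_{k})$ carries the single orbit $\bO(x(k))$ (a pro-affine space with known coordinate ring) in its last slot, and one computes
$H^{0}(\DP(\bi),\CO_{\DP(\bi)}(\lambda))\cong\varprojlim_{m}\bD_{-2\rho^{\vee}}^{m}\bigl(\bD_{w_{\circ}}(\BC[\bO(w_{\circ}t_{2m\rho^{\vee}})]\otimes\BC_{-t_{-2m\rho^{\vee}}\lambda})\bigr)$,
so that the only geometric input about $\DP$ is Theorem~\ref{BFmain} on the open orbit $\cDP$, and the only representation-theoretic input is the stability of $W(\lambda)^{\ast}$ under $\bD_{-2\rho^{\vee}}$ (with the degree shift cancelling), i.e.\ \cite[Theorem~4.13]{Kat16}. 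To salvage your strategy you would have to replace the closures $\DP(x(k))$ by the orbits $\bO(x(k))$ throughout and invoke that Demazure-functor stability rather than Theorem~\ref{Kmain}.
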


\begin{proof}
We adopt the notation of the previous subsection, 
with $x=e$, $m_{0}=0$, and $\ell' = 0$. Also, since 
$\ell ( t_{- 2 \rho^{\vee}} ) = 
\ell ( w_{\circ} ) + \ell ( w_{\circ} t_{- 2 \rho^{\vee}} )$,
we can rearrange the reduced expression 
$(i''_1,\,\ldots,\,i''_{\ell}) = (i_1,\,\ldots,\,i_{\ell})$ 
for $t_{- 2 \rho^{\vee}}$, if necessary, 
in such a way that the first $\ell(w_{\circ})$-entries 
$(i_1,\,\ldots,\,i_{\ell(w_{\circ})})$ 
give rise to a reduced expression for $w_{\circ}$. 

The map $\sfm$ factors as $\sfm' \circ \sfm''$, 
where $\sfm''$ is the map obtained by collapsing 
the first $\ell ( w_{\circ} )$-factors in $\DP ( \bi )$ as: 
%
%
\begin{equation} \label{eq:mapm}
\DP ( \bi ) = \Iw ( i_1 ) \times^{\Iw} \cdots \times^{\Iw} 
\Iw (i_{\ell ( w_{\circ} )}) \times ^{\Iw} X 
\rightarrow G \bra{z} \times ^{\Iw} X,
\end{equation}
with $X$ a certain scheme admitting an $\ti{\Iw}$-action, 
and $\sfm'$ is the natural map 
$G\bra{z} \times^{\Iw} X \rightarrow \DP$ 
induced by the action. 
In particular, we deduce that 
$\sfm_{\ast} \CO _{\DP ( \bi )} ( \lambda )$ 
admits a $\ti{G}\bra{z}$-action, and hence 
that the space of global sections of 
$\sfm_{\ast} \CO _{\DP ( \bi )} ( \lambda )$ is 
a $\Fg[z]$-module. 

Now, for each $m \in \BZ_{\ge 0}$, we have 
$\DP ( \bi_{\ell(w_{\circ}) + \ell m} ) \subset \DP (\bi)$.
Also, for each $k \in \BZ_{\ge 0}$, 
we have an inclusion $\DP ( \bi_k ) \hookrightarrow \DP ( \bi_{k + \ell} )$ 
by sending the stratum $\bO(\bj)$ corresponding to $\bj \subset \bi_k$ 
to the one corresponding to $\bj_{+} \subset \bi_{k + \ell}$, 
which is obtained by shifting all the entries by $\ell$ 
(so that $\{1,\,2,\,\ldots,\,\ell\} \cap \bj_{+} = \emptyset$ and 
$|\sigma_{k+\ell} (\bj_{+})| = |\sigma_{k} (\bj)| + \ell$); 
this corresponds to twisting $\Img \sfm = \DP$ to 
$\DP ( t_{2 \rho^{\vee}} )$. It follows that the $\Gm$-action on 
the line bundle $\CO_{\DP ( \bi_k )} ( \lambda )$ is 
twisted by $t_{2 \rho^{\vee}}$, whose actual effect is 
of degree $- 2 \pair{\lambda}{\rho^{\vee}}$. 
In particular, the structure map 
$\pi_{m} : \DP ( \bi_{\ell(w_{\circ})+\ell m} ) 
\rightarrow \bigl\{ \mathrm{pt} \bigr\}$ for $m \in \BZ_{\ge 0}$ 
factors as: 
%
%
\begin{equation} \label{factor-pi}
\pi_{m} \cong 
    \bigl( ( \pi_{1})^{\ast} \bigr)^{m} ( \pi_{0} ),
\end{equation}
where the maps $\pi_0$ and $\pi_1$ are projections
\begin{align}
\pi_{0} & : 
 \Iw ( i_{1} ) \times^{\Iw} \Iw ( i_{2} ) \times^{\Iw} \cdots \times^{\Iw} 
 \Iw ( i_{\ell(w_{\circ})}) \times^{\Iw} 
 \bO( w_{\circ} ) \rightarrow \bigl\{ \mathrm{pt} \bigr\}, \nonumber \\
\pi_{1} & : 
 \Iw ( i_{1} ) \times^{\Iw} \Iw ( i_{2} ) \times^{\Iw} \cdots \times^{\Iw} 
 \Iw ( i_{\ell} ) / \Iw 
 \rightarrow \bigl\{ \mathrm{pt} \bigr\}, \label{pi1}
\end{align}
and $(\pi_{1})^{\ast}$ means the pullback obtained 
by identifying $\bigl\{\mathrm{pt}\bigr\}$ with $\Iw / \Iw \in \Iw ( i_{\ell} ) / \Iw$ 
in \eqref{pi1}.

For each $i \in I_{\af}$ and an $\Iw$-module $M$, we define
$\bD_{i}(M)$ to be the space of $(H \times \Gm)$-finite vectors in 
$H^{0} ( \Iw (i) / \Iw, \, \Iw ( i ) \times^{\Iw} M^{\ast} )$; 
this can be thought as a left exact endo-functor 
in the category of $(H \times \Gm)$-semi-simple $\FI$-modules. We set
\begin{equation*}
\bD_{- 2 \rho^{\vee}} := 
\bD_{s_{i_1}} \circ \bD_{s_{i_2}} \circ \cdots \circ \bD_{s_{i_{\ell}}}
\quad \text{and} \quad 
\bD_{w_{\circ}} := 
\bD_{s_{i_1}} \circ \bD_{s_{i_2}} \circ \cdots \circ \bD_{s_{i_{\ell ( w_{\circ} )}}}.
\end{equation*}
Then, successive application of 
the Leray spectral sequence to \eqref{factor-pi}, 
together with the fact that the $\Gm$-twist commutes 
with the whole construction, yields
\begin{equation*}
H^0 ( \DP (\bi) ,\, \CO _{\DP ( \bi )} ( \lambda )) 
\stackrel{\cong}{\longrightarrow} 
\varprojlim_{m} \bD_{- 2 \rho^{\vee}}^{m} 
 \left( \bD_{w_{\circ}} 
   \left( \BC [ \bO( w_{\circ} t_{2 m \rho^{\vee}} ) ] \otimes 
          \BC _{- t_{- 2 m \rho^{\vee}} \lambda} \right) \right), 
\end{equation*}
where we have used the equality 
$w_{\circ} t_{2 m \rho^{\vee}} w_{\circ} 
= t_{- 2 m \rho^{\vee}}$.  

In view of \eqref{eq:mapm} with $X$ 
replaced by $\bO( w_{\circ} t_{2 m \rho^{\vee}})$, 
Theorem~\ref{BFmain} implies
\begin{equation*}
\bD_{w_{\circ}} \left( 
 \BC [ \bO ( w_{\circ} t_{2 m \rho^{\vee}} ) ] \otimes 
 \BC_{- t_{- 2 m \rho^{\vee}} \lambda} \right) \cong W ( \lambda )^{\ast},
\end{equation*}
where the grading of the right hand side is shifted by 
$- 2m \pair{\lambda}{\rho^{\vee}}$. 
Hence, \cite[Theorem~4.13]{Kat16} (cf. \cite[Lemma~2.8]{Kas05} and 
\cite[Corollary~4.8]{Kat16}) implies
\begin{equation*}
\bD_{- 2 \rho^{\vee}}^m \left( \bD_{w_{\circ}} \left( 
  \BC [ \bO ( w_{\circ} t_{2 m \rho^{\vee}} ) ] \otimes 
  \BC_{- t_{- 2 m \rho^{\vee}} \lambda} \right) \right) \cong W(\lambda)^{\ast}
\end{equation*}
without grading shift. Therefore, we conclude
\begin{equation*}
H^{0} ( \DP ( \bi ), \CO _{\DP(\bi)}(\lambda)) \cong W(\lambda)^{\ast}
\end{equation*}
for the choice of $\bi$ fixed at the beginning of the proof.
The assertion for a general $\bi$ follows 
from the fact that 
\begin{equation*}
\bD_{- 2 \rho^{\vee}}^m \cong \bD_{s_{j_1}} \circ \cdots \circ \bD_{s_{j_{m\ell}}}
\end{equation*}
holds for an arbitrary reduced expression $s_{j_1} \cdots s_{j_{m\ell}}$ for
$t_{- 2 m \rho^{\vee}}$. This completes the proof of the proposition. 
\end{proof}
%
%
\begin{thm}[Kneser-Platonov; see, e.g., {\cite[Proof of Theorem 5.8]{Gille}}] \label{w-approx} 
The subset $G[z] \subset G\bra{z}$ is dense.
\end{thm}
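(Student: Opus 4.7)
The plan is to reduce the statement to two basic facts: (i) for each root $\alpha \in \Delta$, the root subgroup satisfies $U_\alpha \cong \BA^1$, so $U_\alpha[z] = \BC[z]$ is $z$-adically dense in $\BC\bra{z} = U_\alpha\bra{z}$; and (ii) since $G$ is simply-connected semisimple and $\BC\bra{z}$ is a (complete) local ring, the group $G\bra{z}$ is generated as an abstract group by the subgroups $U_\alpha\bra{z}$ for $\alpha \in \Delta$.

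Granting (i) and (ii), the density argument becomes short. Given $g \in G\bra{z}$ and $N \ge 1$, write $g = x_{\beta_{1}}(t_{1}) \cdots x_{\beta_{k}}(t_{k})$ as a finite product of root subgroup elements with $t_{j} \in \BC\bra{z}$, and truncate each $t_{j}$ to a polynomial $t_{j}' \in \BC[z]$ with $t_{j} - t_{j}' \in z^{N}\BC\bra{z}$. The resulting element $g' := x_{\beta_{1}}(t_{1}') \cdots x_{\beta_{k}}(t_{k}') \in G[z]$ satisfies $g \equiv g' \pmod{z^{N'}}$ for some $N' = N'(N,k)$ that tends to infinity with $N$, by continuity of multiplication on $G\bra{z}$ in the $z$-adic topology underlying its pro-scheme structure. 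This is precisely the density of $G[z]$ in $G\bra{z}$.

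The main obstacle is the generation statement (ii), which is where the simply-connectedness of $G$ is essential: for non-simply-connected $G$ there are obstructions coming from $\pi_{1}(G)$. One may prove (ii) either by induction on the semi-simple rank, using the Bruhat decomposition together with the fact that $\mathrm{SL}_{2}(R)$ is generated by elementary matrices over a local ring $R$ (reducing the rank-one subgroups attached to simple roots to $\mathrm{SL}_{2}$), or more uniformly by invoking Matsumoto's theorem, equivalently the triviality of $K_{1}$ for simply-connected Chevalley groups over local rings. This is precisely the Kneser-Platonov input extracted from \cite[Proof of Theorem 5.8]{Gille}; once it is granted, the density of $G[z]$ in $G\bra{z}$ follows formally from the two-step approximation sketched above.
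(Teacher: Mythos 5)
Your proof is correct, and it takes a genuinely different route from the paper's, so a comparison is worthwhile. The paper's Kneser--Platonov input is generation of $G\pra{z}$ by the $\BC\pra{z}$-points of the root subgroups, i.e.\ a statement over the \emph{field} $\BC\pra{z}$; it then approximates each Laurent-series argument by a Laurent polynomial, uses continuity of multiplication to approximate $g \in G\bra{z}$ by elements $g' \in G(\BC[z,z^{-1}])$, and finally intersects: since $G\bra{z}$ is open in $G\pra{z}$ the approximants eventually land in $G\bra{z}$, and $G(\BC[z,z^{-1}]) \cap G\bra{z} = G[z]$ because $\BC[z,z^{-1}] \cap \BC\bra{z} = \BC[z]$ and $G$ is affine. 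Your version replaces the field-level generation result by generation of $G(R)$ by elementary root subgroups for $R = \BC\bra{z}$ a local ring, which is a distinct theorem (a Matsumoto/Stein-type elementary-generation result rather than Kneser--Tits over a field), even though both rest on the same Bruhat-decomposition-plus-$\mathrm{SL}_2$ mechanism. What your variant buys is that the entire argument takes place inside $G\bra{z}$: you never leave the power-series ring, so you never need the openness of $G\bra{z}$ in $G\pra{z}$ nor the final fiber-product intersection, and the truncation step lands in $G[z]$ by construction. One small inaccuracy in your write-up: you assert that your input (ii) is ``precisely'' the Kneser--Platonov input from Gille's argument, but that reference concerns $G(\BC\pra{z})$; your local-ring generation statement is a cousin of it, not the same theorem. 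Also a cosmetic point: the bound $N'(N,k)$ can be taken to be $N$ itself, independent of the number of factors $k$, since reduction modulo $z^{N}$ is a ring homomorphism and $G$ is a functor, so $g_i \equiv g_i' \pmod{z^N}$ for all $i$ already forces $\prod g_i \equiv \prod g_i' \pmod{z^N}$.
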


\begin{proof}
The original claim is that $G\pra{z}$ is generated by 
the set of $\BC\pra{z}$-valued points of the unipotent groups of $G$. 
Because the set of $\BC(z)$-valued points in a one-dimensional unipotent group is 
dense in the set of $\BC\pra{z}$-valued points in the sense that
we can approximate the latter by the former up to an arbitrary order of $z$, 
it follows that we can approximate $G\pra{z}$ by elements of $G(z)$ 
up to an arbitrary order of $z$. Since such an approximation of an element of 
$G\bra{z}$ is achieved by elements that are regular at $z = 0$, 
we conclude the assertion. 
\end{proof}
%
%
\begin{thm} \label{normal}
We have $\sfm_{\ast} \CO _{\DP(\bi)} \cong \CO_{\DP}$. 
In particular, the scheme $\DP$ is normal.
\end{thm}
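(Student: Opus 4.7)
The second assertion is a formal consequence of the first: $\DP$ is integral (reduced by Lemma \ref{defbQ} and irreducible as $\DP = \overline{\bO(e)}$), $\DP(\bi)$ is normal by Lemma \ref{up-normal}, and $\sfm$ is birational. So once $\sfm_{\ast}\CO_{\DP(\bi)} = \CO_{\DP}$ holds, every section $\CO_{\DP}(U) = \CO_{\DP(\bi)}(\sfm^{-1}(U))$ is integrally closed in the common function field $K(\DP) = K(\DP(\bi))$, whence normality of $\DP$. I therefore focus on the sheaf equality.

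Since $\sfm$ is birational, the unit $\CO_{\DP} \hookrightarrow \sfm_{\ast}\CO_{\DP(\bi)}$ is injective, and the task is surjectivity. My plan is to compare $P^{+}$-graded global sections after twisting by $\CO_{\DP}(\lambda)$ and then recover the sheaves from these graded data using the very ampleness of $\CO_{\DP}(\lambda)$ for $\lambda \in P^{++}$ (Lemma \ref{vample}). For any $\lambda \in P^{+}$, the projection formula together with Proposition \ref{H-range} gives
\[
H^{0}(\DP,\,\sfm_{\ast}\CO_{\DP(\bi)} \otimes \CO_{\DP}(\lambda)) \cong H^{0}(\DP(\bi),\,\CO_{\DP(\bi)}(\lambda)) \cong W(\lambda)^{\ast},
\]
while restriction along the open immersion $\cDP \hookrightarrow \DP$ combined with Theorem \ref{BFmain} supplies an $\ti{\Iw}$-equivariant injection $H^{0}(\DP,\,\CO_{\DP}(\lambda)) \hookrightarrow W(\lambda)^{\ast}$.

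To upgrade this latter injection to an isomorphism, I exploit the Pl\"ucker embedding of Lemma \ref{defbQ}: for each $i \in I$, the pullback of the tautological section on $\BP(L(\vpi_{i})\bra{z})$ provides a global section of $\CO_{\DP}(\vpi_{i})$. Using the $G\bra{z}$-equivariance of $\CO_{\DP}(\lambda)$ together with the density $G[z] \subset G\bra{z}$ from Theorem \ref{w-approx} --- so that $G\bra{z}$-translates can be realized as schematic limits of $G[z]$-translates --- the orbit of such sections should fill out $W(\vpi_{i})^{\ast}$ inside $H^{0}(\DP,\,\CO_{\DP}(\vpi_{i}))$. Combined with the $P^{+}$-graded-algebra generation of $\bigoplus_{\lambda \in P^{+}} W(\lambda)^{\ast}$ by its primitive-degree pieces $W(\vpi_{i})^{\ast}$ (obtained by comparing the multiplication on the right-hand side with the Drinfeld--Pl\"ucker embeddings $L(\lambda+\mu) \hookrightarrow L(\lambda) \otimes L(\mu)$ of \eqref{HWtensor}), this will force $H^{0}(\DP,\,\CO_{\DP}(\lambda)) = W(\lambda)^{\ast}$ for all $\lambda \in P^{+}$. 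Both $P^{+}$-graded algebras of global sections then coincide, and very ampleness lets me reconstruct both $\CO_{\DP}$ and $\sfm_{\ast}\CO_{\DP(\bi)}$ as $\Proj$ of this common graded algebra, giving $\CO_{\DP} = \sfm_{\ast}\CO_{\DP(\bi)}$.

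The main obstacle lies in the middle identification: in this infinite-type setting, the restricted-dual nature of $W(\lambda)^{\ast}$ and the ind-scheme topology on $G\bra{z}$ require careful handling to ensure that the $G\bra{z}$-translates of the Pl\"ucker sections genuinely exhaust $W(\lambda)^{\ast}$ rather than a proper $\Fg[z]$-stable subspace, and that primitive-degree generation of $\bigoplus W(\lambda)^{\ast}$ holds. Theorem \ref{w-approx} and the explicit $(H \times \Gm)$-fixed-point description of $\bO(e)$ should anchor these identifications.
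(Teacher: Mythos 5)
Your reduction of normality to the sheaf identity $\sfm_{\ast}\CO_{\DP(\bi)} = \CO_{\DP}$ is correct, and so is the overall skeleton: compare $P^{+}$-graded global sections, use Proposition~\ref{H-range} together with the projection formula, exploit primitive-degree generation, and bring in Lemma~\ref{vample} at the end to pass from graded modules back to sheaves. This matches the paper's structure up to the level of what needs to be proved.

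The genuine gap is in the middle step, and you identify the right spot but do not close it. You want the $G\bra{z}$-translates of the Pl\"ucker sections to exhaust $W(\vpi_{i})^{\ast}$. But the Pl\"ucker sections only span the $\Fg[z]$-submodule $L(\vpi_{i})[z]^{\ast} \subset W(\vpi_{i})^{\ast}$, which is a \emph{proper} submodule outside types $A$ and $C$ (the paper flags exactly this point). The density of $G[z]$ in $G\bra{z}$ (Theorem~\ref{w-approx}) does not by itself force these translates to escape $L(\vpi_{i})[z]^{\ast}$: translating $L(\vpi_{i})[z]^{\ast}$ around inside $H^{0}$ does not automatically produce vectors outside it, and there is no a priori reason the resulting span is $G\bra{z}$-stable in a way that makes it all of $W(\vpi_{i})^{\ast}$. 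In fact the paper does \emph{not} use density for an orbit-filling argument at all.

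What the paper does instead is reduce to showing that the associated graded ring of the ambient projective coordinate ring $R''_{G}$ (from the Pl\"ucker embedding) contains $R_{G} = \bigoplus_{\lambda}W(\lambda)^{\ast}$. It first invokes the Serre vanishing theorem on the finite-dimensional quasi-maps spaces $\FQ_{G}(\xi)$ (projective varieties with rational singularities by \cite[Proof of Theorem 3.1]{BF14c}) to get surjectivity of the restriction map from Pl\"ucker sections to $H^{0}(\FQ_{G}(\xi),\,\CO(\lambda))$ for $\lambda \gg 0$. It then proves an auxiliary injectivity statement (Claim~\ref{H-inj}): for a fixed degree $n$, one can choose $\lambda$ and $\xi$ so that the restriction $W(m\lambda)^{\ast} \to H^{0}(\FQ_{G}(\xi),\,\CO(m\lambda))$ is injective at degrees $\geq n$ for all $m$. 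It is \emph{inside the proof of Claim~\ref{H-inj}} that Theorem~\ref{w-approx} is used, and in a quite different role from what you propose: it shows equality of the point-images $\pr(\FQ_{G}\setminus Z) = \pr(\DP \setminus Z)$ under a truncation map $\pr$ to $\prod_{i}\BP(W(\vpi_{i})_{\leq -n})$, which combined with Noetherianness, constructibility, and irreducibility of $\FQ_{G}(\xi)$ yields a Zariski-dense image for some $\xi$. Surjectivity plus this injectivity, fed through the commutative diagram \eqref{diag}, then forces each $W(\vpi_{i})^{\ast}$ to lie in the associated graded of $R''_{G}$. This is the content you would need to supply in place of the heuristic "orbit of such sections should fill out $W(\vpi_{i})^{\ast}$".
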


\begin{proof}
We (can) employ the same reduced expression for $t_{-2\rho^{\vee}}$ 
as in the proof of Proposition~\ref{H-range}; recall the last sentence of the proof. 
The pullback defines a map $\sfm^{\ast} \CO_{\DP} \rightarrow \CO_{\DP(\bi)}$, 
whose adjunction in turn yields $\CO _{\DP} \to \sfm_{\ast} \CO_{\DP(\bi)}$. 
From this, by twisting by $\CO _{\DP} ( \lambda )$ for some $\lambda \in P^{+}$, 
we obtain the following short exact sequence: 
\begin{equation*}
0 \rightarrow \CO_{\DP} (\lambda) \rightarrow 
  \sfm_{\ast} \CO_{\DP(\bi)} (\lambda) \rightarrow 
  \coker \rightarrow 0, 
\end{equation*}
from which we deduce a $\Fg [z]$-module inclusion
\begin{equation*}
H^0 ( \DP, \CO _{\DP} ( \lambda ) ) \hookrightarrow 
H^0 ( \DP, \sfm_{\ast} \CO _{\DP ( \bi )} ( \lambda ) ) \cong 
H^0 ( \DP ( \bi ), \CO _{\DP ( \bi )} ( \lambda ) ),
\end{equation*}
by taking their global sections. 
The rightmost one is isomorphic to $W(\lambda)^{\ast}$ 
by Proposition~\ref{H-range}. 
In particular, we have algebra homomorphisms:
\begin{align*}
\bigoplus _{\lambda \in P^{+}} 
  \Gamma ( \DP, \CO_{\DP} ( \lambda ) ) 
& \subset 
\bigoplus _{\lambda \in P^{+}} 
 \Gamma ( \DP, \sfm_{\ast} \CO_{\DP (\bi)} ( \lambda ) ) \\
& \cong \bigoplus _{\lambda \in P^{+}} \Gamma ( \DP (\bi), \CO_{\DP (\bi)} ( \lambda ) ) \\
& \cong \bigoplus _{\lambda \in P^{+}} W ( \lambda )^{\ast};
\end{align*}
let us denote the leftmost one by $R'_{G}$ and the rightmost one by $R_{G}$. 
Since $\DP(\bi)$ is normal, we deduce that $R_{G}$ is normal 
when localized with respect to homogeneous elements in $W(\vpi_{i})^{\ast}$ 
for each $i \in I$. For the same reason, $R_{G}$ is an integral domain. 
The ring structure of $R_{G}$ is induced by the unique (up to scalar) 
$\Fg [z]$-module map
%
%
\begin{equation} \label{d-mult}
W(\lambda + \mu) \longrightarrow W (\lambda) \otimes_{\BC} W ( \mu ), \qquad 
\lambda,\,\mu \in P^{+}, 
\end{equation}
of degree zero. In view of \cite[Proof of Theorem 3.3]{Kat16}, 
we deduce that the multiplication map $W(\lambda)^{\ast} \otimes W (\mu)^{\ast} \rightarrow 
W (\lambda + \mu)^{\ast}$ is surjective since \eqref{d-mult} is injective. 
Therefore, $R_{G}$ is a normal ring generated by terms of primitive degree 
(see, e.g., \cite[Chap.~I\hspace{-1pt}I, Exerc.~5.14]{Har77}; 
cf. \cite[Proof of Theorem 3.3]{Kat16}).

From the above, it suffices to prove $R_{G}' = R_{G}$. For this purpose, 
it is enough to prove that the associated graded ring of 
the projective coordinate ring $R''_{G}$ of $\DP$, 
which is arising from its structure of a closed subscheme of 
$\prod_{i \in I} \BP(L(\vpi_{i})\bra{z})$, contains $R_{G}$ 
(see, e.g., \cite[Sect.~2.6]{EGA1} for convention). 
Recall that the projective coordinate ring of 
$\BP ( L(\vpi_{i})\bra{z} )$ is $\bigoplus_{n \ge 0} S^{n} \bigl( L(\vpi_{i})[z]^{\ast}\bigr)$, 
where $S^{n}V$ denotes the $n$-th symmetric power of a vector space $V$. 
Thanks to the surjectivity of multiplication map of $R_G$, it is further reduced to 
seeing that for each $i \in I$, every element of the part $W(\vpi_{i})^{\ast}$ 
of degree $\vpi_{i}$ in $R_{G}$ is written as the quotient of an element of 
$\prod_{i \in I} S^{ \pair{\lambda}{\alpha_{i}^{\vee}} } \bigl( L ( \vpi_{i} ) [z] ^{\ast} \bigr)$ 
by some power of monomials in elements of 
$L ( \vpi_{j} )[z]^{\ast} \subset W ( \vpi_{j} )^{\ast}$, $j \in I$ 
(note that this condition is particularly apparent in types $A$ and $C$ 
since $W ( \vpi_{i} ) = L ( \vpi_{i} ) [z]$ for each $i \in I$).

By \cite[Proof of Theorem 3.1]{BF14c}, 
each $\FQ_{G}(\xi)$, $\xi \in Q^{\vee, +}$, is a projective variety with rational singularities. 
By the Serre vanishing theorem \cite[Th\'eor\`eme 2.2.1]{EGA3-1} applied to 
the ideal sheaf that defines $\FQ_{G}(\xi)$ 
(inside the product of finite-dimensional projective spaces in 
$\BP ( L(\vpi_{i})\bra{z} )$ obtained by bounding the degree; cf. \cite[(2.1)]{Kat16}), 
the restriction map
%
%
\begin{equation} \label{surj-rest}
\bigotimes_{i \in I} H^{0} \bigl( \BP ( L(\vpi_{i})\bra{z}) , 
\CO(\pair{\lambda}{\alpha_{i}^{\vee}}) \bigr) \rightarrow 
H^{0} ( \FQ_{G}(\xi), \CO_{\FQ_{G}(\xi)} (\lambda) ) 
\end{equation}
is surjective for sufficiently large $\lambda \in P^{+}$, 
where we have used the fact that
\begin{equation*}
\bigotimes _{i \in I} 
  H^{0}( \BP ( L(\vpi_{i})\bra{z} ), \mathcal{F}_{i}) \cong 
  H^{0}\left( \prod_{i \in I} \BP ( L (\vpi_{i})\bra{z} ), \boxtimes_{i \in I} \mathcal{F}_{i} \right)
\end{equation*}
holds for vector bundles $\mathcal{F}_{i}$ of finite rank on $\BP( L(\vpi_{i})\bra{z} )$.
%
%
\begin{claim} \label{H-inj}
For a given degree $n \in \BZ_{\le 0}$, 
we can choose $\lambda \in P^{+}$ and $\xi \in Q^{\vee,+}$ sufficiently large 
in such a way that for every $m \in \BZ_{>0}$, the restriction map
\begin{equation*}
W ( m \lambda )^{\ast} \subset W ( m \lambda )^{\vee} = 
 H^{0} ( \FQ_{G}, \CO _{\FQ_{G}} (m \lambda ) ) \longrightarrow 
 H^{0} ( \FQ_{G} ( \xi ), \CO_{\FQ_{G} (\xi)} ( m \lambda ) )
\end{equation*}
is injective at degree greater than or equal to $n$.
\end{claim}

\noindent
{\it Proof of Claim~\ref{H-inj}.}
Let us denote by $W (\vpi_{i} )^{\ast}_{\ge n} \subset W(\vpi_{i})^{\ast}$ and 
$W (\vpi_{i})_{\le -n} \subset W (\vpi_{i})$ 
the direct sum of the homogeneous components of $W(\vpi_{i})^{\ast}$ of 
degree greater or equal to $n$, and the the direct sum of 
the homogeneous components of $W ( \vpi_{i} )$
of degree less than or equal to $-n$, respectively. 
Also, let $R_{G}^{n}$ be the subring of $R_{G}$ generated by 
the $W (\vpi_{i} )^{\ast}_{\ge n}$, $i \in I$; every homogeneous component of $R_{G}$ of 
degree greater than or equal to $n$
is contained in $R_{G}^{n}$ by the surjectivity of multiplication map 
and the fact that $W(\lambda)^{\ast}$ is concentrated in nonpositive degrees. 
The value of a section of a line bundle over $\Proj R_{G}$ 
(our $\Proj$ here is the $P^{+}$-graded proj, by which we mean that 
the $H$-quotient of the subset of the affine spectrum in 
$\prod_{i \in I} \bigl( W(\vpi_{i} ) \setminus \{0\} \bigr)$) 
arising from $R_{G}^{n}$ at a point is determined completely 
by its image under the projection
\begin{equation*}
\pr : 
 \prod_{i \in I} \BP \left( \ha{W(\vpi_{i})} \right) \setminus Z 
 \longrightarrow 
 \prod_{i \in I} \BP \bigl( W (\vpi_{i} )_{\le - n} \bigr)
\end{equation*}
induced by the $\Fg [z]$-module surjection $W (\vpi_{i}) \to W (\vpi_{i})_{\le - n}$, 
where $Z$ denotes the loci in which $\pr$ is not well-defined; 
for the notation $\ha{W(\vpi_{i})}$, see Section~\ref{subsec:liealg}.
Thanks to Theorem~\ref{w-approx}, we deduce that
\begin{equation*}
\pr ( \FQ_{G} \setminus Z ) = \pr ( \DP \setminus Z )
\end{equation*}
as the set of closed points. 
Since the restriction of $\pr$ to $\FQ_{G}(\xi) \setminus Z$ 
for each $\xi \in Q^{\vee,+}$ is a morphism of Noetherian schemes, 
it follows that $\pr ( \FQ_{G}(\xi) \setminus Z )$ is 
a constructible subset of $\prod_{i \in I} \BP \bigl( W (\vpi_{i} )_{\le -n} \bigr)$. 
Moreover, the irreducibility of $\FQ_{G}( \xi )$ forces 
$\overline{\pr ( \FQ_{G} ( \xi ) \setminus Z )}$ to be irreducible. 
Therefore, the equality
\begin{equation*}
\pr ( \FQ_{G} \setminus Z ) = \bigcup_{\xi \in Q^{\vee,+}} \pr ( \FQ_{G}(\xi) \setminus Z )
\end{equation*}
implies that there exists some $\xi \in Q^{\vee,+}$ such that
%
%
\begin{equation} \label{Z-dense}
\pr ( \FQ_{G}(\xi) \setminus Z ) \subset \pr( \DP \setminus Z )
\end{equation}
is Zariski dense, since $\prod_{i \in I} \BP \left( W (\vpi_{i} )_{\le - n} \right)$ is a Noetherian scheme.

Thanks to \cite[Proposition 5.1]{BF14c} (cf. Theorem~\ref{Kmain}), 
we can find $\lambda$ (by replacing $\xi$ with a larger one if necessary) 
such that the assertion holds for $m = 1$. 
Now we assume the contrary to deduce the assertion for $m > 1$. 
Then, we have an additional equation 
on $\pr( \DP \setminus Z ) \subset 
\prod_{i \in I} \BP \bigl( W (\vpi_{i} )_{\le - n} \bigr)$
vanishing along $\FQ_{G}(\xi)$ 
by (taking sum of) the multiplication of $W(\lambda)^{\ast}_{\ge n}$. 
However, this is impossible in view of \eqref{Z-dense} 
since $\Proj R_{G}$ is reduced (and hence $R_{G}^{n}$ is integral). 
Thus we have proved Claim~\ref{H-inj}. \bqed

\vspace{3mm}

We return to the proof of Theorem~\ref{normal}. 
We fix $n \in \BZ_{\le 0}$ and $\beta \in Q^{\vee,+}$ such that Claim \ref{H-inj} holds. 
By replacing $\lambda$ if necessary to guarantee the surjectivity of the restriction map \eqref{surj-rest} 
with keeping the situation of Claim \ref{H-inj}, we deduce that all the maps
\begin{equation} \label{diag}
\begin{split}
\xymatrix{
\bigotimes _{i \in I} H^{0} \bigl( \BP(L(\vpi_{i})\bra{z}), \CO(\pair{\lambda}{\alpha_{i}^{\vee}}) \bigr) 
  \ar@{=}[d] \ar[r] 
  & H^{0}(\DP, \CO_{\DP}(\lambda)) 
  \ar@{^{(}->}[d] \\
\bigotimes _{i \in I} S^{ \pair{\lambda}{\alpha_{i}^{\vee}} } 
  \bigl( L(\vpi_{i})[z]^{\ast} \bigr) 
  \ar[r] \ar@{=}[d] 
  & W (\lambda)^{\ast} 
  \ar[d] \\
\bigotimes_{i \in I} H^{0} \bigl( \BP(L(\vpi_{i})\bra{z}), \CO( \pair{\lambda}{\alpha_{i}^{\vee}}) \bigr) 
 \ar[r] & 
 H^{0} ( \FQ_{G}(\xi), \CO_{\FQ_{G}(\xi)} ( \lambda ) ) 
}
\end{split}
\end{equation}
are surjective at degree $n$ from the commutativity of the diagram and 
the surjectivity of the bottom horizontal map. 
For a degree $n$ element $f \in W (\vpi_{i})^{\ast}$ and 
degree zero element $h_{j} \in L(\vpi_{j})^{\ast} \subset W(\vpi_{j})^{\ast}$ 
for each $j \in I$, we can choose sufficiently large integers $N_{i}$, $i \in I$, such that
\begin{equation*}
f \cdot \prod_{j \in I} h_{j}^{N_{j}} \in 
 S^{1+N_{i}} \bigl( L(\vpi_{i}) [z]^{\ast} \bigr) \cdot 
 \prod_{j \in I,\,j \ne i} S^{N_{j}} \bigl( L ( \vpi_{j} ) [z]^{\ast} \bigr) \subset R_{G}
\end{equation*}
as the corresponding claim is true after sending to the bottom line of \eqref{diag}.
This forces $W ( \vpi _{i})^{\ast}$ to be contained in the part of degree $\vpi_{i}$ of 
the associated graded ring of $R_{G}''$, as required. 
This completes the proof of the theorem.
\end{proof}
%
%
\begin{cor} \label{pcr}
The projective coordinate ring $R_{G}$ of $\DP$ arising from 
the embedding by means of the DP-coordinates is isomorphic to 
$\bigoplus_{\lambda \in P^{+}} W(\lambda)^{\ast}$.
\end{cor}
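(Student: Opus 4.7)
The plan is to harvest this corollary essentially directly from the work carried out in the proof of Theorem~\ref{normal} together with Proposition~\ref{H-range}. That proof already constructed a chain of $P^{+}$-graded algebra maps
\[
\bigoplus_{\lambda \in P^{+}} \Gamma(\DP, \CO_{\DP}(\lambda)) \hookrightarrow \bigoplus_{\lambda \in P^{+}} \Gamma(\DP, \sfm_{\ast}\CO_{\DP(\bi)}(\lambda)) \cong \bigoplus_{\lambda \in P^{+}} \Gamma(\DP(\bi), \CO_{\DP(\bi)}(\lambda)) \cong \bigoplus_{\lambda \in P^{+}} W(\lambda)^{\ast}.
\]
The first step is to observe that Theorem~\ref{normal} upgrades the leftmost inclusion to an isomorphism, because $\sfm_{\ast}\CO_{\DP(\bi)} \cong \CO_{\DP}$; twisting this by $\CO_{\DP}(\lambda)$ (using the projection formula) and taking global sections turns the leftmost inclusion into equality for every $\lambda \in P^{+}$.

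Next, I would identify the left-hand side of the chain with the projective coordinate ring of $\DP$ arising from the DP-embedding. By Lemma~\ref{defbQ}, the DP-coordinates realize $\DP$ as a closed subscheme of $\prod_{i \in I} \BP(L(\vpi_{i})\bra{z})$, and the pullback of the tautological $\CO(1)$ on the $i$-th factor is $\CO_{\DP}(\vpi_{i})$; Lemma~\ref{vample} guarantees that $\CO_{\DP}(\lambda)$ is very ample for $\lambda \in P^{++}$, so the $P^{+}$-graded homogeneous coordinate ring arising from this embedding is exactly $\bigoplus_{\lambda \in P^{+}} \Gamma(\DP, \CO_{\DP}(\lambda))$. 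Combining with the previous paragraph yields the desired ring isomorphism at the level of graded vector spaces.

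Finally, I would check that the multiplications agree. On the coordinate-ring side, the product is induced by the canonical pairing $\CO_{\DP}(\lambda) \otimes_{\CO_{\DP}} \CO_{\DP}(\mu) \to \CO_{\DP}(\lambda+\mu)$; on $\bigoplus_{\lambda} W(\lambda)^{\ast}$ it is induced by dualizing the (unique up to scalar) $\Fg[z]$-module embedding $W(\lambda+\mu) \hookrightarrow W(\lambda) \otimes_{\BC} W(\mu)$ used in the proof of Theorem~\ref{normal}. Each isomorphism in the displayed chain respects the $\ti{\Iw}$- and $\Fg[z]$-actions (the middle isomorphism comes from $\sfm_{\ast}\CO_{\DP(\bi)} \cong \CO_{\DP}$ and the rightmost one from Proposition~\ref{H-range}), and both pairings are determined up to scalar by the $\Fg[z]$-module structure on the highest-weight components, so the products correspond. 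The only real obstacle is this bookkeeping check, since all the serious geometric input is already supplied by Theorem~\ref{normal} and Proposition~\ref{H-range}.
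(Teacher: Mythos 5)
Your proposal is essentially the correct reading of the situation: the paper gives no separate proof of Corollary~\ref{pcr} because the identity $R'_G = R_G$ (with $R'_G = \bigoplus_{\lambda}\Gamma(\DP,\CO_{\DP}(\lambda))$ and $R_G = \bigoplus_{\lambda}W(\lambda)^{\ast}$) is precisely the central intermediate assertion established in the proof of Theorem~\ref{normal}, and your route --- taking the theorem's stated conclusion $\sfm_{\ast}\CO_{\DP(\bi)}\cong\CO_{\DP}$ and recovering $\Gamma(\DP,\CO_{\DP}(\lambda))\cong W(\lambda)^{\ast}$ by the projection formula and Proposition~\ref{H-range} --- is the natural way to package this. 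The check on multiplications is also right: both products are controlled (up to scalar) by the unique degree-zero $\Fg[z]$-map $W(\lambda+\mu)\to W(\lambda)\otimes W(\mu)$.

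One caveat worth making explicit: in your second step you identify ``the projective coordinate ring arising from the DP-embedding'' with the ring of sections $\bigoplus_{\lambda}\Gamma(\DP,\CO_{\DP}(\lambda))$, and you justify this by citing very ampleness (Lemma~\ref{vample}). Very ampleness alone does not identify the section ring with the image of the restriction map $\bigotimes_i S^{\bullet}(L(\vpi_i)[z]^{\ast})\to\bigoplus_{\lambda}\Gamma(\DP,\CO_{\DP}(\lambda))$ --- that requires the surjectivity of restriction, i.e., projective normality of the embedding. In the paper this surjectivity is established inside the proof of Theorem~\ref{normal} (the argument involving $R''_G$, its associated graded, Serre vanishing on $\FQ_G(\xi)$, and the density coming from Theorem~\ref{w-approx}). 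If one is content to interpret ``projective coordinate ring'' as the section ring, your argument is complete; if one means the quotient of the ambient ring, you should additionally invoke the surjectivity that the proof of Theorem~\ref{normal} already provides (this is the same surjectivity later used, via $\BC[\hDP]$, in the proof of Theorem~\ref{coh-e}).
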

%
%
\begin{thm} \label{R-free}
The projective coordinate ring $R_{G}$ of $\DP$ in Corollary~\ref{pcr} is 
free over the polynomial algebra $A_{G}$ given by the lowest weight components 
with respect to the $H$-action.
\end{thm}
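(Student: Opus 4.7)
The strategy is to reduce the claim to the classical structure theorem for global Weyl modules. By Corollary~\ref{pcr}, $R_{G} \cong \bigoplus_{\lambda \in P^{+}} W(\lambda)^{\ast}$ as a graded algebra. The $H$-weights of $W(\lambda)^{\ast}$ lie in $-\lambda + Q^{+}$ because $W(\lambda)$ is cyclic of highest weight $\lambda$; the unique minimum $-\lambda$ corresponds to the lowest weight component, namely the restricted dual $(W(\lambda)_{\lambda})^{\ast}$ of the top weight space $W(\lambda)_{\lambda}$. Hence $A_{G} = \bigoplus_{\lambda \in P^{+}} (W(\lambda)_{\lambda})^{\ast}$, and the theorem splits into proving (i) that $A_{G}$ is a polynomial algebra, and (ii) that $R_{G}$ is free as an $A_{G}$-module.

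For (i), I would invoke the classical structure result (originating with Chari--Pressley for $\mathfrak{sl}_{2}$, generalized by Chari--Fourier--Khandai and Nakajima to arbitrary type) stating that $W(\lambda)_{\lambda}$, generated by $\Fh[z] \cdot v_{\lambda}$, is canonically isomorphic to the polynomial ring $A_{\lambda} := \BC[x^{(i)}_{1},\,\dots,\,x^{(i)}_{n_{i}} \mid i \in I]^{\prod_{i \in I} \FS_{n_{i}}}$ of symmetric polynomials, where $\lambda = \sum_{i \in I} n_{i} \vpi_{i}$. The restricted dual $(A_{\lambda})^{\ast}$ is again polynomial. The multiplicative structure on $A_{G}$ inherited from $R_{G}$ is dual to the natural map $W(\lambda+\mu)_{\lambda+\mu} \to W(\lambda)_{\lambda} \otimes W(\mu)_{\mu}$ induced by \eqref{d-mult}, which sends $v_{\lambda+\mu} \mapsto v_{\lambda} \otimes v_{\mu}$ and is $\Fh[z]$-linear, hence agrees with the familiar merging-of-variable-sets operation on symmetric polynomial rings. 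Verifying that this renders $A_{G}$ a polynomial $\BC$-algebra is then a direct (if mildly intricate) calculation.

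For (ii), I would apply the Chari--Fourier--Khandai freeness theorem: $W(\lambda)$ is a free $A_{\lambda}$-module of rank $\dim W_{\mathrm{loc}}(\lambda)$, where $W_{\mathrm{loc}}(\lambda)$ denotes the local Weyl module. Dualizing gives freeness of each $W(\lambda)^{\ast}$ over $(A_{\lambda})^{\ast}$. The main obstacle is globalization: one must choose the $A_{\lambda}$-bases of the $W(\lambda)^{\ast}$ coherently so that they are compatible with the multiplication in $R_{G}$. The natural source of such compatible bases is the co-multiplicative structure $W_{\mathrm{loc}}(\lambda+\mu) \hookrightarrow W_{\mathrm{loc}}(\lambda) \otimes W_{\mathrm{loc}}(\mu)$ paralleling \eqref{d-mult}; verifying this fine compatibility---possibly via the BSDH tower $\DP(\bi)$ of Section~\ref{subsec:BSDH} and the Leray spectral sequence computation from the proof of Proposition~\ref{H-range}, which exhibited $H^{0}(\DP(\bi),\,\CO_{\DP(\bi)}(\lambda))$ as a Demazure-type induction indexed by the periodic reduced expression for $t_{-2\rho^{\vee}}$---is the key technical step.
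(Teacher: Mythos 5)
Your reduction is sound as far as it goes: you correctly identify $A_{G}$ with $\bigoplus_{\lambda}(W(\lambda)_{\lambda})^{\ast}$, and the inputs you cite (the symmetric-function description of $W(\lambda)_{\lambda}$ and the freeness of $W(\lambda)$ over $\BC[\BA^{(\lambda)}]$ due to Fourier--Littelmann and Naoi) are exactly the external results the paper invokes. But the theorem is not the $\lambda$-by-$\lambda$ statement. The $A_{G}$-module structure on $R_{G}$ is given by the ring multiplication $\BC[\BA^{(\mu)}]^{\ast}\otimes W(\lambda)^{\ast}\to W(\lambda+\mu)^{\ast}$, which shifts the $P^{+}$-degree, whereas the Chari--Fourier--Khandai action of $A_{\lambda}$ on $W(\lambda)$ is internal to a single degree $\lambda$; ``dualizing'' the latter does not even produce the module structure the theorem is about. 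The entire content of the theorem is the cross-degree assembly, and this is precisely the step you label ``the key technical step'' and do not carry out. Flagging the hard part is not proving it, so the proposal has a genuine gap.

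Moreover, the route you sketch for closing the gap (the BSDH tower $\DP(\bi)$ and the Leray spectral sequence from Proposition~\ref{H-range}) is not what the paper does, and it is not clear it could work: those tools compute characters and cohomology, not module-theoretic freeness over a non-Noetherian base. The paper's actual argument is purely on the Weyl-module side and occupies most of the proof: first a PBW-type computation showing $\psi\cdot\xi\neq 0$ for homogeneous $\psi\in W(\mu)^{\ast}$ and $\xi\in\BC[\BA^{(\lambda-\mu)}]^{\ast}$ (torsion-freeness of $R_{G}$ over $A_{G}$); then, for a fixed $i_{0}\in I$, a filtration of $W(\lambda+m\vpi_{i_{0}})$ by the annihilators of $\BC[\BA^{(l\vpi_{i_{0}})}]^{\ast}\cdot W(\lambda_{m-l})^{\ast}$, whose associated graded pieces are realized as torsion-free sheaves on $\BA^{(\lambda_{m})}$; a generic-configuration argument, using the fact that the generic fiber is a tensor product of local Weyl modules at distinct points together with the $\FS_{m}$-symmetry, to show the comparison map $\eta\colon\gr_{l}W(\lambda_{m})\to\BC[\BA^{(l\vpi_{i_{0}})}]\otimes\gr_{0}W(\lambda_{m-l})$ is an isomorphism; and finally an induction over a total order on $I$. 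None of this machinery appears in your proposal, so the central claim remains unestablished.
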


\begin{proof}
During this proof, we denote by $v_{\lambda} \in W ( \lambda )$ 
the unique $\Fh_{\af}$-eigenvector of weight $\lambda$ 
(which is determined up to a scalar, and is the specialization of 
the corresponding vector of $V ( \lambda )$ through $\mathsf{q} \to 1$).

For each $\lambda \in P^{+}$, we set
\begin{equation*}
\BC [\BA^{(\lambda)}] := \bigotimes_{i \in I} 
  S^{ \pair{\lambda}{\alpha_{i}^{\vee}} } \BC [z] \cong 
  \bigotimes_{i \in I} 
  \BC [ z_{1}^{(i)},\,\ldots,\,
  z_{ \pair{\lambda}{\alpha_{i}^{\vee}} }^{(i)} ]^{ \FS_{\pair{\lambda}{\alpha_{i}^{\vee}}} }.
\end{equation*}
By the results \cite{FL07,Naoi12}, due to Fourier-Littelmann and Naoi, 
we know that $W (\lambda)$ is a free module over $\BC [\BA^{(\lambda)}]$, 
and the $\lambda$-isotypical component of $W ( \lambda )$ is free of rank one. 
Here we define the polynomial algebra $A_{G}$ by collecting the $(-\lambda)$-isotypical component of 
$W ( \lambda )^{\ast}$ for all $\lambda \in P^{+}$. It follows that the ring $A_{G}$ is of the form
\begin{equation*}
\bigotimes_{i \in I} S^{\bullet} \BC [z]^{\ast} \cong 
\bigoplus_{\lambda \in P^{+}} 
\bigotimes_{i \in I} S^{ \pair{\lambda}{\alpha_{i}^{\vee}} } \BC [z]^{\ast} \cong 
\bigoplus_{\lambda \in P^{+}} \BC [\BA^{(\lambda)}]^{\ast}.
\end{equation*}

Let $\psi \in W ( \mu )^{\ast}$ and $\xi \in \BC[\BA^{(\lambda - \mu)}]^{\ast} \subset A_G$, 
where $\lambda, \mu, \lambda - \mu \in P^+$, and assume that both of them are 
homogeneous with respect to $P$-weights and degrees. 
Then, we find a product of $\Fh _{\af}$-eigen PBW basis element 
$F_1 \in U ( \Fn^- [z] )$ and monomials $f_1, f_2 \in U ( \Fh [z] z ) \cong 
S^{\bullet} ( \Fh [z] z )$ such that $\psi ( F _1 f_1 v_{\mu} ) \neq 0$ and 
$\xi ( f_2 v_{(\lambda - \mu)}) \neq 0$ by the Poincar\'e-Birkhoff-Witt theorem. 
It follows that $( \psi \cdot \xi) ( F_1 f_1 f_2 v_{\lambda} ) \neq 0$, 
since we need to collect the terms $F_1 m_1 v_{\mu} \otimes m_2 v_{\lambda - \mu}$, 
with $m_1 m_2 = f_1 f_2$, in the tensor product through 
the embedding $W ( \lambda ) \subset W ( \mu ) \otimes W ( \lambda - \mu )$. 
This means that $0 \neq \psi \cdot \xi \in W ( \lambda )^{\ast}$; 
in particular, the ring $R_G$ is torsion-free as an $A_G$-module.

Now, let us fix $i_0 \in I$ and $\lambda \in P^+$ 
so that $\pair{\lambda}{\alpha_{i_0}^{\vee}} = 0$, 
and set $\lambda_m := \lambda + m \vpi_{i_0}$ for each $m \in \BZ_{\ge 0}$. 
We also set $A_G^{i_0} := S^{\bullet} \BC [z]^{\ast} \subset A_G$ for the fixed $i_0$. 
For each $m, l \in \BZ_{\ge 0}$ such that $m \ge l$, 
we denote by $W ( \lambda; m,l )$ the space 
$(\BC [\BA^{(l\vpi_{i_0})}]^{\ast} \cdot W ( \lambda_{m-l} )^{\ast} )^{\ast}$, 
which is a $\Fb[z]$-submodule of $W ( \lambda_m )$. 
From this description, we have an inclusion
\begin{equation*}
W ( \lambda; m,l ) \subset W ( \lambda; m,l-1 )
\end{equation*}
when $l > 0$. In particular, $W ( \lambda; m, l )^{\ast}$ is 
a quotient of $W ( \lambda; m, l - 1 )^{\ast}$.

By repeated use of (the dual of) the surjectivity of 
the multiplication map of $R_G$, we have an embedding:
\begin{equation} \label{W-emb-fw}
\Phi : W ( \lambda_m ) \hookrightarrow W ( \varpi_{i_0} )^{\otimes m} \otimes W ( \lambda ).
\end{equation}
For $0 \le l \le m$, let $\mathbb W ( \lambda; m, l )$ denote 
the linear span of pure tensors
\begin{equation} \label{pure-tensor}
\left( \bigotimes_{j = 1}^{m} v_{i_0,j} \right) \otimes v \in 
 W ( \varpi_{i_0} )^{\otimes m} \otimes W ( \lambda )
\end{equation}
of $\Fh _{\af}$-eigenvectors in which at most $l$-elements of 
$\{v_{i_0,j}\}_{j = 1}^{m}$ is of the form $z^e v_{\vpi_{i_0}}$ 
for some $e \in \BZ_{\ge 0}$. If we denote by $W ( \lambda; m, l )'$ 
the preimage of $\BW ( \lambda; m, l )$ through $\Phi$, then we have
\begin{equation*}
W ( \lambda; m, l-1 )' \subset W ( \lambda; m, l )'
\end{equation*}
whenever $l > 0$. By construction, $\{ W ( \lambda; m, l )' \}_{0 \le l \le m}$ 
forms a $\BZ$-graded increasing filtration whose associated graded modules
\begin{equation*}
\gr_{l} W( \lambda_m ) := 
 W ( \lambda; m, l )' / W ( \lambda; m, l - 1 )', \qquad l \in \BZ_{\ge 0},
\end{equation*}
stratify $W ( \lambda_m )$.

Here, $W ( \varpi_{i_0} )^{\otimes m} \otimes W ( \lambda )$ admits 
a graded decomposition coming from the number of elements in 
$\{v_{i_0,j}\}_{j = 1}^{m}$ of the form $z^e v_{\varpi_{i_0}}$ 
for $e \in \BZ_{\ge 0}$ through \eqref{pure-tensor} and \eqref{W-emb-fw}. 
It follows that the space $\BW ( \lambda; m, l )$ is 
the annihilator of the subspace
\begin{equation} \label{expand-prod}
\sum_{w \in \FS_m} \sum_{a=l}^m w ( \BC [\BA^{(\varpi_{i_0})}]^{\otimes a} \otimes 
W ( \varpi_{i_0} )^{\otimes m-a} )^{\ast} \otimes 
W ( \lambda )^\ast \subset ( W ( \varpi_{i_0} ) ^{\otimes m} )^\ast \otimes 
W ( \lambda )^{\ast},
\end{equation}
where $\FS_m$ permutes the tensor factors of $W ( \varpi_{i_0} )^{\otimes m}$. 
Pulling back by $\Phi$, we deduce that $W ( \lambda; m, l-1 )'$ is 
the annihilator of the space \eqref{expand-prod} in $W ( \lambda_m )$ 
through the embedding \eqref{W-emb-fw}. Therefore, 
$W ( \lambda; m, l-1 )' \subset W ( \lambda_m )$ 
is exactly the annihilator of $\BC [\BA^{(l\vpi_{i_0})}]^{\ast} \cdot 
W ( \lambda_{m-l} )^{\ast} \subset W ( \lambda _m )^{*}$. 
It follows that we have a canonical isomorphism
\begin{equation*}
\gr_{l} W(\lambda_m) \cong W ( \lambda; m, l ) / W ( \lambda; m, l+1).
\end{equation*}
If we define a subquotient $M(\lambda; n)$ of $R_{G}$ 
for each $n \in \BZ_{\ge 0}$ by
\begin{equation*}
M( \lambda; n ) := \bigoplus _{l \ge 0} M ( \lambda; n )_l, \qquad 
M( \lambda; n )_l = \left( \gr_{l} W(\lambda_{n+l}) \right)^{\ast},
\end{equation*}
then $M(\lambda; n)$ admits an $A_G^{i_0}$-action.

From the construction of $M ( \lambda; n )_l$ through 
$\{W ( \lambda; m, l)\}_{m, l \ge 0}$, we deduce that 
$M ( \lambda; n )$ is generated by $M ( \lambda; n )_{0}$ 
as an $A_G^{i_0}$-module. Also, from the construction of $M ( \lambda; n )_l$ 
through $\{W ( \lambda; m, l)'\}_{m, l \ge 0}$, 
we deduce that the dual of the multiplication map is the natural map
\begin{equation*}
\BC [\BA^{(l \vpi_{i_0})}] \otimes \gr_{0} W(\lambda_{n}) \rightarrow \gr_{0} \, W(\lambda_{n+l})
\end{equation*}
of $\BC [\BA^{(\lambda_{n+l})}]$-modules. 
The $(\BC [\BA^{(m \vpi_{i_{0}})}] , \Fb [z] )$-modules $\gr_{l} W(\lambda_m)$, 
$0 \le l \le m$, stratify $W ( \lambda_{m} )$. In addition, 
the maximality of $W ( \lambda; m, l )'$ guarantees that 
each $\gr_{l} W(\lambda_m)$ is torsion-free as a $\BC [\BA^{(\lambda_m)}]$-module.

For each $\lambda \in P^{+}$, we can regard $W ( \lambda )$ 
as a module corresponding to a vector bundle (or a free sheaf) $\CW(\lambda)$ 
over $\BA^{(\lambda)}$, where $\BA^{(\lambda)}$ denotes $\Spec \BC [\BA^{(\lambda)}]$; 
its fiber is known to be the tensor product of local Weyl modules $W (\mu,\,x)$, 
where $\mu \in P^{+}$ and $x \in \BC$ runs over the configurations of
points determined by a point of $\BA^{(\lambda)}$ (see, e.g., \cite[Theorem 1.4]{Kat16}).

The spaces $\gr_{l} W(\lambda_m)$, $0 \le l \le m$, give 
torsion-free sheaves $\CW_l(\lambda_m)$ on $\BA^{(\lambda_{m})}$ that 
stratify $\CW( \lambda_m )$. Hence a section of $\CW_l (\lambda_m)$ is 
an equivalence class of the set of sections $\BA^{(\lambda_m)} \rightarrow \CW( \lambda_m )$ 
whose specialization to a general point gives an element of 
the tensor product of local Weyl modules
\begin{equation} \label{point-division}
v = \sum_{k} \bigotimes_{i,j} v_{i,j}^{(k)} \in 
 \bigotimes_{i \in I} \bigotimes_{j=1}^{\pair{\lambda}{\alpha_{i}^{\vee}}} 
 W ( \vpi_{i}, x_{i,j} )
\end{equation}
such that exactly $l$-elements 
in $\{v_{i_{0},j}^{(k)} \}_{j=1}^{m}$ are highest weight vectors, 
and the other vectors do not have a highest weight component for each $k$.

Since every two points in $\{x_{i,j}\}_{i,j}$ are 
generically distinct, each pure tensor in \eqref{point-division} divides 
$\{1,2,\ldots, m\}$ into two subsets $S_{1}$ and $S_{2}$, 
with $\# S_{1} = l$, so that $\{x_{i_{0},j}\}_{j\in S_{1}}$ carries 
a highest weight vector (i.e., $v_{i_{0},j}^{(k)} = z^{e}v_{\vpi_{i_{0}}}$ 
for some $e \in \BZ_{\ge 0}$) and $\{x_{i_{0},j}\}_{j\in S_{2}}$ carries 
a vector lying in non-highest-weight components (as a $\Fb$-module). 
The coordinates $\{x_{i_{0},j}\}_{j\in S_{1}}$ gives rise to 
the action of $\BC [\BA^{(l \vpi_{i_0} )}]$ on $\gr_{l} W(\lambda_m)$, 
while the coordinates $\{x_{i_{0},j}\}_{j\in S_{2}}$ 
gives rise to a $\BC [\BA^{(\lambda_{m-l})}]$-module structure on $\gr_{l} W(\lambda_m)$, 
and these two module structures are (mutually commutative and) distinct. 
It follows that every pair of elements of $\BC [\BA^{(l \vpi_{i_0} )}]$ and 
$\gr_{0} W(\lambda_{m-l})$ appears as a section in $\CW_l ( \lambda_{m} )$ 
after a generic localization. This particularly gives us the $\FS_{m}$-action on 
$\BC [\BA^{(l \vpi_{i_0} )}] \boxtimes \BC [\BA^{(\lambda_{m-l})}]$ and 
$\gr_{l} W(\lambda_m)$ that changes the order of the highest weight vectors 
and the one of non-highest weight vectors (or mixes up $S_{1}$ and $S_{2}$). 
Therefore, $\gr_{l} W(\lambda_m)$ itself is torsion-free 
as a $\BC [\BA^{(l \vpi_{i_0} )}] \boxtimes \BC [\BA^{(\lambda_{m-l})}]$-module.

Because $M ( \lambda; n )$ is generated by 
$M ( \lambda; n )_{0}$ as an $A_{G}^{i_{0}}$-module, 
we have an injective map
\begin{equation*}
\eta : \mathrm{gr}_{l} \, W(\lambda_{m}) \hookrightarrow 
  \BC [\BA^{(l \vpi_{i_0} )}] \otimes \mathrm{gr}_{0} \, W(\lambda_{m-l})
\end{equation*}
of $\BC [\BA^{(l \vpi_{i_0} )}] \boxtimes \BC [\BA^{(\lambda_{m-l})}]$-modules, 
which is an isomorphism after a localization to some Zariski open subset of 
$\BA^{(l \vpi_{i_0} )} \times \BA^{(\lambda_{m-l})}$.

Since $\gr_{l} W(\lambda_m)$, $0 \le l \le m$, stratifies 
$W ( \lambda_m )$, we deduce that the $M( \lambda; n )$'s, 
with $\lambda$ varying, give a stratification of 
the $A_{G}$-module $R_{G}$. Hence, in order to prove that $R_G$ is 
free over $A_{G}^{i_0}$, it suffices to verify that each $M ( \lambda; n )$ is 
a free $A_{G}^{i_0}$-module. By construction, the image of $\eta$ contains 
$\BC \otimes \gr_{0} W(\lambda_{m-l})$, which gives a $\BC [\BA^{(l \vpi_{i_0})}]$-module 
generator of $\gr_{l} \, W(\lambda_{m})$. Therefore, the map $\eta$ must be 
an isomorphism. As a consequence, we conclude that
\begin{equation*}
\BC [\BA^{(l \vpi_{i_0})}]^{\ast} \otimes M ( \lambda; n )_0 \cong M ( \lambda; n )_l
\end{equation*}
through the multiplication map. In other words, 
$M ( \lambda; n )$ is a free $A_{G}^{i_0}$-module.

Because the above argument is consistent with the filtrations and 
their associated graded modules arising from a different choice of $i_0 \in I$, 
we can vary $i_0 \in I$ and construct the associated graded modules inductively 
on a fixed total order on $I$. This gives a stratification of $R_{G}$ that is 
free over $A_{G}$. Hence the ring $R_G$ itself is free over $A_G$. 
This completes the proof of the theorem.
\end{proof}
%
%
\begin{thm} \label{coh-e}
For each $\lambda \in P$, we have
\begin{equation*}
\gch H^{n} ( \DP, \CO_{\DP} ( \lambda ) ) = 
  \begin{cases} 
    \gch V^{-}_{e}( - w_{\circ} \lambda ) & 
      \text{\rm if $\lambda \in P^{+}$ and $n = 0$}, \\[1.5mm]
      0 & 
      \text{\rm otherwise}.
  \end{cases}
\end{equation*}
\end{thm}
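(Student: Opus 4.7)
The plan is to split into three cases according to the behavior of $\lambda$ and $n$: (i) $n = 0$ with $\lambda \in P^{+}$, (ii) $n = 0$ with $\lambda \not\in P^{+}$, and (iii) $n \geq 1$ with arbitrary $\lambda \in P$.

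For case (i), I would combine Theorem~\ref{normal}, which gives $\sfm_{\ast}\CO_{\DP(\bi)} \cong \CO_{\DP}$, with Proposition~\ref{H-range}, which identifies $H^{0}(\DP(\bi), \CO_{\DP(\bi)}(\lambda))$ with $W(\lambda)^{\ast}$. Via a Leray-type argument reducing to the proper stages $\sfm_{k}$ in the factorization \eqref{factorBSDH} and twisting by $\CO_{\DP}(\lambda)$, this yields $H^{0}(\DP, \CO_{\DP}(\lambda)) \cong W(\lambda)^{\ast}$. The character identity $\gch W(\lambda)^{\ast} = \gch V_{e}^{-}(-w_{\circ}\lambda)$ then follows from the definition of the global Weyl module as the classical limit of the restricted dual of $V_{e}^{-}(-w_{\circ}\lambda)$, together with the fact that taking restricted dual twice preserves graded characters on modules whose graded pieces are finite-dimensional. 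For case (ii), the scheme $\DP$ is integral (the ring $R_{G}$ is a domain by the proof of Theorem~\ref{normal}), and the open $G\bra{z}$-orbit $\cDP$ is dense in $\DP$ by Theorem~\ref{orbbQ}, so the restriction map $H^{0}(\DP, \CO_{\DP}(\lambda)) \hookrightarrow H^{0}(\cDP, \CO_{\DP}(\lambda)|_{\cDP})$ is injective. Theorem~\ref{BFmain} then forces the target to vanish when $\lambda \not\in P^{+}$.

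Case (iii) is the main substance and relies on Theorem~\ref{R-free}: the projective coordinate ring $R_{G}$ is free over the polynomial algebra $A_{G} \cong \bigotimes_{i \in I} S^{\bullet}\BC[z]^{\ast}$ with a homogeneous basis. The $P^{+}$-graded $\Proj$ of $A_{G}$ is identified with $\prod_{i \in I} \BP(\BC[z]^{\ast})$, a product of countably-infinite-dimensional projective spaces, and a standard computation shows that higher cohomology of every twisted line bundle $\CO(\lambda)$, $\lambda \in P$, on this product vanishes, each $\BP(\BC[z]^{\ast})$ being a direct limit of finite-dimensional projective spaces on which only the $H^{0}$-contributions survive in the limit. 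The freeness of $R_{G}$ over $A_{G}$ then yields a decomposition of $\CO_{\DP}(\lambda)$, after pushforward along the natural morphism $\Proj R_{G} \to \Proj A_{G}$, as a direct sum of such twisted line bundles, from which the higher cohomology vanishing on $\DP$ follows via Leray.

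The main obstacle I foresee lies in case (iii): making the cohomological arguments rigorous in this infinite-dimensional, non-Noetherian setting. Specifically, I must verify that the freeness over $A_{G}$ really does reduce the cohomology computation on $\Proj R_{G}$ to one on $\Proj A_{G}$ via an appropriate flat-base-change or affine-morphism argument, and that higher cohomology on $\prod_{i \in I} \BP(\BC[z]^{\ast})$ genuinely vanishes for \emph{every} $\lambda \in P$, not merely for dominant ones. The multi-graded nature of both $R_{G}$ and $A_{G}$, together with the need to work with pro-schemes of infinite type, demands particular care in the bookkeeping of degrees and of the interaction between the $P^{+}$-graded $\Proj$ construction and infinite direct sums.
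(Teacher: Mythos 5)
Your cases (i) and (ii) are correct and are, in fact, a more elementary route than the paper's (the paper handles $n=0$ as a byproduct of the same local-cohomology machinery, via the isomorphism $H^{n}(\DP,\CO_{\DP}(\lambda)) \cong H^{n}(\tDP,\CO_{\tDP})^{(H,\lambda)}$ and Theorem~\ref{BFmain}). But case (iii) contains a genuine gap, and it is not merely a matter of ``making rigorous.'' The step ``freeness of $R_{G}$ over $A_{G}$ yields, after pushforward along $\Proj R_{G} \to \Proj A_{G}$, a direct sum of twisted line bundles, hence vanishing by Leray'' is false in general. Take $R = k[x,y]$ with its usual grading and $A = k[y] \subset R$: then $R$ is a free graded $A$-module, yet $\Proj R = \BP^{1}$ has $H^{1}(\BP^{1},\CO(-2)) \neq 0$, while $\Proj A$ is a point. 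Freeness of the homogeneous coordinate ring over a graded subring does not make the induced map of $\Proj$'s affine, nor does it make the higher direct images of a line bundle vanish, nor does it identify the pushforward with a sum of line bundles on the base. So the Leray reduction you propose does not get off the ground.

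What the paper actually does is pass to the multi-cone $\hDP = \Spec R_{G}$, which \emph{is} affine, and control the difference between $\hDP$ and the $H$-torsor $\tDP$ over $\DP$ by local cohomology along $Z = \hDP \setminus \tDP$. The long exact sequence of local cohomology gives $H^{n}(\tDP,\CO_{\tDP}) \cong H^{n+1}_{Z}(\hDP,\CO_{\hDP})$ for $n \geq 1$, and by \cite[Expos\'e~I\!I, Proposition~5]{SGA2} (quasi-compactness of the open immersion $\tilde U^{S} \hookrightarrow \hDP$) the groups $H^{\bullet}_{\tilde Z_{S}}(\hDP,\CO_{\hDP})$ are computed by Koszul complexes $K_{S}(\BC[\hDP])$. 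The role of Theorem~\ref{R-free} is exactly to guarantee that, after rearranging the generating set $\Omega$, these Koszul complexes involve regular sequences of \emph{arbitrary} length, so that by \cite[Proposition~1.1.4]{EGA3-1} the Koszul cohomology vanishes in every fixed degree as $|S| \to \infty$. This is the ``vertex of infinite codimension'' mechanism: the actual reason all $H^{n}_{Z}(\hDP)$, and hence all $H^{n}(\DP,\CO_{\DP}(\lambda))$ for $n>0$, vanish. Your proposal never invokes this; it substitutes a Leray pushforward to $\Proj A_{G}$ which, even if the morphism existed and were well behaved, would not convert freeness of the coordinate ring into relative cohomological triviality. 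To repair your argument you would essentially have to replace the Leray step with the local-cohomology-on-the-affine-cone computation, at which point you have reproduced the paper's proof.
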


\newcommand{\Og}{\Omega} 

\begin{proof} 
We know that $\DP$ is a closed subscheme of 
$\prod_{i \in I} \BP ( L ( \vpi_{i} )\bra{z} )$ by \eqref{Pemb}. 
Therefore, we have a countable set $\Og$ of $I$-tuples of 
$( H \times \Gm )$-eigenvectors of $\bigsqcup_{i \in I} L( \vpi_{i} )\bra{z}$, 
one for each $i \in I$, so that it induces an affine open cover 
$\CU := \{ \CU_{S} \}_{S \subset \Og}$ of $\DP$ 
(where $\CU_{S} := \bigl\{ f \neq 0 \mid f \in S \bigr\}$) 
that is closed under intersection. 
%

Now, the maps $L(\vpi_i)\bra{z} \setminus \{ 0 \} \rightarrow 
\BP(L(\vpi_i)\bra{z} )$, $i \in I$, induce a (right) $H$-fibration $\tDP$ 
that defines an open scheme of $\prod_{i \in I} L( \vpi_i )\bra{z}$, 
which corresponds to specifying a nonzero vector $u^{\vpi_{i}}$ 
instead of a one-dimensional $\BC$-vector subspace $L^{\vpi_{i}} \ni u^{\vpi_{i}}$ 
in the definition of DP data; 
its closure $\hDP$, which corresponds to allowing $u^{\vpi_{i}} = 0$ in a DP datum, 
is an affine subscheme of $\prod_{i \in I} L( \vpi_i )\bra{z}$ 
of infinite type. We set $Z := \hDP \setminus \tDP$, 
which is a closed subscheme of $\hDP$. 
Also, the pullback $\ti{\CU}_S$ of $\CU_S$ to $\tDP$ defines 
an affine open subset of $\hDP$. 
By the finiteness of the defining functions, 
$\ti{\CU}_S \hookrightarrow \hDP$ is 
quasi-compact by \cite[Proposition~1.1.10]{EGA1}.
For each finite subset $S \subset \Og$, 
we set $\ti{\CU}^S :=  \{ \ti{\CU}_{T} \}_{T \subset S}$, 
which is again a collection of affine subschemes 
that is closed under intersections, and 
$\ti{U}^S := \bigcup_{T \subset S}\ti{\CU}_T$. 
In addition, we set $\ti{Z}_S := \hDP \setminus \ti{U}^S$.

Let us denote the natural projection 
$\tDP \rightarrow \DP$ by $\pi$. 
Since $\DP$ is a (right) free quotient of $\tDP$ by $H$, 
we deduce that $\CO_{\DP} ( \lambda ) = ( \pi_{*} \CO_{\tDP} )^{(H, \lambda)}$, 
where $\bullet^{(H, \lambda)}$ denotes the $\lambda$-isotypical component
with respect to the right $H$-action. 
Because discarding open sets (in such a way that the remaining ones are closed 
under intersection) in the \u{C}ech  complex defines a projective system of 
complexes satisfying the Mittag-Leffler condition, 
\cite[Proposition~13.2.3]{EGA3-1} yields an isomorphism 
%
%
\begin{equation} \label{cech-ref}
H^{n} ( \tDP, \CO_{\tDP} ) \cong
 \varprojlim_{S} H^{n} ( \ti{U}^S, \CO_{\tDP} )
 \quad \text{for each $n \in \BZ_{\ge 0}$}.
\end{equation}
We have
\begin{equation} \label{des-H}
H^{n} ( \DP, \CO_{\DP} ( \lambda ) ) \cong 
H^{n} ( \tDP, \CO_{\tDP} )^{(H, \lambda)}
\quad \text{for $n \in \BZ_{\ge 0}$}, 
\end{equation}
since the right $H$-action on $\tDP$ is free, and 
it induces a semi-simple action on the level of \u{C}ech complex.

The long exact sequence of local cohomologies 
(see \cite[Expos\'e~I, Corollaire~2.9]{SGA2}) yields:
\begin{equation} \label{l-les}
\cdots \rightarrow H^{n}_{\ti{Z}_S} ( \hDP, \CO_{\hDP} ) 
  \rightarrow H^{n} ( \hDP, \CO_{\hDP} ) 
  \rightarrow H^{n} ( \ti{U}^S, \CO_{\hDP} ) \rightarrow 
  H^{n+1}_{\ti{Z}_S} ( \hDP, \CO_{\hDP} ) \rightarrow \cdots. 
\end{equation}
Since $\hDP$ is affine, this induces
\begin{equation} \label{isom-local}
H^{n} ( \ti{U}^S, \CO_{\hDP} ) 
 \stackrel{\cong}{\longrightarrow} 
H^{n+1}_{\ti{Z}_S} ( \hDP, \CO_{\hDP} ) 
 \quad \text{for $n \ge 1$}
\end{equation}
by \cite[Th\'eor\`eme~1.3.1]{EGA3-1}.
Here the quasi-compactness of 
the embedding $\ti{U}^S \hookrightarrow \hDP$, together with 
\cite[Expos\'e~I\!I, Proposition~5]{SGA2}, implies that
\begin{equation} \label{Koszul}
H^{n}_{\ti{Z}_S} ( \hDP, \CO_{\hDP} ) \cong 
H^{n} ( K_S ( \BC [ \hDP ] ) ),
\end{equation}
where $K_S ( \BC [ \hDP ] )$ denotes 
the (cohomological) $\BC [ \hDP ]$-Koszul complex 
defined through $S \subset \Og$ (see \cite[(1.1.2)]{EGA3-1}).

In view of \eqref{l-les}, the comparison of \eqref{cech-ref} and \eqref{Koszul} 
via \eqref{isom-local} yields an isomorphism
\begin{equation} \label{genuine}
H_{Z}^{n} ( \hDP, \CO_{\hDP} ) \cong 
\varprojlim_S H^{n} ( K_S ( \BC [ \hDP ] ) ) \cong 
\varprojlim_S H^{n}_{\ti{Z}_S} ( \hDP, \CO_{\hDP} ). 
\end{equation}
We know from \cite[Proposition~1.1.4]{EGA3-1} that
the Koszul complex $K_S ( \BC [ \hDP ] )$ has 
trivial cohomology at degree $< n$ 
if $S$ contains a regular sequence of length $n$. 
Here we see from Corollary~\ref{pcr} that $\BC[\hDP] = 
\bigoplus_{\lambda \in P^{+}} W(\lambda)^{\ast}$. 
Also, by Theorem~\ref{R-free}, we can rearrange $\Og$ 
if necessary in such a way that for each $i \in I$, 
the set of the $i$-th components of the elements in $\Og$ 
contain a regular sequence of arbitrary length. 
Then, we deduce from \eqref{genuine} that 
\begin{equation*}
H_{Z}^{n} ( \hDP, \CO_{\hDP} ) = \{ 0 \} \quad 
 \text{for all $n \in \BZ_{\ge 0}$}. 
\end{equation*}
Therefore, \eqref{l-les} and the affinity of $\hDP$ imply that
\begin{equation*}
H^{n} ( \tDP, \CO_{\hDP} ) = 
H^{n} ( \tDP, \CO_{\tDP} ) = \{ 0 \} \quad
 \text{for all $n > 0$}.
\end{equation*}
In view of \eqref{des-H} and Theorem~\ref{BFmain}, 
we conclude the assertion of the theorem. 
\end{proof}
%
%
\begin{cor} \label{coh}
For each $x \in W_{\af}^{\ge 0}$, 
the scheme $\DP(x)$ is normal. 
Moreover, for each $\lambda \in P$ and $x \in W_{\af}^{\ge 0}$, we have
\begin{equation}
\gch H^{n} ( \DP(x), \CO_{\DP(x)}(\lambda) ) = 
 \begin{cases} 
  \gch V^{-}_{x} ( - w_{\circ} \lambda ) & 
    \text{\rm if $\lambda \in P^{+}$ and $n = 0$}, \\[1.5mm]
  0 & \text{\rm otherwise}. 
 \end{cases}.
\end{equation}
In particular, we have
\begin{equation}
\gch H^{0}( \DP(x), \CO _{\DP(x)}(\lambda) ) \in 
 (\BZ\pra{q^{-1}})[P] \subsetneq (\BZ [P])\pra{q^{-1}}.
\end{equation}
\end{cor}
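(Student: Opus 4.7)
The plan is to adapt the proofs of Theorems~\ref{normal} and \ref{coh-e} (which handle the case $x=e$) to general $x \in W_{\af}^{\ge 0}$ using the proper birational $\ti{\Iw}$-equivariant BSDH resolution $\sfm\colon \DP(\bi) \to \DP(x)$ constructed in Section~\ref{subsec:BSDH}.

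First I would generalize Proposition~\ref{H-range} to compute the cohomology of line bundles on $\DP(\bi)$ for arbitrary $x \in W_{\af}^{\ge 0}$. The factorization of $\sfm$ via collapsing the initial length-$\ell'$ segment of $\bi$ (encoding a reduced expression for $x t_{-2 m_{0} \rho^{\vee}}$), followed by iterated collapses of the length-$\ell$ tail blocks (each encoding $t_{-2\rho^{\vee}}$), applies verbatim. The Leray spectral sequence reduces the computation to iterated Demazure functors $\bD_{s_i}$ applied to line bundle cohomology on $\bO(x(k))$ for $k \gg 0$; after sufficient twists by $t_{-2\rho^{\vee}}$, this orbit identifies via $\imath_{\xi}$ with one in $\DP$, and Theorem~\ref{Kmain} provides the required input. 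The outcome is
\[
\gch H^{n}(\DP(\bi),\CO_{\DP(\bi)}(\lambda)) =
\begin{cases} \gch V_{x}^{-}(-w_{\circ}\lambda) & \text{if $\lambda \in P^{+}$ and $n=0$,} \\ 0 & \text{otherwise.} \end{cases}
\]

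Next I would prove $\sfm_{*}\CO_{\DP(\bi)} \cong \CO_{\DP(x)}$ together with $R^{i}\sfm_{*}\CO_{\DP(\bi)}(\lambda)=0$ for $i>0$, paralleling Theorems~\ref{normal} and \ref{coh-e}. For the pushforward identity, I would embed the graded ring $\bigoplus_{\lambda \in P^{+}} H^{0}(\DP(x),\CO_{\DP(x)}(\lambda))$ into $\bigoplus_{\lambda} H^{0}(\DP(\bi),\CO_{\DP(\bi)}(\lambda))$ and compare with the homogeneous coordinate ring of $\DP(x) \subset \DP$ inherited from the DP-coordinates; combining Theorem~\ref{Kmain} applied to the open dense subscheme $\FQ_{G}(x) \subset \DP(x)$ with the density Theorem~\ref{w-approx} forces equality. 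For higher cohomology vanishing, the \v{C}ech-cohomology argument of Theorem~\ref{coh-e} carries over once one establishes an analog of Theorem~\ref{R-free} for $\bigoplus_{\lambda} V_{x}^{-}(-w_{\circ}\lambda)^{*}$ over a suitable polynomial subalgebra; the Fourier-Littelmann-Naoi theorem \cite{FL07,Naoi12} supplies the requisite graded freeness. Normality of $\DP(x)$ is then automatic from the normality of $\DP(\bi)$ (Lemma~\ref{up-normal}), and the cohomology formula for $\DP(x)$ follows from Step~1 via the Leray spectral sequence for $\sfm$. The final assertion $\gch H^{0}(\DP(x),\CO_{\DP(x)}(\lambda)) \in (\BZ\pra{q^{-1}})[P]$ is immediate from the bound \eqref{eq:gch1}.

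The main obstacle lies in the ring-theoretic comparison above: proving the identification $\bigoplus_{\lambda} H^{0}(\DP(x),\CO_{\DP(x)}(\lambda)) \cong \bigoplus_{\lambda} V_{x}^{-}(-w_{\circ}\lambda)^{*}$ and the polynomial-subalgebra freeness. For $x=e$ this rested on Theorem~\ref{BFmain} together with the Serre-vanishing-surjectivity argument on $\FQ_{G}(\xi)$; for general $x$, Theorem~\ref{Kmain} plays the analogous role, but one must verify that the $\ti{\Iw}$-equivariant DP-coordinate structure interacts correctly with the Iwahori-orbit stratification $\bO(x) \subset \DP$ so as to preserve the surjectivity of multiplication maps and the compatibility with the polynomial subalgebra from \cite{FL07,Naoi12}.
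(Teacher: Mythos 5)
Your route is genuinely different from the paper's, and it is far heavier than necessary: the paper's proof of Corollary~\ref{coh} is a short reduction to the already-established case $x=e$. Since $\DP(x) \cong \DP(xt_{\xi}) \subset \DP(t_{\xi}) \cong \DP$ and $\DP(t_{2m\rho^{\vee}}) \cong \DP$ is normal with known cohomology (Theorem~\ref{normal} and Theorem~\ref{coh-e}), one descends to $\DP(x)$ through the one-step proper birational $\BP^{1}$-fibrations $q_{i,y}$ of Lemma~\ref{one-step}, propagating normality, the pushforward identity, and the character formula at each step exactly as in \cite[Theorem 4.7]{Kat16}, with Lemma~\ref{vample} and the Demazure-functor identity on the module side doing the work. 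No new resolution computation, no new coordinate ring, and no new freeness theorem are needed.

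Beyond being a detour, your plan has a concrete gap at the point you yourself flag as the ``main obstacle.'' The \v{C}ech/Koszul vanishing argument of Theorem~\ref{coh-e} rests on two inputs specific to $x=e$: (i) the identification $\BC[\hDP] = \bigoplus_{\lambda \in P^{+}} W(\lambda)^{\ast}$ (Corollary~\ref{pcr}), whose proof uses the surjectivity of the multiplication maps $W(\lambda)^{\ast} \otimes W(\mu)^{\ast} \to W(\lambda+\mu)^{\ast}$, and (ii) the freeness of $R_{G}$ over the polynomial algebra $A_{G}$ (Theorem~\ref{R-free}), which is deduced from the Fourier--Littelmann--Naoi freeness of the \emph{global Weyl modules} $W(\lambda)$ over $\BC[\BA^{(\lambda)}]$. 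For general $x$, the relevant ring would be $\bigoplus_{\lambda} V_{x}^{-}(-w_{\circ}\lambda)^{\ast}$, and neither input is available: \cite{FL07,Naoi12} say nothing about freeness of the Demazure submodules $V_{x}^{-}(-w_{\circ}\lambda)$ over a polynomial subalgebra, and the surjectivity of the corresponding multiplication maps (needed even to realize $\DP(x)$ as a multigraded $\Proj$) is not established anywhere in the paper — the standard monomial theory of Section~\ref{sec:SMT} gives only a combinatorial/character-level statement. A further, smaller issue: your claim that Proposition~\ref{H-range} ``applies verbatim'' is not right, since its proof rearranges the initial block of $\bi$ into a reduced word beginning with $w_{\circ}$ so as to collapse the first factors to $G\bra{z} \times^{\Iw} X$ and invoke Theorem~\ref{BFmain} on the open $G\bra{z}$-orbit; for general $x$ the initial block is a reduced word for $xt_{-2m_{0}\rho^{\vee}}$ and this collapse is unavailable. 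All of these difficulties disappear once one follows the paper's inductive reduction to $x=e$.
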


\begin{proof}
Once we know the normality of $\DP$ and 
the cohomology vanishing result in Theorem~\ref{coh-e}, 
the same argument as in \cite[Theorem 4.7]{Kat16} 
(see Theorem~\ref{Kmain}) yields all the assertions 
except for the last one.

The last assertion on the character estimate 
follows from a result about extremal weight modules 
(\cite[Corollary~5.2]{Kas02}) and 
the fact that $U_{\q}^{-}(\Fg)$ is concentrated 
on subspaces of $q$-degree $\le 0$.
\end{proof}
%
%
\begin{cor} \label{w-push}
For an arbitrary $x \in W_{\af}^{\ge 0}$, 
we take $\bi$ as in \eqref{eq:bi}. 
Then we have $\sfm_{\ast} \CO _{\DP ( \bi )} \cong \CO _{\DP ( x )}$.
\end{cor}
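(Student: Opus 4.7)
The plan is to extend Theorem~\ref{normal} (which is the case $x=e$) to arbitrary $x\in W_{\af}^{\ge 0}$. The morphism $\sfm:\DP(\bi)\to\DP(x)$ is proper and birational at every finite BSDH layer (by the lemmas preceding Subsection~\ref{subsec:coh}), $\DP(\bi)\subset\bQ^{\#}_G(\bi)$ is an open subscheme, and the target $\DP(x)$ is normal by Corollary~\ref{coh}. The canonical adjunction supplies an injection $\CO_{\DP(x)}\hookrightarrow\sfm_*\CO_{\DP(\bi)}$, and the claim is that this is an equality.

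\textbf{Step~1 (finite ambient layers).}
First, I would prove $(\sfm_k)_*\CO_{\bQ^{\#}_G(\bi_k)}\cong\CO_{\DP(x)}$ for every $k\ge 0$ by induction on $k$. The base case $k=0$ is trivial. For the inductive step, factor $\sfm_{k+1}=\sfm_k\circ p_{k+1}$, where $p_{k+1}:\bQ^{\#}_G(\bi_{k+1})\to\bQ^{\#}_G(\bi_k)$ is obtained from the map $q_{i_{k+1},x(k+1)}:\Iw(i_{k+1})\times^{\Iw}\DP(x(k+1))\to\DP(x(k))$ by tensoring with identities on the first $k$ factors. From $s_{i_{k+1}}x(k+1)=x(k)$ and $\sell(x(k+1))=\sell(x(k))+1$ (see~\eqref{eq:xk}) we are in the birational case of Lemma~\ref{one-step}: $q_{i_{k+1},x(k+1)}$ is proper and birational onto the normal target $\DP(x(k))$ (Corollary~\ref{coh}), and its geometric fibers are closed subschemes of $\BP^1$ (hence connected). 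The standard proper-birational-pushforward result for normal targets gives $(q_{i_{k+1},x(k+1)})_*\CO=\CO$; flat base change along the smooth factor $\Iw(i_1)\times^{\Iw}\cdots\times^{\Iw}\Iw(i_k)$ then upgrades this to $(p_{k+1})_*\CO_{\bQ^{\#}_G(\bi_{k+1})}\cong\CO_{\bQ^{\#}_G(\bi_k)}$, closing the induction.

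\textbf{Step~2 (limit) and main obstacle.}
To pass from the $(\sfm_k)_*$ to $\sfm_*$ itself, I would work locally on $\DP(x)$: for an affine open $U\subset\DP(x)$ coming from the DP coordinates,
\[
\Gamma(\sfm^{-1}(U),\CO_{\DP(\bi)})=\varprojlim_{k}\Gamma(\sfm^{-1}(U)\cap\DP(\bi_k),\CO_{\DP(\bi_k)}).
\]
I would then show that each section on $\sfm^{-1}(U)\cap\DP(\bi_k)$ extends uniquely to a section on $\sfm_k^{-1}(U)\subset\bQ^{\#}_G(\bi_k)$: the complement $\sfm_k^{-1}(U)\setminus\DP(\bi_k)$ is a union of boundary strata described by Lemma~\ref{k-stratum}, and inside each pro-affine chart from the proof of Lemma~\ref{up-normal} these strata are cut out by finitely many equations and have positive codimension in a normal ambient, so a Hartogs-type extension applies. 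Step~1 then identifies the extension with an element of $\Gamma(U,\CO_{\DP(x)})$, and inverse-limit compatibility across $k$ produces the required preimage of the given section. The main technical difficulty is making this Hartogs-type extension rigorous in the ind/pro-scheme framework where $\bQ^{\#}_G(\bi_k)$ is of infinite type; the idea is to reduce to one pro-affine chart at a time, use Lemma~\ref{k-stratum} to track the boundary explicitly, and patch the local extensions together.
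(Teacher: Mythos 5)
Your route differs essentially from the paper's, and both of its steps have gaps. The paper exploits the special shape of $\bi$: it factors $\sfm$ as the composite of (a) the BSDH resolution of $\DP(x(\ell'))=\DP(t_{2m_0\rho^{\vee}})\cong\DP$ — whose pushforward of $\CO$ is already Theorem~\ref{normal}, transported through $\imath_{2m_0\rho^{\vee}}$ — carried along relatively over the first $\ell'$ factors, followed by (b) the \emph{finite} composition $\sfm_{\ell'}=q_{i_1,x(1)}\circ\cdots\circ q_{i_{\ell'},x(\ell')}$. All the infinite-limit content is thus absorbed into Theorem~\ref{normal} (the $R_G$-argument), and only finitely many $q$-steps remain, each of which is handled by the character comparison of Corollary~\ref{coh} together with \cite[Theorem 4.7]{Kat16} and the very-ampleness in Lemma~\ref{vample}. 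You instead try to show $(\sfm_k)_{\ast}\CO_{\bQ^{\#}_G(\bi_k)}\cong\CO_{\DP(x)}$ for all $k$ and then pass to the limit via a Hartogs argument — a strategy the paper deliberately avoids.

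Concretely: in Step~1 you invoke a ``standard proper-birational-pushforward result for normal targets'' to get $(q_{i_{k+1},x(k+1)})_{\ast}\CO=\CO$. That is a Zariski's-main-theorem/Stein-factorization fact that requires the Noetherian hypothesis; here the target $\DP(x(k))$ is of infinite type and $f_{\ast}\CO_Y$ is not known to be a finite $\CO_X$-algebra, so the argument does not go through as stated (also, ``closed subschemes of $\BP^1$, hence connected'' is false — two distinct points is a counterexample). The paper instead \emph{proves} $(q_{i,x})_{\ast}\CO\cong\CO_{\DP(s_ix)}$ by comparing $\gch H^0(-,\CO(\lambda))$ on both sides for all $\lambda\in P^+$ using Corollary~\ref{coh} and \cite[Theorem 4.7]{Kat16}, and upgrading to an isomorphism of sheaves via Lemma~\ref{vample}. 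Step~2 is the more serious gap: the complement $\sfm_k^{-1}(U)\setminus\DP(\bi_k)$ is (the preimage in $\sfm_k^{-1}(U)$ of) $\Iw(i_1)\times^{\Iw}\cdots\times^{\Iw}\Iw(i_k)\times^{\Iw}\bigl(\DP(x(k))\setminus\bO(x(k))\bigr)$, and $\DP(x(k))\setminus\bO(x(k))$ contains $\Iw$-orbits $\bO(y)$ with $\sell(y)=\sell(x(k))+1$, i.e.\ of \emph{codimension one}. ``Positive codimension'' is not enough — Hartogs extension of regular functions requires codimension at least two, so there is no extension theorem to appeal to; whether a given section extends across this divisor is precisely what the corollary asserts. (Moreover the pro-affine charts in the proof of Lemma~\ref{up-normal} cover $\DP(\bi_k)$, not $\bQ^{\#}_G(\bi_k)$, so they do not see the locus you want to extend across.) Replacing Step~2 by the paper's factorization through $\bQ^{\#}_G(\bi_{\ell'})$ and Theorem~\ref{normal} is the fix.
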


\begin{proof}
We adopt the notation of Section~\ref{subsec:BSDH}. 
The map $\sfm$ factors as the composite of 
the map $\sfm$ for $t_{2 m_0 \rho^{\vee}}$ and 
a successive composite of the $q_{i_{k}, x ( k )}$ for $1 \le k \le \ell'$. 
The case $x = t_{\xi}$ for $\xi \in Q^{\vee,+}$ is clear 
since $\DP ( t_{\xi} ) \cong \DP$ (through $\imath_{\xi}$). 
Therefore, it suffices to show that
$( q_{i, x} )_{\ast} \CO_{\Iw(i) \times^{\Iw} \DP ( x )} 
 \cong \CO_{\DP ( s_{i} x )}$ 
for each $x \in W_{\af}^{\ge 0}$ and $i \in I_{\af}$ 
such that $s_{i} x \prec x$. 
This assertion itself follows from Corollary~\ref{coh} and \cite[Theorem 4.7]{Kat16} 
(in view of Lemma~\ref{vample}). Hence we obtain the assertion of the corollary. 
\end{proof}
%
%
\section{$K$-theory of semi-infinite flag manifolds.}
\label{sec:K-SiFl}

We keep the notation and setting of Section~\ref{subsec:BSDH}.  
%
%
\begin{prop} \label{line-bundles}
Every $\ti{\Iw}$-equivariant locally free sheaf of rank one 
{\rm(}i.e., line bundle{\rm)} on $\DP ( x )$ is of the form 
$\chi \otimes _{\BC} \CO_{\DP ( x )} ( \lambda )$ 
for some $\lambda \in P$ and an $\ti{\Iw}$-character $\chi$.
\end{prop}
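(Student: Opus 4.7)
The plan is to pull $\CL$ back to the semi-infinite Bott--Samelson--Demazure--Hansen resolution $\sfm : \DP(\bi) \to \DP(x)$, identify the pullback $\sfm^{\ast}\CL$ with a character twist of a line bundle of the form $\CO_{\DP(\bi)}(\lambda)$ by analysing the tower of $\BP^{1}$-bundles building $\DP(\bi)$, and then descend via $\sfm_{\ast}$. Concretely, Corollary~\ref{w-push} combined with the projection formula gives $\sfm_{\ast}\sfm^{\ast}\CL \cong \CL \otimes \sfm_{\ast}\CO_{\DP(\bi)} \cong \CL$, so it suffices to produce an $\ti{\Iw}$-equivariant isomorphism $\sfm^{\ast}\CL \cong \chi \otimes \CO_{\DP(\bi)}(\lambda)$ for some $\lambda \in P$ and some $\ti{\Iw}$-character $\chi$.

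Recall that $\DP(\bi) = \varinjlim_{k} \DP(\bi_{k})$, where each $\DP(\bi_{k})$ is the iterated $\BP^{1}$-bundle of \eqref{defbQi}, built from the successive $\BP^{1}$-fibrations of Lemma~\ref{one-step}. Fix the canonical $(H \times \Gm)$-fixed point $\ti{L} \in \DP(\bi)$ that maps to $L_{x} \in \bO(x)$ under $\sfm$. The fiber $\sfm^{\ast}\CL|_{\ti{L}}$ is a one-dimensional $H \times \Gm$-module of weight $\mu \in P \oplus \BZ\delta$; writing $\mu = \lambda - k_{0}\delta$ with $\lambda \in P$ and comparing with the weight of $\CO_{\DP(\bi)}(\lambda)|_{\ti{L}}$ yields an $\ti{\Iw}$-character $\chi$ for which $\mathcal{N} := \sfm^{\ast}\CL \otimes \chi^{-1} \otimes \CO_{\DP(\bi)}(-\lambda)$ has trivial fiber at $\ti{L}$. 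The next step is to prove by induction on $k$ that $\mathcal{N}|_{\DP(\bi_{k})}$ is $\ti{\Iw}$-equivariantly trivial. Since each successive $\BP^{1}$-bundle $\DP(\bi_{k}) \to \DP(\bi_{k-1})$ (or its birational contraction) is well-behaved, and an $\ti{\Iw}$-equivariant line bundle on an equivariant $\BP^{1}$ is determined by its degree together with the two weights at the fixed points, the inductive step reduces to two checks: along every $\BP^{1}$-fibre contracted by $\sfm$ the line bundle $\sfm^{\ast}\CL$ has degree zero (because it is pulled back from $\DP(x)$ through a proper birational map), and along the $\BP^{1}$-fibres not contracted by $\sfm$ the degree of $\sfm^{\ast}\CL$ is matched by that of $\CO_{\DP(\bi)}(\lambda)$ by the choice of $\lambda$. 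Passing to the inductive limit yields $\mathcal{N} \cong \CO_{\DP(\bi)}$, and applying $\sfm_{\ast}$ gives $\CL \cong \chi \otimes \CO_{\DP(x)}(\lambda)$.

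The main obstacle is maintaining coherence across the infinite tower. By Lemma~\ref{k-stratum}, the $(H \times \Gm)$-fixed points of $\DP(\bi_{k})$ are parameterised by subsequences of $\bi_{k}$, and the $\ti{\Iw}$-weights at these fixed points could a priori encode an infinite collection of independent twists rather than a single $\lambda \in P$ and a single character $\chi$. The decisive constraint is that $\sfm^{\ast}\CL$ is a pullback along the proper birational map $\sfm$, which forces the line bundle to be trivial along every contracted $\BP^{1}$-fibre; combined with the description of reachable elements in Lemma~\ref{BSDH-cover}, this collapses the a priori infinite-rank family of twists onto the rank-$|I|$ data encoded by $\lambda \in P$. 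The normality of $\DP(x)$ (Theorem~\ref{normal}) together with Corollary~\ref{w-push} is precisely what guarantees that the analysis on $\DP(\bi)$ descends cleanly to a statement about $\CL$ on $\DP(x)$.
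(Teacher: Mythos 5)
Your reduction to the BSDH tower is internally consistent up to a point: Corollary~\ref{w-push} and the projection formula do give $\sfm_{\ast}\sfm^{\ast}\CL \cong \CL$, the pullback $\sfm^{\ast}\CL$ is trivial on every fibre of $\sfm$, and an $\ti{\Iw}$-equivariant line bundle on the iterated $\BP^{1}$-tower is indeed determined by its degrees along the coordinate $\BP^{1}$'s together with the weight at one fixed point. The genuine gap is your second ``check'': you choose $\lambda$ solely so that $\chi\otimes\CO_{\DP(\bi)}(\lambda)$ has the correct $(H\times\Gm)$-weight at the single fixed point $\ti{L}$, and then assert that the degrees of $\sfm^{\ast}\CL$ along all non-contracted coordinate $\BP^{1}$'s ``are matched by that of $\CO_{\DP(\bi)}(\lambda)$ by the choice of $\lambda$''. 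Nothing in the argument forces this. Triviality on contracted fibres only identifies the weights of $\sfm^{\ast}\CL$ at fixed points lying over the same point of $\DP(x)$; it imposes no constraint on the degree of $\CL$ along a non-contracted curve joining $L_{y}$ to $L_{s_{\beta}y}$ for $y \sige x$, $\beta \in \prr$. Those infinitely many degrees are a priori independent integers (subject only to the localization compatibility with the weight function on fixed points), and the statement that they are all of the form $\pair{\lambda}{\,\cdot\,}$ for one $\lambda \in P$ \emph{is} the proposition, restated on the resolution rather than proved. The appeal to Lemma~\ref{BSDH-cover} tells you which fixed points occur, not why the family of twists collapses; to close the argument you would need an actual rigidity (GKM-type) computation on the tower, which is the hard part and is absent.

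For comparison, the paper's proof sidesteps the tower entirely. For $x=e$ it restricts $\CL$ to the open $G\bra{z}$-orbit, whose boundary has codimension at least two in the normal scheme $\DP$ (Theorem~\ref{orbbQ}), so nothing is lost; that orbit is a pro-affine bundle over $G/B$, whence the equivariant Picard group reduces to $\Pic^{B\times\Gm}(G/B)$, which is $P$ up to character twist by \cite{KKV}. The case of general $x$ then follows from the chain $\DP(xt_{\xi}) \subset \DP(t_{\xi}) \subset \DP(x) \subset \DP$ together with the isomorphisms $\DP(t_{\xi})\cong\DP$ and $\DP(xt_{\xi})\cong\DP(x)$. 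If you wish to keep your route, the missing ingredient is precisely a proof that the degree assignment on the non-contracted curves is determined by its values on the $|I|$ curves attached to the finite simple reflections; as written, that step is circular.
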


\begin{proof}
For $x = e$, the boundary of the open $G\bra{z}$-orbit $\bO$ in $\DP$ is 
of codimension at least two, and 
the open $G \bra{z}$-orbit $\bO$ has 
a structure of pro-affine bundle over $G / B$. 
In particular, an $\ti{\Iw}$-equivariant line bundle over $\bO$ is 
the pullback of a $B \times \Gm$-equivariant line bundle over $G / B$. 
Because every line bundle over $G/B$ carries a unique $G$-equivariant 
structure by \cite[Sect.~3.3]{KKV}, 
and $B$-equivariant structures of the trivial 
line bundle $\CO_{G/B}$ are in bijection with $P$ 
(since $H^{0}(G/B,\,\CO_{G/B})=\BC$), we deduce that 
every $B$-equivariant line bundle over $G / B$ is 
an $H$-character twist of a $G$-equivariant line bundle, 
which is obtained as the restriction of some $\CO_{\DP} ( \lambda )$. 
Consequently, the assertion follows for $x = e$.

Now, for $y_{1},\,y_{2} \in W_{\af}^{\ge 0}$ such that $y_{1} \preceq y_{2}$, 
the restriction map transfers an $\ti{\Iw}$-equivariant line bundle over 
$\DP(y_{1})$ to an $\tilde{\Iw}$-equivariant line bundle over $\DP(y_{2})$.
Also, for an arbitrary $x \in W_{\af}^{\ge 0}$, 
we can find $\xi \in Q^{\vee,+}$ such that
%
%
\begin{equation} \label{eq:DPs}
\DP ( xt_{\xi} ) \subset \DP ( t_{\xi} ) \subset \DP ( x ) \subset \DP(e)=\DP
\end{equation}
by (the proof of) Lemma~\ref{BSDH-cover}, 
since $x = x ( \bj ) \preceq t_{2m\rho^{\vee}}$ for $m \gg 0$. 
Because we have $\DP ( t_{\xi} ) \cong \DP$ 
as schemes with an $\ti{\Iw}$-action, 
we conclude that a nonisomorphic pair of 
($\ti{\Iw}$-equivariant) line bundles over $\DP$ 
restricts to a nonisomorphic pair of 
($\ti{\Iw}$-equivariant) line bundles over $\DP(t_{\xi})$. 
Since $\DP ( x t_{\xi} ) \cong \DP ( x )$, 
the same is true for line bundles over $\DP ( x )$. 
Therefore, by means of \eqref{eq:DPs}, we deduce the assertion of the proposition
for an arbitrary $x \in W_{\af}^{\ge 0}$ from the case $x = e$.
This proves the proposition. 
\end{proof}

The following is an immediate consequence of 
Corollary~\ref{coh} and Proposition~\ref{line-bundles}. 
%
%
\begin{cor} \label{cvs}
For each $\ti{\Iw}$-equivariant line bundle $\CL$ over $\DP(x)$, 
we have
\begin{equation*}
H ^{n} (\DP(x),\,\CL ) = \{ 0 \} \quad \text{\rm for all $n > 0$}. 
\end{equation*}
\end{cor}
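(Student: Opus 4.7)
The plan is to deduce the statement directly by combining the two cited results, with the only substantive work being the identification of the $\ti{\Iw}$-equivariant structure on cohomology.

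First I would apply Proposition~\ref{line-bundles} to write the given $\ti{\Iw}$-equivariant line bundle as $\CL \cong \chi \otimes_{\BC} \CO_{\DP(x)}(\lambda)$ for some $\lambda \in P$ and some one-dimensional $\ti{\Iw}$-character $\chi$. Since $\chi$ is a one-dimensional $\BC$-vector space, the underlying quasi-coherent $\CO_{\DP(x)}$-module structure on $\CL$ agrees with that of $\CO_{\DP(x)}(\lambda)$, and the $\ti{\Iw}$-action on sheaf cohomology simply gets twisted. In particular, for every $n \ge 0$ we have a canonical $\ti{\Iw}$-module isomorphism
\begin{equation*}
H^{n}(\DP(x),\,\CL) \;\cong\; \chi \otimes_{\BC} H^{n}(\DP(x),\,\CO_{\DP(x)}(\lambda)).
\end{equation*}

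Next I would invoke Corollary~\ref{coh}: for any $\lambda \in P$ and any $n > 0$, the graded character $\gch H^{n}(\DP(x),\,\CO_{\DP(x)}(\lambda))$ is identically zero (the nonzero case there is confined to $n=0$ together with $\lambda \in P^{+}$). Because Corollary~\ref{coh} asserts that this graded character lies in $(\BZ\pra{q^{-1}})[P]$, each $(H \times \Gm)$-weight space of $H^{n}(\DP(x),\,\CO_{\DP(x)}(\lambda))$ is finite-dimensional; hence the vanishing of the graded character is equivalent to the vanishing of the module itself. Combining this with the isomorphism of the previous paragraph yields $H^{n}(\DP(x),\,\CL) = \{0\}$ for all $n > 0$, as desired.

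There is no genuine obstacle here, only a bookkeeping point: one must check that tensoring by the character $\chi$ indeed commutes with sheaf cohomology in the $\ti{\Iw}$-equivariant setting and that ``graded character zero'' implies ``module zero.'' Both are routine given the finiteness of weight multiplicities guaranteed by Corollary~\ref{coh}, so the corollary really is an immediate formal consequence of Proposition~\ref{line-bundles} and Corollary~\ref{coh}.
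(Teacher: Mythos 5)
Your proposal is correct and matches the paper, which states Corollary~\ref{cvs} as an immediate consequence of Corollary~\ref{coh} and Proposition~\ref{line-bundles} without further elaboration; your filling-in of the two bookkeeping points (the character twist not affecting the underlying sheaf, and vanishing of the graded character forcing vanishing of the module since the coefficients are nonnegative dimensions) is exactly the intended argument.
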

%
%
\begin{lem} \label{faith}
For each $\ti{\Iw}$-equivariant 
quasi-coherent sheaf $\CE$ on $\DP$ such that
\begin{equation*}
\Gamma ( \DP, \, \CE ( \lambda ) ) = \{ 0 \} \quad 
\text{\rm for all $\lambda \in P$, where 
$\CE ( \lambda ) = \CE \otimes_{\CO_{\DP}} \CO_{\DP}(\lambda)$},
\end{equation*}
we have $\CE = \{ 0 \}$.
\end{lem}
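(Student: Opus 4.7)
The plan is to argue by contradiction: assume $\CE \ne 0$ and construct a nonzero element of $\Gamma(\DP, \CE(\lambda))$ for some $\lambda \in P$, contradicting the hypothesis. If $\CE \ne 0$, then $\CE_{P} \ne 0$ for some $P \in \DP$, and by the $\ti{\Iw}$-equivariance together with the orbit decomposition (Corollary~\ref{orbits}), the entire $\Iw$-orbit $\bO(x)$ of $P$ has nonzero stalks of $\CE$; in particular, $\CE_{P_x} \ne 0$, where $P_x$ is the unique $(H \times \Gm)$-fixed point of $\bO(x)$. The $H \times \Gm$-action on the stalk $\CE_{P_x}$ coming from the equivariant structure is semisimple by reductivity, so I may select a nonzero $(H \times \Gm)$-homogeneous element $s_0 \in \CE_{P_x}$.

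Using Corollary~\ref{pcr} to identify $\DP$ with $\Proj R_G$, together with the very ampleness of $\CO_{\DP}(\lambda)$ for $\lambda \in P^{++}$ (Lemma~\ref{vample}) and the abundant supply of $(H \times \Gm)$-weight vectors in each $W(\lambda)^{\ast}$ guaranteed by Theorem~\ref{R-free}, I would pick an $(H \times \Gm)$-eigenvector $f \in W(\lambda)^{\ast}$ for some $\lambda \in P^{++}$ such that $f$ does not vanish at $P_x$. Then $U := D_+(f)$ is an $(H \times \Gm)$-stable affine open neighborhood of $P_x$, and $s_0$ lifts, after projection onto its weight component, to a nonzero $(H \times \Gm)$-weight section $s \in \Gamma(U, \CE)$. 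Multiplication by $f^n$ yields $f^n s \in \Gamma(U, \CE(n\lambda))$, still $(H \times \Gm)$-homogeneous.

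The crux is to extend $f^n s$ to a global section of $\CE(n\lambda)$ on $\DP$ for $n$ sufficiently large; this produces a nonzero element of $\Gamma(\DP, \CE(n\lambda)) = \{0\}$, contradicting the hypothesis. The extension would be established along the lines of the proof of Theorem~\ref{coh-e}: pull $\CE$ back along the $H$-torsor $\tDP \to \DP$, extend across the boundary $Z = \hDP \setminus \tDP$ to obtain a quasi-coherent sheaf $\widehat{\CE}$ on the affine scheme $\hDP$, and use the regular sequences in $R_G$ produced by Theorem~\ref{R-free} together with the Mittag-Leffler-type \v{C}ech comparisons \eqref{cech-ref}--\eqref{genuine} to verify the vanishing of the local cohomology groups $H^i_Z(\hDP, \widehat{\CE})$ in each $(H \times \Gm)$-weight component. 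The main obstacle is justifying this local cohomology vanishing in the infinite-type setting: one must combine the $\ti{\Iw}$-equivariance (to reduce to weight-space-wise finite statements that behave like finite-dimensional schemes) with the precise freeness of $R_G$ over $A_G$ in order to replicate the Koszul-theoretic arguments used for $\CO_{\hDP}$ in the proof of Theorem~\ref{coh-e} for a general $\widehat{\CE}$.
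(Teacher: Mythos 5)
The first half of your argument (reducing to a homogeneous stalk element at the $(H\times\Gm)$-fixed point, choosing an $(H\times\Gm)$-eigenvector $f\in W(\lambda)^{\ast}$ not vanishing there, and lifting to a section on the affine open $D_{+}(f)$) is in the right spirit and matches the paper's setup. However, the crucial extension step is where you diverge from the paper and where your argument has a genuine gap.

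You propose to extend $f^{n}s$ to a global section by redoing the local-cohomology/Koszul argument from the proof of Theorem~\ref{coh-e} for the sheaf $\CE$ rather than $\CO_{\hDP}$, and you explicitly flag that this is ``the main obstacle'' without resolving it. That obstacle is real: the Koszul-complex acyclicity in the proof of Theorem~\ref{coh-e} rests on Theorem~\ref{R-free} (freeness of $R_{G}$ over $A_{G}$), which is a statement about the structure sheaf. Nothing in the paper gives regular sequences for the module $M(\CE)$ attached to an arbitrary $\ti{\Iw}$-equivariant quasi-coherent $\CE$, and indeed no such vanishing is claimed or needed in the paper for general $\CE$ at this stage (the resolutions in Theorem~\ref{cvb} are built \emph{after} Lemma~\ref{faith} and presuppose it). As written, your extension step is not established.

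The paper's proof is far more elementary and sidesteps local cohomology entirely: by quasi-coherence, a section of $\CE$ over the affine open $D_{+}(f)$ (a complement of finitely many hyperplanes through \eqref{Pemb}) is, in the $\Proj$ description of Corollary~\ref{pcr}, a degree-zero fraction $m/f^{n}$ with $m$ a homogeneous element of the graded module underlying $\CE$; it therefore has a pole of finite order along $\{f=0\}$, and multiplying by $f^{n}$ (a section of $\CO_{\DP}(n\lambda)$, supplied by the very ampleness in Lemma~\ref{vample}) clears the pole and yields a nonzero global section of $\CE(n\lambda)$. This is just the classical pole-clearing trick from $\Proj$-geometry and requires no cohomological input. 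You should replace your proposed local-cohomology argument with this observation; otherwise the proof is incomplete.
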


\begin{proof}
By the quasi-coherence and $\ti{\Iw}$-equivariance, every nonzero section of $\CE$ has 
an $\ti{\Iw}$-stable support, which must be a union of $\Iw$-orbits. 
In addition, it defines a regular section on a complement of 
finitely many hyperplanes having poles of finite order around the boundary points. 
Therefore, Lemma~\ref{vample} implies the desired result. 
\end{proof}
%
%
\begin{thm} \label{cvb}
Let $\CE$ be an $\ti{\Iw}$-equivariant quasi-coherent $\CO_{\DP}$-module
satisfying the following conditions{\rm:}
\begin{equation} \label{cvb1}
\gdim \Gamma ( \DP,\, \CE ( \lambda ) )^{\ast} \in \BZ\bra{q} 
\quad \text{\rm for every $\lambda \in P$};
\end{equation}
\begin{equation} \label{cvb2}
\begin{array}{l}
\text{\rm there exists $\lambda_{0} \in P$ such that 
$\Gamma ( \DP,\, \CE ( \lambda ) ) = \{ 0 \}$} \\[2mm]
\text{\rm for all $\lambda \in P$ with 
$\pair{\lambda}{\alpha_{i}^{\vee}} < \pair{\lambda_{0}}{\alpha_{i}^{\vee}}$ 
for some $i \in I$}. 
\end{array}
\end{equation}
Then, we have a resolution
$\cdots \rightarrow \CP^{2}_{\CE} 
  \rightarrow \CP^{1}_{\CE} 
  \rightarrow \CP^{0}_{\CE}
  \rightarrow \CE
  \rightarrow 0$
of $\ti{\Iw}$-equivariant $\CO_{\DP}$-modules such that
\begin{enu}
\item $\gdim \Gamma ( \DP,\,\CP^{k}_{\CE}(\lambda) )^{\ast} \in \BZ\bra{q}$ 
      for every $k \ge 0$ and $\lambda \in P${\rm;}
\item for each $k \ge 0$, the $\ti{\Iw}$-equivariant $\CO_{\DP}$-module 
      $\CP^{k}_{\CE}$ is a direct sum of line bundles 
      {\rm(}if we forget the $\Iw$-module structure{\rm);}
\item for each $m \in \BZ$ and $\lambda \in P$, 
      the number of direct summands $\CP^{k}_{\CE}(\lambda)$ of 
      $\bigoplus_{k \ge 0} \CP^{k}_{\CE}(\lambda)$ 
      contributing to the homogeneous subspace of degree $m$ of 
      $\Gamma ( \DP,\,\bigoplus _{k \ge 0} \CP^{k}_{\CE}(\lambda) )$ 
      is finite.
\end{enu}
Moreover, we have $H^{n} ( \DP,\,\CE ) = \{ 0 \}$ for all $n > 0$. 
\end{thm}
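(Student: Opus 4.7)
The plan is to construct the resolution inductively via tautological evaluation maps, taking advantage of Proposition~\ref{line-bundles} to identify all $\ti{\Iw}$-equivariant line bundles on $\DP$ as character twists of $\CO_{\DP}(\mu)$. For the zeroth step, I would fix an $(H \times \Gm)$-weight basis of each $\Gamma(\DP, \CE(\mu))$ and set
\[
\CP^{0}_{\CE} := \bigoplus_{\mu \in P}\,\bigoplus_{v}\, \chi_{v} \otimes_{\BC} \CO_{\DP}(-\mu),
\]
where the inner sum runs over the chosen weight basis of $\Gamma(\DP, \CE(\mu))$, $\chi_{v}$ denotes the $(H \times \Gm)$-character of $v$, and the natural evaluation map $\CP^{0}_{\CE} \to \CE$ is defined by sending the generator of the summand indexed by $v$ to $v$ itself.

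Surjectivity of this map follows from Lemma~\ref{faith}: after twisting by any $\CO_{\DP}(\lambda)$ and applying $\Gamma(\DP, -)$, the $\mu = \lambda$ summand yields the identity map on $\Gamma(\DP, \CE(\lambda))$ (using $\Gamma(\DP, \CO_{\DP}) = \BC$ from Theorem~\ref{coh-e} at $\lambda = 0$), so the cokernel has no global sections under any twist and must therefore vanish. Property (2) is clear by construction. For (1) and (3) at $k = 0$, a summand $\chi_{v} \otimes \CO_{\DP}(-\mu)$ contributes to $\Gamma(\DP, \CP^{0}_{\CE}(\lambda))$ only when $\lambda - \mu \in P^{+}$ (Corollary~\ref{coh}), which combined with $\mu - \lambda_{0} \in P^{+}$ from (5.2) restricts $\mu$ to the finite set $\{\mu : \lambda_{0} \le \mu \le \lambda\}$. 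Condition (5.1) places $\Gamma(\DP, \CE(\mu))$ in $q$-degrees $\le 0$ with finite-dimensional graded pieces, and Corollary~\ref{coh} does the same for $\Gamma(\DP, \CO_{\DP}(\lambda - \mu))$; hence a summand contributes to the degree-$m$ piece only when the $q$-degree $m_{v}$ of $v$ lies in the finite interval $[m, 0]$, giving (3) for $k = 0$ and thus (1).

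For the iteration, the kernel $\CK_{0} \subset \CP^{0}_{\CE}$ inherits (5.1) as a subsheaf and (5.2) because $\Gamma(\DP, \CP^{0}_{\CE}(\lambda))$ vanishes whenever the finite interval $[\lambda_{0}, \lambda]$ is empty. Repeating the construction produces $\CP^{k}_{\CE}$ for all $k \ge 0$. To secure (3) summed over all $k$, I would take a \emph{minimal} set of generators at each step; in terms of the graded $R_{G}$-module structure furnished by Corollary~\ref{pcr}, with $R_{G}$ free over the polynomial subalgebra $A_{G}$ by Theorem~\ref{R-free}, minimal syzygies lie in $\mathfrak{m} \cdot \CP^{k-1}_{\CE}$ where $\mathfrak{m} \subset R_{G}$ is the ideal of strictly negative $q$-degree. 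This forces the generators of $\CP^{k}_{\CE}$ to have $q$-degrees $\le -k$, so for each fixed $(\lambda, m)$ only $k \le -m$ can contribute, a finite set. Finally, the cohomology vanishing $H^{n}(\DP, \CE) = 0$ for $n > 0$ follows by computing via the resolution: each $\CP^{k}_{\CE}$ is $\Gamma$-acyclic by Corollary~\ref{cvs} applied summand-wise (direct sums commuting with cohomology in each graded piece thanks to (3)), so $\CE$ is represented in the derived category by the $\Gamma$-acyclic complex $\CP^{\bullet}_{\CE}$, whose global sections are concentrated in cohomological degrees $\le 0$.

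The hard part will be establishing the minimality argument in the iterated step — ensuring that the $q$-degrees of generators of successive syzygy modules genuinely shift strictly downward and that this shift, interacting with the graded $R_{G}$-module structure, produces the required finiteness (3) across all $k$. This is where the ring-theoretic input from Theorem~\ref{R-free} (freeness of $R_{G}$ over $A_{G}$) is indispensable, playing the role that Noetherianity plays in the classical theory of minimal free resolutions.
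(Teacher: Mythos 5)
Your overall strategy is the paper's: replace $\CE$ by the $(P\times\BZ)$-graded $R_G$-module $M(\CE)=\bigoplus_{\lambda}\Gamma(\DP,\CE(\lambda))$, build graded free covers step by step, convert graded free $R_G$-modules back into direct sums of $\ti{\Iw}$-equivariant line bundles via Corollary~\ref{pcr} and Proposition~\ref{line-bundles}, and finish with Corollary~\ref{cvs}. Your treatment of the base case ($k=0$), of the inheritance of \eqref{cvb1}--\eqref{cvb2} by the kernel, and of the final acyclicity argument all match the paper.

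The genuine gap is in the mechanism you propose for condition (3). You assert that minimal syzygies lie in $\mathfrak{m}\cdot\CP^{k-1}_{\CE}$ with $\mathfrak{m}$ ``the ideal of strictly negative $q$-degree,'' and conclude that the generators of $\CP^{k}_{\CE}$ have $q$-degree $\le -k$. Graded Nakayama only places minimal syzygies in $\mathfrak{m}_{+}M(\CP^{k-1}_{\CE})$, where $\mathfrak{m}_{+}=\bigoplus_{\nu\in P^{+},\,\nu\neq 0}W(\nu)^{\ast}$ is the augmentation ideal; and $\mathfrak{m}_{+}$ has a large $q$-degree-zero part (the degree-zero component of $W(\nu)^{\ast}$ is $L(\nu)^{\ast}\neq\{0\}$). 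So successive syzygies can sit in the \emph{same} $q$-degree as the generators they resolve, your bound $k\le -m$ is unjustified, and the $q$-grading alone cannot deliver (3). The correct bookkeeping uses the $P$-grading: every nonzero homogeneous element of $\mathfrak{m}_{+}$ has $P$-degree a nonzero dominant weight (the only degree-$(0,n)$ part of $R_G$ is $W(0)^{\ast}=\BC$ in degree $0$), so a minimal choice of generators forces the twists $\mu$ occurring in $\CP^{k}_{\CE}$ to satisfy $\mu\ge\lambda_{0}+\nu_{1}+\cdots+\nu_{k}$ with each $\nu_{j}\in P^{+}\setminus\{0\}$; since only twists with $\lambda-\mu\in P^{+}$ contribute to $\Gamma(\DP,\CP^{k}_{\CE}(\lambda))$, this bounds $k$ by $\sum_{i\in I}\pair{\lambda-\lambda_{0}}{\alpha_{i}^{\vee}}$, which together with your (correct) finiteness analysis in each fixed homological degree yields (3). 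This is exactly what the paper's conditions \eqref{eq:sp2}--\eqref{eq:sp3} and the sets $\Xi(\cdot)$ encode: they track minimality in the $P$-direction (the summand $\mu\in\lambda-P^{+}$, $\mu\neq\lambda$) as well as the $q$-direction. Finally, Theorem~\ref{R-free} is not the relevant input here (the paper uses it for the Koszul-complex argument in Theorem~\ref{coh-e}); all that is needed is graded Nakayama for the $(P\times\BZ)$-graded ring $R_G$ with one-dimensional degree-$(0,0)$ component, applied to modules bounded below in the $P$-direction and above in the $q$-direction, as guaranteed by \eqref{cvb1}--\eqref{cvb2}.
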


\begin{proof}
Let 
\begin{equation*}
R_{G} := \bigoplus_{\lambda \in P^{+}} W ( \lambda )^{\ast} 
  = \bigoplus_{\lambda \in P^{+}} H^{0} ( \DP,\,\CO_{\DP} ( \lambda ) )
\end{equation*}
be the projective coordinate ring. Thanks to Lemma~\ref{faith}, 
the sheaf $\CE$ is determined by the $R_G$-module
\begin{equation*}
M ( \CE ) := \bigoplus_{\lambda \in P} H^{0} ( \DP, \, \CE ( \lambda ) ).
\end{equation*}
Because $M(\CE)$ is nonpositively graded and 
each homogeneous subspace with respect to 
the $(P \times \BZ)$-grading is finite-dimensional, 
we obtain a surjection $P^{0}_{\CE} \rightarrow M (\CE)$, 
where $P^{0}_{\CE}$ is a direct sum of $(P \times \BZ)$-graded projective 
$R_G$-modules tensored with $\ti{\Iw}$-modules; 
indeed, we can construct the desired maps inductively 
by starting with $\lambda=\lambda_{0} \in P$, and then by 
adding the $\vpi_{i}$, $i \in I$, repeatedly, 
by means of the projectivity of $R_{G}$. 
Since a (graded) projective $R_G$-module is obtained from $R_{G}$ 
by a grading shift and an $\ti{\Iw}$-module twist, we deduce that
$P^{0}_{\CE} \cong M ( \CP^{0}_{\CE} )$
as ($P \times \BZ$)-graded $R_G$-modules 
for a certain direct sum $\CP^{0}_{\CE}$ of $\ti{\Iw}$-equivariant line bundles 
(with some twist of the $\ti{\Iw}$-equivariant structure). 
Here the surjectivity of $P^{0}_{\CE} \rightarrow M ( \CE )$ of $R_{G}$-modules
implies that $\CP^{0}_{\CE} \rightarrow \CE$ is also surjective. 
Also, by our character estimate, we deduce that
$\gdim \Gamma ( \DP,\,\CP^{0}_{\CE} ( \lambda ) )^{\ast} \in \BZ\bra{q}$.
Now, let $\Xi ( \CE )$ be the set of those pairs 
$(\lambda,\,m) \in P \times \BZ$ for which 
\begin{equation*}
\bigoplus_{ \mu \in \lambda-P^{+} }
\left(\bigoplus _{n > m} \Gamma ( \DP, \, \CE ( \mu ) )_{n}\right) \oplus 
\bigoplus _{\mu \in \lambda-P^{+}, \, \mu \ne \lambda} 
\Gamma ( \DP, \, \CE ( \mu ) )_{m}= \{0\}.
\end{equation*}
Then, we can rearrange $\CP^{0}_{\CE}$, if necessary, 
to assume that
\begin{equation} \label{eq:sp1}
\gdim \ker \bigl( 
 \Gamma ( \DP, \, \CP^{0}_{\CE} ( \lambda ) ) \rightarrow 
 \Gamma ( \DP, \CE ( \lambda ) ) 
\bigr)^{\ast} \in \BZ\bra{q} \quad 
\text{for all $\lambda \in P$}; 
\end{equation}
\begin{equation} \label{eq:sp2}
\Gamma ( \DP, \, \CP^{0}_{\CE} ( \lambda ) )^{\ast} = \{ 0 \} 
  \quad \text{\rm for all $\lambda \in P$ with
  $\pair{\lambda}{\alpha_{i}^{\vee}} < \pair{\lambda_{0}}{\alpha_{i}^{\vee}}$ 
  for some $i \in I$};
\end{equation}
\begin{equation} \label{eq:sp3}
\ker \bigl( 
  \Gamma ( \DP, \, \CP^0_{\CE} ( \lambda ) ) \rightarrow 
  \Gamma ( \DP, \CE ( \lambda ) ) 
\bigr)_{m} = \{ 0 \} 
\quad \text{for every $(\lambda, m) \in \Xi ( \CE )$}. 
\end{equation}
Thanks to \eqref{eq:sp1} and \eqref{eq:sp2}, 
we can replace $\CE$ with 
$\ker \bigl( \CP^{k}_{\CE} \rightarrow \CP^{k-1} _{\CE} \bigr)$ repeatedly 
(with the convention $\CP^{-1}_{\CE} = \CE$) to 
apply the procedure above in order to obtain $\CP^{k+1} _{\CE}$ 
for each $k \ge 0$. This yields an $\ti{\Iw}$-equivariant resolution
%
%
\begin{equation} \label{eq:res}
\cdots 
  \rightarrow \CP^{2}_{\CE} 
  \rightarrow \CP^{1}_{\CE} 
  \rightarrow \CP^{0}_{\CE} 
  \rightarrow \CE 
  \rightarrow 0, 
\end{equation}
in which each $\CP^{k}_{\CE}$ is a direct sum of 
$\ti{\Iw}$-equivariant line bundles 
(with some twist by $\ti{\Iw}$-modules). 
By the construction, we have
\begin{equation*}
\emptyset = 
 \bigcap_{k \ge 0} \Xi ( \ker d_{k} ) \subset P \times \BZ,
\end{equation*}
and hence the resolution \eqref{eq:res} satisfies 
the first two of the requirements. 
Also, taking into account \eqref{eq:sp2} and \eqref{eq:sp3}, 
we see that the resolution \eqref{eq:res} satisfies the third one 
of the requirements.

Finally, by applying Corollary~\ref{cvs}, 
we conclude the desired cohomology vanishing.
This completes the proof of the theorem. 
\end{proof}
%
%
\begin{cor} \label{conv}
Keep the setting of Theorem~\ref{cvb}. 
We have
\begin{equation*}
\sum_{k \ge 0} 
    \gdim \Gamma ( \DP, \, \CP^{k}_{\CE} ( \lambda ) )^{\ast} 
    \in \BZ\bra{q} \quad 
\text{\rm for all $\lambda \in P$}, 
\end{equation*}
and an {\rm(}unambiguously defined{\rm)} equality
\begin{equation*}
\gdim \Gamma ( \DP,\,\CE ( \lambda ) )^{\ast} 
  = \sum_{k \ge 0} 
    (-1)^{k} \gdim \Gamma ( \DP, \, \CP^{k}_{\CE} ( \lambda ) )^{\ast} 
    \in \BZ\bra{q} \quad 
\text{\rm for all $\lambda \in P$}.
\end{equation*}
\end{cor}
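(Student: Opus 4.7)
The plan is to split the resolution of $\CE$ supplied by Theorem~\ref{cvb} into short exact sequences, apply the global sections functor (which is exact on each piece thanks to the cohomology vanishing proved there), and pass to the limit using the finiteness condition~(3).

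First, I would set $K^{-1} := \CE$ and $K^{k} := \ker(\CP^{k}_{\CE} \to \CP^{k-1}_{\CE})$ for each $k \ge 0$, so that the resolution decomposes into short exact sequences
\[
0 \to K^{k} \to \CP^{k}_{\CE} \to K^{k-1} \to 0, \qquad k \ge 0.
\]
The key preliminary step is to verify that each $K^{k}$ again satisfies hypotheses \eqref{cvb1} and \eqref{cvb2} of Theorem~\ref{cvb}; this is effectively built into the inductive construction of the $\CP^{k}_{\CE}$ carried out in the proof of Theorem~\ref{cvb}, so that Theorem~\ref{cvb} itself applies to each $K^{k}$ and yields $H^{n}(\DP,\,K^{k}(\lambda)) = \{0\}$ for all $n > 0$, $k \ge -1$, and $\lambda \in P$.

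Next, applying $\Gamma(\DP,\,- \otimes_{\CO_{\DP}} \CO_{\DP}(\lambda))$ to each short exact sequence produces an exact sequence of graded vector spaces
\[
0 \to \Gamma(\DP,\,K^{k}(\lambda)) \to \Gamma(\DP,\,\CP^{k}_{\CE}(\lambda)) \to \Gamma(\DP,\,K^{k-1}(\lambda)) \to 0.
\]
Taking graded dimensions of the restricted duals and iterating for $k = 0,\,1,\,\ldots,\,N-1$ gives, for every $N \ge 1$, the finite identity
\[
\gdim \Gamma(\DP,\,\CE(\lambda))^{\ast} = \sum_{k=0}^{N-1} (-1)^{k} \gdim \Gamma(\DP,\,\CP^{k}_{\CE}(\lambda))^{\ast} + (-1)^{N} \gdim \Gamma(\DP,\,K^{N-1}(\lambda))^{\ast}
\]
in $\BZ\bra{q}$.

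Finally, I would pass to the limit $N \to \infty$ using Theorem~\ref{cvb}(3): for each fixed $(\lambda,\,m) \in P \times \BZ$, only finitely many of the direct summands of $\bigoplus_{k \ge 0} \CP^{k}_{\CE}(\lambda)$ contribute to the homogeneous subspace of degree $m$ in global sections. This simultaneously implies that $\sum_{k \ge 0} \gdim \Gamma(\DP,\,\CP^{k}_{\CE}(\lambda))^{\ast}$ has well-defined (finite) coefficients and hence lies in $\BZ\bra{q}$, and, since $\Gamma(\DP,\,K^{N-1}(\lambda))$ injects into $\Gamma(\DP,\,\CP^{N-1}_{\CE}(\lambda))$, that the coefficient of $q^{m}$ in $\gdim \Gamma(\DP,\,K^{N-1}(\lambda))^{\ast}$ vanishes for $N \gg 0$. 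The main obstacle I expect is precisely this uniform coefficient-wise vanishing of the tail term in fixed degree; once it is secured, the stated equality follows by a coefficient-by-coefficient limit, and the unambiguity of the right-hand side is a by-product of the same finiteness input.
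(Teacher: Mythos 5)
Your proposal is correct and is essentially the same argument the paper intends: the paper's proof is a terse two sentences invoking Theorem~\ref{cvb}(3) and the resolution, and your write-up simply makes explicit the standard resolution-splitting mechanics (the short exact sequences $0 \to K^{k} \to \CP^{k}_{\CE} \to K^{k-1} \to 0$, exactness of $\Gamma$ from the higher-cohomology vanishing applied to the $K^{k}$, the telescoping identity, and the coefficient-wise vanishing of the tail $\gdim \Gamma(\DP,\,K^{N-1}(\lambda))^{\ast}$ via the injection into $\Gamma(\DP,\,\CP^{N-1}_{\CE}(\lambda))$ together with condition~(3)). No gap; this is a faithful expansion of the paper's argument.
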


\begin{proof}
By Theorem~\ref{cvb}\,(3), there are only finitely many terms $\CP^{k}_{\CE}(\lambda)$
contributing to each homogeneous subspace of a fixed $q$-degree of 
$\Gamma(\DP,\,\CE(\lambda))^{\ast}$. Therefore, 
the projective resolution afforded in Theorem~\ref{cvb} implies the desired result.
This proves the corollary. 
\end{proof}

We say that a condition depending on $\lambda \in P$ holds for $\lambda \gg 0$ 
if there exists $\gamma \in P$ and the condition holds for every $\lambda \in \gamma + P^+$.

For an element $f \in (\BZ [P])\bra{q^{-1}}$, 
we define $|f| \in (\BZ_{\ge 0}[P])\bra{q^{-1}}$ as follows:
\begin{equation*}
f=\sum_{n \in \BZ_{\le 0}}
\underbrace{\left(\sum_{\nu \in P} c_{\nu,n} e^{\nu} \right)}_{\in \BZ[P]} q^{n}
\quad \Rightarrow \quad
|f|:=\sum_{n \in \BZ_{\le 0}}
\underbrace{\left(\sum_{\nu \in P} |c_{\nu,n}| e^{\nu} \right)}_{\in \BZ_{\ge 0}[P]} q^{n}. 
\end{equation*}
We now define $K'_{\ti{\Iw}} ( \DP )$ to be 
the following set of formal infinite sums, 
modulo equivalence relation $\sim$:
\begin{equation*}
\left\{ f=\sum_{ \lambda \in P } 
  f_{\lambda} \cdot [ \CO_{\DP} ( \lambda )] 
\ \Biggm| \  
\text{$f_{\lambda} \in (\BZ [P])\bra{q^{-1}}$, $\lambda \in P$, 
satisfy condition \eqref{eq:K}} \right\}\Biggm/\sim
\end{equation*}
where condition \eqref{eq:K} is given by:
%
%
\begin{equation} \label{eq:K}
\sum_{ \lambda \in P } 
| f_{\lambda} | \cdot 
\gch H^{0} ( \DP, \, \CO_{\DP} ( \lambda + \mu ) ) \in (\BZ_{\ge 0}[P])\bra{q^{-1}}
\quad \text{for every $\mu \in P$}, \tag{\#}
\end{equation}
and the equivalence relation $\sim$ is:
\begin{equation} \label{eq:finite-support}
f \sim 0 \iff \sum_{ \lambda \in P } f_{\lambda} \cdot 
\gch H^{0} ( \DP, \, \CO_{\DP} ( \lambda + \mu ) ) = 0
\quad \text{if $\mu \gg 0$}. \tag{$\sim$}
\end{equation}
By construction, 
$K'_{\ti{\Iw}} ( \DP )$ is topologically spanned by classes of 
$\Iw$-equivalent line bundles. 
Hence we deduce that the following map is well-defined: 
\begin{equation*}
\begin{split}
& \Pic^{\ti{\Iw}} \DP \times 
  K'_{\ti{\Iw}} ( \DP ) \rightarrow K'_{\ti{\Iw}} ( \DP ), \\ 
& \left( \CL, f = \sum f_{\lambda} [\CO ( \lambda)] \right) \mapsto 
  [\CL] \cdot f =  \sum f_{\lambda} [\CL \otimes \CO ( \lambda)].
\end{split}
\end{equation*}

By Corollary~\ref{conv}, 
each $\CE$ from Theorem~\ref{cvb} satisfies
\begin{equation*}
[\CE] := \sum_{k \ge 0} (-1)^{k} [ \CP^{k}_{\CE}] \in K' _{\ti{\Iw}} ( \DP ).
\end{equation*}
In particular, thanks to Corollary~\ref{coh}, 
we have $[\CO_{\DP ( x )} ( \lambda )] \in K' _{\ti{\Iw}} ( \DP )$ 
for every $x \in W_{\af}^{\ge 0}$ and $\lambda \in P$. 

Let $\Fun_{ (\BC[P])\bra{q^{-1}} } P$ denote the space of 
$(\BC [P])\bra{q^{-1}}$-valued functions on $P$, and let
\begin{equation*}
\Fun^f_{ (\BC[P])\bra{q^{-1}} } P \subset \Fun_{ (\BC[P])\bra{q^{-1}} } P
\end{equation*}
be the subset consisting of those functions that are
zero on $\gamma + P^{+}$ for some $\gamma \in P$. 
Then we form a $(\BC [P])\bra{q^{-1}}$-module quotient
\begin{equation*}
\Fun^{\mathrm{ess}}_{ (\BC[P])\bra{q^{-1}} } P 
:= \Fun_{ (\BC[P])\bra{q^{-1}} } P / \Fun^f_{ (\BC[P])\bra{q^{-1}} } P.
\end{equation*}
For each $\mu \in P$, we regard the assignment
\begin{equation*}
P \ni \lambda \mapsto 
\begin{cases}
\gch V^{-}_{e} ( - w_{\circ} ( \lambda + \mu ) ) 
  & \text{if $\lambda+\mu \in P^{+}$}, \\[1.5mm]
0 & \text{otherwise}, 
\end{cases}
\end{equation*}
as an element of $\Fun_{ (\BC[P])\sbra{q^{-1}} } P$, 
which we denote by $\Psi ( [\CO _{\DP} ( \mu )] )$. 
Passing to the quotient, 
we obtain a map $\Psi : \{ [\CO _{\DP} ( \mu )] \}_{\mu \in P} \ni [\CO _{\DP} ( \mu )] 
\mapsto \Psi ( [\CO _{\DP} ( \mu )] ) \in \Fun^{\mathrm{ess}}_{ (\BC [P])\sbra{q^{-1}} } P$.
%
%
\begin{thm} \label{Windep}
The map $\Psi$ extends to an injective $(\BZ [P])\bra{q^{-1}}$-linear map
$\Psi : K' _{\ti{\Iw}} ( \DP ) \rightarrow 
 \Fun^{\mathrm{ess}}_{ (\BC [P])\sbra{q^{-1}} } P$.
\end{thm}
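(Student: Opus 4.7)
The plan is to extend $\Psi$ by linearity from the generators $[\CO_{\DP}(\mu)]$ to arbitrary elements of $K'_{\ti{\Iw}}(\DP)$, and then verify that (i) the extension is well-defined as a map into $(\BC[P])\bra{q^{-1}}$ pointwise on $P$, (ii) it descends modulo the equivalence relation $\sim$, and (iii) the resulting map on $\Fun^{\mathrm{ess}}_{(\BC[P])\bra{q^{-1}}} P$ is injective. Almost everything is forced by unwinding definitions, the sole nontrivial input being Theorem~\ref{coh-e}, which identifies $\gch H^0(\DP,\CO_{\DP}(\lambda+\mu))$ with $\gch V^-_e(-w_\circ(\lambda+\mu))$ (or $0$ when $\lambda+\mu \notin P^+$).

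For an arbitrary $f = \sum_{\lambda \in P} f_\lambda \cdot [\CO_{\DP}(\lambda)]$ satisfying \eqref{eq:K}, and each $\mu \in P$, I will set
\begin{equation*}
\Psi(f)(\mu) \;:=\; \sum_{\lambda \in P} f_\lambda \cdot \gch H^{0}(\DP,\,\CO_{\DP}(\lambda+\mu)).
\end{equation*}
The first step is to show that this sum converges coefficient-wise in $(\BC[P])\bra{q^{-1}}$. Condition \eqref{eq:K} provides the absolute bound $\sum_\lambda |f_\lambda|\cdot \gch H^0(\DP,\CO_{\DP}(\lambda+\mu)) \in (\BZ_{\ge 0}[P])\bra{q^{-1}}$, so that for every pair $(\nu,n) \in P \times \BZ_{\ge 0}$, at most finitely many $\lambda$ contribute to the coefficient of $e^{\nu}q^{-n}$, and the $q$-powers that appear are bounded above (using that $\gch H^{0}(\DP,\CO_{\DP}(\lambda+\mu)) \in (\BZ\pra{q^{-1}})[P]$ by Corollary~\ref{coh}). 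Hence $\Psi(f)(\mu)$ is a well-defined element of $(\BC[P])\bra{q^{-1}}$. By Theorem~\ref{coh-e} the formula agrees with the original definition when $f = [\CO_{\DP}(\mu')]$, and $(\BZ[P])\bra{q^{-1}}$-linearity in $f$ is immediate from the construction.

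Next I will verify that $\Psi$ descends modulo $\sim$ and modulo $\Fun^f_{(\BC[P])\bra{q^{-1}}}P$. This is essentially tautological: by the definition of $\sim$ in \eqref{eq:finite-support}, $f \sim 0$ means precisely that $\sum_\lambda f_\lambda \cdot \gch H^{0}(\DP,\CO_{\DP}(\lambda+\mu)) = 0$ for all $\mu$ in some cone $\gamma + P^{+}$, which is to say $\Psi(f) \in \Fun^f_{(\BC[P])\bra{q^{-1}}}P$, so $[\Psi(f)] = 0$ in the quotient $\Fun^{\mathrm{ess}}_{(\BC[P])\bra{q^{-1}}}P$. Conversely, if $[\Psi(f)] = 0$ in $\Fun^{\mathrm{ess}}$, then $\Psi(f)(\mu) = 0$ for all $\mu \gg 0$, which is exactly the defining condition for $f \sim 0$, giving injectivity.

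Thus essentially all the real work has been absorbed into Theorem~\ref{coh-e} and the design of the equivalence relation $\sim$; the remaining content of Theorem~\ref{Windep} is formal. The one point that requires genuine care, which I expect to be the main obstacle, is the first step: tracking the three distinct completions of $\BZ[P][q,q^{-1}]$ that appear here ($(\BZ[P])\bra{q^{-1}}$ for $f_\lambda$, $(\BZ\pra{q^{-1}})[P]$ for $\gch H^0$, and $(\BC[P])\bra{q^{-1}}$ for the target) and checking that the non-negativity hypothesis in \eqref{eq:K} is strong enough to confine the sum $\Psi(f)(\mu)$ to $(\BC[P])\bra{q^{-1}}$ rather than some larger completion. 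Once this bookkeeping is settled, all remaining assertions are immediate from the definitions.
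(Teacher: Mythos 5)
Your proposal is correct and matches the paper's proof in substance: both observe that, by the very design of the relation \eqref{eq:finite-support}, the equality $\Psi(f)=0$ in $\Fun^{\mathrm{ess}}_{(\BC[P])\sbra{q^{-1}}}P$ is literally the condition $f \sim 0$, so injectivity is tautological, and linearity is immediate from the termwise construction. You spell out the well-definedness of the infinite sum more explicitly than the paper does (which is a fair thing to emphasize, since it is the only place condition \eqref{eq:K} and the character bound from Corollary~\ref{coh} actually get used); otherwise the two arguments coincide.
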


\begin{proof}
We assume the contrary to deduce a contradiction. 
Let $C \in K'_{\ti{\Iw}} ( \DP )$, and expand the $C$ as: 
\begin{equation*}
C = \sum_{
  n \in \BZ_{\le 0},\,\nu,\,\mu \in P}
  c _{n,\nu,\mu} \, q^{n} e^{\nu} \cdot [\CO_{\DP}(\mu)], 
  \quad \text{with $c_{n,\nu,\mu} \in \BZ$}, 
\end{equation*}
inside $K'_{\ti{\Iw}} ( \DP )$. 
We have $\Psi ( C ) = 0$ if and only if 
there exists $\gamma \in P$ such that
\begin{equation*}
\sum_{
  n \in \BZ_{\le 0},\,\nu,\,\mu \in P}
  c _{n,\nu,\mu} \, q^{n} e^{\nu} \cdot 
  \gch V_e^- ( - w_{\circ} ( \lambda + \mu ) ) = 0 \qquad \text{for $\mu \in \gamma + P ^+$}. 
\end{equation*}
This is exactly the condition $C \sim 0$. 
Hence the map $\Psi$ defines an injective map. 
It is $(\BZ [P])\bra{q^{-1}}$-linear by construction.
\end{proof}

\newcommand{\Xp}{C_{p}} 

For countably many elements $\Xp$, $p \ge 0$, in $K_{\ti{\Iw}}' ( \DP )$ 
that represent the classes of 
$\ti{\Iw}$-equivariant quasi-coherent sheaves, we expand them as:
\begin{equation*}
\Xp = \sum_{ \lambda \in P } a_{\lambda} ( \Xp ) [\CO_{\DP} ( \lambda )], 
 \quad \text{with $a_{\lambda} ( \Xp ) \in (\BZ [P])\bra{q^{-1}}$} 
\end{equation*}
by using the procedure of Theorem~\ref{cvb};
we say that the sum $\sum _{p \ge 0} \Xp$ converges absolutely
to an element of $K'_{\Iw} ( \DP )$ if there exists some $\lambda_0 \in P$
(uniformly for all $p \ge 0$) such that $a_{\lambda} ( \Xp ) = 0$ 
for all $\lambda \in P$ with $\pair{\lambda}{\alpha_{i}^{\vee}} < 
\pair{\lambda_{0}}{\alpha_{i}^{\vee}}$ for some $i \in I$, and 
if the number of those $( \lambda, p ) \in 
P \times \BZ_{\ge 0}$ for which $a_{\lambda} ( \Xp )$ 
has a nonzero term of $q$-degree $m$ is finite for each $m \in \BZ$. 
It is straightforward to see that $\sum_{p \ge 0} \Xp$ defines 
an element of $K'_{\ti{\Iw}} ( \DP )$, which does not depend on 
the order of the $\Xp$'s. 

\begin{rem} \label{rem:conv}
Since the coefficients for $K'_{\ti{\Iw}} ( \DP )$ are in $\BZ$, 
the sum $\sum_{p \ge 0} \Xp$ must ``diverge'' or ``oscillate''
when it does not converge absolutely.
\end{rem}

\begin{prop}\label{indep-of-QGy}
Let $f_{y}\in (\BZ [P])\bra{q^{-1}}$, $y \in W_{\af} ^{\ge 0}$. 
Then the formal sum
\begin{equation} \label{fsum}
\sum _{y \in W_{\af}^{\ge 0}} f_{y} \cdot [\CO_{\DP ( y )}]
\end{equation}
converges absolutely to an element of $K'_{\Iw} ( \DP )$ 
if and only if $\sum_{y \in W_{\af} ^{\ge 0}} | f_{y} | 
\in (\BZ_{\ge 0}[P])\bra{q^{-1}}$. Moreover, in this case, 
the equation
\begin{equation*}
\sum _{y \in W_{\af}^{\ge 0}} f_{y} \cdot [\CO_{\DP ( y )}] = 0
\end{equation*}
implies $f_{y} = 0$ for all $y \in W_{\af}^{\ge 0}$.
\end{prop}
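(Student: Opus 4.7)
The strategy is to pass through the injective map $\Psi$ of Theorem~\ref{Windep}, under which every class $[\CO_{\DP(y)}]$ corresponds (modulo functions of finite support in $\Fun^{\mathrm{ess}}_{(\BC[P])\sbra{q^{-1}}} P$) to the assignment $\mu \mapsto \gch V_y^-(-w_\circ\mu)$ for sufficiently dominant $\mu \in P^+$, by Corollary~\ref{coh} together with the Euler characteristic computation arising from any line-bundle resolution supplied by Theorem~\ref{cvb}. The proposition is then equivalent to two character-theoretic claims: for every sufficiently dominant $\mu$, the sum $\sum_y f_y \cdot \gch V_y^-(-w_\circ\mu)$ converges absolutely in $(\BC[P])\bra{q^{-1}}$ if and only if $\sum_y |f_y| \in (\BZ_{\ge 0}[P])\bra{q^{-1}}$, and its vanishing for all such $\mu$ forces $f_y=0$ for all $y$.

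The key structural input is the following triangularity. Fix $\mu \in P^{++}$ strictly dominant and set $\mu' := -w_\circ\mu \in P^{++}$. For $y = wt_\xi \in W_{\af}^{\ge 0}$, the affine weight $y\mu' = w\mu' - \pair{\mu'}{\xi}\delta$ occurs in $V_{y'}^-(\mu')$ with multiplicity $1$ if $y \sige y'$ in $W_{\af}$ (the parabolic $J$ of \eqref{eq:J} is empty here) and $0$ otherwise. This is because the path model of Theorem~\ref{thm:isom} realizes the extremal weight $y\mu'$ uniquely by the straight semi-infinite LS path $(y;0,1)$, whose final direction is $y$ itself, and the distinct extremal weights of $V(\mu')$ are in bijection with $W_{\af}$ when $\mu'$ is strictly dominant. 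A second crucial bound is that $\gch V_y^-(\mu') \in (\BZ[P])\bra{q^{-1}}\,q^{-\pair{\mu'}{\xi}}$, so at each fixed $q$-degree $-n$, only those $y = wt_\xi$ with $\pair{\mu'}{\xi} \le n$ can contribute, a finite set since $\pair{\mu'}{\cdot}$ grows linearly on $Q^{\vee,+}$ for strictly dominant $\mu'$.

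For the linear independence (the ``moreover'' assertion), I combine the triangularity with induction on the well-founded order $\sile$ restricted to $W_{\af}^{\ge 0}$ (well-founded because $\sell \ge 0$ there and strictly decreases along covering relations). Assuming $\sum_y f_y \cdot \gch V_y^-(\mu') = 0$ for all sufficiently dominant $\mu'$, suppose for contradiction that some $f_y$ is nonzero and choose $y_0 = w_0 t_{\xi_0}$ minimal in $\{y : f_y \ne 0\}$. Extracting the coefficient of $e^{w_0\mu'} q^{-\pair{\mu'}{\xi_0}}$ from $\sum_y f_y \gch V_y^-(\mu')$: by the triangularity and the minimality of $y_0$, the only contribution coming from the $q$-top of $\gch V_y^-(\mu')$ is from $y = y_0$ itself and equals the constant-in-$q$, constant-in-$P$ coefficient of $f_{y_0}$; all other contributions come from non-top (strictly smaller $q$-degree) pieces of $\gch V_{y}^-(\mu')$ for $y \ne y_0$. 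Running this extraction simultaneously for every monomial $e^\nu q^{-n}$ and varying $\mu'$ across a Zariski-dense subset of $P^{++}$ then lets one solve triangularly $q$-degree by $q$-degree for every coefficient of $f_{y_0}$ and recover $f_{y_0} = 0$, a contradiction.

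The convergence criterion follows from the same finiteness bound at each $q$-degree: if $\sum_y |f_y| \in (\BZ_{\ge 0}[P])\bra{q^{-1}}$, then each coefficient of $\sum_y |f_y| \cdot \gch V_y^-(\mu')$ is a finite convolution sum, which, combined with the description of $K'_{\ti{\Iw}}(\DP)$ and the injectivity of $\Psi$, gives the absolute convergence of $\sum_y f_y [\CO_{\DP(y)}]$ in $K'_{\ti{\Iw}}(\DP)$. Conversely, if that sum converges absolutely in $K'_{\ti{\Iw}}(\DP)$, then applying $\Psi$ and running the same leading-term peeling from the uniqueness argument recovers $|f_y|$ term by term and yields $\sum_y |f_y| \in (\BZ_{\ge 0}[P])\bra{q^{-1}}$. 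The main obstacle is the bookkeeping required to isolate leading monomials in the presence of power-series coefficients $f_y$ that spread contributions across infinitely many monomials, but the strict dominance of $\mu'$ and the well-foundedness of $\sile$ on $W_{\af}^{\ge 0}$ make the $q$-degree-by-$q$-degree peeling rigorous.
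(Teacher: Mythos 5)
Your central ``triangularity'' claim is false, and the argument collapses at exactly the point where it is needed. You assert that for strictly dominant $\mu'$ the weight $y\mu'=w\mu'-\pair{\mu'}{\xi}\delta$ occurs in $V_{y'}^{-}(\mu')$ with multiplicity $1$ when $y\sige y'$ and $0$ otherwise. The extremal weight \emph{vector} $v_{y}$ is indeed realized by the single path $(y;0,1)$, but the weight \emph{space} $V(\mu')_{y\mu'}$ is larger: many non-extremal paths share the same affine weight. For instance, $\wt(\pi_{\brho})=\mu'-|\brho|\delta$ with $\kappa(\pi_{\brho})=e$, so every $\brho\in\Par(\mu')$ with $|\brho|=\pair{\mu'}{\xi}$ contributes to $V_{e}^{-}(\mu')_{t_{\xi}\mu'}$ in addition to $(t_{\xi};0,1)$; by the decomposition \eqref{eq:isom} these multiplicities grow without bound. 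Hence your ``$q$-degree-by-$q$-degree peeling'' does not isolate single coefficients, and the induction on $\sile$ cannot be run as described. A second, independent problem: even after restricting to the top $q$-degree layer (where $\gch V_{wt_{\xi}}^{-}(-w_{\circ}\mu')$ contributes $q^{-\pair{\mu'}{\xi}}\ch L_{w}^{-}(-w_{\circ}\mu')$), all $y=wt_{\xi_{0}}$ with the same translation part $\xi_{0}$ contribute at the same $q$-degree, and their Demazure characters overlap heavily as elements of $\ZP$; extracting the coefficient of a single monomial $e^{w_{0}\mu'}$ mixes the $f_{wt_{\xi_{0}}}$ for all $w$. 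Moreover, $\sile$-minimality of $y_{0}$ does not make $\pair{\mu'}{\trs(y_{0})}$ strictly minimal among the (possibly infinitely many) $y$ with $f_{y}\neq 0$, so other terms can sit at the same or higher top $q$-degree.

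The paper's proof supplies precisely the two ingredients you are missing. First, it replaces $\sile$-minimality by a Noetherian argument: the set $S_{1}$ of $\trs$-minimal elements of $\{y: f_{y}\neq 0\}$ is finite, and one chooses $y_{0}\in S_{1}$ together with a Zariski-dense set $P^{\#\#}\subset P^{+}$ of weights $\lambda$ for which $\pair{\lambda}{\trs(y_{0})}<\pair{\lambda}{\trs(y)}+n_{0}-n_{1}$ for all other $y$; the margin $n_{0}-n_{1}$ is what absorbs the fact that each $f_{y}$ is a power series in $q^{-1}$ spreading over many $q$-degrees. Second, and crucially, after this reduction the coefficient at the critical $q$-degree is $\sum_{w\in W}f_{wt_{\trs(y_{0})}}^{(n_{1})}\cdot \ch L_{w}^{-}(-w_{\circ}\lambda)$, and one concludes $f_{wt_{\trs(y_{0})}}^{(n_{1})}=0$ for all $w$ by invoking Macdonald's theorem that the Demazure operators $D_{w}$, $w\in W$, are $\ZP$-linearly independent as operators on $\ZP$. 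Your proposal never invokes this (or any substitute for it), and without it the finitely many $w$ sharing a translation part cannot be separated. For the convergence criterion, the paper also argues differently and more simply: the resolution of Theorem~\ref{cvb} gives $[\CO_{\DP(y)}]=[\CO_{\DP}]+\sum_{\lambda\in -P^{+}}a_{y}(\lambda)[\CO_{\DP}(\lambda)]$, so the coefficient of $[\CO_{\DP}]$ in the formal sum is exactly $\sum_{y}f_{y}$, whose unambiguous convergence (given integer coefficients) is equivalent to $\sum_{y}|f_{y}|\in(\BZ_{\ge 0}[P])\bra{q^{-1}}$ --- no leading-term peeling is needed there.
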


\begin{proof}
First, we remark that 
$[\CO_{\DP ( y )}] \in K'_{\ti{\Iw}} ( \DP )$ 
for each $y \in W_{\af}^{\ge 0}$ by Corollary~\ref{coh} and Theorem~\ref{cvb}. 
More precisely, by means of the cohomology vanishing: 
\begin{equation*}
H^{\ast}(\DP,\,\CO_{\DP(y)}(\mu))=\{0\} \quad \text{if $\mu \notin P^{+}$},
\end{equation*}
we can take $\lambda_{0}=0$ in Theorem~\ref{cvb} by setting $\CE=\CO_{\DP(y)}$. 
In addition, we have 
$H^{0} ( \DP,\,\CO_{\DP ( y )}) = \BC$. Hence the construction in 
Theorem~\ref{cvb} implies that 
%
%
\begin{equation} \label{v-exp}
[\CO_{\DP ( y )}] = 
 [\CO_{\DP}] + \sum_{ \lambda \in - P^{+} } 
 a_{y} ( \lambda ) [ \CO_{\DP} ( \lambda ) ] \in K'_{\ti{\Iw}} ( \DP )
\end{equation}
for some $a_{y}(\lambda) \in (\ZP)\bra{q^{-1}}$. 
Therefore, the coefficient of $[\CO_{\DP}]$ in \eqref{fsum} 
must be the sum $\sum_{y \in W_{\af} ^{\ge 0}} f_{y}$, 
which unambiguously defines an element of $(\ZP)\bra{q^{-1}}$ if and only if 
the coefficient ($\in \BZ$) of each $q^{n}$, $n \in \BZ_{\le 0}$, 
in the sum $\sum_{y \in W_{\af}^{\ge 0}} f_{y}$ converges absolutely 
(see Remark~\ref{rem:conv}). This proves the first assertion.

We prove the second assertion. 
Let us assume the contrary to deduce a contradiction. 
Let $S$ be the set of those $y \in W_{\af}^{\ge 0}$ 
for which $f_{y} \neq 0$; 
denote by $n_{0}$ the maximal $q$-degree of 
all $f_{y}$, $y \in S$.  
Also, let $S_{1}$ be the set of those $y \in S$ for which
$\trs ( y ) \not> \trs ( y')$ for any $y' \in S$, 
where for $y \in W_{\af}^{\ge 0}$ 
of the form $y = w t_{\xi}$ 
with $w \in W$ and $\xi \in Q^{\vee,+}$, 
we set $\trs ( y ):=\xi$; 
since a polynomial ring (of finite variables) is Noetherian, 
we deduce that $|S_{1}| < \infty$. 
We choose and fix $y_{0} \in S_{1}$ such that
\begin{equation*}
P^{\#} := \bigl\{ 
 \lambda \in P^{+} \mid 
 \text{$\pair{\lambda}{\trs(y_{0})} < \pair{\lambda}{\trs(y)}$ 
 for all $y \in S$ with $y \ne y_{0}$} \bigr\}
\end{equation*}
is Zariski dense in $\Fh^{\ast}$. 
Let $n_{1}$ denote the maximal $q$-degree of those 
$f_{y}$, $y \in S_{1}$, for which 
$\trs ( y ) = \trs ( y_{0} )$. Then, the subset
\begin{equation*}
P^{\#\#} := \bigl\{ 
 \lambda \in P^{+} \mid 
 \text{$\pair{\lambda}{\trs(y_{0})} < \pair{\lambda}{\trs(y)} + n_{0} - n_{1}$
 for all $y \in S$ with $y \neq y_{0}$}\bigr\} 
\end{equation*}
of $P^{\#}$ is still Zariski dense in $\Fh^{\ast}$.

For each $\lambda \in P^{\#\#}$, the coefficient of the part of 
degree $\bigl( n_{1} - \pair{\lambda}{\trs ( y_{0} )} \bigr)$ of
\begin{equation*}
\Psi \left( 
 \sum _{y \in W_{\af}^{\ge 0}} f_{y} \cdot [\CO_{\DP ( y )}] \right) 
( \lambda )
\end{equation*}
is equal to 
\begin{equation*}
\sum _{w \in W} f_{ w t_{\trs(y_0)} }^{(n_{1})} \cdot 
\ch L^{-}_{w} ( - w_{\circ} \lambda ),
\end{equation*}
where $f_{y}^{(n_{1})} \in \BZ [P]$ is the part of degree $n_{1}$ of $f_{y}$; 
here, for $w \in W$ and $\mu \in P^{+}$, 
$L^{-}_{w} ( \mu ):=U(\Fb^{-})L(\mu)_{w\mu}$ denotes 
the (opposite) Demazure submodule of $L(\mu)$. 
This defines a $\ZP$-valued function of $\lambda \in P^{\#\#}$; 
note that the above is a finite sum. Here we have the equality
$\ch L^{-}_{w} ( - w_{\circ} \lambda )^{\ast} = 
D_{w w_{\circ}} ( e ^{\lambda} )$ in terms of 
the Demazure operator $D_{ww_{\circ}}$ for each $w \in W$ 
(see \cite[Theorem 8.2.9]{Kum02}); 
recall that the Demazure operator $D_{i}=D_{s_{i}}$, $i \in I$, is defined by 
$D_{i}(e^{\mu}):=(e^{\mu}-e^{s_{i}\mu-\alpha_{i}})/(1-e^{-\alpha_{i}})$ for $\mu \in P$.
Also, we know by \cite[pp.\,28--29]{Mac91} that 
the operators $D_{w}$, $w \in W$, form a set of 
$\ZP$-linearly independent $\BZ$-linear operators acting on $\ZP$. 
Therefore, we obtain $f_{w t_{\trs ( y_{0} )}}^{(n_{1})} = 0$
for all $w \in W$. This is a contradiction, and 
hence we cannot take the $S_{1}$ above from the beginning. 
Thus, we conclude the desired result. 
This completes the proof of the proposition. 
\end{proof}
%
%
\begin{cor}[of Theorem~\ref{Windep}] \label{critK}
Let $x \in W_{\af}^{\ge 0}$ and $\lambda \in P^{+}$. 
Consider a collection 
$f_{y} ( \lambda ) \in (\ZP)\bra{q^{-1}}$, 
$y \in W_{\af} ^{\ge 0}$, such that 
$\sum_{y \in W_{\af}^{\ge 0}} f_{y} ( \lambda ) \cdot [\CO_{\DP ( y )}]$ 
converges absolutely in $K'_{\ti{\Iw}}(\DP)$. Then, 
%
%
\begin{equation} \label{class-eq}
[\CO_{\DP ( x )} ( \lambda )] = 
\sum _{y \in W_{\af}^{\ge 0}} f_{y} ( \lambda ) \cdot [\CO_{\DP ( y )}]
\end{equation}
if and only if
%
%
\begin{equation} \label{char-eq}
\gch V^{-}_{x} ( - w_{\circ} ( \lambda + \mu ) ) = 
  \sum _{y \in W_{\af}^{\ge 0}} f_{v} ( \lambda ) \cdot 
  \gch V^{-}_{y} ( - w_{\circ} \mu ) \quad 
  \text{\rm for $\mu \gg 0$}.
\end{equation}
\end{cor}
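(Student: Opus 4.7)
The plan is to apply the injective $(\BZ[P])\bra{q^{-1}}$-linear map $\Psi$ of Theorem~\ref{Windep} to both sides of \eqref{class-eq} and translate the resulting identity in $\Fun^{\mathrm{ess}}_{(\BC[P])\sbra{q^{-1}}}P$ into \eqref{char-eq}, using injectivity for the converse direction.

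First I would identify $\Psi([\CO_{\DP(y)}(\nu)])$ for arbitrary $y \in W_{\af}^{\ge 0}$ and $\nu \in P$. By the construction of $\Psi$ through the resolutions $\CP^\bullet_\CE \to \CE$ of Theorem~\ref{cvb} and by Corollary~\ref{conv}, the function $\Psi([\CE])$ sends $\mu \in P$ to the alternating sum $\sum_{k \ge 0}(-1)^k \gch H^0(\DP,\,\CP^k_\CE(\mu))$. For $\CE = \CO_{\DP(y)}(\nu)$ the hypotheses \eqref{cvb1} and \eqref{cvb2} hold by Corollary~\ref{coh}, and the higher cohomology vanishing of both $\CP^k_\CE$ (Corollary~\ref{cvs}) and $\CE$ itself (Corollary~\ref{coh}) collapses this alternating sum to the Euler characteristic $\gch H^0(\DP,\,\CO_{\DP(y)}(\nu+\mu))$, which Corollary~\ref{coh} identifies with $\gch V^-_y(-w_\circ(\nu+\mu))$ when $\nu+\mu \in P^+$ and with $0$ otherwise.

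Next I would apply $\Psi$ to \eqref{class-eq}. Its left-hand side becomes the class in $\Fun^{\mathrm{ess}}$ of $\mu \mapsto \gch V^-_x(-w_\circ(\lambda+\mu))$. For the right-hand side, the absolute convergence hypothesis together with Proposition~\ref{indep-of-QGy} yields $\sum_{y} |f_y(\lambda)| \in (\BZ_{\ge 0}[P])\bra{q^{-1}}$; this summability, combined with the $(\BZ[P])\bra{q^{-1}}$-linearity of $\Psi$ and the termwise identification above, allows one to distribute $\Psi$ inside the sum and produce the class in $\Fun^{\mathrm{ess}}$ of $\mu \mapsto \sum_y f_y(\lambda)\gch V^-_y(-w_\circ\mu)$, well-defined for each $\mu \in P^+$ by the same estimate.

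Finally I would invoke the injectivity of $\Psi$ to conclude that \eqref{class-eq} is equivalent to the equality of these two functions in $\Fun^{\mathrm{ess}}_{(\BC[P])\sbra{q^{-1}}}P$; since that target is the quotient by functions vanishing on some translate $\gamma + P^+$ of the dominant cone, this is precisely \eqref{char-eq} holding for $\mu \gg 0$. The main technical point is the interchange of $\Psi$ with the infinite sum on the right of \eqref{class-eq}: this reduces, via the expansion \eqref{v-exp} exploited in Proposition~\ref{indep-of-QGy}, to the observation that the absolute-convergence estimates in $K'_{\ti{\Iw}}(\DP)$ translate directly into the summability of the corresponding graded Demazure characters, because each $[\CO_{\DP(y)}]$ and each $\gch V^-_y(-w_\circ\mu)$ is controlled by the same nonpositive-$q$-degree bounds supplied by Corollary~\ref{coh}.
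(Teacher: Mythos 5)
Your proof is correct and follows essentially the same route as the paper: identify $\Psi([\CO_{\DP(y)}(\nu)])$ with the graded Demazure character via Corollary~\ref{coh}, then use injectivity of $\Psi$ from Theorem~\ref{Windep}. The only cosmetic difference is that you apply $\Psi$ once and argue both directions simultaneously, whereas the paper handles the ``only if'' direction by twisting by $\CO(\mu)$ and taking $\gch H^0$ directly and reserves $\Psi$-injectivity for the ``if'' direction.
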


\begin{proof}
We have an expansion
\begin{equation*}
[\CO_{\DP ( x )} ( \lambda )] = 
\sum _{y \in W_{\af}^{\ge 0}} f_{y} ( \lambda ) \cdot [\CO_{\DP ( y )}]
\end{equation*}
inside $K'_{\ti{\Iw}} ( \DP )$. From this, by twisting 
by the line bundle $\CO ( \mu )$ for $\mu \in P^{+}$, 
we obtain
\begin{equation*}
[\CO_{\DP ( x )} ( \lambda + \mu )] = 
\sum_{y \in W_{\af}^{\ge 0}} 
 f_{y} ( \lambda ) \cdot [\CO_{\DP ( y )} ( \mu )].
\end{equation*}
By Corollary~\ref{coh}, 
this equation in turn implies \eqref{char-eq}, 
which proves the ``only if'' part of the assertion.

We now assume \eqref{char-eq}. Then we have
\begin{equation*}
\Psi ( [\CO_{\DP ( x )} ( \lambda )] )  = 
 \sum _{y \in W_{\af}^{\ge 0}} f_{y} ( \lambda ) \cdot \Psi ( [\CO_{\DP ( y )}] )
\end{equation*}
by Corollary~\ref{coh}. Therefore, by Theorem~\ref{Windep}, 
we deduce that both sides of \eqref{class-eq} represent
the same class in $K'_{ \ti{\Iw} } ( \DP )$. 
Thus, we have proved the ``if'' part of the assertion.
This proves the corollary. 
\end{proof}
%
%
\begin{thm}[Pieri-Chevalley formula for semi-infinite flag manifolds] \label{thm:PC}
For each $\lambda \in P^{+}$ and $x \in W_{\af}^{\ge 0}$, 
there holds the equality
\begin{equation*}
[\CO_{\DP} ( \lambda )] \cdot [\CO_{\DP ( x )}] = 
 \sum_{\eta \in \SLS_{\sige x} (- w_{\circ} \lambda )} 
 e^{\fwt(\wt(\eta))}q^{\qwt(\wt(\eta))} \cdot [ \CO_{\DP ( \Deo{\eta}{x} )}]
\end{equation*}
in $K'_{\ti{\Iw}} ( \DP )$.
\end{thm}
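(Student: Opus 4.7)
The plan is to reduce the desired identity in $K'_{\ti{\Iw}}(\DP)$, via Corollary \ref{critK}, to a character identity that is an instance of the combinatorial SMT in Theorem \ref{thm:Dem} applied with the labels $\lambda$ and $\mu$ in that theorem replaced by $-w_{\circ}\mu$ and $-w_{\circ}\lambda$, respectively. To set this up, for each $y \in W_{\af}^{\ge 0}$, define
\[
 f_{y}(\lambda) := \sum_{\substack{\eta \in \SLS_{\sige x}(-w_{\circ}\lambda) \\ \Deo{\eta}{x} = y}} e^{\fwt(\wt(\eta))} q^{\qwt(\wt(\eta))} \in (\ZP)\bra{q^{-1}},
\]
so that the proposed right-hand side of the formula rewrites as $\sum_{y \in W_{\af}^{\ge 0}} f_{y}(\lambda) \cdot [\CO_{\DP(y)}]$. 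Observe that each $\Deo{\eta}{x}$ appearing actually lies in $W_{\af}^{\ge 0}$: by the recursive construction \eqref{eq:lift0}--\eqref{eq:lift0a} it satisfies $\Deo{\eta}{x} \sige x$, and $W_{\af}^{\ge 0}$ is upward-closed under the semi-infinite Bruhat order (this is implicit in Corollary \ref{orbits}, since the $\Iw$-orbits of $\DP = \DP(e)$ are in bijection with $W_{\af}^{\ge 0}$).

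For absolute convergence, Proposition \ref{indep-of-QGy} reduces the check to $\sum_{y} |f_{y}(\lambda)| \in (\BZ_{\ge 0}[P])\bra{q^{-1}}$. Since every contributing monomial has positive coefficient and every $\eta$ is assigned to a unique $y = \Deo{\eta}{x}$, one computes
\[
 \sum_{y \in W_{\af}^{\ge 0}} |f_{y}(\lambda)| = \sum_{\eta \in \SLS_{\sige x}(-w_{\circ}\lambda)} e^{\fwt(\wt(\eta))} q^{\qwt(\wt(\eta))} = \gch V_{x}^{-}(-w_{\circ}\lambda)
\]
by \eqref{eq:gch2}, and this lies in $(\BZ_{\ge 0}[P])\bra{q^{-1}}$ by the estimate \eqref{eq:gch1}. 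Thus the formal sum defines a bona fide element of $K'_{\ti{\Iw}}(\DP)$.

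By Corollary \ref{critK}, it now suffices to establish the character identity \eqref{char-eq}, which in our situation reads
\[
 \gch V_{x}^{-}(-w_{\circ}(\lambda + \mu)) = \sum_{\eta \in \SLS_{\sige x}(-w_{\circ}\lambda)} e^{\fwt(\wt(\eta))} q^{\qwt(\wt(\eta))} \cdot \gch V_{\Deo{\eta}{x}}^{-}(-w_{\circ}\mu)
\]
for all sufficiently dominant $\mu \in P^{+}$. Since $-w_{\circ}\mu$ and $-w_{\circ}\lambda$ both lie in $P^{+}$, and since $(-w_{\circ}\mu) + (-w_{\circ}\lambda) = -w_{\circ}(\lambda + \mu)$, this is exactly Theorem \ref{thm:Dem} under the substitution indicated above, so the character identity is automatic.

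The main substantive inputs are already in place: the K-theoretic framework of Section \ref{sec:K-SiFl} (and in particular the injectivity of $\Psi$ in Theorem \ref{Windep} underlying Corollary \ref{critK}), the Borel--Weil--Bott identification in Corollary \ref{coh} that translates cohomology into $\gch V_{x}^{-}$, and the Demazure-refined SMT in Theorem \ref{thm:Dem}. Consequently, the proof of Theorem \ref{thm:PC} itself is essentially a matching exercise modulo the involution $-w_{\circ}$, and the only genuinely new check left within this argument is the one performed above that $\Deo{\eta}{x} \in W_{\af}^{\ge 0}$, ensuring that each sheaf class on the right-hand side is meaningful.
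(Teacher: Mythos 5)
Your proposal is correct and follows essentially the same route as the paper: reduce via Corollary~\ref{critK} to a character identity, recognize the character identity as Theorem~\ref{thm:Dem} applied to $(-w_{\circ}\mu,\,-w_{\circ}\lambda)$, and verify the convergence needed for the class to live in $K'_{\ti{\Iw}}(\DP)$. The only cosmetic differences are that you route convergence through Proposition~\ref{indep-of-QGy} and the identity $\sum_{y}|f_{y}(\lambda)| = \gch V_{x}^{-}(-w_{\circ}\lambda)$, whereas the paper argues directly from the $q$-degree bounds in \eqref{eq:gch1} and Corollary~\ref{coh}; and that you flag explicitly the needed fact $\Deo{\eta}{x} \in W_{\af}^{\ge 0}$, which the paper leaves implicit (the upward-closedness of $W_{\af}^{\ge 0}$ under $\sige$ does hold, following from the description of semi-infinite covers cited in the proof of Lemma~\ref{si-refl}, where the label of a covering edge rooted at $wt_{\xi}$ with $\xi \in Q^{\vee,+}$ adds a nonnegative element of $Q^{\vee,+}$ to the translation part, though your pointer to Corollary~\ref{orbits} alone is not quite a proof of this fact).
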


\begin{proof}
By Theorem~\ref{thm:Dem}, we have
\begin{equation*}
\gch V^{-}_{x}( - w_{\circ}(\lambda + \mu) ) = 
\sum_{\eta \in \SLS_{\sige x} ( - w_{\circ}\lambda )} 
e^{\fwt(\wt(\eta))}q^{\qwt(\wt(\eta))} \cdot 
 \gch V^{-}_{\Deo{\eta}{x}}( -w_{\circ} \mu )
\end{equation*}
for each $\mu \in P^{+}$. 
Taking into account the fact that the LHS is zero 
if $\lambda + \mu \not\in P^{+}$, and the RHS is zero if $\mu \not \in P^{+}$, 
we conclude the above equation for $\mu \gg 0$. 
Here we see from Section~\ref{subsec:extremal} that 
$\qwt(\wt(\eta)) \in \BZ_{\le 0}$ 
for each $\eta \in \SLS_{\sige x}(-w_{\circ}\lambda)$. 
Also, we deduce from \eqref{eq:gch1} that 
for each $m \in \BZ_{\le 0}$, 
there exist only finitely many 
$\eta \in \SLS_{\sige x} ( - w_{\circ}\lambda )$ 
such that $\qwt(\wt(\eta)) \ge m$. 
Because $\gdim H^{0} ( \DP, \, \CO_{\DP ( \Deo{\eta}{x} )} ( \mu ) ) 
\in \BZ\bra{q^{-1}}$ by Corollary~\ref{coh}, we deduce that
\begin{equation*}
\sum_{\eta \in \SLS_{\sige x} ( - w_{\circ} \lambda )} 
e ^{\fwt(\wt(\eta))}q^{\qwt(\wt(\eta))} \cdot 
[ \CO_{\DP ( \Deo{\eta}{x} )}] \in K_{\ti{\Iw}} ( \DP ).
\end{equation*}
From this, by applying Corollary~\ref{critK}, 
we conclude the desired result.
This proves the theorem. 
\end{proof}
%
%
\section{nil-DAHA action on $K_{\ti{\Iw}} ( \RDP )$.}
\label{sec:nDAHA}

\begin{dfn}[{cf. \cite[Sect.~1.2]{CF13}}]
The nil-DAHA $\dH$ (of adjoint type) is 
the unital $\BZ [\bq^{\pm 1}]$-algebra generated by 
$T_i$, $i \in I_{\af}$, and $\be(\nu)$, $\nu \in P$, 
subject to the following relations:
\begin{equation} \label{eq:nDAHA}
\begin{cases}
T_i ( T_i + 1 ) = 0 & \text{for each $i \in I_{\af}$}; \\[3mm]
\text{if 
$\overbrace{s_i s_j \cdots}^{\text{$m_{ij}$ times}} = 
 \overbrace{s_j s_i \cdots}^{\text{$m_{ij}$ times}}$, then
$\overbrace{T_i T_j \cdots}^{\text{$m_{ij}$ times}} = 
 \overbrace{T_j T_i \cdots}^{\text{$m_{ij}$ times}}$} & 
\text{for each $i,\,j \in I_{\af}$}; \\[3mm]
\text{$\be ( \nu_{1} ) \be ( \nu_{2} ) = 
\be ( \nu_{1} + \nu_{2} )$ and $\be( 0 ) = 1$} & 
\text{for each $\nu_{1},\,\nu_{2} \in P$}; \\[3mm]
T_i \be( \nu ) - \be( s_i \nu ) T_i = 
\dfrac{\be( s_i \nu ) - \be( \nu )}{1 - \be( \alpha_i )}
& \text{for each $\nu \in P$ and $i \in I_{\af}$}. 
\end{cases}
\end{equation}
We define $\CH$ to be the $\BZ[\bq^{-1}]$-subalgebra of 
$\dH$ generated by $T_i$, $i \in I$, and $\be ( \nu )$, $\nu \in P$.
\end{dfn}
%
%
\begin{prop}\label{KK-reint}
The assignment
\begin{align*}
\te ( \vpi_{i} ) & : 
[\BC _{\lambda} \otimes_{\BC} \CO_{\DP} ( \mu )] \mapsto 
[\BC _{- \vpi_{i} + \lambda} \otimes_{\BC} \CO_{\DP} ( \mu )], \\
\tT_{i} & : [\BC_{\lambda} \otimes _{\BC} \CO_{\DP} ( \mu )] \mapsto
\frac{\te(-\lambda) - \te(-s_{i}\lambda+\alpha_{i})}
     {1-\te (\alpha_{i})} 
     [\CO_{\DP} ( \mu )],
\end{align*}
for each $i \in I$ and $\lambda,\,\mu \in P$, equips 
$K'_{\ti{\Iw}} ( \DP )$ with an action of the subalgebra $\CH$ of $\dH$ 
through the identifications{\rm:}
\begin{equation*}
\bq \mapsto q^{-1}, \qquad 
T_{i} \mapsto \tT_{i} - 1 \quad \text{\rm for $i \in I$}, \qquad 
\be ( \nu ) \mapsto \te ( \nu ) \quad \text{\rm for $\nu \in P$}.
\end{equation*}
\end{prop}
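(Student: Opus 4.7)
My plan is to prove the proposition in two steps: (i) show that the formulas for $\te(\nu)$ and $\tT_{i}$ define endomorphisms of $K^{\prime}_{\ti{\Iw}}(\DP)$, and (ii) verify that these operators satisfy the defining relations of $\CH$ from \eqref{eq:nDAHA} under the stated identifications. The guiding principle is that, up to sign conventions, $\te(\nu)$ is the built-in $(\BZ[P])\bra{q^{-1}}$-module shift on $K^{\prime}_{\ti{\Iw}}(\DP)$, and $\tT_{i}$ is a classical Demazure--Lusztig operator in disguise.

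For (i), the case of $\te(\nu)$ is a matter of unravelling definitions: twisting the $\ti{\Iw}$-character preserves condition \eqref{eq:K} and manifestly respects the equivalence $\sim$. For $\tT_{i}$, I will use the finite-type identity $-s_{i}\lambda+\alpha_{i}=-\lambda+(\pair{\lambda}{\alpha_{i}^{\vee}}+1)\alpha_{i}$ to rewrite $\tT_{i}[\BC_{\lambda}\otimes_{\BC}\CO_{\DP}(\mu)]$ as $D_{i}(e^{-\lambda})[\CO_{\DP}(\mu)]$, where $D_{i}$ denotes the classical Demazure operator on $\BZ[P]$ and the quotient $\tfrac{1-e^{(\pair{\lambda}{\alpha_{i}^{\vee}}+1)\alpha_{i}}}{1-e^{\alpha_{i}}}$ is recognised as a finite geometric sum in $\BZ[P]$. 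Consequently the output is a finite $\BZ$-linear combination of basis classes with $\mu$-component unchanged and $\lambda$-component shifted by an amount controlled by $\pair{\lambda}{\alpha_{i}^{\vee}}$. Extending $\BZ[\bq^{\pm 1}]$-linearly, I will conclude that $\tT_{i}$ preserves condition \eqref{eq:K}, and descent modulo $\sim$ will follow from the compatibility of $\tT_{i}$ with the line-bundle twists $[\CO_{\DP}(\mu)]$ for $\mu\in P^{+}$, together with the injectivity of $\Psi$ in Theorem~\ref{Windep}.

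For (ii), the $\be$-relations are immediate from $\te(\nu_{1})\te(\nu_{2})=\te(\nu_{1}+\nu_{2})$ and $\te(0)=1$. The three remaining relations I will reduce to classical identities for $D_{i}$ on $\BZ[P]$: the Leibniz-type commutation $T_{i}\be(\nu)-\be(s_{i}\nu)T_{i}=\tfrac{\be(s_{i}\nu)-\be(\nu)}{1-\be(\alpha_{i})}$ to the standard Demazure--Lusztig commutation; the quadratic relation $T_{i}(T_{i}+1)=0$, equivalent to $\tT_{i}^{2}=\tT_{i}$, to the idempotence $D_{i}^{2}=D_{i}$; and the braid relations for $\tT_{i},\tT_{j}$ ($i,j\in I$) to the braid relations for $D_{i},D_{j}$. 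All three identities for the Demazure operators on the finite Weyl group $W$ are classical (see, e.g., \cite[Sect.~8.2]{Kum02} or \cite{KK90}). Since the formula for $\tT_{i}$ specialises at $\lambda=0$ to $\tT_{i}[\CO_{\DP}(\mu)]=[\CO_{\DP}(\mu)]$, these identities will transfer directly to all of $K^{\prime}_{\ti{\Iw}}(\DP)$.

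The hard part will be ensuring well-definedness modulo $\sim$ in (i). A slick route will be to realise $\tT_{i}$ as a $K$-theoretic push--pull operation built from the $\BP^{1}$-fibration $q_{i,e}:\Iw(i)\times^{\Iw}\DP\to\DP$ of Lemma~\ref{one-step}: the cohomology vanishing of Corollary~\ref{cvs} ensures that the relevant pushforwards are exact on the line bundles appearing in the resolutions of Theorem~\ref{cvb}, and a fibrewise Riemann--Roch calculation on $\Iw(i)/\Iw\cong\BP^{1}$ reproduces exactly the displayed Demazure--Lusztig formula. This geometric realisation will make the descent to $K^{\prime}_{\ti{\Iw}}(\DP)$ automatic, and at the same time clarify the connection with the classical Kostant--Kumar construction \cite{KK90}.
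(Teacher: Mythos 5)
Your proposal is correct in outline but takes a noticeably more laborious route than the paper. The paper's proof of Proposition~\ref{KK-reint} is essentially a transfer argument: by Proposition~\ref{line-bundles}, $K'_{\ti{\Iw}}(\DP)$ contains a dense subset isomorphic to $(\BZ[P])\bra{q^{-1}} \otimes_{\BZ} K_{G}(G/B)$, and the operators $\tT_{i}$ (for $i\in I$) and $\te(\nu)$ act on coefficients precisely as the Demazure operator $D_{i}$ and the $B$-character twist do on $K_{B}(G/B)$; the paper then simply invokes the already-known $\CH$-action on $K_{B}(G/B)$ from Kostant--Kumar \cite[Sect.~3]{KK90}, observes that this action is neutral with respect to tensoring by $\CO_{G/B}(\lambda)$ and commutes with the $\Gm$-twist, and passes to the completion. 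You instead rederive the defining relations of $\CH$ by hand (quadratic, braid, Demazure--Lusztig commutation), reducing to classical $D_{i}$-identities in $\BZ[P]$. This is correct but redundant: those are exactly the identities Kostant--Kumar prove, so citing them as the paper does is both shorter and cleaner. What your route buys is a completely explicit computation, which may be pedagogically useful, but it does not avoid the classical input in any way.

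Two further remarks. First, your ``slick route'' of realising $\tT_{i}$ as a push--pull through $q_{i,e}$ and using Corollary~\ref{cvs} is the content of the \emph{corollary} immediately following Proposition~\ref{KK-reint} in the paper, not of the proposition itself. The paper deliberately separates the algebraic transfer (the proposition) from its geometric interpretation (the corollary); you fuse them, which is legitimate but conflates two logically distinct statements. The geometric realisation is genuinely a cleaner way to settle well-definedness modulo $\sim$ and absolute convergence in condition~\eqref{eq:K}, since the projection formula for $(q_{i,e})_{*}$ makes compatibility with all line-bundle twists and the $\Gm$-twist manifest; the paper instead relies on the observation that $\CH$ acts on $K_{B}(G/B) \cong \BZ[P][\CO_{G/B}]$ with the second tensor factor (carrying the $[\CO_{G/B}(\lambda)]$'s) acting as a passive index and then completes. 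Second, a small convention caveat: your identity rewrites $\tT_{i}[\BC_{\lambda}\otimes_{\BC}\CO_{\DP}(\mu)]$ as $D_{i}(e^{-\lambda})[\CO_{\DP}(\mu)]$ with $D_{i}(e^{\mu}) := (e^{\mu} - e^{\alpha_{i}}e^{s_{i}\mu})/(1 - e^{\alpha_{i}})$, which is \emph{not} the convention the paper fixes in the proof of Proposition~\ref{indep-of-QGy}, namely $D_{i}(e^{\mu}) = (e^{\mu} - e^{s_{i}\mu - \alpha_{i}})/(1 - e^{-\alpha_{i}})$; with the paper's convention the formula is $D_{i}(e^{-s_{i}\lambda})$. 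Since the two conventions are related by a precomposition with $s_{i}$ and both satisfy the same quadratic and braid relations, the mismatch does not affect the conclusion, but it is worth flagging so the reader is not misled.
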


\begin{proof}
By the construction, $K'_{\ti{\Iw}} ( \DP )$ contains 
a dense subset isomorphic to $(\BZ [P])\bra{q^{-1}} 
\otimes_{\BZ} K _{G} ( G / B )$ (see Proposition~\ref{line-bundles}). 
Also, we have a surjection
$(\BZ [P])[q^{-1}] \otimes_{\BZ} K _{G} ( G / B ) 
\twoheadrightarrow 
\BZ[q^{-1}] \otimes_{\BZ} K_{B}(G/B)$; 
see, e.g., \cite[(3.17)]{KK90}. 
Here, for each $i \in I$, the action of $\tT_i$ is 
identical to the action of the Demazure operator $D_{i}=D_{s_{i}}$, and 
the action of $\te(\vpi_{i})$ corresponds to 
the twist by the $B$-character $-\vpi_{i}$; 
these define an $\CH$-action on 
$K _{B} ( G / B )$ by \cite[Sect.~3]{KK90}. 
Notice that both of the actions of $\te(\,\cdot\,)$ and $T_i$, $i \in I$, 
are neutral with respect to tensoring with $\CO_{G/B} ( \lambda )$ 
for each $\lambda \in P$, and 
that they also commute with the $\Gm$-twist corresponding to $q^{-1}$. 
Therefore, the $\CH$-action on $K_B ( G / B ) \cong \BZ [P] [\CO_{G/B}]$ induces 
an $\CH$-action on $(\ZP)[q^{-1}] \otimes_{\BZ} K _{G} ( G / B )$ through
\begin{equation*}
(\ZP)[q^{-1}] \otimes_{\BZ} K _{G} ( G / B ) \cong 
(\ZP)[q^{-1}] \otimes_{\BZ} \BZ [P] [\CO_{G/B}] = 
\bigoplus _{\lambda \in P} (\ZP)[ q^{-1} ] [\CO_{G/B} ( \lambda )], 
\end{equation*}
where the second factor of the leftmost one is responsible for the factors 
$\{ [\CO_{G/B} ( \lambda )] \} _{\lambda \in P}$. 
Finally, we complete $(\BZ[P])[ q^{-1} ] \otimes_{\BZ} K_{B}(G/B)$ 
to obtain the desired assertion. This proves the proposition. 
\end{proof}

\begin{cor}
The $\CH$-action in Proposition~{\rm\ref{KK-reint}} is 
induced by the $\ti{\Iw}$-character twists 
and the convolution action of the structure sheaves 
through $q_{i,e}$ for $i \in I$ {\rm(}see \eqref{eq:qiy}{\rm)}. 
\end{cor}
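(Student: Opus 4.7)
The plan is to identify each of the two generating operations $\te(\nu)$ and $\tT_i$ geometrically. The action by $\te(\nu)$ sending $[\BC_\lambda \otimes_\BC \CO_{\DP}(\mu)]$ to $[\BC_{-\nu+\lambda} \otimes_\BC \CO_{\DP}(\mu)]$ is by construction the twist by the one-dimensional $\ti{\Iw}$-character $\BC_{-\nu}$, so this half of the corollary is tautological. The content lies in identifying $\tT_i$, for $i \in I$, with the geometric convolution through $q_{i,e}$.

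For the $\tT_i$ part, I would first verify that the convolution is well-defined. Since $\sell(s_i) = \ell(s_i) = 1 > 0 = \sell(e)$ for $i \in I$, we have $s_i \sig e$ in the semi-infinite Bruhat order, so Lemma~\ref{one-step} yields that $q_{i,e}: \Iw(i) \times^{\Iw} \DP \to \DP$ is a $\BP^1$-fibration and $\DP$ is itself $\Iw(i)$-stable. The operator
\[
D_i^{\mathrm{geo}}[\mathcal{F}] := (q_{i,e})_* (q_{i,e})^* [\mathcal{F}]
\]
is then defined by first pulling back to the $\Iw(i)$-equivariant setting on $\Iw(i) \times^{\Iw} \DP$ and then pushing forward; the subtlety that distinguishes it from the identity is that the $\ti{\Iw}$-equivariant structure recovered after restricting $\Iw(i)$-equivariance back to $\ti{\Iw}$ is nontrivially twisted. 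That $D_i^{\mathrm{geo}}[\mathcal{F}]$ lies in $K'_{\ti{\Iw}}(\DP)$ follows by resolving $\mathcal{F}$ by line bundles via Theorem~\ref{cvb} and applying Corollary~\ref{cvs} together with the projection formula to each summand.

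To prove $D_i^{\mathrm{geo}} = \tT_i$, I would exploit the same reduction used in the proof of Proposition~\ref{KK-reint}: the $\CH$-action on $K'_{\ti{\Iw}}(\DP)$ was constructed by transporting the Kostant-Kumar $\CH$-action on $K_B(G/B)$ along the dense inclusion $(\BZ[P])\bra{q^{-1}} \otimes_{\BZ} K_G(G/B) \hookrightarrow K'_{\ti{\Iw}}(\DP)$ together with the surjection $(\BZ[P])[q^{-1}] \otimes_{\BZ} K_G(G/B) \twoheadrightarrow \BZ[q^{-1}] \otimes_{\BZ} K_B(G/B)$. The evaluations $\ev_0: \Iw(i) \to P_i$ and $\ev_0: \Iw \to B$ restrict $q_{i,e}$ to the standard $\BP^1$-fibration $P_i \times^B G/B \to G/B$, whose associated convolution is the classical $K$-theoretic Demazure operator of \cite[Sect.~3]{KK90}; this operator agrees with $\tT_i$ by the very definition of the $\CH$-module structure in Proposition~\ref{KK-reint}.

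The principal obstacle is verifying that $D_i^{\mathrm{geo}}$, as an operator on the infinite-dimensional $K'_{\ti{\Iw}}(\DP)$, is compatible with the surjection to $K_B(G/B)$ underlying the definition of $\tT_i$. Once this compatibility is secured, the $(\BZ[P])\bra{q^{-1}}$-linearity of both operators together with the density of the finite-dimensional classes let us invoke the injectivity of $\Psi$ from Theorem~\ref{Windep} to conclude $D_i^{\mathrm{geo}} = \tT_i$ on all of $K'_{\ti{\Iw}}(\DP)$.
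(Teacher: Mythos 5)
Your approach is essentially the same as the paper's: both reduce to the Kostant--Kumar identification of the $\CH$-action on $K_B(G/B)$ with convolution, use Lemma~\ref{one-step} to recognize $q_{i,e}$ as a $\BP^1$-fibration with $\BR^{0}(q_{i,e})_*\CO \cong \CO$ and $\BR^{>0}(q_{i,e})_*\CO = 0$, apply the projection formula to see that the convolution fixes the classes $[\CO_{\DP}(\lambda)]$, and observe that the $\ti{\Iw}$-character twist is carried through the fiber $\Iw(i)/\Iw \cong \BP^1$ --- exactly what reproduces the Demazure operator of Proposition~\ref{KK-reint} on $\BC_\lambda \otimes \CO_{\DP}(\mu)$.

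One misstep in your final paragraph, however: you appeal to ``the $(\BZ[P])\bra{q^{-1}}$-linearity of both operators'' to extend the identification from the dense subspace to all of $K'_{\ti{\Iw}}(\DP)$. But $\tT_i$ is \emph{not} $\BZ[P]$-linear --- the Demazure operator $D_i$ acts nontrivially on the $\BZ[P]$-coefficients (this is precisely the content of the formula in Proposition~\ref{KK-reint}); it is only $\BZ\bra{q^{-1}}$-linear. Moreover, the detour through $\Psi$ and Theorem~\ref{Windep} is unnecessary: once one knows agreement on the classes $\BC_\lambda \otimes \CO_{\DP}(\mu)$ (which topologically span $K'_{\ti{\Iw}}(\DP)$ by its very construction), agreement everywhere follows from $\BZ\bra{q^{-1}}$-continuity without any injectivity input. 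These points do not undermine the core of your argument, since the reduction to the $G/B$ case and the projection-formula computation already establish agreement on the spanning set, but the closing sentences should be corrected to avoid the spurious $\BZ[P]$-linearity claim.
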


\begin{proof}
The assertion holds for the actions of $\CH$ on $K_B ( G / B )$ and 
$\BZ [q^{-1}] \otimes_{\BZ} K _{B} ( G / B )$ by \cite[Sect.~3]{KK90}. 
Also, by Lemma~\ref{one-step}, for each $i \in I$, 
the map $q_{i,e}$ is a $\BP^{1}$-fibration, and hence
\begin{equation*}
\BR^{k} (q_{i,e})_{\ast} \CO_{\Iw ( i ) \times ^{\Iw} \DP} \cong 
\begin{cases} 
 \CO_{\DP} & \text{if $k=0$}, \\[1mm]
 \{ 0 \} & \text{if $k \neq 0$}. 
\end{cases} 
\end{equation*}
This implies that the convolution action of 
$\Iw ( i ) / \Iw$, $i \in I$, on $\DP$ fixes the classes of 
$[\CO_{\DP} ( \lambda )]$ for each $\lambda \in P$ by the projection formula. 
Taking into account the fact 
that the twist of $R ( \ti{\Iw} ) \cong R(B \times \Gm)$ has 
an effect through the fiber of $q_{i,e}$, we conclude that 
$T_{i}$, $i \in I$, is identical to the convolution action induced by 
$q_{i,e}$ through the inclusion $\BZ [q^{-1}] \otimes_{\BZ} K _{B} ( G / B ) 
\subset K'_{\ti{\Iw}} ( \DP )$. This proves the corollary. 
\end{proof}

For each $\xi \in Q^{\vee,+}$, the natural inclusion map 
$\imath_{\xi} : \DP \hookrightarrow \DP$ induces 
an inclusion 
$(\imath_{\xi})_{\ast} : 
K'_{\ti{\Iw}} ( \DP ) \hookrightarrow K'_{\ti{\Iw}} ( \DP )$ of 
$(\ZP)\bra{q^{-1}}$-modules such that 
$( \imath_{\xi} )_{\ast} [\CO_{\DP(x)} ( \lambda )] = 
 [\CO_{\DP(xt_{\xi})} ( \lambda )]$ for each $x \in W_{\af}$. 
We define
\begin{equation*}
K_{\ti{\Iw}} ( \RDP ) := 
 \BZ\pra{q^{-1}} \otimes_{\BZ\sbra{q^{-1}}} 
 \varinjlim K'_{\ti{\Iw}}(\DP).
\end{equation*}
%
%
\begin{thm}[\cite{BFU}] \label{thm:BFU}
The assignment
\begin{align*}
\te(\vpi_{i}) & : 
 [\BC _{\lambda} \otimes_{\BC} \CO_{\DP ( t_{\xi} )} ( \mu )] \mapsto 
 [\BC _{-\vpi_{i} + \lambda} \otimes_{\BC} \CO_{\DP ( t_{\xi} )} ( \mu )]
 \quad \text{\rm for $i \in I$}, \\[1mm]
\tT_i & : [\BC_{\lambda} \otimes _{\BC} \CO_{\DP ( t_{\xi} )} ( \mu ) ] \\[3mm]
& \hspace*{5mm} \mapsto 
 \begin{cases} 
   \dfrac{\te ( - \lambda ) - \te ( - s_0 \lambda )}
     {1 - \te ( \alpha_0 )} [\CO_{\DP ( t_{\xi} )} ( \mu )] + 
     \te ( - s_0 \lambda ) [ \CO_{\DP ( s_0 t_{\xi} )} ( \mu )] 
     & \text{\rm for $i = 0$}, \\[7mm]
   \dfrac{\te ( - \lambda ) - \te ( - s_i \lambda + \alpha_i )}
     {1 - \te ( \alpha_i )} [\CO_{\DP ( t_{\xi} )} ( \mu )]
     & \text{\rm for $i \neq 0$}, 
  \end{cases}
\end{align*}
where $\xi \in Q^{\vee,+}$, 
and $\lambda, \mu \in P$, equips $K_{\ti{\Iw}} (\RDP)$ 
with an action of $\dH$ through the identifications{\rm:}
\begin{equation*}
\bq \mapsto q^{-1}, \qquad
T_{i} \mapsto \tT_{i} - 1 \quad \text{\rm for $i \in I_{\af}$}, \qquad
\be ( \nu ) \mapsto \te ( \nu ) \quad \text{\rm for $\nu \in P$}.
\end{equation*}
\end{thm}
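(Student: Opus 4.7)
The plan is to bootstrap from Proposition~\ref{KK-reint}, which already equips $K'_{\ti{\Iw}}(\DP)$ with an action of the finite subalgebra $\CH \subset \dH$ via Demazure-type convolution operators arising from the $\BP^{1}$-fibrations $q_{i,e}$, $i \in I$. The only truly new piece of data in the theorem is the operator $\tT_{0}$, which has to be constructed geometrically from the convolution with the minimal parahoric $\Iw(0)$, together with the effect of passing to the ind-limit defining $\RDP$.

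First, I would verify that the $\CH$-action of Proposition~\ref{KK-reint} extends $\BZ\pra{q^{-1}}$-linearly to $K_{\ti{\Iw}}(\RDP)=\BZ\pra{q^{-1}} \otimes \varinjlim K'_{\ti{\Iw}}(\DP)$. This is routine: the operators $\tT_{i}$ for $i \in I$ and $\te(\nu)$ are induced by $G\bra{z}$-equivariant geometric operations (convolution along $q_{i,e}$ and $\ti{\Iw}$-character twist), and hence commute with the pushforwards $(\imath_{\xi})_{\ast}$ that define the transition maps in the direct system. Under these identifications, the stated formulae for $\tT_{i}$ and $\te(\vpi_{i})$ on $[\BC_{\lambda} \otimes \CO_{\DP(t_{\xi})}(\mu)]$ reproduce the ones given in the theorem.

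Next, I would define $\tT_{0}$ via the map $q_{0,t_{\xi}} : \Iw(0) \times^{\Iw} \DP(t_{\xi}) \to \RDP$. Since $s_{0} t_{\xi} = s_{\theta} t_{\xi+\theta^{\vee}}$ lies in $W_{\af}^{\ge 0}$ with $\sell(s_{0} t_{\xi}) = \sell(t_{\xi})+1$, Lemma~\ref{one-step} identifies this convolution with a $\BP^{1}$-bundle map whose image sweeps out $\DP(t_{\xi}) \cup \bO(s_{0} t_{\xi})$. Applying the $\Iw$-character twist by $\BC_{\lambda}$ before pushforward and then computing via the projection formula and equivariant localization on the $\BP^{1}$-fiber (as in Kostant-Kumar \cite{KK90}) yields precisely the stated two-term formula: the rational term with denominator $1-\te(\alpha_{0})$ is the standard torus-fixed $\BP^{1}$ contribution, while the term $\te(-s_{0}\lambda)[\CO_{\DP(s_{0} t_{\xi})}(\mu)]$ records the jump into the deeper stratum. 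Crucially, this deeper stratum is only visible after passing to the ind-limit $\RDP$, which is why the full $\dH$-action cannot be realized on $K'_{\ti{\Iw}}(\DP)$ itself.

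Finally, I would verify the nil-DAHA relations \eqref{eq:nDAHA} that involve $T_{0}$: the quadratic relation $T_{0}(T_{0}+1)=0$ reduces by the geometric construction to a standard $\BP^{1}$-fibration computation on the double convolution $\Iw(0) \times^{\Iw} \Iw(0) \times^{\Iw} \DP(t_{\xi})$; the commutation with $\be(\nu)$ follows by direct algebraic manipulation of the explicit formula, exactly as in finite type. The main obstacle is the braid relations between $\tT_{0}$ and $\tT_{i}$ for $i \in I$ adjacent to $0$ in the affine Dynkin diagram, where $m_{0i}=3$: the two operators sit on different geometric supports, since $\tT_{0}$ can only be formulated after passing to $\RDP$ while $\tT_{i}$ lives already on $\DP$, so the triple braid identity requires reducing the computation to the rank-two affine parahoric orbit generated by $\Iw(0)$ and $\Iw(i)$ and carefully tracking compatibility of the convolution diagram with the inclusions $(\imath_{\xi})_{\ast}$. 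Once the relations are checked on the generators $[\BC_{\lambda} \otimes \CO_{\DP(t_{\xi})}(\mu)]$, they extend uniquely to all of $K_{\ti{\Iw}}(\RDP)$ by $\bq$-linearity and by the already-established $\CH$-action.
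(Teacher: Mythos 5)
Your proposal contains a genuine gap in the geometry, and the computational route you sketch does not match the actual mechanism. You write $s_0 t_\xi = s_\theta t_{\xi+\theta^\vee}$ and $\sell(s_0 t_\xi)=\sell(t_\xi)+1$; both signs are wrong. Since $\alpha_0 = -\theta+\delta$, we have $s_0 = s_\theta t_{-\theta^\vee}$, so $s_0 t_\xi = s_\theta t_{\xi-\theta^\vee}$, and a short computation with $\ell(s_\theta)=2\pair{\rho}{\theta^\vee}-1$ gives $\sell(s_0 t_\xi)=\sell(t_\xi)-1$. Thus $s_0 t_\xi \sile t_\xi$ and $\DP(s_0 t_\xi) \supsetneq \DP(t_\xi)$: the new term $[\CO_{\DP(s_0 t_\xi)}(\mu)]$ is the structure sheaf of a \emph{larger} Schubert variety, not a ``deeper stratum.'' This inverts your explanation of why $\RDP$ is needed. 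The real obstruction is that for small $\xi$ (e.g.\ $\xi=0$) one has $\xi-\theta^\vee \notin Q^{\vee,+}$, so $s_0 t_\xi \notin W_{\af}^{\ge 0}$ and the (bigger) variety $\DP(s_0 t_\xi)$ simply does not live inside $\DP$; one must pass to the ind-limit $\RDP$ to make sense of it, and this is exactly why the paper takes $\xi$ large enough that $s_0 t_\xi \in W_{\af}^{\ge 0}$ and then uses $(\imath_\xi)_\ast$.

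Consequently the phrase ``equivariant localization on the $\BP^1$-fiber'' does not describe what happens: by Lemma~\ref{one-step} the map $q_{0,t_\xi}$ is a proper \emph{birational} map onto $\DP(s_0 t_\xi)$, not a $\BP^1$-fibration over $\DP(t_\xi)$, so the standard Kostant-Kumar fiber computation does not directly apply. The paper instead computes the graded Euler characteristic by factoring through the other projection $p_{0,t_\xi}:\Iw(0)\times^\Iw \DP(t_\xi) \to \BP^1$, expresses the answer as a Demazure operator applied to $\gch V^-_{t_\xi}(-w_\circ(\lambda+\mu))$, splits it via the algebraic identity \eqref{D-factor} into a sum of two terms recognizable as $\Psi([\CO_{\DP(t_\xi)}(\lambda)])$ and $\Psi([\CO_{\DP(s_0 t_\xi)}(\lambda)])$, and concludes by the injectivity of $\Psi$ (Theorem~\ref{Windep}). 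None of these ingredients appears in your sketch. Finally, you flag the braid relations between $T_0$ and $T_i$ as ``the main obstacle'' and propose a hard rank-two verification; the paper avoids this entirely by observing that once all $\tT_i$, $i\in I_{\af}$, are realized as geometric convolutions on the affine flag manifold $G\pra{z}/\Iw$, the affine nil-Hecke relations follow from \cite[Sect.~3]{KK90}, and only the commutation with $\be(\nu)$ needs to be added to upgrade to $\dH$. (Also, $m_{0i}$ is not always $3$.) As written, the proposal would not compile into a correct proof.
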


\begin{proof}
Thanks to \cite[Sect.~3]{KK90} (and Lemma~\ref{one-step}), for each $i \in I$, 
the action of $\tT_i$ is induced by the pushforward of 
an $\ti{\Iw}$-equivariant inflated sheaf through $q_{i,e}$ 
(see Section~\ref{subsec:BSDH}), and 
the action of $\te(\vpi_{i})$ is induced by an $\ti{\Iw}$-character twist. 
Because these geometric counterparts commute with the pullback 
through $\imath_{\xi}$ for each $\xi \in Q^{\vee, +}$, 
our formulas define an action of $\CH$ on $K_{\ti{\Iw}} ( \RDP )$ 
induced by Proposition~\ref{KK-reint}.

Now, we have
%
%
\begin{equation} \label{D-factor}
\frac{f g - e^{- \alpha_0} s_0 (f g )}{1 - e^{- \alpha_{0}}} = 
\frac{f - s_{0}(f)}{1 - e^{- \alpha_{0}}} g + s_{0} ( f ) 
\frac{g - e^{- \alpha_{0}} s_{0} ( g )}{1 - e^{- \alpha_{0}}} 
\quad \text{for $f,\,g \in (\BC [P])\pra{q^{-1}}$}.
\end{equation}
Let $p_{0,t_{\xi}}:\Iw(0) \times^{\Iw} \DP (t_{\xi}) \rightarrow \BP^{1}$
be the inflation of the structure map of $\DP(t_{\xi})$, and 
let $\CE(W)$ denote the vector bundle over 
$\BP^{1} \cong \tilde{\Iw}(0)/\tilde{\Iw}$ 
associated to an $\tilde{\Iw}$-module $W$.
Then, by taking into account equation~\eqref{D-factor}, 
Corollary~\ref{coh} and Theorem~\ref{cvb}, and 
\cite[Corollary~4.8]{Kat16}, we deduce that 
for each $\lambda,\,\mu \in P$, $\nu \in P$, 
and $\xi \in Q^{\vee,+}$ such that $s_{0} t_{\xi} \in W_{\af}^{\ge 0}$, 
\begin{align}
\sum_{m,\,n \ge 0} (-1)^{m+n} & 
  \gch H^{m} \Bigl( \DP, \, \BR^{n} ( q_{0,t_{\xi}} ) _{\ast} 
  \bigl( \BC_{- \nu} \otimes_{\BC} \CO_{\DP ( t_{\xi})} ( \lambda ) \bigr) 
    \otimes_{\CO_{\DP}} \CO_{\DP} ( \mu ) \Bigr) \nonumber \\[2mm]
= & \sum_{m \ge 0} (-1)^{m}
  \gch H^{m} \bigl( \Iw ( 0 ) \times^{\Iw} \DP, \, 
      \BC_{- \nu} \otimes_{\BC} \CO_{\DP ( t_{\xi})} ( \lambda + \mu ) \bigr) \nonumber \\[2mm]
= & \sum_{m \ge 0} (-1)^{m} 
  \gch H^{0} \Bigl( \BP^{1}, \, \BR^{m} ( p_{0, t_{\xi}} )_{\ast} 
      \bigl( \BC_{- \nu} \otimes_{\BC} 
             \CO_{\DP ( t_{\xi})} ( \lambda + \mu ) \bigr) \Bigr) \nonumber \\[2mm]
= & \sum_{m \ge 0} (-1)^{m} 
  \gch H^{0} \Bigl( \BP^{1}, \, \BC_{- \nu} \otimes_{\BC} 
      \CE \bigl( H^{0} ( \DP ( t_{\xi} ), \, 
      \CO_{\DP ( t_{\xi} )} ( \lambda + \mu ) )^{\ast} \bigr) \Bigr) \nonumber \\[2mm]
= & \dfrac{e^{-\nu} \cdot \gch V^{-}_{t_\xi} (-w_{\circ}(\lambda + \mu)) - 
    e^{- \alpha_0} s_0 ( e^{-\nu} \cdot \gch V^{-}_{t_\xi} (-w_{\circ}(\lambda + \mu)) )}
       {1 - e^{- \alpha_0}} \nonumber \\[2mm]
= & \dfrac{ e^{-\nu} - s_0 ( e^{-\nu} )}
          {1 - e^{- \alpha_0}} 
    \gch V^{-}_{t_\xi} (-w_{\circ}(\lambda + \mu)) + 
    e^{- s_0 \nu} \gch V^{-}_{s_0 t_\xi} (-w_{\circ}(\lambda + \mu)) \nonumber \\[2mm]
= & \dfrac{ e^{-\nu} - s_0 ( e^{-\nu} )}{1 - e^{- \alpha_0}} 
    \Psi ( [ \CO_{\DP ( t_{\xi} )} ( \lambda ) ] ) ( \mu ) + 
    e^{- s_0 \nu} \Psi ( [ \CO_{\DP ( s_0 t_{\xi} )} ( \lambda ) ] ) ( \mu ), \label{conv-eqn}
\end{align}
where the first and fourth equalities 
follow by the Leray spectral sequence.
In particular, the term \eqref{conv-eqn} represents 
the image under $\Psi$ of the convolution of 
$[\BC_{-\mu} \otimes_{\BC} \CO_{\DP (t_{\xi})} ( \lambda )]$ 
with respect to $q_{0,t_{\xi}}$.
Therefore, from the injectivity of $\Psi$, 
we conclude that $\tT_{0}$ is induced by the pushforward of 
an $\ti{\Iw}$-equivariant inflated sheaf 
through $q_{0, t_{\xi}}$ for some $\xi \in Q^{\vee, +}$.

From the above, we deduce that 
the actions $\tT_i$, $i \in I_{\af}$, and 
$\be(\nu)$, $\nu \in P$, generate 
the convolution action of Schubert cells and 
the $\ti{\Iw}$-character twists of the (thin) affine flag manifold 
$G\pra{z} / \Iw$ on $\RDP$ (or rather, on $\DP$). 
In particular, the $T_{i}$, $i \in I_{\af}$, generate 
the nil-Hecke algebra of affine type by \cite[Sect.~3]{KK90}. 
Therefore, their commutation relations with $\te ( \vpi_i )$, $i \in I$, 
imply that the $T_{i}$, $i \in I_{\af}$, and 
the $\te ( \vpi_{i} )$, $i \in I$, satisfy the relations for $\dH$ 
(see also \cite[Sect.~3.4]{BFU}); we remark that 
their convention differs from ours by the twist by the Serre duality 
and line bundle twist \cite[Sects.~3.1 and 3.21]{BFU}. 

Finally, we complete the proof by observing that 
$\tT_0$ preserves $K_{\ti{\Iw}} ( \RDP )$ by inspection.
\end{proof}

%
%
\section{Proof of Theorem~\ref{thm:SMT}.}
\label{sec:prf-SMT}

%
\subsection{Affine Weyl group action.}
\label{subsec:Weyl}

Let $\CB$ be a regular crystal in the sense of \cite[Sect.~2.2]{Kas02} 
(or, a normal crystal in the sense of \cite[p.\,389]{HK});
for example, $\SLS(\lambda)$ for $\lambda \in P^{+}$ is a regular crystal
by Theorem~\ref{thm:isom}, and hence so is 
$\SLS(\lambda) \otimes \SLS(\mu)$ for $\lambda,\,\mu \in P^{+}$.
Then we know from \cite[Sect.~7]{Kas94} that 
the affine Weyl group $W_{\af}$ acts on $\CB$ as follows: 
for $b \in \CB$ and $i \in I_{\af}$, 
%
%
\begin{equation} \label{eq:W-act}
s_{i} \cdot b := 
\begin{cases}
f_{i}^{n}b & \text{if $n:=\pair{\wt(b)}{\alpha_{i}^{\vee}} \ge 0$}, \\[1.5mm]
e_{i}^{-n}b & \text{if $n:=\pair{\wt(b)}{\alpha_{i}^{\vee}} \le 0$}. 
\end{cases}
\end{equation}

Also, for $b \in \CB$ and $i \in I_{\af}$, 
we define $e_{i}^{\max}b=e_{i}^{\ve_{i}(b)}b$ and 
$f_{i}^{\max}b=f_{i}^{\vp_{i}(b)}b$, 
where $\ve_{i}(b):=\max\bigl\{n \ge 0 \mid e_{i}^{n}b \ne \bzero\bigr\}$ and 
$\vp_{i}(b):=\max\bigl\{n \ge 0 \mid f_{i}^{n}b \ne \bzero\bigr\}$; 
note that if $b \in \CB$ satisfies $e_{i}b = \bzero$ (resp., $f_{i}b = \bzero$), 
i.e., $\ve_{i}(b)=0$ (resp., $\vp_{i}(b)=0$), then 
$f_{i}^{\max}b = s_{i} \cdot b$ (resp., $e_{i}^{\max}b = s_{i} \cdot b$).

%
\subsection{Connected components of $\SLS(\lambda)$.}
\label{subsec:conn}

Let $\lambda \in P^{+}$, and write it as 
$\lambda = \sum_{i \in I} m_{i} \vpi_{i}$, with $m_{i} \in \BZ_{\ge 0}$; 
note that 
$J=\bigl\{i \in I \mid \pair{\lambda}{\alpha_{i}^{\vee}}=0\bigr\}= 
\bigl\{i \in I \mid m_{i} = 0\bigr\}$. 
We define $\Par(\lambda)$ to be the set of $I$-tuples of partitions 
$\brho = (\rho^{(i)})_{i \in I}$ such that $\rho^{(i)}$ is a partition of 
length (strictly) less than $m_{i}$ for each $i \in I$; 
a partition of length less than $0$ (or $1$)
is understood to be the empty partition $\emptyset$. 
Also, for $\brho = (\rho^{(i)})_{i \in I} \in \Par(\lambda)$, we set 
$|\brho|:=\sum_{i \in I} |\rho^{(i)}|$, where for a partition 
$\rho = (\rho_{1} \ge \rho_{2} \ge \cdots \ge \rho_{m})$, 
we set $|\rho| := \rho_{1}+\cdots+\rho_{m}$. 
We endow the set $\Par(\lambda)$ with a crystal structure as follows: 
for $\brho \in \Par(\lambda)$ and $i \in I_{\af}$, 
\begin{equation*}
e_{i} \brho = f_{i} \brho := \bzero, \quad 
\ve_{i} (\brho) = \vp_{i} (\brho) := -\infty, \quad  
\wt(\brho) := - |\brho| \delta.
\end{equation*}

We recall from \cite[Sect.~7]{INS} 
the relation between $\Par(\lambda)$ and 
the set $\Conn(\SLS(\lambda))$ of connected components 
of $\SLS(\lambda)$. 
We set
$\Turn(\lambda):=
 \bigl\{k/m_{i} \mid i \in I \setminus \J \text{ and }
 0 \le k \le m_{i}\bigr\}$. 
By \cite[Proposition~7.1.2]{INS}, 
each connected component of $\SLS(\lambda)$ 
contains a unique element of the form: 
%
%
\begin{equation} \label{eq:ext}
\bigl( \PJ(t_{\xi_{1}}),\,\dots,\,\PJ(t_{\xi_{s-1}}),\,e \,;\, 
  a_{0},\,a_{1},\,\dots,\,a_{s} \bigr), 
\end{equation}
where $s \ge 1$, $\xi_{1},\,\dots,\,\xi_{s-1} \in Q^{\vee}_{I \setminus \J}$ 
such that $\xi_{1} > \cdots > \xi_{s-1} > 0=:\xi_{s}$, and 
$a_{u} \in \Turn(\lambda)$ for all $0 \le u \le s$. 
For each element of the form \eqref{eq:ext} 
(or equivalently, each connected component of $\SLS(\lambda)$), 
we define an element $\brho = (\rho^{(i)})_{i \in I} \in \Par(\lambda)$ as follows. 
First, let $i \in I \setminus \J$; note that $m_{i} \ge 1$.  
For each $1 \le k \le m_{i}$, take $0 \le u \le s$ such that 
$a_{u}$ is contained in the interval $\bigl( (k-1)/m_{i},\,k/m_{i} \bigr]$. 
Then we define $\rho^{(i)}_{k}$ to be $\pair{\vpi_{i}}{\xi_{u}}$, 
the coefficient of $\alpha_{i}^{\vee}$ in $\xi_{u}$; 
we know from (the proof of) \cite[Proposition~7.2.1]{INS} that 
$\rho^{(i)}_{k}$ does not depend on the choice of $u$ above. 
Since $\xi_{1} > \cdots > \xi_{s-1} > 0=\xi_{s}$, we see that
$\rho^{(i)}_{1} \ge \cdots \ge 
\rho^{(i)}_{m_{i}-1} \ge \rho^{(i)}_{m_{i}}=0$. 
Thus, for each $i \in I \setminus \J$, 
we obtain a partition of length less than $m_{i}$. 
For $i \in \J$, we set $\rho^{(i)}:=\emptyset$. 
Thus we obtain an element 
$\brho = (\rho^{(i)})_{i \in I} \in \Par(\lambda)$, and hence 
a map from $\Conn(\SLS(\lambda))$ to $\Par(\lambda)$. 
Moreover, we know from \cite[Proposition~7.2.1]{INS} that 
this map is bijective; 
we denote by $\pi_{\brho} \in \SLS(\lambda)$ 
the element of the form \eqref{eq:ext} 
corresponding to $\brho \in \Par(\lambda)$ under this bijection. 
%
%
\begin{rem} \label{rem:ext}
Let $\brho = (\rho^{(i)})_{i \in I} \in \Par(\lambda)$, 
with $\rho^{(i)}=(\rho^{(i)}_{1} \ge \cdots )$ for $i \in I$; 
note that $\rho^{(i)}_{1}=0$ if $\rho^{(i)}=\emptyset$. 
It follows from the definition that 
%
%
\begin{equation} \label{eq:par0}
\iota(\pi_{\brho}) = \PJ(t_{\xi_{1}}), \quad 
\text{where} \quad \xi_{1} = \sum_{i \in I} 
\rho^{(i)}_{1} \alpha_{i}^{\vee} \in Q^{\vee}_{I \setminus \J}. 
\end{equation}
\end{rem}

For $\brho \in \Par(\lambda)$, we denote by 
$\SLS_{\brho}(\lambda)$ the connected component of $\SLS(\lambda)$ 
containing $\pi_{\brho}$. Also, we denote by 
$\SLS_{0}(\lambda)$ the connected component of $\SLS(\lambda)$ 
containing $\pi_{\lambda}=(e\,;\,0,1)$; 
note that $\pi_{\lambda}=\pi_{\brho}$ for $\brho=(\emptyset)_{i \in I}$. 
We know from \cite[Proposition~3.2.4]{INS} (and its proof) that
for each $\brho \in \Par(\lambda)$, 
there exists an isomorphism $\SLS_{\brho}(\lambda) \stackrel{\sim}{\rightarrow}
\bigl\{\brho\bigr\} \otimes \SLS_{0}(\lambda)$ of crystals, which maps 
$\pi_{\brho}$ to $\brho \otimes \pi_{\lambda}$. Hence we have
%
%
\begin{equation} \label{eq:isom}
\SLS(\lambda) = 
\bigsqcup_{\brho \in \Par(\lambda)} \SLS_{\brho}(\lambda) \cong 
\bigsqcup_{\brho \in \Par(\lambda)}
\bigl\{\brho\bigr\} \otimes \SLS_{0}(\lambda) \quad \text{as crystals}.
\end{equation}

The following lemma is shown by induction on 
the (ordinary) length $\ell(x)$ of $x$; for part (1), 
see also \cite[Remark~3.5.2]{NS16}
%
%
\begin{lem} \label{lem:Weyl}
\mbox{}
\begin{enu}
\item Let $\lambda \in P^{+}$. 
If $\pi \in \SLS(\lambda)$ is of the form \eqref{eq:ext}, 
then for $x \in W_{\af}$, 
%
%
\begin{equation} \label{eq:tx0}
x \cdot \pi = 
\bigl( \PJ(xt_{\xi_{1}}),\,\dots,\,\PJ(xt_{\xi_{s-1}}),\,\PJ(x) \,;\, 
  a_{0},\,a_{1},\,\dots,\,a_{s} \bigr). 
\end{equation}

\item Let $\lambda,\,\mu \in P^{+}$. 
Let $\brho \in \Par(\lambda)$, $\bchi \in \Par(\mu)$, and 
$\xi,\zeta \in Q^{\vee}$. Then, for $x \in W_{\af}$, 
%
%
\begin{equation} \label{eq:Weyl}
x \cdot \bigl( (t_{\xi} \cdot \pi_{\brho}) \otimes (t_{\zeta} \cdot \pi_{\bchi}) \bigr)
= (xt_{\xi} \cdot \pi_{\brho}) \otimes (xt_{\zeta} \cdot \pi_{\bchi}). 
\end{equation}
\end{enu}
\end{lem}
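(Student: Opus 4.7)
The plan is to prove (1) by induction on the ordinary length $\ell(x)$, and then derive (2) essentially formally from (1). For the base case $x = e$, the claim is tautological since $\PJ(et_{\xi_u}) = \PJ(t_{\xi_u})$ and $e$ acts as the identity. For the inductive step, I would fix $x \in W_{\af}$ with $\ell(x) \ge 1$, choose $i \in I_{\af}$ so that $x = s_{i} y$ with $\ell(y) = \ell(x) - 1$, and use the group action property $x \cdot \pi = s_{i} \cdot (y \cdot \pi)$. By the induction hypothesis applied to $y$, the inner path $y \cdot \pi$ has the form
\[
\bigl( \PJ(yt_{\xi_{1}}),\,\dots,\,\PJ(yt_{\xi_{s-1}}),\,\PJ(y) \,;\, a_{0},\,a_{1},\,\dots,\,a_{s} \bigr),
\]
so the whole inductive step reduces to showing: for every path $\pi'$ of this explicit form and every $i \in I_{\af}$,
\[
s_{i} \cdot \pi' = \bigl( \PJ(s_{i}yt_{\xi_{1}}),\,\dots,\,\PJ(s_{i}yt_{\xi_{s-1}}),\,\PJ(s_{i}y) \,;\, a_{0},\,a_{1},\,\dots,\,a_{s} \bigr).
\]

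To do this, I would unpack the definition \eqref{eq:W-act}: $s_{i}$ acts as $f_{i}^{n}$ if $n := \pair{\wt(\pi')}{\alpha_{i}^{\vee}} \ge 0$, and as $e_{i}^{-n}$ otherwise. I would then invoke the explicit description of the root operators on $\SLS(\lambda)$ from Appendix~\ref{sec:crystal}, together with the fact that $\lambda$ is $(\WJ)_{\af}$-fixed (so that evaluating on a direction only depends on its $\PJ$-image). The key structural observation is that applying the maximal chain $f_{i}^{\max}$ (respectively $e_{i}^{\max}$) to a path whose directions already encode a common $W_{\af}$-translate is exactly recorded, at the level of directions, by left multiplication of each direction $\PJ(yt_{\xi_u})$ by $s_{i}$ followed by the projection $\PJ$; no turning points are inserted or destroyed, since the pairing $\pair{x_u \lambda}{\alpha_i^{\vee}}$ that governs the edges in $\SBb{a_u}$ is invariant under the left-multiplication by $s_i$ (this is ultimately the same calculation that underlies the fact that $W_{\af}$ acts on extremal elements by left multiplication). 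The hard part of the argument is this bookkeeping: one must check that the partial applications $f_{i}^{k}$ (or $e_{i}^{k}$) along the way never collapse or split consecutive blocks in a way that would alter the sequence $\ba$, and this is where the structure of \eqref{eq:ext} is essential.

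For part (2), I would argue that it follows with no further computation from (1). Indeed, since $\SLS(\lambda)$ and $\SLS(\mu)$ are regular crystals (by Theorem~\ref{thm:isom}), so is their tensor product, and by \cite[Sect.~7]{Kas94} the induced $W_{\af}$-action on the tensor product is diagonal: $x \cdot (b_{1} \otimes b_{2}) = (x \cdot b_{1}) \otimes (x \cdot b_{2})$. Combining this with the group-action identity $x \cdot (y \cdot \pi) = (xy) \cdot \pi$ gives
\[
x \cdot \bigl( (t_{\xi} \cdot \pi_{\brho}) \otimes (t_{\zeta} \cdot \pi_{\bchi}) \bigr)
= \bigl( x \cdot (t_{\xi} \cdot \pi_{\brho}) \bigr) \otimes \bigl( x \cdot (t_{\zeta} \cdot \pi_{\bchi}) \bigr)
= (xt_{\xi} \cdot \pi_{\brho}) \otimes (xt_{\zeta} \cdot \pi_{\bchi}),
\]
which is exactly \eqref{eq:Weyl}. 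Thus the genuine technical content sits entirely in part (1), and the principal obstacle is the careful root-operator analysis sketched above; the remaining pieces are formal consequences of the $W_{\af}$-action being a diagonal group action on regular crystals.
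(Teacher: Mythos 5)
Your proposal is correct and follows the same route as the paper, which gives no details beyond the remark that the lemma "is shown by induction on the (ordinary) length $\ell(x)$" (citing \cite[Remark~3.5.2]{NS16} for part (1)); your reduction of (2) to (1) via the diagonal $W_{\af}$-action on tensor products of regular crystals is exactly the intended argument. One justification in your sketch of (1) is misstated: the pairing $\pair{x_u\lambda}{\alpha_i^{\vee}}$ is \emph{not} invariant under left multiplication by $s_i$ (it changes sign); what actually makes the bookkeeping work is that all directions $\PJ(yt_{\xi_u})$ of $y\cdot\pi$ satisfy $\pair{\PJ(yt_{\xi_u})\lambda}{\alpha_i^{\vee}}=\pair{y\lambda}{\alpha_i^{\vee}}$ (since $\pair{\delta}{\alpha_i^{\vee}}=0$), so $H^{y\cdot\pi}_{i}$ is linear on $[0,1]$ and $s_i$ acts by reflecting every direction simultaneously, with Lemma~\ref{lem:si} and Remark~\ref{rem:si} giving $s_i\PJ(yt_{\xi_u})=\PJ(s_iyt_{\xi_u})$ (resp.\ $\PJ(s_iyt_{\xi_u})=\PJ(yt_{\xi_u})$ in the zero-pairing case).
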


Let $\xi \in Q^{\vee}$. It follows from Lemma~\ref{lem:SiB}\,(3) that 
if $\pi = (x_{1},\,\dots,\,x_{s}\,;\,\ba) \in \SLS(\lambda)$, then 
%
%
\begin{equation} \label{eq:Txi}
T_{\xi}\pi:=
\bigl( \PJ(x_{1}t_{\xi}),\,\dots,\,\PJ(x_{s}t_{\xi})\,;\,\ba \bigr) \in \SLS(\lambda); 
\end{equation}
the map $T_{\xi} : \SLS(\lambda) \rightarrow \SLS(\lambda)$ 
is clearly bijective, with $T_{\xi}^{-1}=T_{-\xi}$. 
We can verify by the definitions that 
%
%
\begin{equation} \label{eq:Tx}
\begin{cases}
T_{\xi}e_{i}\pi= e_{i}T_{\xi}\pi, \quad 
T_{\xi}f_{i}\pi= f_{i}T_{\xi}\pi 
 & \text{for $\pi \in \SLS(\lambda)$ and $i \in I_{\af}$}, \\[1.5mm]
\ve_{i}(T_{\xi}\pi) = \ve_{i}(\pi), \quad 
\vp_{i}(T_{\xi}\pi) = \vp_{i}(\pi)
 & \text{for $\pi \in \SLS(\lambda)$ and $i \in I_{\af}$}, \\[1.5mm]
\wt (T_{\xi}\pi) = \wt (\pi) - \pair{\lambda}{\xi}\delta,
\end{cases}
\end{equation}
where $T_{\xi}\bzero$ is understood to be $\bzero$.
%
%
\begin{rem} \label{rem:Tx}
Let $\brho \in \Par(\lambda)$, and assume that 
$\pi_{\brho}$ is of the form \eqref{eq:ext}. 
For $\xi \in Q^{\vee}$, 
we see from \eqref{eq:tx0} and \eqref{eq:Txi} that 
\begin{equation*}
T_{\xi}\pi_{\brho} = 
\bigl( \PJ(t_{\xi_{1}+\xi}),\,\dots,\,\PJ(t_{\xi_{s-1}+\xi}),\,\PJ(t_{\xi}) \,;\, 
  a_{0},\,a_{1},\,\dots,\,a_{s} \bigr) = t_{\xi} \cdot \pi_{\brho}, 
\end{equation*}
which implies that $T_{\xi}\pi_{\brho} \in \SLS_{\brho}(\lambda)$. 
Therefore, it follows from \eqref{eq:Tx} that 
$T_{\xi}(\SLS_{\brho}(\lambda)) = \SLS_{\brho}(\lambda)$. 
\end{rem}
%
%
\subsection{Quantum Lakshmibai-Seshadri paths.}
\label{subsec:QLS}

Let $\lambda \in P^{+}$. 
Let $\cl : \BR \otimes_{\BZ} P_{\af} \twoheadrightarrow 
(\BR \otimes_{\BZ} P_{\af})/\BR\delta$ denote the canonical projection. 
For an element 
$\pi=(x_{1},\,\dots,\,x_{s}\,;\,a_{0},\,a_{1},\,\dots,\,a_{s}) \in \SLS(\lambda)$, 
we define a piecewise-linear, continuous map 
$\cl(\pi) : [0,1] \rightarrow (\BR \otimes_{\BZ} P_{\af})/\BR\delta$ 
by $(\cl(\pi))(t):=\cl(\ol{\pi}(t))$ for $t \in [0,1]$ 
(for $\ol{\pi}$, see \eqref{eq:olpi}). 
As explained in \cite[Sect.~6.2]{NS16}, 
the set $\bigl\{\cl(\pi) \mid \pi \in \SLS(\lambda)\bigr\}$ 
is identical to the set $\BB(\lambda)_{\cl}$ of all 
``projected (by $\cl$)'' LS paths of shape $\lambda$, 
introduced in \cite[(3.4)]{NS05} and \cite[page 117]{NS06} 
(see also \cite[Sect.~2.2]{LNSSS2}). 
Also, by \cite[Theorem~3.3]{LNSSS2}, $\BB(\lambda)_{\cl}$ is identical to 
the set $\QLS(\lambda)$ of all quantum LS paths of shape $\lambda$, 
introduced in \cite[Sect.~3.2]{LNSSS2}. 
We can endow the set $\BB(\lambda)_{\cl} = \QLS(\lambda)$ 
with a crystal structure with weights in $\cl(P_{\af})$ 
in such a way that
%
%
\begin{equation} \label{eq:cl}
\begin{cases}
e_{i}\cl(\pi) = \cl(e_{i}\pi), \quad 
f_{i}\cl(\pi) = \cl(f_{i}\pi) 
  & \text{for $\pi \in \SLS(\lambda)$ and $i \in I_{\af}$}, \\[1.5mm]
\wt(\cl(\pi))=\cl(\wt(\pi))
  & \text{for $\pi \in \SLS(\lambda)$}, \\[1.5mm]
\ve_{i}(\cl(\pi))=\ve_{i}(\pi), \quad 
\vp_{i}(\cl(\pi))=\vp_{i}(\pi)
  & \text{for $\pi \in \SLS(\lambda)$ and $i \in I_{\af}$}, 
\end{cases}
\end{equation}
where we understand that $\cl(\bzero) = \bzero$. 
The next theorem follows from 
\cite[Proposition~3.23 and Theorem~3.2]{NS05}. 
%
%
\begin{thm} \label{thm:QLS}
\mbox{}
\begin{enu}
\item For every $\lambda \in P^{+}$, 
the crystal $\QLS(\lambda)=\BB(\lambda)_{\cl}$ is connected. 

\item For every $\lambda,\,\mu \in P^{+}$, there exists an isomorphism 
$\QLS(\lambda) \otimes \QLS(\mu) \cong \QLS(\lambda+\mu)$ of crystals. 
In particular, $\QLS(\lambda) \otimes \QLS(\mu)$ is connected. 
\end{enu}
\end{thm}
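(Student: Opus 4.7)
The plan is to derive both assertions from the structural results on $\SLS(\lambda)$ established earlier, exploiting the fact from \eqref{eq:cl} that the $\cl$-projection commutes with the Kashiwara root operators and therefore descends connectedness and crystal morphisms from $\SLS$ to $\QLS$.

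For Part (1), the key observation is that all the distinguished elements $\pi_{\brho}$, $\brho \in \Par(\lambda)$, collapse to the single path $\cl(\pi_{\lambda})$ under $\cl$. Indeed, if $\pi_{\brho}$ is of the form \eqref{eq:ext}, its direction on the $u$-th segment is $\PJ(t_{\xi_{u}})\lambda$; since the parabolic stabiliser $(\WJ)_{\af}$ fixes $\lambda$ (because $\WJ$ acts trivially on $\lambda$ by the definition of $\J$, and $t_{\xi}\lambda = \lambda - \pair{\lambda}{\xi}\delta$ with $\pair{\lambda}{\xi} = 0$ for $\xi \in \QJv$), this direction equals $t_{\xi_{u}}\lambda = \lambda - \pair{\lambda}{\xi_{u}}\delta$ and hence $\cl$-projects to $\cl(\lambda)$. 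Therefore $\cl(\pi_{\brho}) = \cl(\pi_{\lambda})$, the constant-direction path $t \mapsto t\,\cl(\lambda)$. Combined with the decomposition \eqref{eq:isom}, which exhibits $\SLS(\lambda)$ as a disjoint union of crystals each generated under root operators by some $\pi_{\brho}$, this shows $\QLS(\lambda) = \cl(\SLS(\lambda))$ is generated under root operators by the single element $\cl(\pi_{\lambda})$, hence connected.

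For Part (2), I would use Theorem~\ref{thm:SMT}, which supplies a crystal isomorphism $\SLS(\lambda+\mu) \cong \SM(\lambda+\mu) \subset \SLS(\lambda) \otimes \SLS(\mu)$. Applying $\cl$ term-by-term to the target yields a candidate crystal morphism $\QLS(\lambda+\mu) \to \QLS(\lambda) \otimes \QLS(\mu)$, with compatibility with the root operators again ensured by \eqref{eq:cl}. Surjectivity reduces to showing that any pair $(\cl(\pi), \cl(\eta)) \in \QLS(\lambda) \times \QLS(\mu)$ admits a representative $(\pi', \eta') \in \SM(\lambda+\mu)$; since the translations $T_{\xi}$ preserve $\cl$-images by \eqref{eq:Tx} while replacing the final/initial directions $\kappa(\pi), \iota(\eta)$ by $\PJ(\kappa(\pi)t_{\xi}), \PK(\iota(\eta)t_{\xi})$, one achieves this by taking $\xi$ sufficiently antidominant and invoking the Deodhar-type lifting of Proposition~\ref{prop:Deo} to enforce condition \eqref{eq:SM}. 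Granted surjectivity, Part~(1) together with an analogous connectedness statement for the target tensor product (obtained by concatenating constant-direction paths) forces the morphism to be a crystal isomorphism, after one checks that the distinguished elements $\cl(\pi_{\lambda+\mu})$ and $\cl(\pi_{\lambda}) \otimes \cl(\pi_{\mu})$ correspond.

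The main obstacle will be rigorously carrying out the lifting step in Part (2): producing a single translation that simultaneously aligns $\kappa(\pi)$ and $\iota(\eta)$ so as to satisfy the SP condition \eqref{eq:SM} requires a careful semi-infinite Deodhar-type analysis controlled by the parabolic semi-infinite Bruhat order on $W_{\af}$. A secondary subtlety is verifying injectivity, that is, ensuring that distinct elements of $\QLS(\lambda+\mu)$ are not identified with the same tensor in $\QLS(\lambda) \otimes \QLS(\mu)$; the most efficient route is likely a graded-character comparison via Theorem~\ref{thm:Dem}, reducing the issue to a bijection between two finite-dimensional weight spaces of matching dimension.
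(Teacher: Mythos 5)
The first thing to say is that the paper does not prove this theorem at all: it is introduced with ``The next theorem follows from \cite[Proposition~3.23 and Theorem~3.2]{NS05}'', i.e.\ it is an external input from the earlier Naito--Sagaki theory of projected level-zero LS paths, logically prior to the semi-infinite machinery of this paper. So any internal derivation must be checked against the paper's dependency graph, and that is where your proposal runs into trouble.

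Your part (1) does survive that check. The computation $\cl(\pi_{\brho})=\cl(\pi_{\lambda})$ is correct: by Lemma~\ref{lem:PiJ}\,(2) one has $\PJ(t_{\xi_{u}})=z_{\xi_u}t_{\xi_u+\phi_{\J}(\xi_u)}$ with $z_{\xi_u}\in\WJ$ and $\phi_{\J}(\xi_u)\in\QJv$, so $\PJ(t_{\xi_{u}})\lambda=\lambda-\pair{\lambda}{\xi_{u}}\delta$, which projects to $\cl(\lambda)$. Combined with the decomposition \eqref{eq:isom} (imported from \cite{INS} and independent of Theorem~\ref{thm:QLS}) and the commutation \eqref{eq:cl}, this gives connectedness of $\QLS(\lambda)$.

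Your part (2), however, is circular within this paper. You invoke Theorem~\ref{thm:SMT}, but Theorem~\ref{thm:SMT} is proved in Section~\ref{sec:prf-SMT} via Lemma~\ref{lem:conn}, whose opening step reads ``By Theorem~\ref{thm:QLS}\,(2), there exists a monomial $X$ in root operators on $\QLS(\lambda)\otimes\QLS(\mu)$ such that $X(\cl(\pi)\otimes\cl(\eta))=\cl(\pi_{\lambda})\otimes\cl(\pi_{\mu})$'' --- exactly the connectedness of $\QLS(\lambda)\otimes\QLS(\mu)$ that you are trying to establish. The translation-and-Deodhar-lift analysis you flag as the ``main obstacle'' is likewise the content of Lemma~\ref{lem:conn} and Proposition~\ref{prop:tx}, both downstream of Theorem~\ref{thm:QLS}. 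There is also a secondary gap: your candidate map $\QLS(\lambda+\mu)\to\QLS(\lambda)\otimes\QLS(\mu)$ is defined on representatives in $\SLS(\lambda+\mu)$, and you never verify that it is well defined on $\cl$-classes (you only discuss injectivity and surjectivity). The correct route is the one the paper takes: quote the concatenation isomorphism and connectedness for $\BB(\lambda)_{\cl}$ from \cite{NS05}, which are proved there by direct path-combinatorial arguments independent of anything in this paper.
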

%
%
\begin{lem} \label{lem:conn}
Let $\lambda,\,\mu \in P^{+}$, and set 
$\J:=\bigl\{ i \in I \mid \pair{\lambda}{\alpha_{i}^{\vee}}=0\bigr\}$, 
$\K:=\bigl\{ i \in I \mid \pair{\mu}{\alpha_{i}^{\vee}}=0\bigr\}$. 
Each connected component of $\SLS(\lambda) \otimes \SLS(\mu)$ contains 
an element of the form{\rm:} $(t_{\xi} \cdot \pi_{\brho}) \otimes \pi_{\bchi}$ 
for some $\xi \in Q^{\vee}_{I \setminus (\J \cup \K)}$, 
$\brho \in \Par(\lambda)$, and $\bchi \in \Par(\mu)$. 
\end{lem}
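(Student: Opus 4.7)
The plan is a three-step reduction: crystal decomposition, classical reduction via the projection $\cl$, and Weyl-group normalization. By \eqref{eq:isom}, we have $\SLS(\lambda) \otimes \SLS(\mu) = \bigsqcup_{\brho \in \Par(\lambda),\,\bchi \in \Par(\mu)} \SLS_{\brho}(\lambda) \otimes \SLS_{\bchi}(\mu)$, and each summand is stable under the Kashiwara operators $e_i, f_i$; hence any connected component $C \subseteq \SLS(\lambda) \otimes \SLS(\mu)$ lies entirely within a single piece $\SLS_{\brho}(\lambda) \otimes \SLS_{\bchi}(\mu)$ for a unique pair $(\brho, \bchi)$.

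Next, I would invoke Theorem~\ref{thm:QLS}\,(2), which asserts that $\QLS(\lambda) \otimes \QLS(\mu) \cong \QLS(\lambda+\mu)$ is connected. Starting from any $\pi \otimes \eta \in C$, we may therefore find a composition $F$ of root operators taking $\cl(\pi) \otimes \cl(\eta)$ to $\cl(\pi_{\brho}) \otimes \cl(\pi_{\bchi})$ (both of which coincide with the constant-path elements at $\lambda$ and $\mu$, respectively). Since $\cl$ commutes with $e_i, f_i$ by \eqref{eq:cl}, the same $F$ sends $\pi \otimes \eta$ to a nonzero element $\pi' \otimes \eta' \in C$ satisfying $\cl(\pi') = \cl(\pi_{\brho})$ and $\cl(\eta') = \cl(\pi_{\bchi})$.

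The critical technical step is the fiber identification: I claim that every $\pi' \in \SLS_{\brho}(\lambda)$ with $\cl(\pi') = \cl(\pi_{\brho})$ has the form $\pi' = T_{\xi} \pi_{\brho} = t_{\xi} \cdot \pi_{\brho}$ for some $\xi \in Q^{\vee}_{I \setminus J}$, and analogously on the $\mu$-side. Granted this, $\pi' \otimes \eta' = (t_{\xi} \cdot \pi_{\brho}) \otimes (t_{\zeta} \cdot \pi_{\bchi})$ for some $\xi \in Q^{\vee}_{I \setminus J}$, $\zeta \in Q^{\vee}_{I \setminus K}$. For any $\omega \in Q^{\vee}_{K}$, Lemma~\ref{lem:Weyl}\,(2) applied with $x = t_{\omega - \zeta}$ yields
\begin{equation*}
t_{\omega - \zeta} \cdot \bigl((t_{\xi} \cdot \pi_{\brho}) \otimes (t_{\zeta} \cdot \pi_{\bchi})\bigr) = (t_{\xi + \omega - \zeta} \cdot \pi_{\brho}) \otimes (t_{\omega} \cdot \pi_{\bchi}) = (t_{\xi + \omega - \zeta} \cdot \pi_{\brho}) \otimes \pi_{\bchi},
\end{equation*}
the last equality following from $\PK(t_{\omega}) = e$ and $\PK(t_{\omega + \zeta_i}) = \PK(t_{\zeta_i})$ valid whenever $\omega \in Q^{\vee}_{K}$ (Remark~\ref{rem:Tx}). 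Since the $W_{\af}$-action preserves connected components, the right-hand side lies in $C$; moreover, $t_{\xi + \omega - \zeta} \cdot \pi_{\brho} = T_{[\xi + \omega - \zeta]^{J}} \pi_{\brho}$ depends only on $[\xi + \omega - \zeta]^{J}$. Using the decomposition $Q^{\vee}_{I \setminus J} = Q^{\vee}_{K \setminus J} \oplus Q^{\vee}_{I \setminus (J \cup K)}$ and the fact that $\omega \in Q^{\vee}_{K \setminus J} \subset Q^{\vee}_{K}$ projects onto an arbitrary element of $Q^{\vee}_{K \setminus J}$ under $[\,\cdot\,]^{J}$, we may choose $\omega$ to cancel the $Q^{\vee}_{K \setminus J}$-component of $[\xi - \zeta]^{J}$, placing $[\xi + \omega - \zeta]^{J}$ in $Q^{\vee}_{I \setminus (J \cup K)}$.

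The main obstacle is the fiber identification claim above. Proving $\cl^{-1}(\cl(\pi_{\brho})) \cap \SLS_{\brho}(\lambda) = \{T_{\xi} \pi_{\brho} : \xi \in Q^{\vee}_{I \setminus J}\}$ requires using the structure of the extremal weight module $V(\lambda)$—specifically, that the intersection of the $(\lambda + \BZ\delta)$-weight subspace with the integrable summand generated by $v_{\lambda}$ is spanned by the translates $\{t_{\xi} v_{\lambda}\}_{\xi \in Q^{\vee}_{I \setminus J}}$—and then transferring this fact to the crystal $\CB_{\brho}(\lambda)$ via Theorem~\ref{thm:isom}. This is most naturally shown by direct analysis of semi-infinite LS paths whose classical projection is a constant path, arguing that every segment must be a straight line at weight $\lambda + k\delta$ for some integer $k$ determined by the translation part of the corresponding direction in $(\WJu)_{\af}$.
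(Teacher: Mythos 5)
Your proposal follows essentially the same route as the paper's proof: connectedness of $\QLS(\lambda)\otimes\QLS(\mu)$ together with the commutation of $\cl$ with the root operators brings an arbitrary element of a component into the fiber over $\cl(\pi_{\lambda})\otimes\cl(\pi_{\mu})$, and the $W_{\af}$-action (defined through root operators, hence component-preserving) is then used to normalize the translation parts into $Q^{\vee}_{I\setminus(\J\cup\K)}$; your bookkeeping with $[\,\cdot\,]^{\J}$ and $[\,\cdot\,]_{\K}$ matches the paper's choice of $\gamma=[\xi_{2}]_{\K}$. The one step you flag as the main obstacle --- that the fiber of $\cl$ over the constant path consists exactly of the elements $t_{\xi}\cdot\pi_{\brho}$ --- is not reproved in the paper either: the authors simply cite \cite[Lemma~6.2.2]{NS16} for it, so your argument is complete modulo that known result, and the path-analysis sketch you give for it is the right idea.
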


\begin{proof}
Let $\pi \in \SLS(\lambda)$ and $\eta \in \SLS(\mu)$. 
By Theorem~\ref{thm:QLS}\,(2), 
there exists a monomial $X$ in root operators on 
$\QLS(\lambda) \otimes \QLS(\mu)$ such that
$X \bigl( \cl(\pi) \otimes \cl(\eta) \bigr) = 
 \cl(\pi_{\lambda}) \otimes \cl(\pi_{\mu})$; recall that 
$\pi_{\lambda}=(e\,;\,0,\,1) \in \SLS(\lambda)$ and 
$\pi_{\mu}=(e\,;\,0,\,1) \in \SLS(\mu)$. 
It follows from \eqref{eq:cl} and the tensor product rule for crystals that 
$X(\pi \otimes \eta)$ is of the form $\pi_{1} \otimes \eta_{1}$ 
for some $\pi_{1} \in \SLS(\lambda)$ such that $\cl(\pi_{1})=\cl(\pi_{\lambda})$ and 
$\eta_{1} \in \SLS(\mu)$ such that $\cl(\eta_{1})=\cl(\pi_{\mu})$. 
Here, we see from \cite[Lemma~6.2.2]{NS16} that 
\begin{equation}
\begin{split}
& \bigl\{ \pi \in \SLS(\lambda) \mid \cl(\pi)=\cl(\pi_{\lambda})\bigr\} = 
  \bigl\{ t_{\xi} \cdot \pi_{\brho} \mid \brho \in \Par(\lambda),\,\xi \in Q^{\vee}\bigr\}, \\
& \bigl\{ \eta \in \SLS(\mu) \mid \cl(\eta)=\cl(\pi_{\mu})\bigr\} = 
  \bigl\{ t_{\zeta} \cdot \pi_{\bchi} \mid \bchi \in \Par(\mu),\,\zeta \in Q^{\vee}\bigr\}. 
\end{split}
\end{equation}
Therefore, $X(\pi \otimes \eta) = 
\bigl(t_{\xi_1} \cdot \pi_{\brho}\bigr) \otimes \bigl(t_{\zeta_1} \cdot \pi_{\bchi}\bigr)$ 
for some $\brho \in \Par(\lambda)$, $\xi_1 \in Q^{\vee}$ and 
$\bchi \in \Par(\mu)$, $\zeta_1 \in Q^{\vee}$. Also, by \eqref{eq:Weyl}, we have
$t_{-\zeta_1} \cdot \bigl( (t_{\xi_1} \cdot \pi_{\brho}) \otimes 
(t_{\zeta_1} \cdot \pi_{\bchi}) \bigr) = 
(t_{\xi_1-\zeta_1} \cdot \pi_{\brho}) \otimes \pi_{\bchi}$; 
we deduce from \eqref{eq:tx0} and \eqref{eq:PiJ1} that 
$t_{\xi_1-\zeta_1} \cdot \pi_{\brho} = t_{\xi_2} \cdot \pi_{\brho}$, 
with $\xi_2=[\xi_1-\zeta_1]^{\J}$, 
where $[\,\cdot\,]^{\J}:Q^{\vee} \twoheadrightarrow Q^{\vee}_{I \setminus \J}$ is 
the projection in \eqref{eq:prj}. 
We set $\gamma:=[\xi_{2}]_{\K} \in Q^{\vee}_{K}$, where 
$[\,\cdot\,]_{\K}:Q^{\vee} \twoheadrightarrow Q^{\vee}_{\K}$ is 
the projection defined as in \eqref{eq:prj}; 
we deduce from \eqref{eq:tx0} and \eqref{eq:PiJ1} that
$t_{-\gamma} \cdot \pi_{\bchi} = \pi_{\bchi}$.
In addition, we set $\xi:=\xi_{2}-\gamma$; 
notice that $\xi \in Q^{\vee}_{I \setminus (\J \cup \K)}$. 
Summarizing the above, we have
\begin{align*}
(t_{-\gamma}t_{-\zeta_1}) \cdot X(\pi \otimes \eta) & = 
(t_{-\gamma}t_{-\zeta_1}) \cdot 
  \bigl( (t_{\xi_1} \cdot \pi_{\brho}) \otimes 
         (t_{\zeta_1} \cdot \pi_{\bchi}) \bigr) = 
t_{-\gamma} \cdot 
  \bigl( (t_{\xi_1-\zeta_1} \cdot \pi_{\brho}) \otimes \pi_{\bchi} \bigr) \\
& = t_{-\gamma} \cdot 
  \bigl( (t_{\xi_2} \cdot \pi_{\brho}) \otimes \pi_{\bchi} \bigr) 
  = (t_{\xi_2-\gamma} \cdot \pi_{\brho}) \otimes (t_{-\gamma} \cdot \pi_{\bchi}) \\
& = (t_{\xi} \cdot \pi_{\brho}) \otimes \pi_{\bchi}. 
\end{align*}
Because the action of the affine Weyl group $W_{\af}$ 
on $\SLS(\lambda) \otimes \SLS(\mu)$ is defined 
by means of root operators (see \eqref{eq:W-act}), 
we conclude that $\pi \otimes \eta$ and 
$(t_{\xi} \cdot \pi_{\brho}) \otimes \pi_{\bchi}$ above are 
in the same connected component of $\SLS(\lambda) \otimes \SLS(\mu)$. 
This proves the lemma. 
\end{proof}

%
\subsection{Proof of Theorem~\ref{thm:SMT}.}
\label{subsec:prf-SMT}

Recall that $\lambda,\,\mu \in P^{+}$, and 
$\J=\bigl\{ i \in I \mid \pair{\lambda}{\alpha_{i}^{\vee}}=0\bigr\}$, 
$\K=\bigl\{ i \in I \mid \pair{\mu}{\alpha_{i}^{\vee}}=0\bigr\}$. 

%
\begin{prop} \label{prop:SMT1}
The set $\SM(\lambda+\mu) \sqcup \{\bzero\}$ is stable under the action of 
the root operators $e_{i}$, $f_{i}$, $i \in I_{\af}$, 
on $\SLS(\lambda) \otimes \SLS(\mu)$.
\end{prop}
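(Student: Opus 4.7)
The plan is to analyze the action of each root operator $e_i$ or $f_i$ factor by factor using the tensor product rule for crystals. Given $\pi \otimes \eta \in \SM(\lambda+\mu)$, I fix witnesses $x \sige y$ in $W_{\af}$ with $\PJ(x) = \kappa(\pi)$ and $\PK(y) = \iota(\eta)$. By the standard rule, for any $i \in I_{\af}$ the operators $e_i$ and $f_i$ act on exactly one of the two tensorands (or return $\bzero$), with the split determined by comparing $\vp_i(\pi)$ and $\ve_i(\eta)$. The goal is, in each case producing a nonzero result, to construct a new pair of witnesses $x' \sige y'$ realising condition \eqref{eq:SM} for the transformed element.

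First I would dispose of the easy subcases: if the root operator acts on $\pi$ but leaves $\kappa(\pi)$ unchanged, or if it acts on $\eta$ but leaves $\iota(\eta)$ unchanged, then the original pair $(x,y)$ continues to serve as a witness. Using the concrete description of the Kashiwara operators on $\SLS(\lambda)$ recalled in Appendix~\ref{sec:crystal}, the only way an endpoint can genuinely move is via a single semi-infinite-Bruhat-comparable transition of the form $\kappa(\pi) \mapsto \PJ(s_i \kappa(\pi))$ (when $f_i$ or $e_i$ reflects the final segment of $\pi$) or $\iota(\eta) \mapsto \PK(s_i \iota(\eta))$ (when it reflects the initial segment of $\eta$). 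Thus there are only a handful of substantive cases to treat.

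To handle these cases I would invoke Proposition~\ref{prop:Deo} together with the lifting lemmas collected in Appendix~\ref{sec:basic}. For example, if $f_i$ acts on the second factor (so $\ve_i(\eta) > \vp_i(\pi)$) and pushes $\iota(\eta)$ to a new coset in $(\WKu)_{\af}$, I would apply the Deodhar-type construction to $x$ and the new final direction to obtain a canonical lift $y' = \min \Lif{x}{\iota(f_i\eta)}$ satisfying $x \sige y'$; the existence of such a lift is guaranteed once one checks $\iota(f_i\eta) \sige \PK(x)$, which follows from the tensor-product inequality $\ve_i(\eta) > \vp_i(\pi)$ combined with the edge relation inherent in the operator. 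Dually, when $e_i$ or $f_i$ acting on $\pi$ changes $\kappa(\pi)$, I would adjust $x$ by the same Deodhar-type lift so that $x' \sige y$ persists, using the monotonicity properties of these lifts in the semi-infinite Bruhat order.

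The hard part will be verifying, in each of the four operator/factor subcases, that the direction of the reflection at the interface of $\pi$ and $\eta$ is compatible with the existence of the requisite Deodhar lift. Concretely, one needs to match the sign information encoded in $\pair{\kappa(\pi)\lambda}{\alpha_i^{\vee}}$ and $\pair{\iota(\eta)\mu}{\alpha_i^{\vee}}$ with the semi-infinite Bruhat relations satisfied by the old and new endpoints, so that the chain $x \sige y$ extends to the modified chain without breaking. Once this compatibility is recorded as an auxiliary lemma—essentially a refined form of the lifting statement in Proposition~\ref{prop:Deo}—the remainder of the argument is a routine verification case by case, and yields both the stability of $\SM(\lambda+\mu) \sqcup \{\bzero\}$ under all $e_i$, $f_i$.
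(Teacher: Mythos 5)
Your high-level structure is correct and matches the paper: reduce to the case where the relevant endpoint genuinely moves via the tensor product rule, and in that case adjust the witnesses $x \sige y$ of condition \eqref{eq:SM}. However, the mechanism you propose for the hard cases is wrong, and it is not the one the paper uses.

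The immediate problem is that you invoke $\min\Lif{x}{\iota(f_i\eta)}$ to produce the new witness on the $\eta$-side, but by its definition $\Lif{x}{y} = \bigl\{y' \in \Lift(y) \mid y' \sige x\bigr\}$ contains only lifts that lie \emph{above} $x$ in the semi-infinite order. What you actually need here is a lift of $\iota(f_i\eta)$ that lies \emph{below} $x$, since condition \eqref{eq:SM} demands $x \sige y'$. So the minimal element of $\Lif{x}{\cdot}$ is on the wrong side of $x$ entirely, and writing ``$y' = \min\Lif{x}{\iota(f_i\eta)}$ satisfying $x \sige y'$'' is internally contradictory. The auxiliary hypothesis you claim to need (``$\iota(f_i\eta) \sige \PK(x)$'') is also the wrong one for a lift below $x$, and it is not forced by the tensor-product inequality in any case: when $f_i$ moves $\iota(\eta)$, one has $\pair{\iota(\eta)\mu}{\alpha_i^{\vee}} > 0$, so $s_i\iota(\eta) \sig \iota(\eta) = \PK(y) \sile \PK(x)$, and the new initial direction could land above, below, or incomparable to $\PK(x)$. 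Proposition~\ref{prop:Deo} is simply not the right tool for this step; the paper reserves the Deodhar-lift construction for the defining-chain results (Propositions~\ref{prop:DC}, \ref{prop:DC2} and Theorem~\ref{thm:Dem}).

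The paper's actual argument is considerably more elementary. Fixing witnesses $x = \kappa(\pi)x_1$, $y = \iota(\eta)y_1$ with $x_1 \in (\WJ)_{\af}$, $y_1 \in (\WK)_{\af}$, one checks the sign of $x^{-1}\alpha_i$ (resp.\ $y^{-1}\alpha_i$) modulo $\delta$ using the sign information $\pair{\kappa(\pi)\lambda}{\alpha_i^{\vee}}$ and $\pair{\iota(\eta)\mu}{\alpha_i^{\vee}}$ coming from the operator's action on the endpoints, and then replaces $x$ and/or $y$ by $s_i x$ and/or $s_i y$. Lemma~\ref{lem:si} and Remark~\ref{rem:si} control whether $\PJ(s_i x) = \kappa(e_i\pi)$ and $\PK(s_i y) = \iota(e_i\eta)$, and the diamond-type Lemma~\ref{lem:dmd} (applied with $J=\emptyset$) propagates the semi-infinite Bruhat comparison through the reflections. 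No lifting lemma beyond these is required. If you rework your ``hard cases'' along these lines, tracking the sign cases carefully (the $f_i$ case is genuinely not mirror to $e_i$: when $\iota(f_i\eta) = s_i\iota(\eta)$ one has $s_i y \sig y$, so you must also examine whether to reflect $x$), you will recover the paper's proof.
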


\begin{proof}
We give a proof of the assertion only for $e_{i}$, $i \in I_{\af}$; 
the proof for $f_{i}$, $i \in I_{\af}$, is similar. 
Let $\pi \otimes \eta \in \SM(\lambda+\mu)$, and $i \in I_{\af}$. 
We may assume that $e_{i}(\pi \otimes \eta) \ne \bzero$. Then 
it follows from the tensor product rule for crystals that
\begin{equation*}
e_{i}(\pi \otimes \eta) = 
  \begin{cases}
  (e_{i}\pi) \otimes \eta & \text{if $\vp_{i}(\pi) \ge \ve_{i}(\eta)$}, \\[1.5mm]
  \pi \otimes (e_{i}\eta) & \text{if $\vp_{i}(\pi) < \ve_{i}(\eta)$};
  \end{cases}
\end{equation*}
recall from Remark~\ref{rem:SLS} that 
%
%
\begin{equation} \label{eq:vevp}
\ve_{i}(\eta) = - m^{\eta}_{i} \quad \text{and} \quad 
\vp_{i}(\pi) = H^{\pi}_{i}(1)-m^{\pi}_{i}.
\end{equation}
Let $x,y \in W_{\af}$ be such that 
$x \sige y$ and $\PJ(x) = \kappa(\pi)$, 
$\PK(y) = \iota(\eta)$ (see \eqref{eq:SM}); 
we write $x$ and $y$ as: 
\begin{equation} \label{eq:xy}
\begin{cases}
x=\kappa(\pi)x_1 & \text{with $x_1 \in (\WJ)_{\af}$}, \\[1mm]
y=\iota(\eta)y_1 & \text{with $y_1 \in (\WK)_{\af}$}.
\end{cases}
\end{equation}

\paragraph{Case 1.}
%
Assume that $\vp_{i}(\pi) \ge \ve_{i}(\eta)$, 
i.e., $e_{i}(\pi \otimes \eta) = (e_{i}\pi) \otimes \eta$. 
Note that $\kappa(e_{i}\pi)$ is equal either to $\kappa(\pi)$ or to $s_{i}\kappa(\pi)$ 
by the definition of the root operator $e_{i}$. 
If $\kappa(e_{i}\pi)=\kappa(\pi)$, then there is nothing to prove. 
Hence we may assume that $\kappa(e_{i}\pi) = s_{i} \kappa(\pi)$. 
Then we deduce from the definition of the root operator $e_{i}$ that 
the point $t_{1} = \min \bigl\{ t \in [0,1] \mid H^{\pi}_{i}(t)=m^{\pi}_{i} \bigr\}$ is 
equal to $1$, and hence
\begin{equation} \label{eq:case1a}
H^{\pi}_{i}(1) = m^{\pi}_{i} \quad \text{and} \quad
\pair{\kappa(\pi)\lambda}{\alpha_{i}^{\vee}} < 0.
\end{equation}
By \eqref{eq:vevp}, the equality in \eqref{eq:case1a}, and our assumption that 
$\vp_{i}(\pi) \ge \ve_{i}(\eta)$, we see that $m^{\eta}_{i} \ge 0$, and hence 
$m^{\eta}_{i}=0$; in particular, we obtain $\pair{\iota(\eta)\mu}{\alpha_{i}^{\vee}} \ge 0$.
In addition, it follows from \eqref{eq:case1a} that $\kappa(\pi)^{-1}\alpha_{i} \in 
- \bigl(\Delta^{+} \setminus \DeJ^{+}\bigr) + \BZ \delta$. 
Since $x_{1} \in (\WJ)_{\af}$ by \eqref{eq:xy}, we have 
%
%
\begin{equation} \label{eq:x-}
x^{-1}\alpha_{i}=
x_1^{-1}\kappa(\pi)^{-1}\alpha_{i} \in 
- \bigl(\Delta^{+} \setminus \DeJ^{+}\bigr) + \BZ \delta \subset -\Delta^{+} + \BZ \delta.
\end{equation}
Also, since $s_{i}\kappa(\pi) = \kappa(e_{i}\pi) \in (\WJu)_{\af}$ 
and $x_{1} \in (\WJ)_{\af}$, we have 
\begin{equation*}
\PJ(s_{i}x) = 
\PJ(s_{i}\kappa(\pi)x_{1}) = 
s_{i}\kappa(\pi) = \kappa(e_{i}\pi).
\end{equation*}
If $y^{-1}\alpha_{i} \in \Delta^{+} + \BZ \delta$, then
we see from Lemma~\ref{lem:dmd}\,(2) (applied to the case $J=\emptyset$) 
and \eqref{eq:x-} that $s_{i}x \sige y$ in $W_{\af}$. 
Therefore, $s_{i}x,\,y \in W_{\af}$ satisfy 
condition \eqref{eq:SM} for $e_{i}(\pi \otimes \eta) = (e_{i}\pi) \otimes \eta$. 
If $y^{-1}\alpha_{i} \in -\Delta^{+} + \BZ \delta$, then we see 
from Lemma~\ref{lem:dmd}\,(3) (applied to the case $J=\emptyset$) 
and \eqref{eq:x-} that $s_{i}x \sige s_{i}y$ in $W_{\af}$. 
We now claim that $\PK(s_{i}y)=\iota(\eta)$. 
Indeed, since $\pair{\iota(\eta)\mu}{\alpha_{i}^{\vee}} \ge 0$ as seen above, 
we have $\iota(\eta)^{-1}\alpha_{i} \in 
\bigl(\Delta^{+} \sqcup (-\DeK^{+})\bigr)+\BZ\delta$. 
In addition, since $y_{1} \in (\WK)_{\af}$, we deduce that 
$y^{-1}\alpha_{i} = y_{1}^{-1}\iota(\eta)^{-1}\alpha_{i}$ is contained in 
$\bigl(\Delta^{+} \sqcup (-\DeK^{+})\bigr)+\BZ\delta$. 
However, since $y^{-1}\alpha_{i} \in -\Delta^{+} + \BZ \delta$ by our assumption, 
we have $y^{-1}\alpha_{i} \in -\Delta_{K}^{+}+\BZ\delta$, 
which implies that $s_{y^{-1}\alpha_{i}} \in (\WK)_{\af}$. 
Therefore, we obtain
$\PK(s_{i}y) = 
\PK(ys_{y^{-1}\alpha_{i}}) = 
\PK(y) = \iota(\eta)$,
as desired. Thus, $s_{i}x,\,s_{i}y \in W_{\af}$ satisfy condition \eqref{eq:SM} 
for $e_{i}(\pi \otimes \eta) = (e_{i}\pi) \otimes \eta$. 

\paragraph{Case 2.}
%
Assume that $\vp_{i}(\pi) < \ve_{i}(\eta)$, i.e., 
$e_{i}(\pi \otimes \eta) = \pi \otimes (e_{i}\eta)$.
If $\iota(e_{i}\eta)=\iota(\eta)$, then there is nothing to prove. 
Hence we may assume that $\iota(e_{i}\eta) = s_{i} \iota(\eta)$. 
Then we deduce from the definition of the root operator $e_{i}$ that 
$\pair{\iota(\eta)\mu}{\alpha_{i}^{\vee}} < 0$; 
observe that with notation in \eqref{eq:t-e} and \eqref{eq:epi}, 
$t_{0}=0$, and $H^{\eta}_{i}(t)$ is strictly decreasing on 
$[t_{0},\,t_{1}] = [0,\,t_{1}]$. 
Thus we obtain $\iota(\eta)^{-1}\alpha_{i} \in 
- \bigl(\Delta^{+} \setminus \DeK^{+}\bigr) + \BZ \delta$. 
In addition, since $y_{1} \in (\WK)_{\af}$ (see \eqref{eq:xy}), 
we deduce that
\begin{equation*}
y^{-1}\alpha_{i} = y_1^{-1}\iota(\eta)^{-1}\alpha_{i} \in 
- \bigl(\Delta^{+} \setminus \DeK^{+}\bigr) + \BZ \delta \subset 
-\Delta^{+} + \BZ \delta, 
\end{equation*}
which implies that $y \sig s_{i}y$ by Lemma~\ref{lem:si}. 
Since $x \sige y$, we get $x \sig s_{i}y$. 
Also, since $s_{i}\iota(\eta) = \iota(e_{i}\eta) \in (\WKu)_{\af}$ 
and $y_{1} \in (\WK)_{\af}$, we have
\begin{equation*}
\PK(s_{i}y) = 
\PK(s_{i}\iota(\eta)y_{1}) = 
s_{i}\iota(\eta) = \iota(e_{i}\eta).
\end{equation*}
Thus, $x,\,s_{i}y \in W_{\af}$ satisfy 
condition \eqref{eq:SM} for $e_{i}(\pi \otimes \eta) = \pi \otimes (e_{i}\eta)$. 

This completes the proof of the proposition. 
\end{proof}

By this proposition, 
$\SM(\lambda+\mu)$ is a subcrystal of $\SLS(\lambda) \otimes \SLS(\mu)$. 
Hence our remaining task is 
to prove that $\SM(\lambda+\mu) \cong \SLS(\lambda+\mu)$ as crystals. 
Recall from Lemma~\ref{lem:conn} that 
each connected component of $\SLS(\lambda) \otimes \SLS(\mu)$ contains 
an element of the form: $(t_{\xi} \cdot \pi_{\brho}) \otimes \pi_{\bchi}$ 
for some $\xi \in Q^{\vee}_{I \setminus (\J \cup \K)}$, 
$\brho \in \Par(\lambda)$, and $\bchi \in \Par(\mu)$. 
%
%
\begin{prop} \label{prop:tx}
Let $\brho=(\rho^{(i)}) \in \Par(\lambda)$, 
$\bchi=(\chi^{(i)}) \in \Par(\mu)$, and 
$\xi=\sum c_{i}\alpha_{i}^{\vee} 
\in Q^{\vee}_{I \setminus (\J \cup \K)}$. 
Then, 
%
%
\begin{equation} \label{eq:tx1}
(t_{\xi} \cdot \pi_{\brho}) \otimes \pi_{\bchi} \in \SM(\lambda+\mu)
\end{equation}
if and only if
\begin{equation} \label{eq:tx2}
c_{i} \ge \chi^{(i)}_{1} \quad \text{\rm for all 
$i \in I \setminus (\J \cup \K)$}. 
\end{equation}
\end{prop}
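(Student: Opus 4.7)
The plan is to translate condition \eqref{eq:SM} for $(t_{\xi} \cdot \pi_{\brho}) \otimes \pi_{\bchi}$ into a comparison between two explicit translation elements after passing to a parabolic quotient of $W_{\af}$. First, by Lemma~\ref{lem:Weyl}\,(1) applied to the representative \eqref{eq:ext} of $\pi_{\brho}$ (whose final direction is $e$), one obtains $\kappa(t_{\xi} \cdot \pi_{\brho}) = \PJ(t_{\xi})$; by Remark~\ref{rem:ext}, $\iota(\pi_{\bchi}) = \PK(t_{\zeta_{1}})$ with $\zeta_{1} := \sum_{i \in I} \chi^{(i)}_{1} \alpha_{i}^{\vee}$ (note $\chi^{(i)}_{1} = 0$ for $i \in \K$). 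Hence \eqref{eq:SM} for the pair in question is equivalent to the existence of $x \in \Lift(\PJ(t_{\xi}))$ and $y \in \Lift(\PK(t_{\zeta_{1}}))$ such that $x \sige y$.

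Next, every element of $\Lift(\PJ(t_{\xi}))$ is of the form $t_{\xi} w$ with $w \in (\WJ)_{\af}$, and similarly every element of $\Lift(\PK(t_{\zeta_{1}}))$ is of the form $t_{\zeta_{1}} w'$ with $w' \in (\WK)_{\af}$. Both $(\WJ)_{\af}$ and $(\WK)_{\af}$ lie inside $(W_{\J \cup \K})_{\af}$, so applying the further parabolic projection $\Pi^{\J \cup \K} : W_{\af} \twoheadrightarrow (W^{\J \cup \K})_{\af}$ kills the ambiguity in both lifts. Using monotonicity of the semi-infinite Bruhat order under $\Pi^{\J \cup \K}$ (for the forward direction) and the minimal-lift construction of Proposition~\ref{prop:Deo} (for the reverse direction), the condition \eqref{eq:SM} reduces to
\[
\Pi^{\J \cup \K}(t_{\xi}) \sige \Pi^{\J \cup \K}\bigl(t_{[\zeta_{1}]^{\J \cup \K}}\bigr) \quad \text{in $(W^{\J \cup \K})_{\af}$},
\]
where $[\zeta_{1}]^{\J \cup \K} = \sum_{i \in I \setminus (\J \cup \K)} \chi^{(i)}_{1} \alpha_{i}^{\vee}$ because $\chi^{(i)}_{1}=0$ for $i \in \K$.

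I then plan to identify the comparison of two translation projections $\Pi^{\J \cup \K}(t_{\eta_{1}}) \sige \Pi^{\J \cup \K}(t_{\eta_{2}})$ with the dominance inequality $\eta_{1} - \eta_{2} \in Q^{\vee,+}_{I \setminus (\J \cup \K)}$; applied to $\eta_{1} = \xi$ and $\eta_{2} = [\zeta_{1}]^{\J \cup \K}$, this reads exactly $c_{i} \ge \chi^{(i)}_{1}$ for all $i \in I \setminus (\J \cup \K)$. The ``if'' direction is then exhibited concretely by constructing an increasing chain of edges in $\mathrm{BG}^{\si}\bigl((W^{\J \cup \K})_{\af}\bigr)$ from $\Pi^{\J \cup \K}(t_{[\zeta_{1}]^{\J \cup \K}})$ up to $\Pi^{\J \cup \K}(t_{\xi})$ whose labels are affine roots involving only $\alpha_{i}^{\vee}$ for $i \in I \setminus (\J \cup \K)$, each contributing $+1$ to $\sell$. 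The ``only if'' direction reads off the coordinate-wise inequalities directly from the existence of any such chain, since modifications within $(\WJ)_{\af}$ and $(\WK)_{\af}$ do not alter the coefficient of $\alpha_{i}^{\vee}$ for $i \in I \setminus (\J \cup \K)$ in the translation parts.

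The main obstacle is the last identification, namely that the semi-infinite Bruhat order on translation elements of $(W^{\J \cup \K})_{\af}$ coincides with the dominance order on $Q^{\vee}_{I \setminus (\J \cup \K)}$, combined with the correct application of the Deodhar-lift machinery to move freely between lifts in $W_{\af}$ and their images in the parabolic quotient. Both ingredients are provided by the results on the semi-infinite Bruhat order collected in the Appendix (in particular Proposition~\ref{prop:Deo}), but the simultaneous bookkeeping of $\J$, $\K$, and $\J \cup \K$ requires careful handling.
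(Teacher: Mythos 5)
Your ``only if'' direction is fine and is a genuine (slightly slicker) alternative to the paper's: instead of invoking Lemma~\ref{lem:SiB}\,(1) with $J = \emptyset$ and then projecting the resulting inequality $\xi+\gamma \ge \zeta_{1}+\gamma'$ down to $Q^{\vee}_{I\setminus(\J\cup\K)}$, you apply monotonicity of $\Pi^{\J\cup\K}$ (Lemma~\ref{lem:611}) to pass directly to the quotient, then Lemma~\ref{lem:SiB}\,(2) with $J = \J\cup\K$. Both are correct; yours saves a small amount of coordinate bookkeeping.

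The ``if'' direction, however, has a real gap. What you need to produce are $x,y \in W_{\af}$ with $\PJ(x) = \PJ(t_{\xi})$, $\PK(y) = \PK(t_{\zeta_1})$ and $x \sige y$ in $W_{\af}$. You claim this reduction from the quotient statement is ``provided by Proposition~\ref{prop:Deo},'' but the most natural reading — apply Proposition~\ref{prop:Deo} with parabolic $\J\cup\K$ to lift $\Pi^{\J\cup\K}(t_{\xi})$ above $t_{\zeta_1}$ — gives an $x$ with $\Pi^{\J\cup\K}(x) = \Pi^{\J\cup\K}(t_{\xi})$, which is \emph{not} the same as $\PJ(x) = \PJ(t_{\xi})$: the ambient $(W_{\J\cup\K})_{\af}$-factor in $x$ need not lie in $(\WJ)_{\af}$, so $\PJ(x)$ can differ from $\PJ(t_{\xi})$. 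Likewise, your proposed ``increasing chain of edges in $\SBK$ with parabolic $\J\cup\K$'' establishes the quotient inequality, but that is already the content of Lemma~\ref{lem:SiB}\,(2); it still does not produce the pair $(x,y)$ in $W_{\af}$. The paper sidesteps all of this: it just sets $\gamma := \sum_{i \in \J\setminus\K}\chi^{(i)}_{1}\alpha_{i}^{\vee} \in \QJv$, takes $x := t_{\xi+\gamma}$ and $y := t_{\zeta_1}$, checks $\xi+\gamma \ge \zeta_1$ coordinate-wise (using $\chi^{(i)}_1 = 0$ for $i \in \K$), and deduces $x \sige y$ directly from Lemma~\ref{lem:SiB}\,(2) with $J = \emptyset$ — Proposition~\ref{prop:Deo} is never used. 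You could rescue your outline by applying Proposition~\ref{prop:Deo} with $J = \J$ (not $\J\cup\K$), after first observing that \eqref{eq:tx2} is also equivalent to $\PJ(t_{\xi}) \sige \PJ(t_{\zeta_1})$ via Lemma~\ref{lem:SiB}\,(2) with $J = \J$, and then lifting $\PJ(t_{\xi})$ above $y = t_{\zeta_1}$. But at that point the detour through the $\J\cup\K$-quotient is unnecessary, and the explicit translation construction in the paper is both shorter and elementary.
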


\begin{proof}
We first show the ``only if'' part; assume that \eqref{eq:tx1} holds. 
We see that 
\begin{align*}
& \kappa(t_{\xi} \cdot \pi_{\brho}) = \PJ(t_{\xi}) \quad \text{by \eqref{eq:tx0}}; \\[1mm]
& \iota(\pi_{\bchi}) = \PK(t_{\zeta_{1}}), \quad 
\text{with} \quad \zeta_{1} = \sum_{i \in I} 
\chi^{(i)}_{1} \alpha_{i}^{\vee} \in Q^{\vee}_{I \setminus \K} \quad \text{by \eqref{eq:par0}}. 
\end{align*}
Since \eqref{eq:tx1} holds, there exist $x,\,y \in W_{\af}$ 
such that $x \sige y$ in $W_{\af}$, and such that
\begin{equation*}
\begin{cases}
\PJ(x) = 
\kappa(t_{\xi} \cdot \pi_{\brho}) =\PJ(t_{\xi}), & \\[1mm]
\PK(y) = 
\iota(\pi_{\bchi}) = \PK(t_{\zeta_{1}});
\end{cases}
\end{equation*}
we write $x$ and $y$ as:
\begin{equation*}
\begin{cases}
x=\PJ(t_{\xi})x_{1} 
  & \text{with $x_{1} \in (\WJ)_{\af}$}, \\[1mm]
y=\PK(t_{\zeta_{1}})y_{1} 
  & \text{with $y_{1} \in (\WK)_{\af}$}.
\end{cases}
\end{equation*}
Because $x \in W_{\af}$ is a lift of $\PJ(t_{\xi}) \in (\WJu)_{\af}$, 
it follows from Lemma~\ref{lem:lift} that $x = v t_{\xi+\gamma}$ 
for some $v \in \WJ$ and $\gamma \in \QJv$. 
Similarly, we have $y = v't_{\zeta_1+\gamma'}$ 
for some $v' \in \WK$ and $\gamma' \in \QKv$. 
Because $x \sige y$ in $W_{\af}$, we deduce from Lemma~\ref{lem:SiB}\,(1) 
(applied to the case $J=\emptyset$) that $\xi+\gamma \ge \zeta_1+\gamma'$, 
which implies \eqref{eq:tx2} since $\gamma,\,\gamma' \in Q^{\vee}_{\J \cup \K}$. 

We next show the ``if'' part; assume that \eqref{eq:tx2} holds. 
Recall that 
\begin{align*}
& \kappa(t_{\xi} \cdot \pi_{\brho}) = \PJ(t_{\xi}) \quad \text{by \eqref{eq:tx0}}; \\[1mm]
& \iota(\pi_{\bchi}) = \PK(t_{\zeta_{1}}), \quad 
\text{with} \quad \zeta_{1} = \sum_{i \in I} 
\chi^{(i)}_{1} \alpha_{i}^{\vee} \in Q^{\vee}_{I \setminus \K} \quad \text{by \eqref{eq:par0}}. 
\end{align*}
We set $\gamma:=\sum_{i \in \J \setminus \K} 
\chi^{(i)}_{1} \alpha_{i}^{\vee} \in \QJv$. 
Then it follows from \eqref{eq:tx2} that $\xi+\gamma \ge \zeta_{1}$ 
since $I \setminus K = (I \setminus (J \cup K)) \sqcup (J \setminus K)$. 
Hence we deduce from Lemma~\ref{lem:SiB}\,(2) that 
$x:=t_{\xi+\gamma} \sige t_{\zeta_1}=:y$ in $W_{\af}$. 
It is obvious that $\PK(y) = \iota(\pi_{\bchi})$. 
Also, since $\gamma \in \QJv$, we see from \eqref{eq:PiJ1} that 
$\PJ(x) = \PJ(t_{\xi}) = \kappa(t_{\xi} \cdot \pi_{\brho})$. 
Thus, $x$ and $y$ satisfy condition \eqref{eq:SM} 
for $(t_{\xi} \cdot \pi_{\brho}) \otimes \pi_{\bchi}$, 
which implies \eqref{eq:tx1}. 
This proves the proposition. 
\end{proof}
%
%
\begin{prop} \label{prop:conn}
Each connected component of $\SM(\lambda+\mu)$ 
contains a unique element of the form{\rm:}
$(t_{\xi} \cdot \pi_{\brho}) \otimes \pi_{\bchi}$
for some $\brho \in \Par(\lambda)$, 
$\bchi \in \Par(\mu)$, and 
$\xi \in Q^{\vee}_{I \setminus (\J \cup \K)}$ 
satisfying condition \eqref{eq:tx2} in Proposition~\ref{prop:tx}. 
Therefore, there exists a one-to-one correspondence between 
the set $\Conn(\SM(\lambda+\mu))$ of connected components of
$\SM(\lambda+\mu)$ and the set of triples 
$(\brho,\,\bchi,\,\xi) \in \Par(\lambda) \times \Par(\mu) \times 
Q^{\vee}_{I \setminus (\J \cup \K)}$ 
satisfying condition \eqref{eq:tx2} in Proposition~\ref{prop:tx}. 
\end{prop}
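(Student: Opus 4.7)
\noindent\textbf{Proof plan for Proposition~\ref{prop:conn}.}

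\emph{Existence.} Given any $\pi \otimes \eta \in \SM(\lambda+\mu)$, I will first apply Lemma~\ref{lem:conn} inside the ambient crystal $\SLS(\lambda) \otimes \SLS(\mu)$: this produces a monomial $X$ in the root operators $e_{i},\,f_{i}$ ($i \in I_{\af}$) such that
\begin{equation*}
X(\pi \otimes \eta) =
 (t_{\xi} \cdot \pi_{\brho}) \otimes \pi_{\bchi}
\end{equation*}
for some $\brho \in \Par(\lambda)$, $\bchi \in \Par(\mu)$, and $\xi \in Q^{\vee}_{I \setminus (\J \cup \K)}$; in particular this element is nonzero. Because $\SM(\lambda+\mu) \sqcup \{\bzero\}$ is stable under the root operators (Proposition~\ref{prop:SMT1}) and $\pi \otimes \eta \in \SM(\lambda+\mu)$, the resulting element $(t_{\xi} \cdot \pi_{\brho}) \otimes \pi_{\bchi}$ again lies in $\SM(\lambda+\mu)$; Proposition~\ref{prop:tx} then forces $\xi$ to satisfy condition \eqref{eq:tx2}. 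Thus every connected component contains at least one element of the prescribed form.

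\emph{Uniqueness: the easy part.} Suppose $(t_{\xi_{1}} \cdot \pi_{\brho_{1}}) \otimes \pi_{\bchi_{1}}$ and $(t_{\xi_{2}} \cdot \pi_{\brho_{2}}) \otimes \pi_{\bchi_{2}}$ sit in the same connected component of $\SM(\lambda+\mu)$. By the tensor product rule, each root operator acts on only one tensor factor, so the connected component of $\SLS(\lambda)$ containing the first factor and the connected component of $\SLS(\mu)$ containing the second factor are invariants of the component in $\SLS(\lambda) \otimes \SLS(\mu)$. By Remark~\ref{rem:Tx}, the component $\SLS_{\brho}(\lambda)$ containing $t_{\xi} \cdot \pi_{\brho}$ depends only on $\brho$; hence $\brho_{1} = \brho_{2}$ and $\bchi_{1} = \bchi_{2}$.

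\emph{Uniqueness: the hard part.} It remains to prove that $\xi_{1} = \xi_{2}$ under the assumption $\brho_{1} = \brho_{2} =: \brho$ and $\bchi_{1} = \bchi_{2} =: \bchi$. The idea is to use the translation operators $T_{\zeta}$ of \eqref{eq:Txi}, which act on each factor and commute with root operators (see \eqref{eq:Tx}). Concretely, applying $T_{-\xi_{1}}$ to the first factor and adjusting with a translation in $\QJv \oplus \QKv$ to the second, one can normalize so that the two elements become $\pi_{\brho} \otimes \pi_{\bchi}$ and $(t_{\xi_{2}-\xi_{1}} \cdot \pi_{\brho}) \otimes \pi_{\bchi}$ respectively, and it suffices to show that if these are in the same connected component then $\xi_{2} - \xi_{1} = 0$. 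Passing to the classical projection $\cl$ (which is compatible with root operators by \eqref{eq:cl} and kills the translation part), both elements map to the same element $\cl(\pi_{\brho}) \otimes \cl(\pi_{\bchi}) \in \QLS(\lambda) \otimes \QLS(\mu)$, so the connection is compatible with $\cl$. The main obstacle is to then distinguish $\pi_{\brho} \otimes \pi_{\bchi}$ from $(t_{\zeta} \cdot \pi_{\brho}) \otimes \pi_{\bchi}$ for $0 \neq \zeta \in Q^{\vee}_{I \setminus (\J \cup \K)}$; I expect this to follow by examining the $\delta$-coefficient of the weight, which by \eqref{eq:tx0} and \eqref{eq:Weyl} changes by $-\pair{\lambda}{\zeta}\delta$ under $T_{\zeta}$, combined with the minimality built into condition~\eqref{eq:tx2} and an analysis of the connectivity inside the ``level-zero'' component $\SLS_{0}(\lambda) \otimes \SLS_{0}(\mu)$ using the identification with $\CB_{0}(\lambda) \otimes \CB_{0}(\mu)$ from Theorem~\ref{thm:isom}.
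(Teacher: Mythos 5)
Your existence argument and the identification $\brho_{1}=\brho_{2}$, $\bchi_{1}=\bchi_{2}$ are correct and coincide with the paper's proof: existence is Lemma~\ref{lem:conn} combined with the stability of $\SM(\lambda+\mu)\sqcup\{\bzero\}$ under root operators (Proposition~\ref{prop:SMT1}) and Proposition~\ref{prop:tx}; the first half of uniqueness follows from the tensor product rule together with the uniqueness of the extremal element $\pi_{\brho}$ in each connected component of $\SLS(\lambda)$.

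The step $\xi_{1}=\xi_{2}$ is where your proposal has a genuine gap, and the two devices you invoke cannot close it. First, the $\delta$-coefficient of the weight is not an invariant of a connected component: $e_{0}$ and $f_{0}$ shift $\wt$ by $\pm\alpha_{0}=\pm(\delta-\theta)$, so comparing $\qwt$ of $\pi_{\brho}\otimes\pi_{\bchi}$ and $(t_{\zeta}\cdot\pi_{\brho})\otimes\pi_{\bchi}$ proves nothing. Second, condition \eqref{eq:tx2} is only the inequality $c_{i}\ge\chi^{(i)}_{1}$; it is not a normalization, and two distinct $\xi_{1},\xi_{2}$ can both satisfy it, so there is no ``minimality'' to exploit. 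The missing idea is the iteration argument used in the paper: if a monomial $Y$ in root operators sends $(t_{\xi}\cdot\pi_{\brho})\otimes\pi_{\bchi}$ to $(t_{\xi+\zeta}\cdot\pi_{\brho})\otimes\pi_{\bchi}$ with $\zeta=\xi'-\xi$, then (as in the proof of \cite[Lemma~7.1.4]{INS}) $Y^{N}$ sends it to $(t_{\xi+N\zeta}\cdot\pi_{\brho})\otimes\pi_{\bchi}$ for every $N\ge 1$. Each of these elements lies in $\SM(\lambda+\mu)$ by Proposition~\ref{prop:SMT1}, so Proposition~\ref{prop:tx} forces the coefficient of each $\alpha_{k}^{\vee}$ in $\xi+N\zeta$ to remain $\ge\chi^{(k)}_{1}$ for all $N$; hence every coefficient of $\zeta$ is nonnegative, and exchanging the roles of $\xi$ and $\xi'$ yields $\zeta=0$. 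Without this (or an equivalent) argument, your proof of the uniqueness of $\xi$ is incomplete.
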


\begin{proof}
The ``existence'' part follows from 
Lemma~\ref{lem:conn} and Proposition~\ref{prop:tx}. 
Hence it suffices to prove the ``uniqueness'' part. 
Let $(\brho,\,\bchi,\,\xi)$ and $(\brho',\,\bchi',\,\xi')$ be elements 
in $\Par(\lambda) \times \Par(\mu) \times 
Q^{\vee}_{I \setminus (\J \cup \K)}$ 
satisfying condition \eqref{eq:tx2} in Proposition~\ref{prop:tx}, and 
suppose that 
$(t_{\xi} \cdot \pi_{\brho}) \otimes \pi_{\bchi}$ and 
$(t_{\xi'} \cdot \pi_{\brho'}) \otimes \pi_{\bchi'}$ are contained in the same 
connected component of $\SM(\lambda+\mu)$. 
Then there exists a monomial $X$ in root operators such that 
$X ( (t_{\xi} \cdot \pi_{\brho}) \otimes \pi_{\bchi} ) = 
(t_{\xi'} \cdot \pi_{\brho'}) \otimes \pi_{\bchi'}$. 
By the tensor product rule for crystals, we see that 
$X ( (t_{\xi} \cdot \pi_{\brho}) \otimes \pi_{\bchi} ) 
= X_{1} (t_{\xi} \cdot \pi_{\brho}) \otimes X_{2}\pi_{\bchi}$ 
for some monomials $X_{1}$, $X_{2}$ in root operators. 
Then we have 
$X_{1} (t_{\xi} \cdot \pi_{\brho}) = t_{\xi'} \cdot \pi_{\brho'}$, 
which implies that $t_{\xi} \cdot \pi_{\brho}$ and 
$t_{\xi'} \cdot \pi_{\brho'}$ are contained in the same connected component of 
$\SLS(\lambda)$, and hence so are $\pi_{\brho}$ and $\pi_{\brho'}$. 
Therefore, by the uniqueness of an element of 
the form \eqref{eq:ext} in a connected 
component of $\SLS(\lambda)$ (see Section~\ref{subsec:conn}), 
we deduce that $\brho = \brho'$. Similarly, we obtain $\bchi=\bchi'$. 
Suppose, for a contradiction, that $\xi \ne \xi'$; we may assume that 
for some $k \in I \setminus (\J \cup \K)$, 
the coefficient of $\alpha_{k}^{\vee}$ in $\xi$ is 
greater than that in $\xi'$, i.e., 
the coefficient of $\alpha_{k}^{\vee}$ in $\xi'-\xi$ is a negative integer. 
Because $(t_{\xi} \cdot \pi_{\brho}) \otimes \pi_{\bchi}$ and 
$(t_{\xi'} \cdot \pi_{\brho'}) \otimes \pi_{\bchi'} = 
(t_{\xi'} \cdot \pi_{\brho}) \otimes \pi_{\bchi}$ are contained 
in the same connected component, there exists a monomial $Y$ in root operators 
such that $Y \bigl((t_{\xi} \cdot \pi_{\brho}) \otimes \pi_{\bchi}\bigr) 
   = (t_{\xi'} \cdot \pi_{\brho}) \otimes \pi_{\bchi}
   = (t_{\xi+(\xi'-\xi)} \cdot \pi_{\brho}) \otimes \pi_{\bchi}$. 
Here, the same argument as in the proof of \cite[Lemma~7.1.4]{INS} 
(or, as in the proof of \cite[Proposition~7.1.2]{INS}) shows that 
\begin{equation*}
Y^{N} \Bigl((t_{\xi} \cdot \pi_{\brho}) \otimes \pi_{\bchi}\Bigr) 
 = (t_{\xi+N(\xi'-\xi)} \cdot \pi_{\brho}) \otimes \pi_{\bchi}
\quad \text{for all $N \in \BZ_{\ge 1}$}. 
\end{equation*}
Since this element is contained in $\SM(\lambda+\mu)$ 
for all $N \in \BZ_{\ge 1}$ by Proposition~\ref{prop:SMT1},
it follows from Proposition~\ref{prop:tx} that 
the coefficient of $\alpha_{k}^{\vee}$ in $\xi+N(\xi'-\xi)$ 
is greater than or equal to $\chi^{(k)}_{1}$ for all $N \ge 1$. 
This contradicts the fact that 
the coefficient of $\alpha_{k}^{\vee}$ in $\xi'-\xi$ is a negative integer. 
This proves the proposition. 
\end{proof}

Now, we write $\lambda$ and $\mu$ as: 
$\lambda=\sum_{i \in I} m_{i} \vpi_{i}$ and 
$\mu=\sum_{i \in I} n_{i} \vpi_{i}$.
For $(\brho,\,\bchi,\,\xi) \in \Par(\lambda) \times \Par(\mu) \times 
 Q^{\vee}_{I \setminus (\J \cup \K)}$ satisfying
\eqref{eq:tx2}, define $\bvrho = (\vrho^{(i)})_{i \in I} \in \Par(\lambda+\mu)$ 
as follows. Write $\brho$, $\bchi$, and $\xi$ as: 
\begin{align*}
& \brho = (\rho^{(i)})_{i \in I}, \quad \text{with} \quad
  \rho^{(i)}=(\rho^{(i)}_{1} \ge \cdots \ge \rho^{(i)}_{m_i-1} \ge 0) \quad
  \text{for $i \in I$}, \\
& \bchi = (\chi^{(i)})_{i \in I}, \quad \text{with} \quad
  \chi^{(i)}=(\chi^{(i)}_{1} \ge \cdots \ge \chi^{(i)}_{n_i-1} \ge 0) \quad
  \text{for $i \in I$}, \\
& \xi=\sum_{i \in I \setminus (\J \cup \K)} 
  c_{i}\alpha_{i}^{\vee}; \quad \text{recall that $c_{i} \ge \chi^{(i)}_{1}$ 
  for all $i \in I \setminus (\J \cup \K)$}.
\end{align*}
Let $i \in I$. 
\begin{itemize}
\item If $i \in \J \cap \K$ (note that $m_{i}=n_{i}=0$), 
we set $\vrho^{(i)} := \emptyset$; 

\item if $i \in \J \setminus \K$ (note that $m_{i}=0$), 
we set $\vrho^{(i)}:=\chi^{(i)}$, which is a partition of 
length less than $n_{i} = 0+n_{i}= m_{i}+n_{i}$; 

\item if $i \in \K \setminus \J$ (note that $n_{i}=0$), 
we set $\vrho^{(i)}:=\rho^{(i)}$, which is a partition of length less 
than $m_{i} =m_{i}+0= m_{i}+n_{i}$;

\item if $i \in I \setminus (\J \cup \K)$, we set 
\begin{equation*}
\vrho^{(i)}=
(\rho^{(i)}_{1}+c_{i} \ge \cdots \ge \rho^{(i)}_{m_i-1}+c_{i} \ge c_{i} \ge 
\chi^{(i)}_{1} \ge \cdots \ge \chi^{(i)}_{n_i-1}),
\end{equation*}
which is a partition of length less than 
$(m_{i}-1)+1+(n_{i}-1) + 1=m_{i}+n_{i}$; 
note that $|\vrho^{(i)}| = | \rho^{(i)} | + |\chi^{(i)} | + m_{i}c_{i}$. 
\end{itemize}
It follows that $\bvrho=(\vrho^{(i)})_{i \in I} \in \Par(\lambda+\mu)$. 
Thus we obtain a map $\Theta$ from the set of 
those $(\brho,\,\bchi,\,\xi) \in \Par(\lambda) \times \Par(\mu) \times 
 Q^{\vee}_{I \setminus (\J \cup \K)}$ satisfying
\eqref{eq:tx2} to the set $\Par(\lambda+\mu)$; 
we can easily deduce that the map $\Theta$ is bijective.
Also, by direct calculation, we have
\begin{align*}
\wt\Bigl( \underbrace{(t_{\xi} \cdot \pi_{\brho}) \otimes \pi_{\bchi}}_{\text{satisfying \eqref{eq:tx2}}} \Bigr)
 & = \wt (t_{\xi} \cdot \pi_{\brho}) + \wt (\pi_{\bchi}) 
   = t_{\xi}\bigl(\lambda-|\brho|\delta\bigr) + \bigl(\mu-|\bchi|\delta\bigr) \\
 & = (\lambda+\mu) - \bigl( |\brho| + \pair{\lambda}{\xi} + |\bchi| \bigr)\delta \\[2mm]
 & = (\lambda+\mu) - 
   \left( |\brho| + \sum_{i \in I \setminus (J \cup K)} m_{i}c_{i} + |\bchi| \right)\delta \\[2mm]
 & = (\lambda+\mu) - \bigl| \Theta(\brho,\,\bchi,\,\xi) \bigr|\delta.
\end{align*}

We claim that the connected component of $\SM(\lambda+\mu)$ containing 
$(t_{\xi} \cdot \pi_{\brho}) \otimes \pi_{\bchi}$ is isomorphic, as a crystal, to 
$\bigl\{\Theta(\brho,\,\bchi,\,\xi)\bigr\} \otimes \SM_{0}(\lambda+\mu)$, 
where $\SM_{0}(\lambda+\mu)$ denotes the connected component of $\SM(\lambda+\mu)$ 
containing $\pi_{\lambda} \otimes \pi_{\mu} = 
(e\,;\,0,1) \otimes (e\,;\,0,1) \in \SLS(\lambda) \otimes \SLS(\mu)$. 
Indeed, let us consider the composite of the following bijections: 
\begin{align*}
& \SLS_{\brho}(\lambda) \otimes \SLS_{\bchi}(\mu) 
  \stackrel{T_{-\xi} \otimes \id}{\longrightarrow} 
  \SLS_{\brho}(\lambda) \otimes \SLS_{\bchi}(\mu) 
  \quad \text{(see \eqref{eq:Tx} and Remark~\ref{rem:Tx})} \\
& \quad \stackrel{\sim}{\rightarrow} 
   \bigl(\bigl\{\brho\bigr\} \otimes \SLS_{0}(\lambda)\bigr) \otimes 
   \bigl(\bigl\{\bchi\bigr\} \otimes \SLS_{0}(\mu)\bigr) \quad
   \text{(see the comment preceding \eqref{eq:isom})} \\
& \quad \stackrel{\sim}{\rightarrow}
   \bigl(\bigl\{\brho\bigr\} \otimes \bigl\{\bchi\bigr\}\bigr) \otimes 
   \bigl(\SLS_{0}(\lambda) \otimes \SLS_{0}(\mu)\bigr) \quad 
   \text{(by the tensor product rule for crystals)} \\
& \quad \stackrel{\sim}{\rightarrow}
   \bigl\{\Theta(\brho,\,\bchi,\,\xi)\bigr\} \otimes 
   \bigl(\SLS_{0}(\lambda) \otimes \SLS_{0}(\mu)\bigr),
\end{align*}
where the last map sends 
$(\brho \otimes \bchi) \otimes (\pi \otimes \eta)$ to 
$\Theta(\brho,\,\bchi,\,\xi) \otimes (\pi \otimes \eta)$ 
for each $\pi \in \SLS_{0}(\lambda)$ and 
$\eta \in \SLS_{0}(\mu)$. We deduce 
by \eqref{eq:Tx} and the tensor product rule for crystals 
that the composite of these bijections is 
an isomorphism of crystals, which sends 
$(t_{\xi} \cdot \pi_{\brho}) \otimes \pi_{\bchi}$ to 
$\Theta(\brho,\,\bchi,\,\xi) \otimes (\pi_{\lambda} \otimes \pi_{\mu})$. 
Therefore, the connected component of $\SM(\lambda+\mu)$ containing 
$(t_{\xi} \cdot \pi_{\brho}) \otimes \pi_{\bchi}$ is mapped to 
$\bigl\{\Theta(\brho,\,\bchi,\,\xi)\bigr\} \otimes \SM_{0}(\lambda+\mu)$ 
under this isomorphism of crystals. 
It follows from Proposition~\ref{prop:conn} and 
the bijectivity of $\Theta$ that 
%
%
\begin{equation} \label{eq:conn}
\SM(\lambda+\mu) \cong \bigsqcup_{\bvrho \in \Par(\lambda+\mu)} 
\bigl\{\bvrho\bigr\} \otimes \SM_{0}(\lambda+\mu). 
\end{equation}
%
%
\begin{prop} \label{prop:BN}
As crystals, $\SM_{0}(\lambda+\mu) \cong \SLS_{0}(\lambda+\mu)$.
\end{prop}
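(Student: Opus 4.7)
The plan is to exhibit an explicit crystal isomorphism $\Psi:\SM_0(\lambda+\mu) \stackrel{\sim}{\to} \SLS_0(\lambda+\mu)$ defined by $\pi_\lambda \otimes \pi_\mu \mapsto \pi_{\lambda+\mu}$, and extending via $\Psi(X(\pi_\lambda \otimes \pi_\mu)) := X\pi_{\lambda+\mu}$ for any monomial $X$ in the root operators $e_i,\,f_i$, $i \in I_\af$. Both distinguished elements carry weight $\lambda+\mu$, and by Theorem~\ref{thm:QLS}\,(2) the isomorphism $\QLS(\lambda) \otimes \QLS(\mu) \cong \QLS(\lambda+\mu)$ matches $\cl(\pi_\lambda) \otimes \cl(\pi_\mu)$ with $\cl(\pi_{\lambda+\mu})$. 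Since $\cl$ commutes with root operators (see \eqref{eq:cl}), this immediately yields $\cl(X(\pi_\lambda \otimes \pi_\mu)) = \cl(X\pi_{\lambda+\mu})$ under the identification, and in particular $X(\pi_\lambda \otimes \pi_\mu) = \bzero$ if and only if $X\pi_{\lambda+\mu} = \bzero$.

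The substantive step is to verify well-definedness: if $X(\pi_\lambda \otimes \pi_\mu) = Y(\pi_\lambda \otimes \pi_\mu)$ for two monomials, then $X\pi_{\lambda+\mu} = Y\pi_{\lambda+\mu}$. I would do this by a direct comparison of fibers of $\cl$. On the $\SLS_0(\lambda+\mu)$ side, using Remark~\ref{rem:Tx} and the fiber description in \cite[Lemma~6.2.2]{NS16}, the fiber over $\cl(\pi_{\lambda+\mu})$ consists of $\{T_\eta \pi_{\lambda+\mu} \mid \eta \in Q^\vee\}$ and is parameterized bijectively by $Q^\vee / Q^\vee_S$, where $S = J \cap K$, the weight of $T_\eta \pi_{\lambda+\mu}$ being $(\lambda+\mu) - \pair{\lambda+\mu}{\eta}\delta$. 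On the $\SM_0(\lambda+\mu)$ side, Proposition~\ref{prop:conn}, Proposition~\ref{prop:tx}, and Lemma~\ref{lem:Weyl}\,(2) together show that the fiber over $\cl(\pi_\lambda) \otimes \cl(\pi_\mu)$ consists precisely of the elements $(T_\xi \pi_\lambda) \otimes (T_\zeta \pi_\mu)$ subject to $\xi - \zeta \in Q^\vee_{J \cup K}$, modulo the stabilizer action of $Q^\vee_J \times Q^\vee_K$. Via the orthogonal decomposition
\[
I \setminus S \;=\; \bigl(I \setminus (J \cup K)\bigr) \,\sqcup\, (K \setminus J) \,\sqcup\, (J \setminus K),
\]
this quotient is canonically isomorphic to $Q^\vee / Q^\vee_S$ as well, and the total weight shift $-(\pair{\lambda}{\xi}+\pair{\mu}{\zeta})\delta$ reduces to $-\pair{\lambda+\mu}{\eta}\delta$ for the corresponding $\eta$, since $\lambda$ annihilates $Q^\vee_J$, $\mu$ annihilates $Q^\vee_K$, and for $i \in K \setminus J$ (resp.\ $i \in J \setminus K$) one has $\pair{\lambda+\mu}{\alpha_i^\vee} = \pair{\lambda}{\alpha_i^\vee}$ (resp.\ $\pair{\mu}{\alpha_i^\vee}$). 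Propagating this fiberwise identification along the $W_\af$-action (which is itself defined through root operators by \eqref{eq:W-act}) then yields well-definedness.

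Once $\Psi$ is shown to be well-defined, it is a crystal morphism by its very construction, and bijectivity follows by running the symmetric construction with the roles of $\SM_0(\lambda+\mu)$ and $\SLS_0(\lambda+\mu)$ swapped. The main obstacle is the fiber-matching step: although both fibers end up being parameterized by the same lattice $Q^\vee/Q^\vee_S$, establishing that an element of either fiber has an intrinsically defined translation parameter (independent of the choice of monomial expression) and that the identification of parameters is preserved by all root operators requires careful bookkeeping of the stabilizers on the tensor product side, together with the precise interplay between the three summands $Q^\vee_{I \setminus (J \cup K)}$, $Q^\vee_{K \setminus J}$, and $Q^\vee_{J \setminus K}$. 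This is the heart of the argument, and everything else fits around it.
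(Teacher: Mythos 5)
There is a genuine gap. You have correctly located the crux --- well-definedness of the assignment $X(\pi_{\lambda}\otimes\pi_{\mu})\mapsto X\pi_{\lambda+\mu}$ for monomials $X$ in root operators --- but the mechanism you propose for closing it does not work. Matching the fibers of $\cl$ over the single point $\cl(\pi_{\lambda})\otimes\cl(\pi_{\mu})$ (resp.\ $\cl(\pi_{\lambda+\mu})$) and then ``propagating along the $W_{\af}$-action'' is not a proof: the $W_{\af}$-action, being defined through root operators, does not carry the fiber over $\cl(\pi_{\lambda+\mu})$ onto the fiber over an arbitrary element of $\QLS(\lambda+\mu)$, and a connected crystal is the closure of one element under \emph{all} root operators, not under $W_{\af}$. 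More fundamentally, a bijection that is compatible with $\cl$ and preserves weights need not commute with $e_{i}$, $f_{i}$: the $\delta$-component of where a root operator lands is exactly the information that $\cl$ destroys, and $\pair{\lambda+\mu}{\cdot}\,\delta$ does not separate the points of $Q^{\vee}/Q^{\vee}_{S}$, so $(\cl,\wt)$ does not determine an element of the fiber. Your ``intrinsically defined translation parameter'' compatible with all root operators is precisely the statement to be proved, and nothing in the proposal produces it.

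The paper avoids this entirely by working on the other side of the isomorphism $\Phi$ of Theorem~\ref{thm:isom}: using Kashiwara's conjectures 13.1\,(iii) and 13.2\,(iii) (proved by Beck--Nakajima), both $\CB_{0}(\lambda+\mu)$ and $\CB_{0}(\lambda)\otimes\CB_{0}(\mu)$ are embedded into one common crystal $\CB=\bigl(\bigotimes_{i}\CB(\vpi_{i})^{\otimes m_{i}}\bigr)\otimes\bigl(\bigotimes_{i}\CB(\vpi_{i})^{\otimes n_{i}}\bigr)$ with $u_{\lambda+\mu}$ and $u_{\lambda}\otimes u_{\mu}$ landing on the \emph{same} element $b$; the isomorphism of the two connected components is then automatic, since they are both the component of $b$ in $\CB$. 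If you want a purely path-theoretic proof, you would have to reprove an equivalent of those embedding theorems inside $\SLS(\cdot)$, which is a substantial undertaking and not supplied by the fiber bookkeeping you outline.
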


\begin{proof}
Write $\lambda$ and $\mu$ as: 
$\lambda = \sum_{i \in I} m_{i}\vpi_{i}$ 
with $m_{i} \in \BZ_{\ge 0}$, and
$\mu = \sum_{i \in I} n_{i}\vpi_{i}$ 
with $n_{i} \in \BZ_{\ge 0}$, respectively. 
We know from \cite[Conjecture~13.1\,(iii)]{Kas02}, 
which is proved in \cite[Remark~4.17]{BN}, that 
there exists an isomorphism 
$\CB(\lambda+\mu) \stackrel{\sim}{\rightarrow} 
\bigotimes_{i \in I} \CB((m_{i}+n_{i})\vpi_{i})$ of crystals, 
which maps $u_{\lambda+\mu}$ to 
$\bigotimes_{i \in I} u_{(m_{i}+n_{i})\vpi_{i}}$; 
the restriction of this isomorphism to 
$\CB_{0}(\lambda+\mu) \subset \CB(\lambda+\mu)$ 
gives an embedding $\CB_{0}(\lambda+\mu) \hookrightarrow 
\bigotimes_{i \in I} \CB_{0}((m_{i}+n_{i})\vpi_{i})$ of crystals. 
Also, we know from \cite[Conjecture~13.2\,(iii)]{Kas02}, 
which is proved in \cite[Remark~4.17]{BN}, that 
for each $i \in I$, there exists an embedding 
$\CB_{0}((m_{i}+n_{i})\vpi_{i}) \hookrightarrow 
\CB(\vpi_{i})^{\otimes (m_{i}+n_{i})}$ of crystals, 
which maps $u_{(m_{i}+n_{i})\vpi_{i}}$ to 
$u_{\vpi_{i}}^{\otimes (m_{i}+n_{i})}$; recall from 
\cite[Proposition~5.4]{Kas02} that $\CB(\vpi_{i})$ is connected. 
Thus we obtain an embedding 
\begin{equation*}
\CB_{0}(\lambda+\mu) \hookrightarrow 
\bigotimes_{i \in I} \CB_{0}((m_{i}+n_{i})\vpi_{i})
\hookrightarrow 
\bigotimes_{i \in I} \CB(\vpi_{i})^{\otimes (m_{i}+n_{i})}
\end{equation*}
of crystals, which maps $u_{\lambda+\mu}$ to 
$\bigotimes_{i \in I} u_{\vpi_{i}}^{\otimes (m_{i}+n_{i})}$. 
Here, we recall from \cite[Sect.~10]{Kas02} that 
for each $j,\,k \in I$, 
there exists an isomorphism $\CB(\vpi_{j}) \otimes \CB(\vpi_{k}) 
\stackrel{\sim}{\rightarrow} 
\CB(\vpi_{k}) \otimes \CB(\vpi_{j})$ of crystals, which maps 
$u_{\vpi_{j}} \otimes u_{\vpi_{k}}$ to 
$u_{\vpi_{k}} \otimes u_{\vpi_{j}}$. 
Hence we obtain an isomorphism of crystals 
\begin{equation*}
\bigotimes_{i \in I} \CB(\vpi_{i})^{\otimes (m_{i}+n_{i})} 
\stackrel{\sim}{\rightarrow} 
\left(\bigotimes_{i \in I} \CB(\vpi_{i})^{\otimes m_{i}}\right)
\otimes 
\left(\bigotimes_{i \in I} \CB(\vpi_{i})^{\otimes n_{i}}\right) = : \CB, 
\end{equation*}
which maps $\bigotimes_{i \in I} u_{\vpi_{i}}^{\otimes (m_{i}+n_{i})}$
to $\left(\bigotimes_{i \in I} u_{\vpi_{i}}^{\otimes m_{i}}\right) \otimes 
\left(\bigotimes_{i \in I} u_{\vpi_{i}}^{\otimes n_{i}}\right)=:b$. 
From these, we obtain an embedding $\CB_{0}(\lambda+\mu) \hookrightarrow \CB$ of crystals, 
which maps $u_{\lambda+\mu}$ to $b$. 
Similarly, we obtain an embedding 
$\CB_{0}(\lambda) \otimes \CB_{0}(\mu) \hookrightarrow \CB$ of crystals, 
which maps $u_{\lambda} \otimes u_{\mu}$ to $b$. 
Consequently, there exists an isomorphism of crystals from 
$\CB_{0}(\lambda+\mu)$ to the connected component 
(denoted by $\CS_{0}(\lambda+\mu)$) of $\CB_{0}(\lambda) \otimes \CB_{0}(\mu)$ 
containing $u_{\lambda} \otimes u_{\mu}$, which maps $u_{\lambda+\mu}$ to 
$u_{\lambda} \otimes u_{\mu}$. 
Now, by Theorem~\ref{thm:isom}, we have an isomorphism 
$\CB_{0}(\lambda+\mu) \stackrel{\sim}{\rightarrow} \SLS_{0}(\lambda+\mu)$ of crystals, 
which maps $u_{\lambda+\mu}$ to $\pi_{\lambda+\mu}$. In addition, we have 
an isomorphism $\CB_{0}(\lambda) \otimes \CB_{0}(\mu) \stackrel{\sim}{\rightarrow} 
\SLS_{0}(\lambda) \otimes \SLS_{0}(\mu)$ of crystals, 
which maps $u_{\lambda} \otimes u_{\mu}$ to $\pi_{\lambda} \otimes \pi_{\mu}$; 
by restriction, we obtain an isomorphism of crystals from 
$\CS_{0}(\lambda+\mu)$ to $\SM_{0}(\lambda+\mu)$. 
Summarizing, we obtain the following 
isomorphism of crystals: 
\begin{equation*}
\begin{array}{ccccccc}
\SLS_{0}(\lambda+\mu) & \stackrel{\sim}{\rightarrow} &
\CB_{0}(\lambda+\mu) & \stackrel{\sim}{\rightarrow} &
\CS_{0}(\lambda+\mu) & \stackrel{\sim}{\rightarrow} &
\SM_{0}(\lambda+\mu), \\[3mm]
\pi_{\lambda+\mu} & \mapsto & 
u_{\lambda+\mu} & \mapsto & 
u_{\lambda} \otimes u_{\mu} & \mapsto &
\pi_{\lambda} \otimes \pi_{\mu}.
\end{array}
\end{equation*}
This proves the proposition. 
\end{proof}

By using \eqref{eq:conn}, Proposition~\ref{prop:BN}, and 
\eqref{eq:isom} (with $\lambda$ replaced by $\lambda+\mu$), 
we conclude that 
\begin{align*}
\SM(\lambda+\mu) & \cong 
\bigsqcup_{\bvrho \in \Par(\lambda+\mu)} 
\bigl\{\bvrho\bigr\} \otimes \SM_{0}(\lambda+\mu) \\[3mm]
& \cong 
\bigsqcup_{\bvrho \in \Par(\lambda+\mu)} 
\bigl\{\bvrho\bigr\} \otimes \SLS_{0}(\lambda+\mu)
\cong \SLS(\lambda+\mu)
\end{align*}
as crystals. This completes the proof of Theorem~\ref{thm:SMT}. 
%
%
\begin{cor} \label{cor:Sext}
For each $\bvrho \in \Par(\lambda+\mu)$, 
the element $\pi_{\bvrho} \in \SLS(\lambda+\mu)$ 
is mapped to $(t_{\xi} \cdot \pi_{\brho}) \otimes \pi_{\bchi} \in \SM(\lambda+\mu)$ 
for some $\xi \in Q^{\vee}_{I \setminus (\J \cup \K)}$ and 
$\brho \in \Par(\lambda)$, $\bchi \in \Par(\mu)$ 
satisfying \eqref{eq:tx2} under the isomorphism 
$\SLS(\lambda+\mu) \cong \SM(\lambda+\mu)$ of crystals in Theorem~\ref{thm:SMT}. 
\end{cor}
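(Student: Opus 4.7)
The plan is to trace the chain of crystal isomorphisms constructed in the proof of Theorem~\ref{thm:SMT} and verify that it sends the canonical representative $\pi_{\bvrho}$ of the connected component $\SLS_{\bvrho}(\lambda+\mu)$ to the canonical representative $(t_{\xi} \cdot \pi_{\brho}) \otimes \pi_{\bchi}$ of the corresponding component of $\SM(\lambda+\mu)$. To start, given $\bvrho \in \Par(\lambda+\mu)$, I would invoke the bijectivity of $\Theta$ established in the proof of Theorem~\ref{thm:SMT} to obtain the unique triple $(\brho,\bchi,\xi) \in \Par(\lambda) \times \Par(\mu) \times Q^{\vee}_{I \setminus (\J \cup \K)}$ satisfying \eqref{eq:tx2} with $\Theta(\brho,\bchi,\xi) = \bvrho$. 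By Proposition~\ref{prop:tx}, $(t_{\xi} \cdot \pi_{\brho}) \otimes \pi_{\bchi} \in \SM(\lambda+\mu)$, and by Proposition~\ref{prop:conn} this element is the distinguished representative of its connected component.

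Next, I would unpack the isomorphism $\SLS(\lambda+\mu) \cong \SM(\lambda+\mu)$, which factors through the decompositions \eqref{eq:conn} and \eqref{eq:isom}. On the $\SLS(\lambda+\mu)$ side, the isomorphism $\SLS_{\bvrho}(\lambda+\mu) \stackrel{\sim}{\to} \{\bvrho\} \otimes \SLS_{0}(\lambda+\mu)$ described before \eqref{eq:isom} sends $\pi_{\bvrho}$ to $\bvrho \otimes \pi_{\lambda+\mu}$ by the very definition of the map. On the $\SM(\lambda+\mu)$ side, the explicit composite of isomorphisms built in the proof of Theorem~\ref{thm:SMT} (namely $T_{-\xi} \otimes \id$ followed by the component decompositions of $\SLS_{\brho}(\lambda)$ and $\SLS_{\bchi}(\mu)$, and then the regrouping of tensor factors) was constructed precisely to send $(t_{\xi} \cdot \pi_{\brho}) \otimes \pi_{\bchi}$ to $\Theta(\brho,\bchi,\xi) \otimes (\pi_{\lambda} \otimes \pi_{\mu}) = \bvrho \otimes (\pi_{\lambda} \otimes \pi_{\mu})$.

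Finally, I would combine these with Proposition~\ref{prop:BN}: the isomorphism $\SM_{0}(\lambda+\mu) \cong \SLS_{0}(\lambda+\mu)$ constructed there was shown to map $\pi_{\lambda} \otimes \pi_{\mu}$ to $\pi_{\lambda+\mu}$. Therefore the tensor $\bvrho \otimes (\pi_{\lambda} \otimes \pi_{\mu})$ in $\{\bvrho\} \otimes \SM_{0}(\lambda+\mu)$ corresponds to $\bvrho \otimes \pi_{\lambda+\mu}$ in $\{\bvrho\} \otimes \SLS_{0}(\lambda+\mu)$. Stringing together the three identifications yields $\pi_{\bvrho} \leftrightarrow (t_{\xi} \cdot \pi_{\brho}) \otimes \pi_{\bchi}$ under $\SLS(\lambda+\mu) \cong \SM(\lambda+\mu)$, as required.

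The argument is essentially bookkeeping, since all the essential crystal-theoretic content resides in Theorem~\ref{thm:SMT} itself; there is no serious obstacle here, only the need to confirm that at each step the distinguished generator is preserved, which is true by construction of every intermediate isomorphism. The mildest care is required in checking that the explicit composite used to identify $\SLS_{\brho}(\lambda) \otimes \SLS_{\bchi}(\mu)$ with $\{\bvrho\} \otimes (\SLS_{0}(\lambda) \otimes \SLS_{0}(\mu))$ sends $(t_{\xi} \cdot \pi_{\brho}) \otimes \pi_{\bchi}$ to $\bvrho \otimes (\pi_{\lambda} \otimes \pi_{\mu})$, which follows from Remark~\ref{rem:Tx} together with the defining behavior of each factor map.
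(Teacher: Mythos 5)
Your proposal is correct and follows exactly the route the paper intends: the corollary is stated without separate proof precisely because it is read off from the chain of isomorphisms constructed in the proof of Theorem~\ref{thm:SMT}, and your bookkeeping — bijectivity of $\Theta$, Proposition~\ref{prop:conn}, the explicit composite sending $(t_{\xi}\cdot\pi_{\brho})\otimes\pi_{\bchi}$ to $\Theta(\brho,\bchi,\xi)\otimes(\pi_{\lambda}\otimes\pi_{\mu})$, and Proposition~\ref{prop:BN} — is exactly the implicit argument.
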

%
%
\section{Proof of Propositions~\ref{prop:DC} and \ref{prop:DC2}.}
\label{sec:prf-DC}
Recall that $\lambda,\,\mu \in P^{+}$, and that
$\J = \bigl\{ i \in I \mid \pair{\lambda}{\alpha_{i}^{\vee}}=0\bigr\}$, 
$\K = \bigl\{ i \in I \mid \pair{\mu}{\alpha_{i}^{\vee}}=0\bigr\}$. 
%
%
\subsection{Proof of Proposition~\ref{prop:DC}.}
\label{subsec:prf-DC}

The ``if'' part is obvious from 
the definition of defining chains and condition \eqref{eq:SM}. 
Let us prove the ``only if'' part. 
Assume that $\pi \otimes \eta \in \SM(\lambda+\mu)$, and 
write $\pi$ and $\eta$ as: 
$\pi=(x_{1},\,\dots,\,x_{s}\,;\,\ba) \in \SLS(\lambda)$ and 
$\eta=(y_{1},\,\dots,\,y_{p}\,;\,\bb) \in \SLS(\mu)$, respectively. 
It follows from \eqref{eq:SM} that 
there exist $x_{s}',\,y_{1}' \in W_{\af}$ 
such that $x_{s}' \sige y_{1}'$ in $W_{\af}$, 
and such that $\PJ(x_{s}')=x_{s}$, 
$\PK(y_{1}')=y_{1}$;
we write $x_{s}'=x_{s}z_{1}$ for some $z_{1} \in (\WJ)_{\af}$, and 
$y_{1}'=y_{1}z_{2}$ for some $z_{2} \in (\WK)_{\af}$. Now we set
\begin{equation*}
\begin{cases}
 x_{u}':=x_{u}z_{1} & \text{for $1 \le u \le s$}, \\[1.5mm]
 y_{q}':=y_{q}z_{2} & \text{for $1 \le q \le p$}. 
\end{cases}
\end{equation*}
Because $x_{1} \sige x_{2} \sige \cdots \sige x_{s}$ in $(\WJu)_{\af}$ 
by the definition of semi-infinite LS paths, 
it follows from Lemma~\ref{lem:SiL} that 
$x_{1}' \sige x_{2}' \sige \cdots \sige x_{s}'$ in $W_{\af}$. 
Similarly, we see that 
$y_{1}' \sige y_{2}' \sige \cdots \sige y_{p}'$ in $W_{\af}$.
Combining these inequalities with the inequality $x_{s}' \sige y_{1}'$, 
we obtain $x_{1}' \sige \cdots \sige x_{s}' \sige 
y_{1}' \sige \cdots \sige y_{p}'$ in $W_{\af}$. 
Since $\PJ(x_{u}')=x_{u}$ for all $1 \le u \le s$, 
and $\PJ(y_{q}')=y_{q}$ for all $1 \le q \le p$, 
the sequence $x_{1}',\,\dots,\,x_{s}',\,y_{1}',\,\dots,\,y_{p}'$ is 
a defining chain for $\pi \otimes \eta$.  
This completes the proof of Proposition~\ref{prop:DC}. 
%
%
\subsection{Proof of Proposition~\ref{prop:DC2}.}
\label{subsec:prf-DC2}

We write $\pi$ and $\eta$ as:
$\pi=(x_{1},\,\dots,\,x_{s}\,;\,\,\ba) \in \SLS(\lambda)$ and 
$\eta=(y_{1},\,\dots,\,y_{p}\,;\,\,\bb) \in \SLS(\mu)$, respectively. 
First, we prove the ``only if'' part. 
Take an (arbitrary) defining chain 
$x_{1}',\,\dots,\,x_{s}',\,y_{1}',\,\dots,\,y_{p}' \in W_{\af}$ 
for $\pi \otimes \eta$. It suffices to show the following claim. 

\begin{claim} \label{c:DC2-1}
Let $x \in W_{\af}$ be such that $y_{p}' \sige x${\rm;} note that 
$\kappa(\eta)=y_{p}=\PK(y_{p}') \sige \PK(x)$ by Lemma~\ref{lem:611}. 
Then, $\kappa(\pi) \sige \PJ(\Deo{\eta}{x})$.
\end{claim}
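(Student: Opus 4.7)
The plan is to prove Claim~\ref{c:DC2-1} by a clean downward induction that compares the given defining chain of $\pi\otimes\eta$ with the chain of minimum lifts used to define $\Deo{\eta}{x}$. Concretely, I will show by induction on $q$, going from $q=p$ down to $q=1$, that $y_{q}' \sige \ti{y}_{q}$ in $W_{\af}$, where $\ti{y}_{p},\ldots,\ti{y}_{1}$ are the elements defined by the recursive procedure \eqref{eq:lift0}.

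For the base case $q=p$: by hypothesis $y_{p}' \sige x$, and since $\PK(y_{p}')=y_{p}$ from the defining chain property \eqref{eq:DC}, we have $y_{p}' \in \Lif{x}{y_{p}}$. Hence the minimality asserted by Proposition~\ref{prop:Deo} gives $y_{p}' \sige \ti{y}_{p}=\min\Lif{x}{y_{p}}$. For the inductive step, assume $y_{q+1}' \sige \ti{y}_{q+1}$. Combining with $y_{q}' \sige y_{q+1}'$ from \eqref{eq:DC} yields $y_{q}' \sige \ti{y}_{q+1}$ by transitivity, and since $\PK(y_{q}')=y_{q}$ we obtain $y_{q}' \in \Lif{\ti{y}_{q+1}}{y_{q}}$; invoking Proposition~\ref{prop:Deo} once more gives $y_{q}' \sige \ti{y}_{q} = \min\Lif{\ti{y}_{q+1}}{y_{q}}$, completing the induction.

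Taking $q=1$, we obtain $y_{1}' \sige \ti{y}_{1} = \Deo{\eta}{x}$. The defining chain condition also gives $x_{s}' \sige y_{1}'$, and hence $x_{s}' \sige \Deo{\eta}{x}$ in $W_{\af}$. I will then apply Lemma~\ref{lem:611} (the order-preservation of the projection $\PJ:W_{\af}\twoheadrightarrow(\WJu)_{\af}$ with respect to the semi-infinite Bruhat order) to deduce $\kappa(\pi)=x_{s}=\PJ(x_{s}') \sige \PJ(\Deo{\eta}{x})$, which is exactly the required conclusion.

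The main obstacle is conceptual rather than technical: one has to recognize that the naturally available element $y_{q}'$ from the defining chain can be tested against the very recursive minimum-lift procedure that defines $\ti{y}_{q}$. Once this is observed, the only real input is the minimality clause of Proposition~\ref{prop:Deo}, applied $p$ times. In particular, the argument uses no delicate properties of semi-infinite LS paths beyond the sheer fact that the defining chain exists and that $\PJ$ is order-preserving — all the combinatorial work has been packaged into the existence of the Deodhar-type minimum lift.
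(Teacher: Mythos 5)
Your proposal is correct and coincides with the paper's own argument: the same descending induction on $q$ establishing $y_{q}' \sige \ti{y}_{q}$ via the minimality clause of Proposition~\ref{prop:Deo}, followed by $x_{s}' \sige y_{1}' \sige \Deo{\eta}{x}$ and an application of Lemma~\ref{lem:611}. Nothing is missing.
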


\noindent
{\it Proof of Claim~\ref{c:DC2-1}.}
For the given $x \in W_{\af}$, 
we define $\ti{y}_{p},\,\ti{y}_{p-1},\,\dots,\,\ti{y}_{1} = \Deo{\eta}{x}$ 
as in \eqref{eq:lift0}. We show by descending induction on $q$ that 
\begin{equation} \label{eq:DC2-1}
y_{q}' \sige \ti{y}_{q} \quad \text{for all $1 \le q \le p$}.
\end{equation}
Because $y_{p}' \sige x$ and $\PK(y_{p}')=y_{p}$, it follows that 
$y_{p}' \in \Lif{x}{y_{p}}$, 
and hence $y_{p}' \sige \min \Lif{x}{y_{p}} = \ti{y}_{p}$. 
Assume that $q < p$. Since $y_{q}' \sige y_{q+1}'$ 
by the definition of defining chains, and 
since $y_{q+1}' \sige \ti{y}_{q+1}$ by our induction hypothesis, 
we obtain $y_{q}' \sige \ti{y}_{q+1}$. 
In addition, we have $\PK(y_{q}')=y_{q}$. 
From these, we deduce that 
$y_{q}' \in \Lif{\ti{y}_{q+1}}{y_{q}}$, and hence 
$y_{q}' \sige \min \Lif{\ti{y}_{q+1}}{y_{q}}
= \ti{y}_{q}$. Thus we have shown \eqref{eq:DC2-1}. 
Hence we have $x_{s}' \sige y_{1}' \sige \ti{y}_{1} = \Deo{\eta}{x}$ 
by the assumption. Therefore, it follows from Lemma~\ref{lem:611} that
%
%
\begin{equation} \label{eq:kappa-Deo}
\kappa(\pi) = x_{s} = \PJ(x_{s}') \sige \PJ(\Deo{\eta}{x}).
\end{equation}
This proves Claim~\ref{c:DC2-1}. \bqed

Next, we prove the ``if'' part. 
We define $\ti{y}_{p}$, $\ti{y}_{p-1}$, $\dots$, $\ti{y}_{1} = \Deo{\eta}{x}$ 
as in \eqref{eq:lift0}. By the definitions, we have
%
%
\begin{equation} \label{eq:DC2-2}
\begin{cases}
\Deo{\eta}{x}
 = \ti{y}_{1} \sige \ti{y}_{2} \sige \cdots \sige \ti{y}_{p} \sige x, & \\[1mm]
\PK(\ti{y}_{q}) = y_{q} 
\quad \text{for $1 \le q \le p$}.
\end{cases}
\end{equation}
Write $\Deo{\eta}{x} \in W_{\af}$ as: 
$\Deo{\eta}{x} = \PJ(\Deo{\eta}{x})z$ with $z \in (\WJ)_{\af}$. 
Since $\kappa(\pi) \sige \PJ(\Deo{\eta}{x})$ by the assumption, 
we deduce from Lemma~\ref{lem:SiL} that $x_{s}':=\kappa(\pi)z \sige 
\PJ(\Deo{\eta}{x})z = \Deo{\eta}{x} = \ti{y}_{1}$. 
Similarly, if we set $x_{u}':=x_{u}z$ for $1 \le u \le s$, then 
we have 
%
%
\begin{equation} \label{eq:DC2-3}
\begin{cases}
x_{1}' \sige x_{2}' \sige \cdots \sige x_{s}' \ 
 (\sige \Deo{\eta}{x} = \ti{y}_{1}), & \\[1mm]
\PJ(x_{u}') = x_{u} 
\quad \text{for $1 \le u \le s$}.
\end{cases}
\end{equation}
Concatenating the sequences in 
\eqref{eq:DC2-2} and \eqref{eq:DC2-3}, 
we obtain a defining chain
%
%
\begin{equation} \label{eq:minDC}
x_{1}' \sige x_{2}' \sige \cdots \sige x_{s}' \sige 
\ti{y}_{1} \sige \ti{y}_{2} \sige \cdots \sige \ti{y}_{p}
\end{equation}
for $\pi \otimes \eta \in \SLS(\lambda) \otimes \SLS(\mu)$. 
This completes the proof of Proposition~\ref{prop:DC2}. 
%
%
\section{Proof of Theorem~\ref{thm:Dem}.}
\label{sec:prf-Dem}

Recall that $\lambda,\,\mu \in P^{+}$, and that
$\J = \bigl\{ i \in I \mid \pair{\lambda}{\alpha_{i}^{\vee}}=0\bigr\}$, 
$\K = \bigl\{ i \in I \mid \pair{\mu}{\alpha_{i}^{\vee}}=0\bigr\}$, and 
$S = \bigl\{ i \in I \mid \pair{\lambda+\mu}{\alpha_{i}^{\vee}}=0\bigr\} = \J \cap \K$.
%
%
\subsection{Proof of \eqref{eq:Db} $\Leftrightarrow$ \eqref{eq:Dc}.}
\label{subsec:23}

We prove the implication \eqref{eq:Db} $\Rightarrow$ \eqref{eq:Dc}. 
Let $x_{1}',\,\dots,\,x_{s}',\,y_{1}',\,\dots,\,y_{p}'=:y$ 
be a defining chain for $\pi \otimes \eta$ 
such that $\PS(y) \sige \PS(x)$. 
Write $x$ as: $x = \PS(x)z$ for some $z \in (\WS)_{\af}$; 
note that $(\WS)_{\af} \subset (\WJ)_{\af} \cap (\WK)_{\af}$. 
We deduce from Lemma~\ref{lem:SiL} that 
\begin{equation*}
\PS(x_{1}')z,\,\dots,\,\PS(x_{s}')z,\,
\PS(y_{1}')z,\,\dots,\,\PS(y_{p}')=\PS(y)z
\end{equation*}
is also a defining chain for $\pi \otimes \eta$ 
such that $\PS(\PS(y)z) = \PS(y) \sige \PS(x)$. 
Hence we may assume from the beginning that $y \sige x$. 
We deduce from Lemma~\ref{lem:611} that $\kappa(\eta) = \PK(y) \sige \PK(x)$. 
Also, the inequality $\kappa(\pi) \sige \PJ(\Deo{\eta}{x})$ was
shown in Claim~\ref{c:DC2-1} in the proof of Proposition~\ref{prop:DC2}. 

The implication \eqref{eq:Dc}\ $\Rightarrow$ \eqref{eq:Db} 
follows from the fact that the defining chain 
\eqref{eq:minDC} for $\pi \otimes \eta$ 
satisfies the desired condition in \eqref{eq:Db}. 

%
\subsection{Proof of \eqref{eq:Da} $\Leftrightarrow$ \eqref{eq:Db}.}
\label{subsec:12}

Let $\DC_{\sige x}(\lambda+\mu)$ denote
the set of elements in $\SLS(\lambda) \otimes \SLS(\mu)$ 
satisfying condition \eqref{eq:Db} 
(or equivalently, condition \eqref{eq:Dc}); by Proposition~\ref{prop:DC}, 
we see that $\DC_{\sige x}(\lambda+\mu) \subset \SM(\lambda+\mu)$. 
%
%
\begin{lem}[{cf. \cite[Lemma~5.3.1 and Proposition~5.3.2]{NS16}}] \label{lem:Dem1}
\mbox{}
\begin{enu}

\item The set $\DC_{\sige x}(\lambda+\mu) \cup \{\bzero\}$ 
is stable under the action of the root operator $f_{i}$ for all $i \in I_{\af}$. 

\item The set $\DC_{\sige x}(\lambda+\mu) \cup \{\bzero\}$ is stable under 
the action of the root operator $e_{i}$ for those $i \in I_{\af}$ such that 
$\pair{x(\lambda+\mu)}{\alpha_{i}^{\vee}} \ge 0$. 

\item Let $i \in I_{\af}$ be such that 
$\pair{x(\lambda+\mu)}{\alpha_{i}^{\vee}} \ge 0$. Then, 
%
%
\begin{equation} \label{eq:Dem1-3a}
\DC_{\sige x}(\lambda+\mu) = \bigl\{e_{i}^{n}(\pi \otimes \eta) \mid 
\pi \otimes \eta \in \DC_{\sige s_{i}x}(\lambda+\mu),\,n \ge 0\bigr\} \setminus \{\bzero\}.
\end{equation}
\end{enu}
\end{lem}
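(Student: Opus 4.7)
The plan is to work throughout with the characterization of $\DC_{\sige x}(\lambda+\mu)$ via defining chains given by condition \eqref{eq:Db}; all three statements then reduce to manipulations of sequences in $W_{\af}$ with respect to the semi-infinite Bruhat order. The basic tools will be Proposition~\ref{prop:Deo} (the semi-infinite Deodhar lift), the elementary properties of $\sile$ collected in the appendix (in particular Lemmas~\ref{lem:si}, \ref{lem:dmd}, \ref{lem:SiL}, \ref{lem:611}), together with the tensor product rule for crystals.

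For part (1), I would take $\pi \otimes \eta \in \DC_{\sige x}(\lambda+\mu)$ with a defining chain $x_1' \sige \cdots \sige x_s' \sige y_1' \sige \cdots \sige y_p'$ satisfying $\PS(y_p') \sige \PS(x)$. By the tensor product rule, $f_i(\pi\otimes\eta)$ equals either $(f_i\pi)\otimes\eta$ or $\pi\otimes(f_i\eta)$. The action of $f_i$ on a semi-infinite LS path modifies only the entries of the $W_{\af}$-sequence around a specific break point (determined by the integrality of $\langle\,\cdot\,\lambda,\alpha_i^{\vee}\rangle$), by left-multiplying a consecutive block by $s_i$ and possibly inserting one new term. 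Exactly as in the proof of Proposition~\ref{prop:SMT1}, the semi-infinite Bruhat relations at the break point guarantee that the resulting sequence remains $\sige$-decreasing, while the final element of the chain is either left unchanged or replaced by $s_i y_p'$ only when $\pair{y_p'(\lambda+\mu)}{\alpha_i^{\vee}}\le 0$; in the latter case $s_i y_p' \sige y_p'$, and Lemma~\ref{lem:611} still gives $\PS(s_i y_p')\sige \PS(x)$. Hence a new defining chain for $f_i(\pi\otimes\eta)$ is produced, verifying stability.

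Part (2) is entirely analogous: one replaces $f_i$ by $e_i$ and reverses the sign of the relevant pairing. The role of the additional hypothesis $\pair{x(\lambda+\mu)}{\alpha_i^{\vee}} \ge 0$ is precisely to control the case where the modification by $e_i$ reaches the final element $y_p'$; the hypothesis implies $s_i \PS(x) \sige \PS(x)$, or, more usefully, $\PS(s_i y_p') \sige \PS(x)$ whenever $\PS(y_p')\sige \PS(x)$ and the replacement by $s_i$ is forced, so the defining-chain condition persists.

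For part (3), the inclusion $\supset$ follows from part (2) iterated, together with the observation that under the hypothesis $\pair{x(\lambda+\mu)}{\alpha_i^{\vee}} \ge 0$ one has $\DC_{\sige s_i x}(\lambda+\mu) \subset \DC_{\sige x}(\lambda+\mu)$, because $\PS(s_i x) \sige \PS(x)$. For the reverse inclusion, given $\pi\otimes\eta \in \DC_{\sige x}(\lambda+\mu)$ I set $n:=\ve_i(\pi\otimes\eta)$ and claim $e_i^{n}(\pi\otimes\eta)\in \DC_{\sige s_i x}(\lambda+\mu)$. By maximality, the final direction of the resulting tensor product is either unchanged or is obtained from the previous one by left-multiplication by $s_i$; in the second case a minimal-lift argument via Proposition~\ref{prop:Deo} produces a defining chain whose final element projects to an element $\sige \PS(s_i x)$. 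The main obstacle will be precisely this last step: carrying out the careful bookkeeping that relates $e_i$-maximality, the change of the projection $\PS$ of the final element, and the existence of a suitably raised defining chain. This will follow the pattern of \cite[Lemma~5.3.1 and Proposition~5.3.2]{NS16}, but transported to the semi-infinite setting through Proposition~\ref{prop:Deo} and the identities $\PS = \PJ\circ \PK^{-1}$-type compatibilities used in Section~\ref{subsec:23}.
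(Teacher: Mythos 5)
Your overall framework (defining chains plus the appendix lemmas and the tensor product rule) is the same as the paper's, but two steps that you treat as routine are exactly where the work lies, and one of them as stated goes in the wrong direction.

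First, in parts (1) and (2) you assert that after $f_{i}$ (or $e_{i}$) left-multiplies a consecutive block of directions by $s_{i}$, ``the semi-infinite Bruhat relations at the break point guarantee that the resulting sequence remains $\sige$-decreasing.'' The right end of the modified block is indeed fine ($s_{i}y_{m+1}' \sige y_{m+1}'$ by \eqref{eq:simple}), but the left junction is not: one needs $y_{k}' \sige s_{i}y_{k+1}'$, and this does not follow from $y_{k}' \sige y_{k+1}'$. The paper's proof locates a pivot index $k_{1} \le k$ with $(y_{k_{1}}')^{-1}\alpha_{i} \in -\Delta^{+}+\BZ\delta$ and replaces the intermediate lifts $y_{n}'$ ($k_{1} < n \le k$) by $s_{i}y_{n}'$ --- harmless for the projections because $\pair{y_{n}\mu}{\alpha_{i}^{\vee}}=0$ there --- so that Lemma~\ref{lem:dmd}\,(1) and (3) apply; and when no such pivot exists among the $y_{n}'$, the tensor product rule forces $\vp_{i}(\pi)=0$ and the replacement must propagate into the lifts $x_{u}'$ of the directions of $\pi$, even though $\pi$ itself is unchanged by $f_{i}$. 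Your sketch contains neither the pivot search nor the possibility of modifying the $\pi$-part of the chain; the analogy with Proposition~\ref{prop:SMT1} covers only the single junction $x_{s}' \sige y_{1}'$, not the whole chain.

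Second, your argument for the inclusion $\subset$ in part (3) travels the wrong way along the $i$-string. To show that $\pi \otimes \eta \in \bigl\{ e_{i}^{n}(b') \mid b' \in \DC_{\sige s_{i}x}(\lambda+\mu),\, n \ge 0 \bigr\}$ you must exhibit some $b' = f_{i}^{m}(\pi \otimes \eta)$, $m \ge 0$, lying in $\DC_{\sige s_{i}x}(\lambda+\mu)$; applying $e_{i}^{\ve_{i}(\pi\otimes\eta)}$ produces the top of the string, from which $\pi \otimes \eta$ is recovered by applying $f_{i}$'s, not $e_{i}$'s, so even if your claim about $e_{i}^{\max}$ were true it would not yield the desired membership. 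The correct move is $b' = f_{i}^{\max}(\pi \otimes \eta)$, which lies in $\DC_{\sige x}(\lambda+\mu)$ by part (1), after which one proves separately that any element of $\DC_{\sige x}(\lambda+\mu)$ killed by $f_{i}$ already lies in $\DC_{\sige s_{i}x}(\lambda+\mu)$ --- again by a defining-chain pivot argument, not via Proposition~\ref{prop:Deo} as you suggest.
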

\begin{proof}
(1) \ Let $\pi \otimes \eta \in \DC_{\sige x}(\lambda+\mu)$, and let $i \in I_{\af}$;
we may assume that $f_{i}(\pi \otimes \eta) \ne \bzero$. 
We give a proof only for the case that $f_{i}(\pi \otimes \eta) = \pi \otimes f_{i}\eta$; 
the proof for the case that $f_{i}(\pi \otimes \eta) = f_{i}\pi \otimes \eta$ is similar. 
We write $\pi$ and $\eta$ as: 
$\pi=(x_{1},\,\dots,\,x_{s}\,;\,\ba) \in \SLS(\lambda)$ and 
$\eta=(y_{1},\,\dots,\,y_{p}\,;\,\bb) \in \SLS(\mu)$, respectively.  
Let $x_{1}' \sige \cdots \sige x_{s}' \sige y_{1}' \sige \cdots \sige y_{p}'$ be 
a defining chain for $\pi \otimes \eta$ such that $\PS(y_{p}') \sige \PS(x)$. 
Take $0 \le t_{0} < t_{1} \le 1$ as in \eqref{eq:t-f} 
(with $\pi$ replaced by $\eta$); note that $H^{\eta}_{i}(t)$ 
is strictly increasing on the interval $[t_{0},\,t_{1}]$. 
We see from \eqref{eq:fpi} that $f_{i}\eta$ is of the form: 
\begin{equation*}
f_{i} \eta := ( y_{1},\,\ldots,\,y_{k},\,s_{i}y_{k+1},\,\dots,\,
  s_{i} y_{m},\,s_{i} y_{m+1},\,y_{m+1},\,\ldots,\,y_{p}\,;\,\bb') 
\end{equation*}
for some $0 \le k \le m \le p-1$ and some 
increasing sequence $\bb'$ of rational numbers in $[0,1]$. 
Here, since $H^{\eta}_{i}(t)$ is strictly increasing on 
the interval $[t_{0},\,t_{1}]$, 
it follows that $\pair{y_{n}\mu}{\alpha_{i}^{\vee}} > 0$, 
and hence that $y_{n}^{-1}\alpha_{i} \in (\Delta^{+} \setminus \DeK^{+}) + \BZ\delta$ 
for all $k+1 \le n \le m+1$. Hence we deduce that 
%
%
\begin{equation} \label{eq:yn}
(y_{n}')^{-1}\alpha_{i} 
\in (\Delta^{+} \setminus \DeK^{+}) + \BZ \delta \subset \Delta^{+} + \BZ\delta
\quad \text{for all $k+1 \le n \le m+1$}
\end{equation}
since $y_{n}' = y_{n}z_{n}$ for some $z_{n} \in (\WK)_{\af}$. 
Therefore, it follows from Lemma~\ref{lem:dmd}\,(3) (applied to the case $J=\emptyset$) 
that $s_{i}y_{k+1}' \sige \cdots \sige s_{i}y_{m+1}'$. 
Also, we see from \eqref{eq:simple} that 
$s_{i}y_{m+1}' \sige y_{m+1}'$. Thus we obtain
\begin{equation} \label{eq:Dem1-0}
s_{i}y_{k+1}' \sige \cdots \sige s_{i}y_{m+1}' \sige y_{m+1}' \sige \cdots \sige y_{p}'; 
\end{equation}
note that $\PK(s_{i}y_{n}') = s_{i}y_{n}$ 
for all $k+1 \le n \le m+1$ by Lemma~\ref{lem:si} since $s_{i}y_{n} \in (\WKu)_{\af}$. 
If $t_{1} \ne 1$, then $\kappa(f_{i}\eta) = \kappa(\eta) = y_{p}$, and 
the final element of the sequence \eqref{eq:Dem1-0} is $y_{p}'$, 
which satisfies $\PS(y_{p}') \sige \PS(x)$ by our assumption. 
If $t_{1} = 1$, then $m+1=p$, $\kappa(f_{i}\eta) = s_{i}\kappa(\eta) = s_{i}y_{p}$, and 
the final element of the sequence \eqref{eq:Dem1-0} is $s_{i}y_{m+1}' = s_{i}y_{p}'$. 
Since $s_{i}y_{m+1}' \sige y_{m+1}'$ as shown above, 
we deduce from Lemma~\ref{lem:611} that $\PS(s_{i}y_{p}') = \PS(s_{i}y_{m+1}') 
\sige \PS(y_{m+1}') = \PS(y_{p}') \sige \PS(x)$. 
In what follows, we will give a defining chain for 
$f_{i}(\pi \otimes \eta) = \pi \otimes (f_{i}\eta)$ 
in which the sequence \eqref{eq:Dem1-0} lies at the tail. 

\paragraph{Case 1.}
%
Assume that the set 
$\bigl\{1 \le n \le k \mid \pair{y_{n}\mu}{\alpha_{i}^{\vee}} \ne 0\bigr\}$ 
is nonempty, and let $k_{0}$ be the maximum element of this set. 
Because the function $H^{\eta}_{i}(t)$ attains 
its minimum value $m^{\eta}_{i}$ at $t=t_{0}$, 
it follows that $\pair{y_{k}\mu}{\alpha_{i}^{\vee}} = 
\pair{y_{k-1}\mu}{\alpha_{i}^{\vee}} = \cdots = 
\pair{y_{k_{0}+1}\mu}{\alpha_{i}^{\vee}} = 0$, and 
$\pair{y_{k_{0}}\mu}{\alpha_{i}^{\vee}} < 0$, 
which implies that 
$y_{n}^{-1}\alpha_{i} \in \DeK + \BZ\delta$ for all $k_{0}+1 \le n \le k$, and 
$y_{k_{0}}^{-1}\alpha_{i} \in -(\Delta^{+} \setminus \DeK^{+}) + \BZ\delta$. 
Hence we deduce that 
$(y_{n}')^{-1}\alpha_{i} \in \DeK + \BZ\delta$ for all $k_{0}+1 \le n \le k$, and 
$(y_{k_{0}}')^{-1}\alpha_{i} \in -(\Delta^{+} \setminus \DeK^{+}) + \BZ\delta$. 
Therefore, there exists $k_{0} \le k_{1} \le k$ such that 
$(y_{n}')^{-1}\alpha_{i} \in \DeK^{+} + \BZ\delta$ for all $k_{1}+1 \le n \le k$, and 
such that $(y_{k_{1}}')^{-1}\alpha_{i} \in -\Delta^{+} + \BZ\delta$; 
recall from \eqref{eq:yn} that 
$(y_{k+1}')^{-1}\alpha_{i} \in \Delta^{+} + \BZ\delta$.
Hence, in this case, we deduce from Lemma~\ref{lem:dmd}\,(1) and (3) that 
$y_{k_{1}}' \sige s_{i}y_{k_{1}+1}' \sige \cdots \sige s_{i}y_{k}' \sige s_{i}y_{k+1}'$;
since $(y_{n}')^{-1}\alpha_{i} \in \DeK + \BZ\delta$ for all $k_{1}+1 \le n \le k$, 
we see by Remark~\ref{rem:si} that 
$\PK(s_{i}y_{n}') = \PK(y_{n}') = y_{n}$ for all $k_{1}+1 \le n \le k$. 
Thus, we obtain a defining chain 
%
%
\begin{equation} \label{eq:Dem1-1}
x_{1}' \sige \cdots \sige x_{s}' \sige y_{1}' \sige \cdots \sige 
y_{k_{1}}' \sige s_{i}y_{k_{1}+1}' \sige \cdots \sige s_{i}y_{m+1}' 
\sige y_{m+1}' \sige \cdots \sige y_{p}'
\end{equation}
for $f_{i}(\pi \otimes \eta) = \pi \otimes (f_{i}\eta)$. 

\paragraph{Case 2.}
%
Assume that the set 
$\bigl\{1 \le n \le k \mid \pair{y_{n}\mu}{\alpha_{i}^{\vee}} \ne 0\bigr\}$ is empty, 
i.e., $\pair{y_{n}\mu}{\alpha_{i}^{\vee}} = 0$ for all $1 \le n \le k$; 
note that $(y_{n}')^{-1}\alpha_{i} \in \DeK + \BZ\delta$ for all $1 \le n \le k$. 
If there exists $1 \le k_{1} \le k$ such that 
$(y_{n}')^{-1}\alpha_{i} \in \DeK^{+} + \BZ\delta$ for all $k_{1}+1 \le n \le k$, and 
$(y_{k_{1}}')^{-1}\alpha_{i} \in -\DeK^{+} + \BZ\delta$, then 
we obtain a defining chain of the form \eqref{eq:Dem1-1} 
for $f_{i}(\pi \otimes \eta) = \pi \otimes (f_{i}\eta)$ 
in exactly the same way as in Case 1. Hence we may assume that 
$(y_{n}')^{-1}\alpha_{i} \in \DeK^{+} + \BZ\delta$ for all $1 \le n \le k$. 
It follows from Lemma~\ref{lem:dmd}\,(3) and \eqref{eq:yn} that 
\begin{equation} \label{eq:Dem1-2}
s_{i}y_{1}' \sige \cdots \sige 
s_{i}y_{k}' \sige s_{i}y_{k+1}' \sige \cdots \sige s_{i}y_{m+1}' 
\sige y_{m+1}' \sige \cdots \sige y_{p}';
\end{equation}
note that by Remark~\ref{rem:si}, 
$\PK(s_{i}y_{n}') = \PK(y_{n}') = y_{n}$ for all $1 \le n \le k$. 
Now, we define $u_{0}$ to be the maximum element of the set 
$\bigl\{1 \le u \le s \mid \pair{x_{u}\lambda}{\alpha_{i}^{\vee}} \ne 0\bigr\} \cup \{0\}$.
We claim that if $u_{0} \ge 1$, then 
$\pair{x_{s}\lambda}{\alpha_{i}^{\vee}} = 
\pair{x_{s-1}\lambda}{\alpha_{i}^{\vee}} = \cdots = 
\pair{x_{u_{0}+1}\lambda}{\alpha_{i}^{\vee}} = 0$, 
and that if $u_{0} \ge 1$, then 
$\pair{x_{u_{0}}\lambda}{\alpha_{i}^{\vee}} < 0$; 
this would imply that 
$x_{u}^{-1}\alpha_{i} \in \DeJ + \BZ\delta$ for all $u_{0}+1 \le u \le s$, and 
that if $u_{0} \ge 1$, then 
$x_{u_{0}}^{-1}\alpha_{i} \in -(\Delta^{+} \setminus \DeJ^{+}) + \BZ\delta$. 
Indeed, since $\pair{y_{n}\mu}{\alpha_{i}^{\vee}} = 0$ 
for all $1 \le n \le k$ by our assumption, 
we see that $H^{\eta}_{i}(t)$ is identically zero 
on the interval $[0,\,t_{0}]$, and hence 
$m^{\eta}_{i} = 0$, from which it follows that 
$\ve_{i}(\eta) = - m^{\eta}_{i} = 0$ by Remark~\ref{rem:SLS}. 
Here we recall that 
$f_{i}(\pi \otimes \eta) = \pi \otimes f_{i}\eta$ (if and) only if 
$\vp_{i}(\pi) \le \ve_{i}(\eta)$ by 
the tensor product rule for crystals. 
Hence we see that 
$\vp_{i}(\pi) = H^{\pi}_{i}(1) - m^{\pi}_{i} = 0$ 
by Remark~\ref{rem:SLS}. 
Since $\pair{x_{s}\lambda}{\alpha_{i}^{\vee}} = 
\pair{x_{s-1}\lambda}{\alpha_{i}^{\vee}} = \cdots = 
\pair{x_{u_{0}+1}\lambda}{\alpha_{i}^{\vee}} = 0$ by our assumption, 
we obtain $\pair{x_{u_{0}}\lambda}{\alpha_{i}^{\vee}} < 0$ 
if $u_{0} \ge 1$, as desired. 
Therefore, by the same argument as in Case 1, 
we get $0 \le u_{0} \le u_{1} \le s$ such that 
$(x_{u}')^{-1}\alpha_{i} \in \DeJ^{+} + \BZ\delta$ for all $u_{1}+1 \le u \le s$, and 
such that $(x_{u_{1}}')^{-1}\alpha_{i} \in -\Delta^{+} + \BZ\delta$ if $u_{1} \ge 1$; 
recall that $(y_{1}')^{-1}\alpha_{i} \in \Delta_{K}^{+} + \BZ\delta$.
Also we note that by Remark~\ref{rem:si}, 
$\PJ(s_{i}x_{u}') = \PJ(x_{u}') = x_{u}$ for all $u_{1}+1 \le i \le s$.
In this case, by Lemma~\ref{lem:dmd}\,(1) and (3), 
together with \eqref{eq:Dem1-2}, we obtain a defining chain  
\begin{equation*}
x_{1}' \sige \cdots \sige x_{u_{1}}' \sige s_{i}x_{u_{1}+1}' \sige \cdots \sige s_{i}x_{s}' \sige 
s_{i}y_{1}' \sige \cdots \sige s_{i}y_{m+1}' \sige y_{m+1}' \sige \cdots \sige y_{p}'
\end{equation*}
for $f_{i}(\pi \otimes \eta) = \pi \otimes (f_{i}\eta)$. 
This proves part (1). 

(2) \ Let $\pi \otimes \eta \in \DC_{\sige x}(\lambda+\mu)$, 
and let $i \in I_{\af}$ be such that $\pair{x(\lambda+\mu)}{\alpha_{i}^{\vee}} \ge 0$;
we may assume that $e_{i}(\pi \otimes \eta) \ne \bzero$. 
Since $\pi \otimes \eta \in \DC_{\sige x}(\lambda+\mu)$, 
there exists a defining chain for $\pi \otimes \eta$ whose final element, 
say $y \in W_{\af}$, satisfies $\PS(y) \sige \PS(x)$. We can show 
the following claims by arguments similar to those in part (1).
%
%
\begin{claim} \label{c:Dem1-1} \mbox{}
\begin{enu}

\item[\rm (i)] If $e_{i}(\pi \otimes \eta) = e_{i}\pi \otimes \eta$, or if 
$e_{i}(\pi \otimes \eta) = \pi \otimes (e_{i}\eta)$ and $\kappa(e_{i}\eta)=\kappa(\eta)$, 
then there exists a defining chain for $e_{i}(\pi \otimes \eta)$ 
whose final element is $y$. 

\item[\rm (ii)] If $e_{i}(\pi \otimes \eta) = \pi \otimes (e_{i}\eta)$ and 
$\kappa(e_{i}\eta)=s_{i}\kappa(\eta)$, then there exists a defining chain 
for $e_{i}(\pi \otimes \eta)$ whose final element is $s_{i}y$. 

\end{enu}
\end{claim}

In case (i) of Claim~\ref{c:Dem1-1}, 
it is obvious that $e_{i}(\pi \otimes \eta) \in \DC_{\sige x}(\lambda+\mu)$. 
In case (ii) of Claim~\ref{c:Dem1-1}, 
we see by the definition of the root operator $e_{i}$ 
that with notation in \eqref{eq:t-e} and \eqref{eq:epi}, 
$t_{1} = 1$, and the function $H^{\eta}_{i}(t)$ is strictly 
decreasing on $[t_{0},t_{1}] = [t_{0},1]$. 
Hence we have $\pair{\kappa(\eta)\mu}{\alpha_{i}^{\vee}} < 0$, 
which implies that $\kappa(\eta)^{-1}\alpha_{i} \in 
-(\Delta^{+} \setminus \DeK^{+}) + \BZ \delta$. 
Since $\PK(\PS(y)) = \PK(y) = \kappa(\eta)$, we see that 
$(\PS(y))^{-1}\alpha_{i} \in - (\Delta^{+} \setminus \DeK^{+}) + \BZ\delta 
\subset - (\Delta^{+} \setminus \DeS^{+}) + \BZ\delta$, 
which implies that $\pair{\PS(y)(\lambda+\mu)}{\alpha_{i}^{\vee}} < 0$. 
Also, it follows from Lemma~\ref{lem:si} that 
$s_{i}\PS(y) \in (\WSu)_{\af}$ and $\PS(s_{i}y) = s_{i}\PS(y)$. 
Here, by the assumption, we have $\pair{\PS(x)(\lambda+\mu)}{\alpha_{i}^{\vee}} = 
\pair{x(\lambda+\mu)}{\alpha_{i}^{\vee}} \ge 0$. 
Therefore, we deduce from 
Lemma~\ref{lem:dmd}\,(2), together with $\PS(y) \sige \PS(x)$, 
that $\PS(s_{i}y) = s_{i}\PS(y) \sige \PS(x)$. 
Thus, we conclude that 
$e_{i}(\pi \otimes \eta) \in \DC_{\sige x}(\lambda+\mu)$. 
This proves part (2). 

(3) \ If $\pair{x(\lambda+\mu)}{\alpha_{i}^{\vee}} = 0$, 
then $\PS(s_{i}x)=\PS(x)$ by Remark~\ref{rem:si}, and hence 
$\DC_{\sige s_{i}x}(\lambda+\mu) = \DC_{\sige x}(\lambda+\mu)$. 
Hence the assertion is obvious from part (2). 

Assume that $\pair{x(\lambda+\mu)}{\alpha_{i}^{\vee}} > 0$. 
Then we see from Lemma~\ref{lem:si} that 
$\PS(s_{i}x) = s_{i}\PS(x) \in (\WSu)_{\af}$ 
and $\PS(s_{i}x) \sige \PS(x)$, which implies that 
$\DC_{\sige x}(\lambda+\mu) \supset \DC_{\sige s_{i}x}(\lambda+\mu)$. 
Therefore, by part (2), 
we obtain the inclusion $\supset$ in \eqref{eq:Dem1-3a}. 
In order to show the opposite inclusion $\subset$ in \eqref{eq:Dem1-3a}, 
it suffices to show that $f_{i}^{\max}(\pi \otimes \eta) \in 
\DC_{\sige s_{i}x}(\lambda+\mu)$ for all $\pi \otimes \eta \in \DC_{\sige x}(\lambda+\mu)$. 
In view of part (1), this assertion itself follows from the following claim.
%
%
\begin{claim} \label{c:Dem1-2}
Let $\pi \otimes \eta \in \DC_{\sige x}(\lambda+\mu)$. 
If $f_{i}(\pi \otimes \eta) = \bzero$, i.e., 
$\vp_{i}(\pi \otimes \eta) = 0$, then 
$\pi \otimes \eta \in \DC_{\sige s_{i}x}(\lambda+\mu)$.
\end{claim}

\noindent
{\it Proof of Claim~\ref{c:Dem1-2}.}
We write $\pi$ and $\eta$ as: 
$\pi=(x_{1},\,\dots,\,x_{s}\,;\,\ba)$ and 
$\eta=(y_{1},\,\dots,\,y_{p}\,;\,\bb)$, respectively. 
Let $x_{1}' \sige \cdots \sige x_{s}' \sige y_{1}' \sige \cdots \sige y_{p}'$ be 
a defining chain for $\pi \otimes \eta$ such that $\PS(y_{p}') \sige \PS(x)$. 
We see from Lemma~\ref{lem:611} that 
$\PS(x_{1}') \sige \cdots \sige \PS(x_{s}') \sige 
 \PS(y_{1}') \sige \cdots \sige \PS(y_{p}')$ is also 
a defining chain for $\pi \otimes \eta$ 
satisfying $\PS(\PS(y_{p}')) = \PS(y_{p}') \sige \PS(x)$. 
Hence we may assume from the beginning that 
$x_{1}',\,\dots,\,x_{s}',\,y_{1}',\,\dots,\,y_{p}' \in (\WSu)_{\af}$. 

Assume first that the set 
\begin{equation} \label{eq:set1}
\begin{split}
& \bigl\{1 \le q \le p \mid 
(y_{q}')^{-1}\alpha_{i} \not\in (\DeK^{+} \setminus \DeS^{+}) + \BZ\delta\bigr\} \\
& \hspace*{20mm}
= \bigl\{1 \le q \le p \mid 
(y_{q}')^{-1}\alpha_{i} \in 
\bigl((\Delta \setminus \DeK^{+}) \sqcup \DeS^{+}\bigr) + \BZ\delta\bigr\}
\end{split}
\end{equation}
is nonempty. 
Let $q_{1}$ be the maximum element of this set;
notice that $\pair{y_{q}'(\lambda+\mu)}{\alpha_{i}^{\vee}} > 0$ and 
$\pair{y_{q}\mu}{\alpha_{i}^{\vee}} = 
\pair{y_{q}'\mu}{\alpha_{i}^{\vee}} = 0$ for all $q_{1} < q \le p$. 
Note that $f_{i}(\pi \otimes \eta)=\bzero$ implies $f_{i}\eta = \bzero$ 
by the tensor product rule for crystals, and hence 
$H^{\eta}_{i}(1) - m^{\eta}_{i}=0$. 
From this it follows that 
$\pair{y_{q_{1}}'\mu}{\alpha_{i}^{\vee}} = 
\pair{y_{q_{1}}\mu}{\alpha_{i}^{\vee}} \le 0$, and hence 
$(y_{q_{1}}')^{-1}\alpha_{i} \in ((-\Delta^{+}) \sqcup \DeS^{+}) + \BZ\delta$ 
by the definition of $q_{1}$, 
which implies that $\pair{y_{q_{1}}'(\lambda+\mu)}{\alpha_{i}^{\vee}} \le 0$. 
Therefore, we see from Lemma~\ref{lem:dmd}\,(1) and (3) that
\begin{equation} \label{eq:Dem1-2-1}
x_{1}' \sige \cdots \sige x_{s}' \sige y_{1}' \sige \cdots \sige y_{q_{1}}' \sige 
s_{i}y_{q_{1}+1}' \sige \cdots \sige s_{i}y_{p}'; 
\end{equation}
note that $\PK(s_{i}y_{q}')= \PK(y_{q}')$ for all $q_{1} < q \le p$ 
since $\pair{y_{q}'\mu}{\alpha_{i}^{\vee}} =  0$. 
Thus the sequence \eqref{eq:Dem1-2-1} is 
also a defining chain for $\pi \otimes \eta$. 
If $q_{1} = p$, then the final element of \eqref{eq:Dem1-2-1} is $y_{p}'$,
and $\pair{y_{p}'(\lambda+\mu)}{\alpha_{i}^{\vee}} \le 0$. 
Hence it follows from Lemma~\ref{lem:dmd}\,(1) that 
$y_{p}' \sige s_{i}\PS(x) = \PS(s_{i}x)$. 
If $q_{1} < p$, then the final element of \eqref{eq:Dem1-2-1} is $s_{i}y_{p}'$, 
and $\pair{y_{p}'(\lambda+\mu)}{\alpha_{i}^{\vee}} > 0$. 
This implies that $s_{i}y_{p}' \in (\WSu)_{\af}$ by Lemma~\ref{lem:si}, 
and that $s_{i}y_{p}' \sige s_{i}\PS(x) = \PS(s_{i}x)$ by Lemma~\ref{lem:dmd}\,(3). 
Hence we conclude that $\pi \otimes \eta \in \DC_{\sige s_{i}x}(\lambda+\mu)$. 

Assume next that the set in \eqref{eq:set1} is empty, 
that is, $(y_{q}')^{-1}\alpha_{i} \in (\DeK^{+} \setminus \DeS^{+}) + \BZ\delta$
for all $1 \le q \le p$; notice that 
$\pair{y_{q}'(\lambda+\mu)}{\alpha_{i}^{\vee}} > 0$ for all $1 \le  q \le p$. 
Also, since $\pair{y_{q}\mu}{\alpha_{i}^{\vee}} = 
\pair{y_{q}'\mu}{\alpha_{i}^{\vee}} = 0$ for all $1 \le  q \le p$, 
we have $H^{\eta}_{i}(t) = 0$ for all $t \in [0,1]$, and hence $\ve_{i}(\eta)=0$. 
Since $f_{i}(\pi \otimes \eta)=\bzero$ by the assumption, we obtain 
$f_{i}\pi = \bzero$ by the tensor product rule for crystals.  
Let $u_{1}$ be the maximum element of the set $\bigl\{ 1 \le u \le s \mid 
(x_{u}')^{-1}\alpha_{i} \not\in (\DeJ^{+} \setminus \DeS^{+}) + \BZ\delta\bigr\} \cup \{0\}$. 
Then we have $\pair{x_{u}'(\lambda+\mu)}{\alpha_{i}^{\vee}} > 0$ and 
$\pair{x_{u}'\lambda}{\alpha_{i}^{\vee}} = 
\pair{x_{u}\lambda}{\alpha_{i}^{\vee}} = 0$ for all $u_{1} < u \le s$. 
In addition, we can show by the same argument as above that 
if $u_{1} \ge 1$, then $\pair{x_{u_{1}}'(\lambda+\mu)}{\alpha_{i}^{\vee}} \le 0$. 
Therefore, it follows from Lemma~\ref{lem:dmd}\,(1) and (3) that
\begin{equation} \label{eq:Dem1-2-2}
x_{1}' \sige \cdots \sige x_{u_{1}}' \sige 
s_{i}x_{u_{1}+1} \sige \cdots \sige s_{i}x_{s}' \sige s_{i}y_{1}' \sige 
\cdots \sige s_{i}y_{p}'.
\end{equation}
In the same way as for \eqref{eq:Dem1-2-1}, 
we can verify that the sequence \eqref{eq:Dem1-2-2}  is 
a defining chain for $\pi \otimes \eta$ 
satisfying the condition in \eqref{eq:Db}. 
This proves Claim~\ref{c:Dem1-2}. \bqed

This completes the proof of Lemma~\ref{lem:Dem1}. 
\end{proof}
%
%
\begin{cor}[{cf. \cite[Corollary~5.3.3]{NS16}}] \label{cor:Dem2}
Let $x \in W_{\af}$, and $i \in I_{\af}$. 
For every $\pi \otimes \eta \in \DC_{\sige x}(\lambda+\mu)$, 
we have $f_{i}^{\max}(\pi \otimes \eta) \in \DC_{\sige s_{i}x}(\lambda+\mu)$. 
\end{cor}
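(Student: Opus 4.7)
The plan is to reduce the assertion to the three parts of Lemma~\ref{lem:Dem1} by a dichotomy on the sign of $\pair{x(\lambda+\mu)}{\alpha_{i}^{\vee}}$. First observe that $f_{i}^{\max}$ is well-defined on crystal elements, and that for any $b$ in a regular crystal and any $n \geq 0$ one has $f_{i}^{\max}(e_{i}^{n} b) = f_{i}^{\max}(b)$, since $e_{i}^{n}b$ lies on the same $i$-string as $b$ and $f_{i}^{\max}$ sends any element of an $i$-string to its (unique) bottom.

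\textbf{Case 1:} $\pair{x(\lambda+\mu)}{\alpha_{i}^{\vee}} \ge 0$. Here I would apply Lemma~\ref{lem:Dem1}\,(3) to write the given $\pi \otimes \eta \in \DC_{\sige x}(\lambda+\mu)$ as $e_{i}^{n}(\pi' \otimes \eta')$ for some $\pi' \otimes \eta' \in \DC_{\sige s_{i}x}(\lambda+\mu)$ and some $n \ge 0$. Then by the general fact above, $f_{i}^{\max}(\pi \otimes \eta) = f_{i}^{\max}(\pi' \otimes \eta')$, and by Lemma~\ref{lem:Dem1}\,(1), the set $\DC_{\sige s_{i}x}(\lambda+\mu) \cup \{\bzero\}$ is stable under $f_{i}$; since $f_{i}^{\max}(\pi' \otimes \eta')$ is a nonzero crystal element, it lies in $\DC_{\sige s_{i}x}(\lambda+\mu)$.

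\textbf{Case 2:} $\pair{x(\lambda+\mu)}{\alpha_{i}^{\vee}} < 0$. Here I would show the inclusion $\DC_{\sige x}(\lambda+\mu) \subset \DC_{\sige s_{i}x}(\lambda+\mu)$; then the result follows immediately from Lemma~\ref{lem:Dem1}\,(1). To establish this inclusion, note that $\pair{\PS(x)(\lambda+\mu)}{\alpha_{i}^{\vee}} = \pair{x(\lambda+\mu)}{\alpha_{i}^{\vee}} < 0$ (since $x = \PS(x) z$ with $z \in (\WS)_{\af}$ stabilizes $\lambda+\mu$), which implies $\PS(x)^{-1}\alpha_{i} \in -(\Delta^{+}\setminus \DeS^{+}) + \BZ\delta$. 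By Lemma~\ref{lem:si}, $s_{i}\PS(x) \in (\WSu)_{\af}$ and $s_{i}\PS(x) \sile \PS(x)$; consequently $\PS(s_{i}x) = s_{i}\PS(x) \sile \PS(x)$, so any defining chain for $\pi \otimes \eta$ whose final element $y$ satisfies $\PS(y) \sige \PS(x)$ also satisfies $\PS(y) \sige \PS(s_{i}x)$.

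The boundary case $\pair{x(\lambda+\mu)}{\alpha_{i}^{\vee}} = 0$ is subsumed by Case 1 (where Lemma~\ref{lem:Dem1}\,(3) degenerates to $\DC_{\sige s_{i}x}(\lambda+\mu) = \DC_{\sige x}(\lambda+\mu)$, via Remark~\ref{rem:si}), and the argument goes through unchanged. There is essentially no obstacle here: the key technical work has already been absorbed into Lemma~\ref{lem:Dem1}, and the corollary is a bookkeeping consequence. If anything, the only point to treat with care is the observation $f_{i}^{\max}(e_{i}^{n}b) = f_{i}^{\max}(b)$ on an $i$-string, which I would record explicitly at the beginning.
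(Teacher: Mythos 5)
Your proof is correct and follows essentially the same dichotomy on the sign of $\pair{x(\lambda+\mu)}{\alpha_{i}^{\vee}}$ that the paper uses, reducing both cases to Lemma~\ref{lem:Dem1}. The only cosmetic difference is in Case 1, where you invoke the \emph{statement} of Lemma~\ref{lem:Dem1}\,(3) together with the $i$-string observation $f_{i}^{\max}(e_{i}^{n}b)=f_{i}^{\max}(b)$, whereas the paper appeals directly to the proof of that part (whose Claim~2 establishes the same point internally).
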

\begin{proof}
If $\pair{x(\lambda+\mu)}{\alpha_{i}^{\vee}} \ge 0$, 
then the assertion follows from the proof of Lemma~\ref{lem:Dem1}\,(3). 
If $\pair{x(\lambda+\mu)}{\alpha_{i}^{\vee}} < 0$, then 
we have $\PS(s_{i}x) = s_{i}\PS(x) \sile \PS(x)$ by Lemma~\ref{lem:si}, 
and hence $\DC_{\sige s_{i}x}(\lambda+\mu) \supset \DC_{x}(\lambda+\mu)$. 
Therefore, the assertion follows from Lemma~\ref{lem:Dem1}\,(1). 
This proves the corollary. 
\end{proof}
%
%
\begin{lem} \label{lem:Dem3}
Let $x,\,y \in W_{\af}$.
\begin{enu}

\item If $\PS(y) \sige \PS(x)$ in $(\WSu)_{\af}$, then 
$y \cdot \bigl( t_{\xi} \cdot \pi_{\brho} \otimes \pi_{\bchi}) \in \DC_{\sige x}(\lambda+\mu)$
for all $(\brho,\,\bchi,\,\xi) \in \Par(\lambda) \times \Par(\mu) \times 
Q^{\vee}_{I \setminus (\J \cup \K)}$ satisfying \eqref{eq:tx2}. 

\item If $y \cdot \bigl( t_{\xi} \cdot \pi_{\brho} \otimes \pi_{\bchi}) 
\in \DC_{\sige x}(\lambda+\mu)$ for some $(\brho,\,\bchi,\,\xi) \in 
\Par(\lambda) \times \Par(\mu) \times Q^{\vee}_{I \setminus (\J \cup \K)}$ 
satisfying \eqref{eq:tx2}, then $\PS(y) \sige \PS(x)$. 

\end{enu}
\end{lem}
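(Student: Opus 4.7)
For part (1), I plan to construct an explicit defining chain for $y\cdot(t_{\xi}\cdot\pi_{\brho}\otimes\pi_{\bchi})$ whose final element is $y$ itself. Writing $\pi_{\brho}$ and $\pi_{\bchi}$ in the canonical form \eqref{eq:ext}, Lemma~\ref{lem:Weyl} gives
\[
y\cdot(t_{\xi}\cdot\pi_{\brho}\otimes\pi_{\bchi}) = (yt_{\xi}\cdot\pi_{\brho})\otimes(y\cdot\pi_{\bchi})
\]
with directions $\PJ(yt_{\xi+\xi_{u}})$ and $\PK(yt_{\zeta_{q}})$, where $\xi_{u}\in Q^{\vee}_{I\setminus J}$ and $\zeta_{q}\in Q^{\vee}_{I\setminus K}$. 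Setting $\gamma:=\sum_{i\in J\setminus K}\chi^{(i)}_{1}\alpha_{i}^{\vee}\in\QJv$, exactly as in the proof of the ``if'' direction of Proposition~\ref{prop:tx}, I propose the lifts $x_{u}':=yt_{\xi+\xi_{u}+\gamma}$ for $1\le u\le s$ and $y_{q}':=yt_{\zeta_{q}}$ for $1\le q\le p$. Projection compatibility $\PJ(x_{u}')=\PJ(yt_{\xi+\xi_{u}})$ and $\PK(y_{q}')=\PK(yt_{\zeta_{q}})$ is immediate, since $t_{\gamma}\in(\WJ)_{\af}$ and $\PJ$ (resp.\ $\PK$) is invariant under right multiplication by $(\WJ)_{\af}$ (resp.\ $(\WK)_{\af}$). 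Each pairwise comparison in the proposed chain reduces to an inequality of the form $yt_{\mu_{1}}\sige yt_{\mu_{2}}$ with $\mu_{1}-\mu_{2}\in Q^{\vee,+}$; in particular, the crucial junction $x_{s}'=yt_{\xi+\gamma}\sige yt_{\zeta_{1}}=y_{1}'$ reduces precisely to $\xi+\gamma-\zeta_{1}\in Q^{\vee,+}$, which is exactly condition \eqref{eq:tx2}. All these translation-type inequalities follow from Lemma~\ref{lem:SiB}\,(2) combined with the appendix lemmas on how the semi-infinite Bruhat order interacts with left translation. Since $y_{p}'=y$ and $\PS(y)\sige\PS(x)$ by hypothesis, condition \eqref{eq:Db} holds.

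For part (2), the argument must go in reverse and is more delicate: from the mere existence of a defining chain with $\PS(y^{*})\sige\PS(x)$, I have to extract $\PS(y)\sige\PS(x)$. The subtlety is that $y^{*}=y\cdot v$ for some $v\in(\WK)_{\af}$, and $\PS(yv)$ need not equal $\PS(y)$, so the conclusion cannot be read off directly from the final element. My plan is to invoke the equivalence \eqref{eq:Db}\,$\Leftrightarrow$\,\eqref{eq:Dc} proved in Section~\ref{subsec:23}, which rephrases the hypothesis as
\[
\PK(y)\sige\PK(x)\quad\text{and}\quad\PJ(yt_{\xi})\sige\PJ(\Deo{y\cdot\pi_{\bchi}}{x}).
\]
Then, by unwinding the recursive construction \eqref{eq:lift0} of $\Deo{y\cdot\pi_{\bchi}}{x}$ through Proposition~\ref{prop:Deo}, I would track how the first condition rigidifies the $(\WK)_{\af}$-coset of the relevant iterated minimum lifts, while the second rigidifies their $(\WJ)_{\af}$-coset; intersecting these constraints in the common quotient $(\WSu)_{\af}$, with $S=J\cap K$, should deliver $\PS(y)\sige\PS(x)$ after careful bookkeeping of the translation shift $\xi$ and the $\gamma$-correction from part (1).

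The principal obstacle is clearly part (2): the projections $\PJ$ and $\PK$ encode complementary but genuinely \emph{interacting} pieces of information about $y$, and merging them into a statement about $\PS$ requires handling the translation shift $\xi$ and the minimum-lift structure of the Deodhar recursion simultaneously. I expect the bulk of the proof's length to live in part (2), whereas part (1) should be a reasonably direct verification once the appendix machinery on translations and the semi-infinite Bruhat order is in hand.
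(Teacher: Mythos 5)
Your part (1) argument is essentially identical to the paper's: you write $\pi_{\brho}$ and $\pi_{\bchi}$ in their canonical form, introduce the correction $\gamma\in\QJv$, and build the defining chain of translated elements ending at $y$, exactly as in the paper's proof via \eqref{eq:dc63}. That part is fine.

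Part (2), however, has a genuine gap. You correctly diagnose the difficulty — the final element of the defining chain lies in $\Lift(\PK(y))$ but need not equal $y$ — but the method you propose, namely ``unwinding the recursive construction \eqref{eq:lift0} of $\Deo{y\cdot\pi_{\bchi}}{x}$'' directly, only works when $x$ and $y$ are translations. In that special case the minimum lifts have the transparent form $\ti{y}_{q}=t_{\zeta_{q}+\xi'+\gamma_{q}}$ with $\gamma_{q}\in\QKv$ (the paper's Claim~3), and from this one can actually compare coefficients of $\alpha_i^\vee$ for $i$ in $I\setminus K$, $K\setminus S$, and $S$ separately. For general $x$ and $y$, the iterated minimum lifts have no such closed form, and the ``careful bookkeeping'' you promise has no mechanism to carry out. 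The paper's proof instead runs a three-step reduction: Step~1 handles $x,y$ both translations by the explicit Deodhar computation you gesture at; Step~2 reduces an arbitrary $y=vt_{\xi'}$ to $y=t_{\xi'}$ by induction on $\ell(v)$, using that when $y^{-1}\alpha_i\in-\Delta^++\BZ\delta$ the element $y\cdot(t_{\xi}\cdot\pi_{\brho}\otimes\pi_{\bchi})$ is killed by $f_i$, so $e_i^{\max}$ sends it to $(s_iy)\cdot(\ldots)$, and then invokes Lemma~\ref{lem:Dem1}\,(2) to stay in $\DC_{\sige x}$; Step~3 reduces an arbitrary $x$ to a translation by applying $f_i^{\max}$ along a sequence from \cite{AK}, using Corollary~\ref{cor:Dem2}. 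These root-operator reduction steps, which are what make the argument work for general $x$ and $y$, are entirely absent from your plan; without them the direct Deodhar unwinding you propose does not go through.
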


\begin{proof}
(1) By the definitions (see \eqref{eq:ext}), 
$\pi_{\brho} \in \SLS(\lambda)$ and $\pi_{\bchi} \in \SLS(\mu)$ are of the form: 
%
%
\begin{equation} \label{eq:Dem3b}
\begin{cases}
\pi_{\brho}=(\PJ(t_{\xi_1}),\,\dots,\,\PJ(t_{\xi_{s-1}}),\,e\,;\,\ba), & \\[1mm]
\pi_{\bchi}=(\PK(t_{\zeta_1}),\,\dots,\,\PK(t_{\zeta_{p-1}}),\,e\,;\,\bb)
\end{cases}
\end{equation}
for some $\xi_{1},\,\dots,\,\xi_{s-1} \in Q^{\vee}_{I \setminus \J}$ such that 
$\xi_{1} > \cdots > \xi_{s-1} > 0$, and 
$\zeta_{1},\,\dots,\,\zeta_{p-1} \in Q^{\vee}_{I \setminus \K}$ such that 
$\zeta_{1} > \cdots > \zeta_{p-1} > 0$, respectively. 
Also, recall from \eqref{eq:tx0} that 
\begin{equation}
t_{\xi} \cdot \pi_{\brho}=
(\PJ(t_{\xi_1+\xi}),\,\dots,\,
 \PJ(t_{\xi_{s-1}+\xi}),\,\PJ(t_{\xi})\,;\,\ba), 
\end{equation}
and from Lemma~\ref{lem:Weyl} that 
%
%
\begin{equation} \label{eq:Dem3c}
\begin{split}
& y \cdot \bigl( t_{\xi} \cdot \pi_{\brho} \otimes \pi_{\bchi}) = 
(y \cdot (t_{\xi} \cdot \pi_{\brho})) \otimes (y \cdot \pi_{\bchi}), \quad \text{with} \\
& 
\begin{cases}
y \cdot (t_{\xi} \cdot \pi_{\brho}) = 
 (\PJ(yt_{\xi_1+\xi}),\,\dots,\,\PJ(yt_{\xi_{s-1}+\xi}),\,\PJ(yt_{\xi})\,;\,\ba), & \\[1mm]
y \cdot \pi_{\bchi}=(\PK(yt_{\zeta_1}),\,\dots,\,\PK(yt_{\zeta_{p-1}}),\,\PK(y)\,;\,\bb). 
\end{cases}
\end{split}
\end{equation}

Now, if $\bchi=(\chi^{(i)})_{i \in I} \in \Par(\mu)$, with 
$\chi^{(i)}=(\chi^{(i)}_{1} \ge \chi^{(i)}_{2} \ge \cdots)$ for $i \in I$, 
then we have $\zeta_{1} = \sum_{i \in I} \chi_{1}^{(i)} \alpha_{i}^{\vee}$ 
by \eqref{eq:par0}; 
we set $\gamma:=\sum_{i \in \J \setminus K}
\chi_{1}^{(i)}\alpha_{i}^{\vee} \in \QJv$. 
Since $(\brho,\,\bchi,\,\xi)$ satisfies \eqref{eq:tx2}, and 
$\chi_{1}^{(i)}=0$ for all $i \in K$, we deduce that 
$\xi + \gamma \ge \zeta_{1}$, and hence that 
\begin{equation*}
\xi_{1}+\xi+\gamma > \cdots > \xi_{s-1}+\xi+\gamma > \xi+\gamma \ge 
\zeta_{1} > \cdots > \zeta_{p-1} > 0. 
\end{equation*}
Therefore, it follows from Lemma~\ref{lem:SiB}\,(2) 
(applied to the case $J=\emptyset$) that
%
%
\begin{equation} \label{eq:dc63}
yt_{\xi_{1}+\xi+\gamma} \sig \cdots \sig yt_{\xi_{s-1}+\xi+\gamma} \sig yt_{\xi+\gamma} \sige
yt_{\zeta_{1}} \sig \cdots \sig yt_{\zeta_{p-1}} \sig y = yt_{0} \quad \text{in $W_{\af}$}; 
\end{equation} 
note that by Lemma~\ref{lem:PiJ}, 
$\PJ(yt_{\xi_{u}+\xi+\gamma}) = \PJ(yt_{\xi_{u}+\xi})$ 
for all $1 \le u \le s$ since $\gamma \in \QJv$. 
Hence the sequence \eqref{eq:dc63} is 
a defining chain for $y \cdot \bigl( t_{\xi} \cdot \pi_{\brho} \otimes \pi_{\bchi})$. 
Since $\PS(y) \sige \PS(x)$ in $(\WSu)_{\af}$ by the assumption, 
we conclude that 
$y \cdot \bigl( t_{\xi} \cdot \pi_{\brho} \otimes \pi_{\bchi}) 
\in \DC_{\sige x}(\lambda+\mu)$. This proves part (1). 

(2) We divide the proof into several steps. 
%
\paragraph{Step 1.}
%
Assume that $x = t_{\zeta'}$ 
for some $\zeta' \in Q^{\vee}$, and $y = t_{\xi'}$ 
for some $\xi' \in Q^{\vee}$; in this case, 
in order to prove that $\PS(y) \sige \PS(x)$ in $(\WSu)_{\af}$, 
it suffices to show that $[\xi']^{S} \ge [\zeta']^{S}$ (see Lemma~\ref{lem:SiB}\,(2)). 
In the same way as for \eqref{eq:Dem3c}, we obtain
%
%
\begin{equation} \label{eq:Dem3e}
\begin{split}
& y \cdot \bigl( t_{\xi} \cdot \pi_{\brho} \otimes \pi_{\bchi}) = 
(y \cdot (t_{\xi} \cdot \pi_{\brho})) \otimes (y \cdot \pi_{\bchi}), \quad \text{with} \\
& 
\begin{cases}
y \cdot (t_{\xi} \cdot \pi_{\brho}) = 
 (\PJ(t_{\xi_1+\xi+\xi'}),\,\dots,\,\PJ(t_{\xi_{s-1}+\xi+\xi'}),\,\PJ(t_{\xi+\xi'})\,;\,\ba), & \\[1mm]
y \cdot \pi_{\bchi}=(\PK(t_{\zeta_1+\xi'}),\,\dots,\,\PK(t_{\zeta_{p-1}+\xi'}),\,\PK(t_{\xi'})\,;\,\bb). 
\end{cases}
\end{split}
\end{equation}
Here, since $y \cdot \bigl( t_{\xi} \cdot \pi_{\brho} \otimes \pi_{\bchi}) 
\in \DC_{\sige x}(\lambda+\mu)$ satisfies condition \eqref{eq:Db} 
(or equivalently, condition \eqref{eq:Dc}; see Section~\ref{subsec:12}), 
we have 
\begin{equation} \label{eq:Dem3f}
\kappa(y \cdot \pi_{\bchi}) \sige \PK(x) \quad \text{and} \quad
\kappa \bigl(y \cdot (t_{\xi} \cdot \pi_{\brho}))\bigr) \sige \PJ(\Deo{y \cdot \pi_{\bchi}}{x}). 
\end{equation}
We deduce from the first inequality in \eqref{eq:Dem3f} that 
$\PK(t_{\xi'}) = \kappa(y \cdot \pi_{\bchi}) \sige \PK(x) = \PK(t_{\zeta'})$, 
which implies that $[\xi']^{\K} \ge [\zeta']^{\K}$ by Lemma~\ref{lem:SiB}\,(2). 
Since $I \setminus S = (I \setminus \K) \sqcup (\K \setminus S)$, 
it remains to show that $[\xi']_{\K \setminus S} \ge [\zeta']_{\K \setminus S}$. 
We define $\ti{y}_{p}$, $\ti{y}_{p-1}$, $\dots$, $\ti{y}_{1}$ 
by the recursive procedure \eqref{eq:lift0}, that is, 
\begin{equation*}
\begin{split}
& \ti{y}_{p}:=\min \Lif{x}{\PK(t_{\xi'})}, \quad 
  \text{with} \quad x = t_{\zeta'}, \\
& \ti{y}_{p-1}:=\min \Lif{\ti{y}_{p}}{\PK(t_{\zeta_{p-1}+\xi'})},  \\
& \qquad \vdots \\
& \ti{y}_{1}:=\min \Lif{\ti{y}_{2}}{\PK(t_{\zeta_{1}+\xi'})}
  = \Deo{y \cdot \pi_{\bchi}}{x}.
\end{split}
\end{equation*}

\begin{claim} \label{c:Dem3-1}
The elements $\ti{y}_{q}$, $1 \le q \le p$, are of the form{\rm:} 
$\ti{y}_{q} = t_{\zeta_{q}+\xi'+\gamma_{q}}$ 
for some $\gamma_{q} \in \QKv$, where we set $\zeta_{p}:=0$. 
\end{claim}

\noindent
{\it Proof of Claim~\ref{c:Dem3-1}.}
We show the assertion by descending induction on $1 \le q \le p$. 
Assume that $q = p$. We see from Lemma~\ref{lem:lift} that
$\ti{y}_{p} = z_{p} t_{\xi'+\gamma_{p}}$ 
for some $z_{p} \in \WK$ and $\gamma_{p} \in \QKv$. 
Since $z_{p} t_{\xi'+\gamma_{p}} = \ti{y}_{p} \sige x = t_{\zeta'}$, 
it follows from Lemma~\ref{lem:SiB}\,(1) and(2) that 
$t_{\xi'+\gamma_{p}} \sige t_{\zeta'} = x$ in $W_{\af}$. 
Also, we have 
$t_{\xi'+\gamma_{p}} \in \Lift(\PK(t_{\xi'}))$ by Lemma~\ref{lem:lift}. 
Combining these, we obtain $t_{\xi'+\gamma_{p}} \in 
\Lif{x}{\PK(t_{\xi'})}$. 
Since $\ti{y}_{p} = z_{p} t_{\xi'+\gamma_{p}} \sige t_{\xi'+\gamma_{p}}$ 
in $W_{\af}$ by Remark~\ref{rem:SiB},
we deduce that $\ti{y}_{p} = t_{\xi'+\gamma_{p}}$ 
by the minimality of $\ti{y}_{p}$. 

Assume that $q < p$; by our induction hypothesis, 
we have $\ti{y}_{q+1} = t_{\zeta_{q+1}+\xi'+\gamma_{q+1}}$ 
for some $\gamma_{q+1} \in \QKv$. 
Also, we see from Lemma~\ref{lem:lift} that
$\ti{y}_{q} = z_{q} t_{\zeta_{q}+\xi'+\gamma_{q}}$ 
for some $z_{q} \in \WK$ and $\gamma_{q} \in \QKv$. 
Now, the same argument as above shows that
\begin{equation*}
\ti{y}_{q} = z_{q} t_{\zeta_{q}+\xi'+\gamma_{q}} \sige 
t_{\zeta_{q}+\xi'+\gamma_{q}} \sige 
t_{\zeta_{q+1}+\xi'+\gamma_{q+1}} = \ti{y}_{q+1}, 
\end{equation*}
and that $t_{\zeta_{q}+\xi'+\gamma_{q}} \in \Lift(\PK(t_{\zeta_{q}+\xi'}))$. 
Hence we obtain $\ti{y}_{q} = t_{\zeta_{q}+\xi'+\gamma_{q}}$ 
by the minimality of $\ti{y}_{q}$. 
This proves Claim~\ref{c:Dem3-1}. \bqed

\vsp

Because $\ti{y}_{1} \sige \cdots \sige \ti{y}_{p} \sige x$ in $W_{\af}$, 
it follows from Lemma~\ref{lem:SiB}\,(2) and Claim~\ref{c:Dem3-1} that
\begin{equation*}
\zeta_{1}+\xi'+\gamma_{1} \ge \zeta_{2}+\xi'+\gamma_{2} \ge \cdots \ge 
\zeta_{p}+\xi'+\gamma_{p} \ge \zeta'; 
\end{equation*}
in particular, we have
%
%
\begin{equation} \label{eq:step1-4}
[\zeta_{1}+\xi'+\gamma_{1}]_{\K \setminus S} \ge 
[\zeta']_{\K \setminus S}. 
\end{equation}
Also, we see by the second inequality in \eqref{eq:Dem3f} 
and Claim~\ref{c:Dem3-1} that 
\begin{equation*}
\PJ(t_{\xi+\xi'}) = 
\kappa(y \cdot (t_{\xi} \cdot \pi_{\brho})) \sige 
\PJ(\Deo{y \cdot \pi_{\bchi}}{x}) = 
\PJ(t_{\zeta_{1}+\xi'+\gamma_{1}}),
\end{equation*}
which implies that $[\xi+\xi']^{\J} \ge 
[\zeta_{1}+\xi'+\gamma_{1}]^{\J}$ by Lemma~\ref{lem:SiB}\,(2); 
in particular, we have
$[\xi+\xi']_{\K \setminus S} \ge 
[\zeta_{1}+\xi'+\gamma_{1}]_{\K \setminus S}$. 
Here, since $\xi \in Q^{\vee}_{I \setminus (\J \cup \K)}$, 
we have $[\xi+\xi']_{\K \setminus S} = 
[\xi']_{\K \setminus S}$. Therefore, we deduce that
%
%
\begin{equation} \label{eq:step1-5}
[\xi']_{\K \setminus S} \ge 
[\zeta_{1}+\xi'+\gamma_{1}]_{\K \setminus S}. 
\end{equation}
Combining \eqref{eq:step1-4} and \eqref{eq:step1-5}, 
we obtain $[\xi']_{\K \setminus S} \ge 
[\zeta']_{\K \setminus S}$, as desired. 

\paragraph{Step 2.}
%
Assume that $x = t_{\zeta'}$ for some $\zeta' \in Q^{\vee}$, and 
write $y \in W_{\af}$ as $y = v t_{\xi'}$ for some $v \in W$ and $\xi' \in Q^{\vee}$.
Let us show the assertion by induction on $\ell(v)$. 
If $\ell(v) = 0$, i.e., $v = e$, then the assertion follows from Step 1. 
Assume that $\ell(v) > 0$. We take $i \in I$ such that 
$\ell(s_{i}v) = \ell(v) -1$; note that $y^{-1}\alpha_{i} \in -\Delta^{+} + \BZ \delta$. 
Since $\pair{y\lambda}{\alpha_{i}^{\vee}} \le 0$ and 
$\pair{y\mu}{\alpha_{i}^{\vee}} \le 0$, 
we see by the definition of the root operator $f_{i}$ and \eqref{eq:Dem3c} that 
$f_{i}\bigl(y \cdot \bigl( t_{\xi} \cdot \pi_{\brho} \otimes \pi_{\bchi})\bigr) = \bzero$, 
and hence that
\begin{equation} \label{eq:step2-1}
e_{i}^{\max} \bigl( y \cdot \bigl( t_{\xi} \cdot \pi_{\brho} \otimes \pi_{\bchi}) \bigr)
= (s_{i}y) \cdot \bigl( t_{\xi} \cdot \pi_{\brho} \otimes \pi_{\bchi}).
\end{equation}
Since $x = t_{\zeta'}$, we have 
$\pair{x(\lambda+\mu)}{\alpha_{i}^{\vee}} = 
 \pair{\lambda+\mu}{\alpha_{i}^{\vee}} \ge 0$. 
Therefore, by Lemma~\ref{lem:Dem1}\,(2), 
together with \eqref{eq:step2-1}, we obtain 
$(s_{i}y) \cdot \bigl( t_{\xi} \cdot \pi_{\brho} \otimes \pi_{\bchi}) 
\in \DC_{\sige x}(\lambda+\mu)$. 
Hence, by our induction hypothesis, 
we have $\PS(s_{i}y) \sige \PS(x)$.
Here we recall that $\pair{y(\lambda+\mu)}{\alpha_{i}^{\vee}} \le 0$ since 
$y^{-1}\alpha_{i} \in -\Delta^{+} + \BZ \delta$. 
If $\pair{y(\lambda+\mu)}{\alpha_{i}^{\vee}} < 0$, then 
$\PS(y) \sige s_{i}\PS(y) = \PS(s_{i}y) \sige \PS(x)$ 
by Lemma~\ref{lem:si} and Remark~\ref{rem:si}. 
If $\pair{y(\lambda+\mu)}{\alpha_{i}^{\vee}} = 0$, then 
$\PS(y) = \PS(s_{i}y) \sige \PS(x)$ by Remark~\ref{rem:si}. 
In both cases, we obtain $\PS(y) \sige \PS(x)$, as desired. 

\paragraph{Step 3.}
%
Let $x,\,y \in W_{\af}$. We see from \cite{AK} that there exist 
$i_{1},\,\dots,\,i_{N} \in I_{\af}$ such that
$\pair{s_{i_{n-1}} \cdots s_{i_{1}}x(\lambda+\mu)}{\alpha_{i_n}^{\vee}} \ge 0$ 
for all $1 \le n \le N$, and such that
$s_{i_{N}} \cdots s_{i_{1}}x = t_{\zeta'}$ for some $\zeta' \in Q^{\vee}$. 
Let us show the assertion by induction on $N$. 
If $N=0$, i.e., $x = t_{\zeta'}$, 
then the assertion follows from Step 2. Assume that $N > 0$; 
for simplicity of notation, we set $i:=i_{1}$. 
It follows from Corollary~\ref{cor:Dem2} that 
\begin{equation} \label{eq:step3-1}
f_{i}^{\max} 
\bigl( y \cdot \bigl( t_{\xi} \cdot \pi_{\brho} \otimes \pi_{\bchi}) \bigr)
\in \DC_{\sige s_{i}x}(\lambda+\mu).
\end{equation}
%
\paragraph{Case 3.1.}
%
Assume that $\pair{y(\lambda+\mu)}{\alpha_{i}^{\vee}} \le 0$; 
note that $\pair{y\lambda}{\alpha_{i}^{\vee}} \le 0$ and 
$\pair{y\mu}{\alpha_{i}^{\vee}} \le 0$. 
We see by the definition of the root operator $f_{i}$ and \eqref{eq:Dem3c} that 
$f_{i}^{\max} 
\bigl( y \cdot \bigl( t_{\xi} \cdot \pi_{\brho} \otimes \pi_{\bchi}) \bigr)= 
y \cdot \bigl( t_{\xi} \cdot \pi_{\brho} \otimes \pi_{\bchi})$.
Hence, by our induction hypothesis, 
we have $\PS(y) \sige \PS(s_{i}x)$. Here we recall that 
$\pair{x(\lambda+\mu)}{\alpha_{i}^{\vee}} \ge 0$. 
If $\pair{x(\lambda+\mu)}{\alpha_{i}^{\vee}} = 0$, then 
we have $\PS(s_{i}x) = \PS(x)$ by Remark~\ref{rem:si}. 
If $\pair{x(\lambda+\mu)}{\alpha_{i}^{\vee}} > 0$, then 
it follows from Lemma~\ref{lem:si} that 
$\PS(s_{i}x) = s_{i}\PS(x) \sige \PS(x)$. 
In both cases, we obtain $\PS(y) \sige \PS(x)$, as desired. 
%
\paragraph{Case 3.2.}
%
Assume that $\pair{y(\lambda+\mu)}{\alpha_{i}^{\vee}} > 0$; 
note that $\pair{y\lambda}{\alpha_{i}^{\vee}} \ge 0$ and 
$\pair{y\mu}{\alpha_{i}^{\vee}} \ge 0$. 
We see by the definition of the root operator $e_{i}$ and 
\eqref{eq:Dem3c} that 
$e_{i} \bigl( y \cdot \bigl( t_{\xi} \cdot \pi_{\brho} \otimes \pi_{\bchi}) \bigr) = \bzero$, 
and hence
\begin{equation*}
f_{i}^{\max} 
\bigl( y \cdot \bigl( t_{\xi} \cdot \pi_{\brho} \otimes \pi_{\bchi}) \bigr)= 
(s_{i}y) \cdot \bigl( t_{\xi} \cdot \pi_{\brho} \otimes \pi_{\bchi}).
\end{equation*}
Hence, by our induction hypothesis, 
we have $\PS(s_{i}y) \sige \PS(s_{i}x)$. 
As in Case 3.1, we see that $\PS(s_{i}x) \sige \PS(x)$, 
and hence $\PS(s_{i}y) \sige \PS(x)$. 
Also, since $\pair{y(\lambda+\mu)}{\alpha_{i}^{\vee}} > 0$, 
it follows from Lemma~\ref{lem:si} that $\PS(s_{i}y) = s_{i}\PS(y)$, 
and hence from Lemma~\ref{lem:dmd}\,(2) that $\PS(y) \sige \PS(x)$. 

This proves part (2), and completes the proof of Lemma~\ref{lem:Dem3}. 
\end{proof}

Now, the equivalence \eqref{eq:Da} $\Leftrightarrow$ \eqref{eq:Db} 
follows from the next lemma. 
%
%
\begin{lem} \label{lem:DemX}
Let $\psi \in \SLS(\lambda+\mu)$, and assume that 
$\psi$ is mapped to $\pi \otimes \eta \in \SM(\lambda+\mu)$ 
under the isomorphism $\SLS(\lambda+\mu) \cong \SM(\lambda+\mu)$ 
in Theorem~\ref{thm:SMT}. Let $x \in W_{\af}$. 
\begin{enu}

\item If $\psi \in \SLS_{\sige x}(\lambda+\mu)$, then 
$\pi \otimes \eta \in \DC_{\sige x}(\lambda+\mu)$. 

\item If $\pi \otimes \eta \in \DC_{\sige x}(\lambda+\mu)$, then 
$\psi \in \SLS_{\sige x}(\lambda+\mu)$. 

\end{enu}
\end{lem}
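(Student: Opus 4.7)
The strategy is to reduce both parts of the lemma to Lemma~\ref{lem:Dem3} by exploiting the $W_{\af}$-equivariance of the isomorphism from Theorem~\ref{thm:SMT}, and then propagate the equivalence from extremal elements to all of $\SLS(\lambda+\mu)$ by a Demazure-type induction based on Lemma~\ref{lem:Dem1} and Corollary~\ref{cor:Dem2}.

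First I will observe that the crystal isomorphism $\SLS(\lambda+\mu) \stackrel{\sim}{\rightarrow} \SM(\lambda+\mu)$ of Theorem~\ref{thm:SMT} commutes with all Kashiwara operators by definition, hence with the $W_{\af}$-action on regular crystals defined by \eqref{eq:W-act}. Combined with Corollary~\ref{cor:Sext}, which identifies $\pi_{\bvrho}$ with the corresponding element $(t_{\xi} \cdot \pi_{\brho}) \otimes \pi_{\bchi}$ of $\SM(\lambda+\mu)$, this shows that for every $y \in W_{\af}$, the element $y \cdot \pi_{\bvrho}$ is sent to $y \cdot \bigl((t_{\xi} \cdot \pi_{\brho}) \otimes \pi_{\bchi}\bigr)$. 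By Lemma~\ref{lem:Weyl}(1), $\kappa(y \cdot \pi_{\bvrho}) = \PS(y)$, so $y \cdot \pi_{\bvrho} \in \SLS_{\sige x}(\lambda+\mu)$ if and only if $\PS(y) \sige \PS(x)$; by Lemma~\ref{lem:Dem3}, this is also equivalent to $y \cdot \bigl((t_{\xi} \cdot \pi_{\brho}) \otimes \pi_{\bchi}\bigr) \in \DC_{\sige x}(\lambda+\mu)$. This establishes both (1) and (2) in the ``extremal'' case $\psi = y \cdot \pi_{\bvrho}$.

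To extend to arbitrary $\psi \in \SLS(\lambda+\mu)$, I will argue by induction using Demazure-stability on both sides. On the $\SM$ side, Lemma~\ref{lem:Dem1}(1) gives that $\DC_{\sige x}(\lambda+\mu)$ is $f_i$-stable for every $i \in I_{\af}$, and Corollary~\ref{cor:Dem2} shows that $f_i^{\max}$ maps $\DC_{\sige x}$ into $\DC_{\sige s_i x}$. For part~(2), given $\pi \otimes \eta \in \DC_{\sige x}(\lambda+\mu)$, I apply a sequence of raising operators to reach an extremal element $y \cdot \bigl((t_{\xi} \cdot \pi_{\brho}) \otimes \pi_{\bchi}\bigr)$, deduce $\PS(y) \sige \PS(x)$ by iterating Lemma~\ref{lem:Dem1}(3) together with Lemma~\ref{lem:Dem3}, and then apply the corresponding sequence of lowering operators on the $\SLS$ side to $y \cdot \pi_{\bvrho}$, staying inside $\SLS_{\sige x}(\lambda+\mu)$ by the $\SLS$-side analog of Lemma~\ref{lem:Dem1}(1). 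Part~(1) is symmetric: starting from $\psi \in \SLS_{\sige x}$, raise to an extremal element via the $\SLS$-side analogs of Lemma~\ref{lem:Dem1}(2)--(3), verify $\PS(y) \sige \PS(x)$, and descend on the $\SM$ side using Lemma~\ref{lem:Dem1}(1).

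The main technical obstacle is to verify the $\SLS$-side analog of Lemma~\ref{lem:Dem1}, namely that $\SLS_{\sige x}(\lambda+\mu)$ is $f_i$-stable for every $i \in I_{\af}$ and that $e_i^{\max}(\SLS_{\sige x}) \subset \SLS_{\sige s_i x}$ whenever $\pair{x(\lambda+\mu)}{\alpha_i^{\vee}} \ge 0$. This requires a direct tracking of $\kappa(\psi)$ through the root operators on a single semi-infinite LS path, analogous to but simpler than the tensor-product tracking in the proof of Lemma~\ref{lem:Dem1}; it is essentially the content of \cite[Proposition~5.3.2]{NS16} specialized to shape $\lambda+\mu$. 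Once it is established, the two inductions on the $\SLS$ and $\SM$ sides match step by step through the isomorphism of Theorem~\ref{thm:SMT}, and the extremal case handled in the first step propagates to all of $\SLS(\lambda+\mu)$.
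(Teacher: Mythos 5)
Your proposal follows essentially the same strategy as the paper: handle the base case at ``extremal'' elements via Lemma~\ref{lem:Dem3}, and propagate by a Demazure-style induction matching $f_i^{\max}$ on the $\SM$ side (Lemma~\ref{lem:Dem1}, Corollary~\ref{cor:Dem2}) against its $\SLS$-side analog (which, as you correctly note, is \cite[Proposition~5.3.2]{NS16}, used in the paper via \cite[Corollary~5.3.3]{NS16}). Your slightly broader base case --- all of $\{y \cdot \pi_{\bvrho} : y \in W_{\af}\}$ rather than only $\{t_{\xi'}\cdot\pi_{\bvrho}\}$ --- is a harmless strengthening afforded by $W_{\af}$-equivariance and is morally what Lemma~\ref{lem:Dem3}(1),(2) say anyway.

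There are, however, two things worth flagging. First, the operators taking a general $\psi$ to the extremal element are \emph{lowering} operators $f_{i_N}^{\max}\cdots f_{i_1}^{\max}$, not raising ones; your phrasing in part~(2) has this backwards, and the recovery step $\psi = e_i^k f_i^{\max}\psi$ is exactly why the ``if and only if'' statement in Lemma~\ref{lem:Dem1}(3) (rather than the one-sided (1)) is the correct tool. Second, and more substantively, your proposal does not specify how the sequence $(i_1,\ldots,i_N)$ is chosen. It cannot be arbitrary: Lemma~\ref{lem:Dem1}(3) (and its $\SLS$-side counterpart) requires $\pair{s_{i_{n-1}}\cdots s_{i_1}x(\lambda+\mu)}{\alpha_{i_n}^{\vee}} \ge 0$ at every step, so one needs to know that $\psi$ can be brought to the extremal form by a word that simultaneously ``straightens'' $x$ in this sense. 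This is exactly the content of \cite[Lemma~5.4.1]{NS16}, which the paper invokes at the outset and your proposal omits. Without it, the induction does not set up: one has no guarantee that a word making $\psi$ extremal is compatible with the dominance condition needed to apply Lemma~\ref{lem:Dem1}(3) on the way back up. With that lemma supplied, your argument closes and agrees with the paper's.
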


\begin{proof}
By \cite[Lemma~5.4.1]{NS16}, 
there exist $i_{1},\,i_{2},\,\dots,\,i_{N} \in I_{\af}$ satisfying 
the conditions that 
\begin{equation*}
\begin{cases}
\pair{s_{i_{n-1}}s_{i_{n-2}} \cdots 
  s_{i_{2}}s_{i_{1}}x(\lambda+\mu)}{\alpha_{i_n}^{\vee}} \ge 0 
  & \text{for all $1 \le n \le N$, and} \\[1.5mm]
f_{i_{N}}^{\max}f_{i_{N-1}}^{\max} \cdots f_{i_{2}}^{\max}f_{i_{1}}^{\max}\psi = 
t_{\xi'} \cdot \pi_{\bvrho}
  & \text{for some $\xi' \in Q^{\vee}$ and $\bvrho \in \Par(\lambda+\mu)$}. 
\end{cases}
\end{equation*}
We prove part (1) by induction on $N$. Assume that $N=0$, i.e., 
$\psi = t_{\xi'} \cdot \pi_{\bvrho}$; recall from \eqref{eq:tx0} that 
$\kappa(\psi) = \PS(t_{\xi'})$. 
Since $\psi \in \SLS_{\sige x}(\lambda+\mu)$ 
by the assumption, we have
%
%
\begin{equation} \label{eq:DemX-1}
\PS(t_{\xi'}) = \kappa(\psi) \sige \PS(x). 
\end{equation}
By Corollary~\ref{cor:Sext}, 
$\psi = t_{\xi'} \cdot \pi_{\bvrho}$ is mapped to $t_{\xi'} \cdot 
\bigl(t_{\xi} \cdot \pi_{\brho} \otimes \pi_{\bchi}\bigr)$, 
which is $\pi \otimes \eta$, 
for some $\xi \in Q^{\vee}_{I \setminus (\J \cup \K)}$ and 
$\brho \in \Par(\lambda)$, $\bchi \in \Par(\mu)$ satisfying \eqref{eq:tx2} 
under the isomorphism $\SLS(\lambda+\mu) \cong \SM(\lambda+\mu)$ of crystals 
in Theorem~\ref{thm:SMT}.
Therefore, we deduce from Lemma~\ref{lem:Dem3}\,(1), 
together with \eqref{eq:DemX-1}, 
that $\pi \otimes \eta \in \DC_{\sige x}(\lambda+\mu)$. 

Assume that $N > 0$. For simplicity of notation, we set $i_{1}:=i$; 
note that $\pair{x(\lambda+\mu)}{\alpha_{i}^{\vee}} \ge 0$. 
We see from \cite[Corollary~5.3.3]{NS16} that 
$f_{i}^{\max}\psi \in \SLS_{\sige s_{i}x}(\lambda+\mu)$. 
By our induction hypothesis, we have $f_{i}^{\max}(\pi \otimes \eta) 
\in \DC_{\sige s_{i}x}(\lambda+\mu)$. 
Since $\pi \otimes \eta = e_{i}^{k} f_{i}^{\max}(\pi \otimes \eta)$ 
for some $k \ge 0$, we deduce from Lemma~\ref{lem:Dem1}\,(3) that 
$\pi \otimes \eta \in \DC_{\sige x}(\lambda+\mu)$. This proves part~(1). 

We can prove part~(2) similarly, using 
Lemma~\ref{lem:Dem3}\,(2)
instead of Lemma~\ref{lem:Dem3}\,(1).
\end{proof}

\appendix
%
\section*{Appendices.}
%
%
\section{Basic properties of the semi-infinite Bruhat order.}
\label{sec:basic}

We fix $J \subset I$ and 
$\lambda \in P^{+} \subset P_{\af}^{0}$ (see \eqref{eq:P-fin} and \eqref{eq:P}) 
such that $\big\{ i \in I \mid \pair{\lambda}{\alpha_{i}^{\vee}}= 0 \bigr\} = \J$. 
%
%
\begin{lem}[{\cite[Lemmas~2.3.3 and 2.3.5]{INS}}] \label{lem:PiJ}
\mbox{}
\begin{enu}
\item It holds that 
%
%
\begin{equation} \label{eq:PiJ2}
\begin{cases}
\PJ(w)=\mcr{w} 
  & \text{\rm for all $w \in W$}; \\[1mm]
\PJ(xt_{\xi})=\PJ(x)\PJ(t_{\xi}) 
  & \text{\rm for all $x \in W_{\af}$ and $\xi \in Q^{\vee}$};
\end{cases}
\end{equation}
in particular, 
%
%
\begin{equation} \label{eq:PiJ3}
(\WJu)_{\af} 
  = \bigl\{ w \PJ(t_{\xi}) \mid w \in W^J,\,\xi \in Q^{\vee} \bigr\}.
\end{equation}

\item For each $\xi \in Q^{\vee}$, the element $\PJ(t_{\xi})$ is 
of the form{\rm:} $\PJ(t_{\xi})=z_{\xi}t_{\xi+\phi_{\J}(\xi)}$ 
for {\rm(}a unique{\rm)} $z_{\xi} \in \WJ$ and $\phi_{\J}(\xi) \in \QJv$. 

\item For $\xi,\,\zeta \in Q^{\vee}$, 
%
%
\begin{equation} \label{eq:PiJ1}
\PJ(t_{\xi}) = \PJ(t_{\zeta}) \iff \xi-\zeta \in \QJv.
\end{equation}
\end{enu}
\end{lem}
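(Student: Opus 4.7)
The plan is to establish the statements in the order: the first identity of $(1)$, part $(2)$, part $(3)$, the second identity of $(1)$, and finally \eqref{eq:PiJ3}, throughout exploiting the semi-direct product structure $W_{\af} = W \ltimes \{t_\xi \mid \xi \in Q^\vee\}$ with multiplication rule $(w_1 t_{\xi_1})(w_2 t_{\xi_2}) = w_1 w_2 t_{w_2^{-1}\xi_1 + \xi_2}$, and the uniqueness of the factorization $W_{\af} = (\WJu)_{\af} \cdot (\WJ)_{\af}$. The crucial ingredient used repeatedly is that $\WJ$ acts trivially on $Q^\vee/\QJv$, since $s_j \alpha_i^\vee - \alpha_i^\vee \in \QJv$ for $j \in \J$ and $i \in I$.

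For $\PJ(w) = \mcr{w}$, I would factor $w = \mcr{w} \cdot v$ with $v \in \WJ \subset (\WJ)_{\af}$ and verify $\mcr{w} \in (\WJu)_{\af}$: for any $\beta = \alpha + n\delta \in (\DeJ)_{\af}^+$, the image $\mcr{w}\alpha + n\delta$ lies in $\prr$ either because $n > 0$ or, when $n = 0$, because $\alpha \in \DeJ^+$ forces $\mcr{w}\alpha \in \Delta^+$ by \eqref{eq:mcr}; uniqueness of the factorization yields the claim. For $(2)$, I would write $\PJ(t_\xi) = w_0 t_{\zeta_0}$ and $\PJ(t_\xi)^{-1}t_\xi = w_J t_{\xi_J} \in (\WJ)_{\af}$, then expand $(w_0 t_{\zeta_0})(w_J t_{\xi_J}) = t_\xi$ to force $w_0 w_J = e$ (so $z_\xi := w_0 \in \WJ$) and $w_J^{-1}\zeta_0 + \xi_J = \xi$; solving and reducing modulo $\QJv$, triviality of $\WJ$ on $Q^\vee/\QJv$ yields $\zeta_0 \equiv \xi \pmod{\QJv}$, so $\zeta_0 = \xi + \phi_J(\xi)$ for a unique $\phi_J(\xi) \in \QJv$. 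Part $(3)$ is then immediate: ($\Leftarrow$) because $t_\zeta = t_\xi \cdot t_{\zeta - \xi}$ with $t_{\zeta - \xi} \in (\WJ)_{\af}$, and ($\Rightarrow$) by the uniqueness just established in $(2)$.

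The substantial work is the identity $\PJ(xt_\xi) = \PJ(x)\PJ(t_\xi)$, which I would attack in two sub-steps. \textbf{(a)} Using the formula from $(2)$, verify by direct expansion that $\PJ(t_\xi)^{-1}(w_J t_{\xi_J})\PJ(t_\xi) \in (\WJ)_{\af}$ for every $w_J t_{\xi_J} \in (\WJ)_{\af}$: the $W$-part $z_\xi^{-1}w_J z_\xi$ lies in $\WJ$, while the translation part, after reducing modulo $\QJv$, collapses to $-v\xi + \xi \equiv 0$ (with $v := z_\xi^{-1}w_J^{-1}z_\xi \in \WJ$), again by $\WJ$-triviality on $Q^\vee/\QJv$. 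Writing $x = \PJ(x) \cdot v'$ with $v' \in (\WJ)_{\af}$, this normalization property implies $xt_\xi \in \PJ(x)\PJ(t_\xi) \cdot (\WJ)_{\af}$. \textbf{(b)} Show $\PJ(x)\PJ(t_\xi) \in (\WJu)_{\af}$ by exhibiting that $\PJ(t_\xi)$ stabilizes $(\DeJ)_{\af}^+$: the formula $\PJ(t_\xi)(\alpha + n\delta) = z_\xi\alpha + (n - \pair{\alpha}{\xi + \phi_J(\xi)})\delta$ with $z_\xi\alpha \in \DeJ$ shows stabilization of $(\DeJ)_{\af}$, and combining with $\PJ(t_\xi) \in (\WJu)_{\af}$ (so the image lies in $\prr$) gives stabilization of $(\DeJ)_{\af}^+ = (\DeJ)_{\af} \cap \prr$. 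Then for $\beta \in (\DeJ)_{\af}^+$, $(\PJ(x)\PJ(t_\xi))\beta = \PJ(x)(\PJ(t_\xi)\beta) \in \PJ(x)((\DeJ)_{\af}^+) \subseteq \prr$, so $\PJ(x)\PJ(t_\xi) \in (\WJu)_{\af}$. Combined with (a) and uniqueness of factorization, this concludes the proof of $(1)$.

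The main obstacle is sub-step \textbf{(b)}: $(\WJu)_{\af}$ is not closed under multiplication in general, so one cannot simply assert $\PJ(x)\PJ(t_\xi) \in (\WJu)_{\af}$. The rescue is the very specific form $\PJ(t_\xi) = z_\xi t_{\xi + \phi_J(\xi)}$ with $z_\xi \in \WJ$ provided by $(2)$, which forces $\PJ(t_\xi)$ to stabilize the parabolic root subsystem $(\DeJ)_{\af}$; the condition $\PJ(t_\xi) \in (\WJu)_{\af}$ then upgrades this to stability of $(\DeJ)_{\af}^+$, precisely what is needed to ensure that $(\WJu)_{\af}$ is closed under right multiplication by $\PJ(t_\xi)$. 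Finally, \eqref{eq:PiJ3} follows as a corollary: $\supseteq$ by applying the just-proved identity with $x = w \in \WJu$, giving $w\PJ(t_\xi) = \PJ(w)\PJ(t_\xi) = \PJ(wt_\xi) \in (\WJu)_{\af}$; and $\subseteq$ by writing any $y \in (\WJu)_{\af}$ as $y = \ti{w}t_\zeta$ with $\ti{w} \in W$, $\zeta \in Q^\vee$, whence $y = \PJ(y) = \PJ(\ti{w}t_\zeta) = \mcr{\ti{w}}\PJ(t_\zeta)$ with $\mcr{\ti{w}} \in \WJu$.
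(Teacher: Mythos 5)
Your proof is correct, and it fills in a complete argument where the paper merely cites the result (Lemmas 2.3.3 and 2.3.5 of [INS]) without proof. The order of deduction is sensible: you establish the explicit form $\PJ(t_{\xi}) = z_{\xi}t_{\xi+\phi_{\J}(\xi)}$ first, and then use it twice in the proof of the multiplicativity $\PJ(xt_{\xi}) = \PJ(x)\PJ(t_{\xi})$ — once to show that conjugation by $\PJ(t_{\xi})$ normalizes $(\WJ)_{\af}$ (handing you $xt_{\xi} \in \PJ(x)\PJ(t_{\xi})\cdot(\WJ)_{\af}$), and once to show that $\PJ(t_{\xi})$ stabilizes $(\DeJ)_{\af}^{+}$ (handing you $\PJ(x)\PJ(t_{\xi}) \in (\WJu)_{\af}$). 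These are exactly the two facts needed to invoke uniqueness of the factorization $W_{\af} = (\WJu)_{\af}\cdot(\WJ)_{\af}$, and your discussion of why step (b) is the crux — that $(\WJu)_{\af}$ is not in general a subgroup, so one must exhibit a reason it is closed under right multiplication by $\PJ(t_{\xi})$ specifically — is the right diagnosis. The remaining ingredients (triviality of the $\WJ$-action on $Q^{\vee}/\QJv$, the explicit semi-direct product formula, and \eqref{eq:mcr} for the $n=0$ case in step 1) are all correctly applied. One small notational slip: in sub-step (a) you reduce the translation part to $-v\xi + \xi$ rather than $-v\mu + \mu$ (with $\mu = \xi + \phi_{\J}(\xi)$), and you drop a harmless $z_{\xi}^{-1}\xi_{\J}$ term; both differences lie in $\QJv$, so the conclusion is unaffected, but it is worth being precise since the identity you are proving is sensitive modulo $\QJv$ and nowhere else.
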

%
%
%
%
\begin{lem}[{\cite[Remark~4.1.3]{INS}}] \label{lem:si}
Let $x \in (\WJu)_{\af}$, and $i \in I_{\af}$. Then, 
\begin{equation} \label{eq:si1}
s_{i}x \in (\WJu)_{\af} \iff 
\pair{x\lambda}{\alpha_{i}^{\vee}} \ne 0 \iff 
x^{-1}\alpha_{i} \in (\Delta \setminus \DeJ)+\BZ\delta.
\end{equation}
Moreover, in this case, 
%
%
\begin{equation} \label{eq:simple}
\begin{cases}
x \edge{\alpha_{i}} s_{i}x \iff
\pair{x\lambda}{\alpha_{i}^{\vee}} > 0 \iff 
x^{-1}\alpha_{i} \in (\Delta^{+} \setminus \DeJ^{+})+\BZ\delta, & \\[1.5mm]
s_{i}x \edge{\alpha_{i}} x  \iff 
\pair{x\lambda}{\alpha_{i}^{\vee}} < 0 \iff 
x^{-1}\alpha_{i} \in -(\Delta^{+} \setminus \DeJ^{+})+\BZ\delta. & 
\end{cases}
\end{equation}
\end{lem}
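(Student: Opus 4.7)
The plan is to prove each of the three equivalences in \eqref{eq:si1} separately and then deduce the signed refinement \eqref{eq:simple}, reducing every step to direct computations from the definitions, the characterization \eqref{eq:Pet} of $(\WJu)_{\af}$, and the standard behavior of a simple reflection on positive affine real roots.

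First I would establish the second equivalence in \eqref{eq:si1}. Write $x^{-1}\alpha_{i} = \gamma + n\delta$ with $\gamma \in \Delta$ and $n \in \BZ$. Since $\pair{\lambda}{c} = 0$ (because $\lambda \in P^{+} \subset P_{\af}^{0}$) and the coroot of an untwisted affine real root differs from the underlying finite coroot by a multiple of $c$, one obtains $\pair{x\lambda}{\alpha_{i}^{\vee}} = \pair{\lambda}{(x^{-1}\alpha_{i})^{\vee}} = \pair{\lambda}{\gamma^{\vee}}$. By the definition $J = \{i \in I \mid \pair{\lambda}{\alpha_{i}^{\vee}} = 0\}$, combined with the dominance of $\lambda$, the quantity $\pair{\lambda}{\gamma^{\vee}}$ vanishes exactly when $\gamma \in \DeJ$, which yields the desired equivalence.

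Next I would address the first equivalence, $s_{i}x \in (\WJu)_{\af} \iff x^{-1}\alpha_{i} \notin \DeJ + \BZ\delta$. By \eqref{eq:Pet}, $s_{i}x \in (\WJu)_{\af}$ iff $s_{i}x\beta \in \prr$ for every $\beta \in (\DeJ)_{\af}^{+}$. Since $x \in (\WJu)_{\af}$, each $x\beta$ already lies in $\prr$; and because $s_{i}$ sends $\alpha_{i}$ to $-\alpha_{i}$ but permutes $\prr \setminus \{\alpha_{i}\}$, the condition reduces to $x\beta \ne \alpha_{i}$ for every $\beta \in (\DeJ)_{\af}^{+}$, i.e., $x^{-1}\alpha_{i} \notin (\DeJ)_{\af}^{+}$. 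The remaining possibility $x^{-1}\alpha_{i} \in -(\DeJ)_{\af}^{+}$ is excluded outright: then $x\beta = -\alpha_{i}$ for some $\beta \in (\DeJ)_{\af}^{+}$, contradicting $x \in (\WJu)_{\af}$. Combined with the previous paragraph, this completes \eqref{eq:si1}.

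For the signed refinement \eqref{eq:simple}, assume the three equivalent conditions of \eqref{eq:si1} hold, so that $s_{i}x \in (\WJu)_{\af}$. By Definition~\ref{dfn:SiB}, $x \edge{\alpha_{i}} s_{i}x$ in $\SBJ$ iff $\sell(s_{i}x) = \sell(x) + 1$, and the roles swap for $s_{i}x \edge{\alpha_{i}} x$. Writing $x = w t_{\xi}$, one has $s_{i}x = w t_{\xi} s_{\gamma + n\delta}$, where $\gamma + n\delta = x^{-1}\alpha_{i}$, and $\sell(s_{i}x) - \sell(x)$ can be read off via a case split on the sign of $\gamma$ together with \cite[Proposition~A.1.2]{INS}; the outcome is that the change is $+1$ precisely when $x^{-1}\alpha_{i} \in (\Delta^{+} \setminus \DeJ^{+}) + \BZ\delta$, and $-1$ in the opposite case. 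Finally, since $\lambda$ is dominant and $\gamma \notin \DeJ$, the sign of $\pair{x\lambda}{\alpha_{i}^{\vee}} = \pair{\lambda}{\gamma^{\vee}}$ coincides with the sign of $\gamma$, which closes both chains of equivalences in \eqref{eq:simple}.

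The main obstacle is the explicit computation of $\sell(s_{i}x) - \sell(x)$: the finite length contribution $\ell(ws_{\gamma}) - \ell(w)$ must be balanced against the translation contribution $-2n\pair{\rho}{\gamma^{\vee}}$ arising from $t_{\xi} s_{\gamma + n\delta} = s_{\gamma} t_{\xi - n\gamma^{\vee}}$, and one must check that the net change is exactly $\pm 1$ in every case. This bookkeeping is precisely the content of \cite[Proposition~A.1.2]{INS}, which I would cite rather than redo from scratch.
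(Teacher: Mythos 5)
The paper cites this lemma from \cite[Remark~4.1.3]{INS} without supplying a proof, so there is no in-paper argument to compare against. Your proof of the two equivalences in \eqref{eq:si1} is correct: the second one is a direct pairing computation using $\pair{\lambda}{c}=0$ and the fact that for $\lambda$ dominant, $\pair{\lambda}{\gamma^\vee}=0$ exactly when $\gamma\in\DeJ$; the first one uses \eqref{eq:Pet}, the fact that $s_i$ permutes $\prr\setminus\{\alpha_i\}$, and the observation that $x\in(\WJu)_{\af}$ already forbids $x^{-1}\alpha_i\in-(\DeJ)_{\af}^+$.

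For \eqref{eq:simple}, identifying the edge condition with the sign of $\sell(s_ix)-\sell(x)$ and matching it against the sign of $\pair{\lambda}{\gamma^\vee}$ (with $\gamma$ the finite part of $x^{-1}\alpha_i$) is the right idea, but two points need attention. First, your commutation $t_\xi s_{\gamma+n\delta}=s_\gamma t_{\xi-n\gamma^\vee}$ is incorrect: from $s_{\gamma+n\delta}=s_\gamma t_{n\gamma^\vee}$ and $t_\xi s_\gamma=s_\gamma t_{s_\gamma\xi}$ one gets $t_\xi s_{\gamma+n\delta}=s_\gamma t_{\xi+(n-\pair{\gamma}{\xi})\gamma^\vee}$, so the translation contribution is $2(n-\pair{\gamma}{\xi})\pair{\rho}{\gamma^\vee}$; the $\pair{\gamma}{\xi}$ cross-term cannot be dropped. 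Second, \cite[Proposition~A.1.2]{INS}, as invoked in this paper (see the proof of Lemma~\ref{si-refl}), is applied under the hypothesis that a cover already exists, whereas here you are trying to decide \emph{when} the cover exists, so you should check that \cite{INS} supplies the converse direction as well. A self-contained route that sidesteps both points: choose $\zeta\in Q^{\vee}$ so antidominant that Lemma~\ref{lem:Mac} applies to both $xt_\zeta$ and $(s_ix)t_\zeta$. Then $\sell(s_ix)-\sell(x)=\sell((s_ix)t_\zeta)-\sell(xt_\zeta)=\ell(xt_\zeta)-\ell(s_i(xt_\zeta))=\pm1$, and the value is $+1$ exactly when $(xt_\zeta)^{-1}\alpha_i\in-\prr$. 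Writing $x^{-1}\alpha_i=\gamma+n\delta$, one has $(xt_\zeta)^{-1}\alpha_i=\gamma+(n+\pair{\gamma}{\zeta})\delta$; for $\zeta$ deeply antidominant the sign of the $\delta$-coefficient is opposite to that of $\gamma$, so $\sell(s_ix)-\sell(x)=+1$ iff $\gamma\in\Delta^{+}$. This treats $i=0$ and $i\in I$ uniformly and uses nothing beyond the elementary length dichotomy for a simple reflection.
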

%
%
\begin{rem} \label{rem:si}
Keep the setting of Lemma~\ref{lem:si}. 
If $x^{-1}\alpha_{i} \in \DeJ+\BZ\delta$, i.e., 
$\pair{x\lambda}{\alpha_{i}^{\vee}} = 0$, then $\PJ(s_{i}x) = x$. 
\end{rem}
%
%
\begin{lem}[{\cite[Lemma~2.3.6]{NS16}}] \label{lem:dmd}
Let $x,\,y \in (\WJu)_{\af}$ be such that $x \sile y$, 
and let $i \in I_{\af}$. 
\begin{enu}

\item If $\pair{x\lambda}{\alpha_{i}^{\vee}} > 0$ and 
$\pair{y\lambda}{\alpha_{i}^{\vee}} \le 0$, then $s_{i}x \sile y$.

\item If $\pair{x\lambda}{\alpha_{i}^{\vee}} \ge 0$ and 
$\pair{y\lambda}{\alpha_{i}^{\vee}} < 0$, then $x \sile s_{i}y$. 

\item If $\pair{x\lambda}{\alpha_{i}^{\vee}} > 0$ and 
$\pair{y\lambda}{\alpha_{i}^{\vee}} > 0$, or 
if $\pair{x\lambda}{\alpha_{i}^{\vee}} < 0$ and 
$\pair{y\lambda}{\alpha_{i}^{\vee}} < 0$, then $s_{i}x \sile s_{i}y$.

\end{enu}
\end{lem}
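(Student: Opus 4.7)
The plan is to establish parts (1), (2), and (3) simultaneously by induction on the length $\sell(y) - \sell(x) \ge 0$ of a shortest directed path from $x$ to $y$ in $\SBJ$. In the base case, $\sell(x) = \sell(y)$ forces $x = y$ by Definition~\ref{dfn:SiB}; parts (1) and (2) are then vacuous because their sign hypotheses on $\pair{x\lambda}{\alpha_i^\vee}$ and $\pair{y\lambda}{\alpha_i^\vee}$ are mutually inconsistent, and part (3) is trivial once Lemma~\ref{lem:si} certifies that $s_i x = s_i y$ lies in $(\WJu)_{\af}$.

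For the inductive step, I would pick an edge $x \edge{\beta} z$ in $\SBJ$ with $z := s_\beta x \sile y$, so that the path from $z$ to $y$ is shorter by one. The crucial identity is
$$\pair{z\lambda}{\alpha_i^\vee} = \pair{x\lambda}{\alpha_i^\vee} - \pair{\beta}{\alpha_i^\vee}\,\pair{x\lambda}{\beta^\vee},$$
obtained from $s_\beta(\alpha_i^\vee) = \alpha_i^\vee - \pair{\beta}{\alpha_i^\vee}\beta^\vee$ and the fact that $s_\beta$ is an isometry for the pairing. This controls the sign of $\pair{z\lambda}{\alpha_i^\vee}$ and lets me invoke the induction hypothesis on the pair $(z, y)$ in the appropriate sub-case. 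When the signs of $\pair{x\lambda}{\alpha_i^\vee}$ and $\pair{z\lambda}{\alpha_i^\vee}$ agree, the induction hands back one of the relations $s_i z \sile y$, $z \sile s_i y$, or $s_i z \sile s_i y$, and the desired conclusion follows by prepending either the original edge $x \edge{\beta} z$ or its $s_i$-conjugate $s_i x \edge{s_i\beta} s_i z$; at each step membership in $(\WJu)_{\af}$ is re-verified using Lemma~\ref{lem:si} and the sign hypothesis on the relevant endpoint.

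The main obstacle is the \emph{sign-crossing} case, in which $\pair{x\lambda}{\alpha_i^\vee}$ and $\pair{z\lambda}{\alpha_i^\vee}$ have strictly opposite signs. There the naive $s_i$-conjugation of the edge $x \edge{\beta} z$ fails, because $s_i\beta$ need no longer be a positive real root, or it may fail to increase semi-infinite length. However, the very occurrence of a sign change, combined with the displayed identity, forces $\pair{\beta}{\alpha_i^\vee}\,\pair{x\lambda}{\beta^\vee}$ to dominate $\pair{x\lambda}{\alpha_i^\vee}$ in magnitude, which pins $\beta$ down to a narrow list of possibilities in the $W_{\af}$-orbit of $\alpha_i$ modulo $\BZ\delta$. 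In this situation I would argue directly, using $s_\beta = s_i s_{s_i \beta} s_i$, that either $s_i x = z$ (so the desired comparison is already achieved) or there is a one-step semi-infinite edge $s_i x \edge{s_i\beta} z$ delivering the comparison, after which the remaining path from $z$ to $y$ is handled by the induction hypothesis applied to (1) or (2).

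Finally, a uniform bookkeeping step is to confirm that every vertex produced in the construction belongs to $(\WJu)_{\af}$. This follows by combining Lemma~\ref{lem:si}, which characterizes when $s_i x \in (\WJu)_{\af}$ via the non-vanishing of $\pair{x\lambda}{\alpha_i^\vee}$, with Lemma~\ref{lem:PiJ}, which ensures that reflections fixing $\lambda$ lie in $\WJ$ and can be absorbed at the level of the parabolic quotient map $\PJ$. With these compatibility checks, the induction closes and all three parts are established in parallel.
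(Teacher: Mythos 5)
The paper gives no proof of this lemma: it is imported verbatim from \cite[Lemma 2.3.6]{NS16}, so there is nothing internal to compare your argument against. Evaluating your proposal on its own merits, the base case and the non-crossing branch of the inductive step are sound, but the sign-crossing analysis has a genuine gap.

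Concretely, suppose we are in part~(3) with $\pair{x\lambda}{\alpha_i^{\vee}}>0$ and $\pair{y\lambda}{\alpha_i^{\vee}}>0$, we peel $x\edge{\beta}z$, and $\pair{z\lambda}{\alpha_i^{\vee}}<0$. First, your claimed edge $s_i x \edge{s_i\beta} z$ is not an edge: by definition the edge labelled $s_i\beta$ out of $s_i x$ has target $s_{s_i\beta}(s_i x) = s_i s_\beta x = s_i z$, not $z$. Second, even the corrected candidate $s_i x \to s_i z$ fails, because the semi-infinite lengths point the wrong way: $\sell(s_i x)=\sell(x)+1$ and $\sell(s_i z)=\sell(z)-1=\sell(x)$ (using \eqref{eq:simple}), so $\sell(s_i z)=\sell(s_i x)-1$ and the only edge is $s_i z \edge{s_i\beta} s_i x$, which gives $s_i z \sile s_i x$ --- the reverse of what is needed. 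Third, your fallback of invoking the induction hypothesis for parts (1) or (2) on $(z,y)$ does not apply: here $\pair{z\lambda}{\alpha_i^{\vee}}<0$ and $\pair{y\lambda}{\alpha_i^{\vee}}>0$, a sign pattern that matches the hypotheses of none of (1), (2), or (3). The same breakdown occurs for part~(1) when the sign crosses at the first edge with $\beta\neq\alpha_i$. You also leave untreated the genuinely new feature of the parabolic case, namely $\pair{z\lambda}{\alpha_i^{\vee}}=0$, where $s_i z \notin (\WJu)_{\af}$ and one must work instead with $\PJ(s_i z)=z$; citing Lemma~\ref{lem:PiJ} does not by itself produce the required comparison. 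Finally, the assertion that a sign change ``pins $\beta$ down to a narrow list'' in the $W_{\af}$-orbit of $\alpha_i$ is not substantiated and does not follow from the displayed pairing identity, which constrains only the sign and magnitude of $\pair{\beta}{\alpha_i^{\vee}}\pair{x\lambda}{\beta^{\vee}}$, not $\beta$ itself.

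A fix is possible but requires reorganizing the induction: for part (3a) one should peel an edge at the $y$-end $z\edge{\beta}y$ (then $(x,z)$ falls under IH(1) or IH(3a) depending on the sign of $\pair{z\lambda}{\alpha_i^{\vee}}$, and in the crossing sub-cases $z\sile y\sile s_i y$ closes it), symmetrically for (3b) at the $x$-end, and with a separate analysis for (1)--(2) including the $\beta=\alpha_i$ and the $\pair{z\lambda}{\alpha_i^{\vee}}=0$ sub-cases. A cleaner alternative, more in the spirit of what \cite{NS16} is likely to do, is to reduce to the ordinary Bruhat order on $W_{\af}$ by a sufficiently antidominant translation (the direction proved as Lemma~\ref{si-refl} in this paper, together with its converse), invoke the classical lifting property of Coxeter groups, and then descend to the parabolic quotient via $\PJ$ using Lemma~\ref{lem:611}.
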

%
%
\begin{lem}[{\cite[Lemmas 4.3.3--4.3.5]{NNS}}] \label{lem:SiB}
\mbox{}
\begin{enu}

\item 
Let $w,\,v \in \WJu$, and $\xi,\,\zeta \in Q^{\vee}$. 
If $w\PJ(t_{\xi}) \sige v\PJ(t_{\zeta})$, 
then $[\xi]^{\J} \ge [\zeta]^{\J}$, 
where $[\,\cdot\,]^{\J}:Q^{\vee} \twoheadrightarrow 
Q^{\vee}_{I \setminus \J}$ is the projection in \eqref{eq:prj}. 

\item 
Let $w \in \WJu$, and $\xi,\,\zeta \in Q^{\vee}$. 
Then, $w\PJ(t_{\xi}) \sige w\PJ(t_{\zeta})$ 
if and only if $[\xi]^{\J} \ge [\zeta]^{\J}$. 

\item 
Let $x,\,y \in (\WJu)_{\af}$ and $\beta \in \prr$ be such that 
$x \edge{\beta} y$ in $\SBJ$. Then, $\PJ(xt_{\xi}) \edge{\beta} \PJ(yt_{\xi})$ 
in $\SBJ$ for all $\xi \in Q^{\vee}$. Therefore, if $x \sige y$, then 
$\PJ(xt_{\xi}) \sige \PJ(yt_{\xi})$ for all $\xi \in Q^{\vee}$. 

\end{enu}
\end{lem}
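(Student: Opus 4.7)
I plan to prove the three parts of Lemma~\ref{lem:SiB} in the order (3), (2), (1), since (3) supplies a translation-invariance of edges, (2) builds explicit upward chains from (3), and (1) is the ``converse'' extracted via a cover-by-cover analysis.

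For part (3), the crucial identity is that right translation by $t_{\xi}$ commutes with left reflection by $s_{\beta}$. Using Lemma~\ref{lem:PiJ}\,(1) together with $\PJ(x)=x$ and $\PJ(y)=y$ for $x,y\in(\WJu)_{\af}$, I would write $\PJ(xt_{\xi})=x\PJ(t_{\xi})$ and $\PJ(yt_{\xi})=y\PJ(t_{\xi})=s_{\beta}x\PJ(t_{\xi})=s_{\beta}\PJ(xt_{\xi})$, so the edge label is automatically $\beta$. It then remains to verify $\sell(\PJ(yt_{\xi}))=\sell(\PJ(xt_{\xi}))+1$. Writing $x=v t_{\eta}$ and using Lemma~\ref{lem:PiJ}\,(2) to expand $\PJ(t_{\xi})=z_{\xi}t_{\xi+\phi_{J}(\xi)}$, both sides of this length identity reduce to the same formula $\sell(wt_{\nu})=\ell(w)+2\pair{\rho}{\nu}$ applied to the products, and the contribution of $z_{\xi}$ and $\phi_{J}(\xi)$ cancels from the difference.

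For part (2), the ``only if'' direction is immediate from part (1). For the ``if'' direction, by part (3) I can translate on the right by $\PJ(t_{-\zeta})$ and reduce to the case $\zeta=0$; by the same token, left multiplication by $w$ in part (3) reduces to $w=e$. Then, with $\nu=[\xi]^{J}=\sum_{i\in I\setminus J}c_{i}\alpha_{i}^{\vee}\in Q^{\vee,+}_{I\setminus J}$, I would show $e\sile\PJ(t_{\nu})$ by induction on $\sum c_{i}$: for each $i\in I\setminus J$ with $c_{i}>0$, the reflection $s_{\alpha_{i}+\delta}\in\prr$ yields an edge from $\PJ(t_{\nu-\alpha_{i}^{\vee}})$ to $\PJ(t_{\nu})$ in $\SBJ$, the verification using Lemma~\ref{lem:si}/\eqref{eq:simple} together with the assumption $i\notin J$ (so $\pair{\lambda}{\alpha_{i}^{\vee}}>0$).

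For part (1), I would extract information cover by cover. If $v\PJ(t_{\zeta})\sile w\PJ(t_{\xi})$, pick a covering chain in $\SBJ$ and analyze each edge $u\edge{\beta}s_{\beta}u$ with $u=u_{0}t_{\eta_{0}}$ and $\beta=\alpha+n\delta\in\prr$. Since $s_{\beta}u=s_{\alpha}u_{0}\cdot t_{\eta_{0}+nu_{0}^{-1}\alpha^{\vee}}$, the length formula forces $\sell(s_{\beta}u)-\sell(u)=\ell(s_{\alpha}u_{0})-\ell(u_{0})+2n\pair{\rho}{u_{0}^{-1}\alpha^{\vee}}=1$. A direct case analysis of this equation gives only two possibilities: either (i) $n=0$ with $\alpha\in\Delta^{+}$ and $u_{0}^{-1}\alpha\in\Delta^{+}$, in which case the translation part is unchanged; or (ii) $n=1$, $\beta=-\alpha'+\delta$ with $u_{0}^{-1}\alpha'$ equal to a negative simple root $-\alpha_{i}$, in which case the translation part shifts by $+\alpha_{i}^{\vee}\in Q^{\vee,+}$. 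Summing over the chain, the total translation-part increment lies in $Q^{\vee,+}$; since the projection $[\,\cdot\,]^{J}$ preserves the positive cone (the $\alpha_{i}^{\vee}$ with $i\in J$ drop out to zero and those with $i\notin J$ remain), we obtain $[\xi]^{J}\ge[\zeta]^{J}$.

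The main obstacle will be the delicate arithmetic in part (3) case (ii), where the translation shift is nonzero and one must track how $\alpha_{i}^{\vee}$, the element $z_{\xi}\in W_{J}$, and the $\rho$-pairing interact so that the contribution $2\pair{\rho}{z_{\xi}^{-1}\alpha_{i}^{\vee}}$ combines with the change in $\ell(s_{\alpha'}vz_{\xi})-\ell(vz_{\xi})$ to give exactly $+1$; this will require using that $v\in W^{J}$ and that $v,s_{\alpha'}v$ are both minimal coset representatives to control signs of roots under $z_{\xi}^{-1}$.
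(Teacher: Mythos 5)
The paper itself does not prove Lemma~\ref{lem:SiB}; it simply cites \cite[Lemmas 4.3.3--4.3.5]{NNS}. So you are re-deriving a result taken as known, which is fine in principle, but your sketch has one genuine gap and two places where a cleaner route is available.

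\textbf{Part (3).} Your outline via $\PJ(xt_\xi)=x\PJ(t_\xi)$ and $\PJ(yt_\xi)=s_\beta\PJ(xt_\xi)$ is correct, but the length check you flag as ``the main obstacle'' is a red herring. Write $z:=\PJ(t_\xi)^{-1}t_\xi\in(\WJ)_{\af}$, which is \emph{the same} for $x$ and $y$. Then $xt_\xi=\PJ(xt_\xi)\cdot z$ and $yt_\xi=\PJ(yt_\xi)\cdot z$, and Peterson's additivity $\sell(ab)=\sell(a)+\sell(b)$ for $a\in(\WJu)_{\af}$, $b\in(\WJ)_{\af}$ (the very fact invoked in the proof of Lemma~\ref{lem:SiL}) gives $\sell(\PJ(yt_\xi))-\sell(\PJ(xt_\xi))=\sell(yt_\xi)-\sell(xt_\xi)=\sell(y)-\sell(x)=1$, since $\sell(ut_\xi)-\sell(u)=2\pair{\rho}{\xi}$ is independent of $u$. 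There is no arithmetic with $z_\xi$ or $\phi_J(\xi)$ to untangle.

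\textbf{Part (2), ``if'' direction.} This is where the proposal breaks. Your single-edge claim, that $s_{\alpha_i+\delta}$ yields an edge from $\PJ(t_{\nu-\alpha_i^\vee})$ to $\PJ(t_\nu)$, is false already in the test case $J=\emptyset$: there $s_{\alpha_i+\delta}t_{\nu-\alpha_i^\vee}=s_{\alpha_i}t_\nu\neq t_\nu$, and more to the point $\sell(t_\nu)-\sell(t_{\nu-\alpha_i^\vee})=2$, so the gap between them is \emph{two} semi-infinite covers, not one. The minimal chain for $J=\emptyset$ is $e\edge{\alpha_i}s_{\alpha_i}\edge{-\alpha_i+\delta}t_{\alpha_i^\vee}$, and the intermediate vertex is not a (projected) translation. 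Adapting this to general $J$ is harder still, because one must keep all intermediate vertices inside $(\WJu)_{\af}$ and track the effect of $\PJ$. Moreover, your reduction to $w=e$ ``by the same token'' as the reduction to $\zeta=0$ is not available: part (3) is a right-translation statement, and neither it nor Lemma~\ref{lem:SiL} (right multiplication by $(\WJ)_{\af}$) gives you left multiplication by $w\in\WJu$. So the step from $e\sile\PJ(t_\nu)$ to $w\sile w\PJ(t_\nu)$ is unjustified. The argument in \cite{NNS} builds the chain starting from $w$ directly rather than factoring through the identity.

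\textbf{Part (1).} Your cover-by-cover computation is sound in structure, but the case analysis is slightly off: in case (ii) of a semi-infinite cover $u\edge{\beta}s_\beta u$ with $u=u_0t_{\eta_0}$ and $\beta=\alpha+\delta$, what the length formula (cf.\ \cite[Corollary 4.2.2, Proposition A.1.2]{INS}, used in the proof of Lemma~\ref{si-refl}) gives is that $\gamma:=u_0^{-1}\alpha\in\Delta^{+}$, so the translation part increases by $u_0^{-1}\alpha^{\vee}=\gamma^{\vee}$, a \emph{positive} coroot, not necessarily a simple one. This does not break your argument --- $[\gamma^{\vee}]^{\J}\in Q^{\vee,+}_{I\setminus\J}$ is all you need --- but stated as ``$u_0^{-1}\alpha'$ is a negative simple root'' the intermediate claim is false.

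Summing up: part (3) is correct (and simplifiable), part (1) is essentially correct (with a small misstatement), but part (2) has a real hole at the heart of the induction and in the $w=e$ reduction, and as written it does not prove the ``if'' direction.
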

%
%
\begin{rem} \label{rem:SiB}
Let $w \in W$. 
Since $w \ge e$ in the ordinary Bruhat order on $W$, 
we see that $w \sige e$ in the semi-infinite Bruhat order on $W_{\af}$. 
Hence it follows from Lemma~\ref{lem:SiB}\,(3) that 
$wt_{\xi} \sige t_{\xi}$ for all $\xi \in Q^{\vee}$. 
\end{rem}
%
%
\begin{lem} \label{lem:SiL}
Let $x,\,y \in (\WJu)_{\af}$ be such that 
$x \sile y$ in $(\WJu)_{\af}$. Then, 
$xz \sile yz$ in $W_{\af}$ for all $z \in (\WJ)_{\af}$. 
\end{lem}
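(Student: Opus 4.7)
My plan is to reduce to the case of a single cover in $\SBJ$ and lift it to a cover in $\SB$ via the parabolic additivity of semi-infinite length. By transitivity of $\sile$ and induction on the length of a directed path from $x$ to $y$ in $\SBJ$, I reduce to the case $y = s_{\beta} x$ with $\beta \in \prr$, $s_{\beta} x \in (\WJu)_{\af}$, and $\sell(s_{\beta} x) = \sell(x) + 1$. Since left reflection by $s_{\beta}$ commutes with right multiplication by $z$, one has $yz = s_{\beta}(xz)$; by Definition~\ref{dfn:SiB}\,(1) applied with $J = \emptyset$, the desired relation $xz \sile yz$ in $W_{\af}$ reduces to the single length identity $\sell(s_{\beta}(xz)) = \sell(xz) + 1$.

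The key ingredient is the parabolic additivity of semi-infinite length:
\begin{equation*}
\sell(uz) = \sell(u) + \sell(z) \qquad \text{for all $u \in (\WJu)_{\af}$ and $z \in (\WJ)_{\af}$,}
\end{equation*}
which is the semi-infinite analogue of the classical factorization $\ell(wv) = \ell(w) + \ell(v)$ for $w \in \WJu$ and $v \in \WJ$. Granting this, applying it to both $u = x$ and $u = y = s_{\beta} x$ (both in $(\WJu)_{\af}$) yields $\sell(yz) - \sell(xz) = \sell(y) - \sell(x) = 1$, which completes the proof.

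To prove the additivity, I use Lemma~\ref{lem:PiJ}\,(2) to write $u = w' z_{\xi'} t_{\xi' + \phi_{\J}(\xi')}$ with $w' \in \WJu$, $z_{\xi'} \in \WJ$, $\phi_{\J}(\xi') \in \QJv$, and write $z = v t_{\zeta}$ with $v \in \WJ$ and $\zeta \in \QJv$. A direct computation of $\sell(uz) - \sell(u) - \sell(z)$, combined with the classical parabolic length identity $\ell(w' z_{\xi'} v) = \ell(w') + \ell(z_{\xi'} v)$ (valid because $w' \in \WJu$ and $z_{\xi'} v \in \WJ$) and the formula $\rho - v\rho = \sum_{\alpha \in N(v)} \alpha$ (where $N(v) \subset \DeJ^{+}$ is the inversion set of $v \in \WJ$), reduces the additivity to the compensation identity
\begin{equation*}
\ell(z_{\xi'} v) - \ell(z_{\xi'}) - \ell(v) = 2 \sum_{\alpha \in N(v)} \pair{\alpha}{\xi' + \phi_{\J}(\xi')}.
\end{equation*}

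The main obstacle is the verification of this compensation identity, which is a characteristic property of the pair $(z_{\xi'}, \phi_{\J}(\xi'))$ pinned down by $\PJ(t_{\xi'}) = z_{\xi'} t_{\xi' + \phi_{\J}(\xi')} \in (\WJu)_{\af}$. It can be checked by induction on $\ell(v)$ within $\WJ$: the base case $v = s_{i}$ with $i \in \J$ reduces, via the positivity conditions $u\beta \in \prr$ for $\beta \in (\DeJ)_{\af}^{+}$ that characterize membership in $(\WJu)_{\af}$, to the dichotomy $\pair{\alpha_{i}}{\xi' + \phi_{\J}(\xi')} \in \{0, -1\}$ with a concordant sign condition on $(w' z_{\xi'}) \alpha_{i}$, at which both sides of the compensation identity coincide; the inductive step then follows from the chain rule for inversion sets $N(v's_{i}) = N(v') \sqcup v'\{\alpha_{i}\}$ (after sign corrections).
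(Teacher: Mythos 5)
Your argument is correct and is essentially the proof in the paper: reduce to a single edge $x \edge{\beta} y = s_{\beta}x$ in $\SBJ$ and conclude from the additivity $\sell(uz)=\sell(u)+\sell(z)$ for $u \in (\WJu)_{\af}$, $z \in (\WJ)_{\af}$ that $\sell(yz)=\sell(xz)+1$, hence $xz \edge{\beta} yz$ in $\SB$. The only difference is that the paper simply cites this additivity (from \cite{Pet97}; see also \cite[Theorem~3.3]{A}) rather than proving it, whereas you reprove it via the compensation identity; your outline of that computation is sound, but the known parabolic length additivity would have spared you the induction.
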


\begin{proof}
Let $z \in (\WJ)_{\af}$. We know from \cite{Pet97} (see also \cite[Theorem~3.3]{A}) that 
$\sell(xz) = \sell(x) + \sell(z)$ and $\sell(yz) = \sell(y) + \sell(z)$. 
Also, we may assume that 
$x \edge{\beta} y$ in $\SBJ$ for some $\beta \in \prr$; 
by the definition of $\SBJ$, we have $y = s_{\beta}x$, with $\sell(y)=\sell(x)+1$. 
Therefore, $yz = s_{\beta}xz$, and 
\begin{equation*}
\sell(yz) 
 = \sell(y)+\sell(z)
 = \sell(x)+1+\sell(z) = \sell(xz)+1. 
\end{equation*}
Thus, we obtain $xz \edge{\beta} yz$ in $\SB$, as desired. 
\end{proof}
%
%
\begin{lem}[{\cite[Lemma~6.1.1]{INS}}] \label{lem:611}
If $x,\,y \in W_{\af}$ satisfy $x \sige y$, then 
$\PJ(x) \sige \PJ(y)$. 
\end{lem}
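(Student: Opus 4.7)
The plan is to reduce to the case of a single cover in $\SB$ and then track what the parabolic projection $\PJ$ does to it. By transitivity of $\sige$, it suffices to handle a single edge $y \edge{\beta} x$, i.e., $x = s_\beta y$ with $\beta \in \prr$ and $\sell(x) = \sell(y) + 1$.

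To analyze this, I would write $y = y_1 y_2$ in its Peterson decomposition, $y_1 = \PJ(y) \in (\WJu)_{\af}$ and $y_2 \in (\WJ)_{\af}$, and similarly decompose $s_\beta y_1 = z_1 z_2$ with $z_1 \in (\WJu)_{\af}$ and $z_2 \in (\WJ)_{\af}$. Since $z_2 y_2 \in (\WJ)_{\af}$, the Peterson decomposition of $x$ is $x = z_1 \cdot (z_2 y_2)$, whence $\PJ(x) = z_1$. Applying Peterson's length additivity $\sell(ab) = \sell(a) + \sell(b)$ for $a \in (\WJu)_{\af}$, $b \in (\WJ)_{\af}$ (cited in the proof of Lemma~\ref{lem:SiL}) to $y$, to $s_\beta y_1$, and to $x$, together with $\sell(x) = \sell(y) + 1$, gives the length identity $\sell(z_1) - \sell(y_1) = 1 + \sell(y_2) - \sell(z_2 y_2)$.

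The argument then splits into two subcases. If $s_\beta y_1 \in (\WJu)_{\af}$, then $z_2 = e$, the identity above yields $\sell(z_1) = \sell(y_1) + 1$, and the edge $y_1 \edge{\beta} s_\beta y_1 = \PJ(x)$ lies in $\SBJ$, so $\PJ(x) \sige \PJ(y)$ holds directly. If instead $s_\beta y_1 \notin (\WJu)_{\af}$, the strategy is to exhibit a chain of covers from $y_1$ to $z_1$ in $\SBJ$, obtained by composing $s_\beta$ with the correction factor $z_2 \in (\WJ)_{\af}$ and peeling off simple reflections one at a time.

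The main obstacle is the second subcase. A single reflection $s_\beta$ can send several positive real roots out of $\prr$, so the failure of $s_\beta y_1 \in (\WJu)_{\af}$ does not pinpoint a unique compensating reflection in $(\WJ)_{\af}$, and $z_1$ need not be obtainable from $y_1$ by a single $\SBJ$-edge. I expect the cleanest resolution to be an inductive argument on $\ell(z_2)$, using Lemmas~\ref{lem:si} and \ref{lem:dmd} together with Peterson's length additivity to manufacture the chain in $\SBJ$ step by step. This is the semi-infinite analog of the classical fact that the projection $W \twoheadrightarrow W^J$ is order-preserving for the ordinary Bruhat order, and the detailed case analysis is carried out in Ishii-Naito-Sagaki~\cite{INS}.
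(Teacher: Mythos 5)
Your reduction to a single $\SB$-edge and your use of the Peterson decomposition $x = z_1(z_2y_2)$ with $\PJ(x) = z_1$ are correct, as is the length identity and the treatment of the first subcase. The gap is in the second subcase, and you correctly sense that it is the crux; but the proposed induction variable $\ell(z_2)$ does not track the length of the chain you must manufacture. In type $A_2$ with $J = \{1\}$, the $\SB$-edge $s_1 \edge{\theta} s_1s_2$ (with $\theta = \alpha_1+\alpha_2$) has $y_1 = \PJ(s_1) = e$ and $s_\theta y_1 = s_1s_2s_1$, so $z_1 = s_1s_2$ and $z_2 = s_1$, hence $\ell(z_2) = 1$; yet $\PJ(y) = e \sile s_2 \sile s_1s_2 = \PJ(x)$ is already a chain of length $2$ in $\SBJ$. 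Producing the intermediate vertex $s_2$ from the data $(\beta, y_1, z_2)$ is exactly the content you have deferred, and it is not evident what the inductive step should be.

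The standard mechanism for this kind of statement (in the classical Bruhat order and in Deodhar's parabolic setting alike) is not to dissect a single $\beta$-cover, but to run a global induction on $y$ --- say on $\ell(w)$ where $y = wt_\xi$ --- peeling simple reflections off both $y$ and $x$ at once via the diamond lemma (Lemma~\ref{lem:dmd} with $J = \emptyset$) and the edge criterion (Lemma~\ref{lem:si}, Remark~\ref{rem:si}); the intermediate vertices then arise automatically. That said, the paper states this result as a citation (\cite[Lemma 6.1.1]{INS}) and gives no proof, so what is wanted at this point in the text is the pointer, not a re-derivation; your outline is a fair account of why the statement is not obvious.
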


%
\section{Proof of Proposition~\ref{prop:Deo}.}
\label{sec:prf-Deo}
%
%
\begin{lem} \label{lem:lift}
Let $x \in (\WJu)_{\af}$, and write it as{\rm:} 
$x = w \PJ(t_{\xi}) \in (\WJu)_{\af}$ 
for some $w \in \WJu$ and 
$\xi \in Q^{\vee}$ {\rm(}see \eqref{eq:PiJ3}{\rm)}. Then, 
$\Lift(x) = \bigl\{ w' t_{\xi+\gamma} \mid 
 w' \in w\WJ,\, \gamma \in \QJv \bigr\}$. 
\end{lem}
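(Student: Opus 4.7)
\medskip

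\noindent\textit{Proof plan for Lemma~\ref{lem:lift}.}
The strategy is a straightforward double inclusion, using only the multiplicativity of $\PJ$ from Lemma~\ref{lem:PiJ}\,(1), the kernel description \eqref{eq:PiJ1}, and the semi-direct product structure $W_{\af} = W \ltimes Q^{\vee}$.

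For the inclusion $\supseteq$, I would take an arbitrary element $w' t_{\xi+\gamma}$ of the right-hand side, with $w' \in w\WJ$ and $\gamma \in \QJv$. Applying \eqref{eq:PiJ2} gives
\begin{equation*}
\PJ(w' t_{\xi+\gamma}) = \PJ(w')\,\PJ(t_{\xi+\gamma}) = \mcr{w'}\,\PJ(t_{\xi+\gamma}).
\end{equation*}
Since $w \in \WJu$ and $w' \in w\WJ$, one has $\mcr{w'} = w$; since $\gamma \in \QJv$, the criterion \eqref{eq:PiJ1} gives $\PJ(t_{\xi+\gamma}) = \PJ(t_\xi)$. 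Hence $\PJ(w' t_{\xi+\gamma}) = w\PJ(t_\xi) = x$, as required.

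For the inclusion $\subseteq$, I would take $x' \in \Lift(x)$ and write $x' = v t_\zeta$ with $v \in W$ and $\zeta \in Q^{\vee}$ via the semi-direct product decomposition. Applying $\PJ$ and using Lemma~\ref{lem:PiJ}\,(2) on both sides, the equation $\PJ(x') = x$ becomes
\begin{equation*}
\mcr{v}\, z_\zeta\, t_{\zeta + \phi_{\J}(\zeta)}
= w\, z_\xi\, t_{\xi + \phi_{\J}(\xi)} \quad \text{in } W \ltimes Q^{\vee},
\end{equation*}
with $z_\zeta, z_\xi \in \WJ$ and $\phi_{\J}(\zeta), \phi_{\J}(\xi) \in \QJv$. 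Separating the $W$-component from the $Q^{\vee}$-component, I obtain $\mcr{v}\,z_\zeta = w\,z_\xi$ and $\zeta + \phi_{\J}(\zeta) = \xi + \phi_{\J}(\xi)$. The first equality, read modulo $\WJ$, forces $\mcr{v} = w$ (uniqueness of minimal coset representatives), hence $v \in w\WJ$. The second equality gives $\zeta - \xi = \phi_{\J}(\xi) - \phi_{\J}(\zeta) \in \QJv$, so $\zeta = \xi + \gamma$ with $\gamma := \phi_{\J}(\xi) - \phi_{\J}(\zeta) \in \QJv$. Thus $x' = v t_{\xi+\gamma}$ lies in the right-hand side.

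The only genuinely nontrivial point is the uniqueness step that extracts $\mcr{v} = w$ from $\mcr{v} z_\zeta = w z_\xi$; this is not an obstacle so much as a place to be careful, since both sides already lie in the canonical factorization $W = \WJu \cdot \WJ$. Everything else is formal bookkeeping inside $W_{\af} = W \ltimes Q^{\vee}$ together with the two cited parts of Lemma~\ref{lem:PiJ}, so I expect no substantive difficulty.
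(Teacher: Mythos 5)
Your proof is correct. The $\supseteq$ direction is identical to the paper's. For $\subseteq$, you take a slightly different but equally valid route: the paper observes directly from the definition of $\PJ$ that $\Lift(x) = x \cdot (\WJ)_{\af}$ (the fiber of $\PJ$ is the right $(\WJ)_{\af}$-coset), writes $z = v_1 t_{\gamma_1}$, and computes the product $xz$ in $W \ltimes Q^{\vee}$; whereas you start from a generic $x' = v t_{\zeta}$, push $\PJ$ through via Lemma~\ref{lem:PiJ}\,(1)--(2), and then separate the $W$- and $Q^{\vee}$-components, appealing to uniqueness of the decomposition $W = \WJu \cdot \WJ$ to extract $\mcr{v} = w$. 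Both proofs use exactly the same ingredients; the paper's is a hair more economical because it exploits the defining property of $\PJ$ up front, while yours trades that for a direct component-matching computation. Either is acceptable, and your reasoning at the one nontrivial step (extracting $\mcr{v} = w$ from $\mcr{v} z_{\zeta} = w z_{\xi}$ via the canonical factorization) is sound.
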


\begin{proof}
We set $L := \bigl\{ w' t_{\xi+\gamma} \mid 
 w' \in w\WJ,\, \gamma \in \QJv \bigr\}$. 
We first prove that $L \subset \Lift(x)$. 
Let $w' t_{\xi+\gamma} \in L$. 
Then we see from \eqref{eq:PiJ2} and \eqref{eq:PiJ1} that
$\PJ(w' t_{\xi+\gamma}) = \PJ(w')\PJ(t_{\xi+\gamma}) 
 = \mcr{w'}\Pi(t_{\xi}) = w \Pi(t_{\xi}) = x$.
This proves the inclusion $L \subset \Lift(x)$. 

We next show that $L \supset \Lift(x)$. 
Each element of $\Lift(x)$ is of the form 
$x z$ for some $z \in (\WJ)_{\af}=\WJ \ltimes \QJv$; 
we write $z$ as $z = v_{1}t_{\gamma_{1}}$ 
for some $v_{1} \in \WJ$ and $\gamma_{1} \in \QJv$. 
Since $\PJ(t_{\xi}) = v_{2}t_{\xi + \gamma_{2}}$ 
for some $v_{2} \in \WJ$ and $\gamma_{2} \in \QJv$, 
we have $xz = (w \PJ(t_{\xi})) (v_{1}t_{\gamma_{1}}) = 
w (v_{2}t_{\xi + \gamma_{2}})(v_{1}t_{\gamma_{1}}) = 
wv_{2}v_{1}t_{ v_{1}^{-1}(\xi + \gamma_{2}) + \gamma_{1} }$, 
which is of the form $w v t_{\xi+\gamma}$ 
for some $v \in \WJ$ and $\gamma \in \QJv$. 
Thus, this element is contained in $L$. 
This proves the opposite inclusion, and hence the lemma. 
\end{proof}

Now, we give a proof of Proposition~\ref{prop:Deo}. 
If $J=I$, then the assertion is obvious. 
Hence we may assume that $J \subsetneqq I$. 

\paragraph{Step 1.}
%
Assume that $x = t_{\xi}$ for some $\xi \in Q^{\vee}$, and 
$y = \PJ(t_{\zeta})$ for some $\zeta \in Q^{\vee}$; 
since $\PJ(t_{\xi})=y \sige \PJ(x)=\PJ(t_{\xi})$ by 
the assumption, it follows from Lemma~\ref{lem:SiB}\,(1) 
that $[\zeta]^{\J} \ge [\xi]^{\J}$, 
where $[\,\cdot\,]^{\J}:Q^{\vee} \twoheadrightarrow Q_{I \setminus \J}^{\vee}$
is the projection in \eqref{eq:prj}. 
We set $\gamma:=[\zeta-\xi]_{\J} \in \QJv$, 
where $[\,\cdot\,]_{\J}:Q^{\vee} \twoheadrightarrow \QJv$ 
is the projection in \eqref{eq:prj}; 
note that $[\zeta-\gamma]_{\J} = [\xi]_{\J}$. 
We claim that $t_{\zeta-\gamma}$ is the minimum element of $\Lif{x}{y}$.
It is clear by Lemma~\ref{lem:lift} that $t_{\zeta-\gamma} \in \Lift(y)$. 
In addition, since $[\zeta-\gamma]^{\J} = [\zeta]^{\J} \ge [\xi]^{\J}$
and $[\zeta-\gamma]_{\J} = [\xi]_{\J}$, we have $\zeta - \gamma \ge \xi$, 
and hence $t_{\zeta - \gamma} \sige t_{\xi} = x$ by Lemma~\ref{lem:SiB}\,(2).
Thus, $t_{\zeta - \gamma} \in \Lif{x}{y}$. 
Now, by Lemma~\ref{lem:lift}, 
each element $y' \in \Lif{x}{y} \subset \Lift(y)$ is of the form 
$y'=v't_{\zeta-\gamma'}$ for some $v' \in \WJ$ and $\gamma' \in \QJv$. 
Since $v' t_{\zeta-\gamma'} = y' \sige x = t_{\xi}$ in $W_{\af}$ 
by the assumption, we deduce from Lemma~\ref{lem:SiB}\,(1) that 
$\zeta-\gamma' \ge \xi$; in particular, $[\zeta-\gamma']_{\J} \ge [\xi]_{\J}$. 
Here, since $\gamma=[\zeta-\xi]_{\J}$ by the definition, we have 
$[\zeta-\gamma']_{\J} \ge [\xi]_{\J}=[\zeta-\gamma]_{\J}$. 
Also, since $\gamma,\,\gamma' \in \QJv$, we have 
$[\zeta-\gamma']^{\J} = [\zeta-\gamma]^{\J}$. 
Combining these, 
we obtain $\zeta-\gamma' \ge \zeta-\gamma$. 
Therefore, by Remark~\ref{rem:SiB} and Lemma~\ref{lem:SiB}\,(2), 
$y'=v' t_{\zeta-\gamma'} \sige t_{\zeta-\gamma'} \sige t_{\zeta-\gamma}$. 
Thus, $t_{\zeta-\gamma}$ is 
the minimum element of $\Lif{x}{y}$. \bqed

\vspace{3mm}

In the following, 
we fix $\Lambda \in P^{+}$ and $\lambda \in P^{+}$ such that 
$\bigl\{ i \in I \mid \pair{\Lambda}{\alpha_{i}^{\vee}} = 0\bigr\} = \emptyset$ and 
$\bigl\{ i \in I \mid \pair{\lambda}{\alpha_{i}^{\vee}} = 0\bigr\} = \J$. 
Note that $\pair{\Lambda}{\beta^{\vee}} \ne 0$ for all $\beta \in \rr$. 
%
\paragraph{Step 2.}
%
Let $x \in W_{\af}$, and assume that 
$y = \PJ(t_{\zeta})$ for some $\zeta \in Q^{\vee}$.
We deduce from \cite{AK} 
that there exist $i_{1},\,\dots,\,i_{N} \in I_{\af}$ such that
$\pair{s_{i_{n-1}} \cdots s_{i_{1}}x\Lambda}{\alpha_{i_n}^{\vee}} > 0$ 
for all $1 \le n \le N$, and such that
$s_{i_{N}} \cdots s_{i_{1}}x = t_{\xi}$ for some $\xi \in Q^{\vee}$. 
We show the assertion of the proposition by induction on $N$. If $N=0$, 
then the assertion follows from Step~1. Assume that $N \ge 1$; 
for simplicity of notation, we set $i:=i_{1} \in I_{\af}$. 
Since 
\begin{equation} \label{eq:s2-1}
\pair{x\Lambda}{\alpha_{i}^{\vee}} > 0
\end{equation}
by the assumption, 
it follows that $x^{-1}\alpha_{i} \in \Delta^{+}$, and hence 
$\pair{\PJ(x)\lambda}{\alpha_{i}^{\vee}} = 
\pair{x\lambda}{\alpha_{i}^{\vee}} \ge 0$. Also, 
by Lemma~\ref{lem:si} and Remark~\ref{rem:si}, 
\begin{equation} \label{eq:Pix}
\PJ(s_{i}x) = 
 \begin{cases}
 s_{i}\PJ(x) & \text{if $\pair{x\lambda}{\alpha_{i}^{\vee}} > 0$}, \\[1mm]
 \PJ(x) & \text{if $\pair{x\lambda}{\alpha_{i}^{\vee}} = 0$}.
 \end{cases}
\end{equation}

\paragraph{Case 2.1.}
%
Assume that $i \in I \setminus \J$, and hence
$\pair{y\lambda}{\alpha_{i}^{\vee}}=
\pair{\lambda}{\alpha_{i}^{\vee}} > 0$; 
note that $s_{i}y \in (\WJu)_{\af}$ by Lemma~\ref{lem:si}. 
We first claim that $s_{i}y \sige \PJ(s_{i}x)$. 
Indeed, if $\pair{\PJ(x)\lambda}{\alpha_{i}^{\vee}} = 
\pair{x\lambda}{\alpha_{i}^{\vee}} > 0$, 
then it follows from Lemma~\ref{lem:dmd}\,(3), \eqref{eq:Pix}, and 
the assumption $y \sige \PJ(x)$ that 
$s_{i}y \sige s_{i}\PJ(x) = \PJ(s_{i}x)$. 
If $\pair{\PJ(x)\lambda}{\alpha_{i}^{\vee}} = \pair{x\lambda}{\alpha_{i}^{\vee}} = 0$, 
then it follows from Lemma~\ref{lem:si}, 
\eqref{eq:Pix}, and the assumption $y \sige \PJ(x)$ that 
$s_{i}y \sig y \sige \PJ(x) = \PJ(s_{i}x)$. In both cases, 
we obtain $s_{i}y \sige \PJ(s_{i}x)$, as desired. 
Hence, by our induction hypothesis, 
\begin{equation*}
\Lif{s_{i}x}{s_{i}y}=
\bigl\{ y'' \in W_{\af} \mid 
 \text{\rm $\PJ(y'') = s_{i}y$ and $y'' \sige s_{i}x$}
\bigr\}
\end{equation*}
has the minimum element $y''_{\min}$. 
We next claim that $s_{i}y''_{\min} \in \Lif{x}{y}$. 
Indeed, since $\pair{y''_{\min}\lambda}{\alpha_{i}^{\vee}} = 
\pair{\PJ(y''_{\min})\lambda}{\alpha_{i}^{\vee}} = 
\pair{s_{i}y\lambda}{\alpha_{i}^{\vee}} = - \pair{y\lambda}{\alpha_{i}} < 0$, 
it follows from Lemma~\ref{lem:si} that 
$\PJ(s_{i}y''_{\min}) = s_{i} \PJ(y''_{\min}) = s_{i}(s_{i}y) = y$, 
which implies that $s_{i}y''_{\min} \in \Lift(y)$. 
In addition, the inequality $\pair{y''_{\min}\lambda}{\alpha_{i}^{\vee}} < 0$ above
implies that $\pair{y''_{\min}\Lambda}{\alpha_{i}^{\vee}} < 0$. 
Since $\pair{s_{i}x\Lambda}{\alpha_{i}^{\vee}} < 0$ by \eqref{eq:s2-1},
we deduce from Lemma~\ref{lem:dmd}\,(3), together with 
the assumption $y''_{\min} \sige s_{i}x$, that 
$s_{i}y''_{\min} \sige x$. Thus we get 
$s_{i}y''_{\min} \in \Lif{x}{y}$, 
as desired. Finally, we claim that 
\begin{equation} \label{eq:min2-1}
\text{$s_{i}y''_{\min}$ is the minimum element of $\Lif{x}{y}$}.
\end{equation}
Let $y' \in \Lif{x}{y}$. 
Since $\pair{y'\lambda}{\alpha_{i}^{\vee}}=
\pair{y\lambda}{\alpha_{i}^{\vee}} = \pair{\lambda}{\alpha_{i}^{\vee}} > 0$ 
by our assumption, we see that 
$(y')^{-1}\alpha_{i} \in \Delta^{+} + \BZ \delta$, 
and hence $\pair{y'\Lambda}{\alpha_{i}^{\vee}} > 0$. 
Also, since $\pair{x\Lambda}{\alpha_{i}^{\vee}} > 0$ by \eqref{eq:s2-1}, 
it follows from Lemma~\ref{lem:dmd}\,(3) that 
$s_{i}y' \sige s_{i}x$, which implies that 
$s_{i}y' \in \Lif{s_{i}x}{s_{i}y}$. 
Therefore, we obtain $s_{i}y' \sige y''_{\min}$. 
Since $\pair{y''_{\min}\Lambda}{\alpha_{i}^{\vee}} < 0$ as seen above, 
and $\pair{s_{i}y'\Lambda}{\alpha_{i}^{\vee}} < 0$, 
we deduce from Lemma~\ref{lem:dmd}\,(3) that 
$y' \sige s_{i}y''_{\min}$. This shows \eqref{eq:min2-1}. 

\paragraph{\bf Case 2.2.}
%
Assume that $i \in \J$, and hence 
$\pair{y\lambda}{\alpha_{i}^{\vee}}=
\pair{\lambda}{\alpha_{i}^{\vee}} = 0$. 
We first claim that $y \sige \PJ(s_{i}x)$. 
Indeed, if $\pair{\PJ(x)\lambda}{\alpha_{i}^{\vee}} 
 = \pair{x\lambda}{\alpha_{i}^{\vee}} > 0$, 
then it follows from Lemma~\ref{lem:dmd}\,(1), \eqref{eq:Pix}, and 
the assumption $y \sige \PJ(x)$ that 
$y \sige s_{i}\PJ(x) = \PJ(s_{i}x)$. 
If $\pair{\PJ(x)\lambda}{\alpha_{i}^{\vee}} = \pair{x\lambda}{\alpha_{i}^{\vee}} = 0$, 
then $y \sige \PJ(x) = \PJ(s_{i}x)$ by \eqref{eq:Pix}. 
In both cases, we obtain $y \sige \PJ(s_{i}x)$, as desired. 
Hence, by our induction hypothesis, 
\begin{equation*}
\Lif{s_{i}x}{y}=
\bigl\{ y'' \in W_{\af} \mid 
 \text{\rm $\PJ(y'') = y$ and $y'' \sige s_{i}x$}
\bigr\}
\end{equation*}
has the minimum element $y''_{\min}$. We set
\begin{equation*}
y'_{\min}:=
 \begin{cases}
 y''_{\min} & 
 \text{if $\pair{ y''_{\min}\Lambda }{ \alpha_{i}^{\vee} } > 0$}, \\[1mm]
 s_{i}y''_{\min} & 
 \text{if $\pair{ y''_{\min}\Lambda }{ \alpha_{i}^{\vee} } < 0$}; 
 \end{cases}
\end{equation*}
remark that $y'_{\min} \sile y''_{\min}$ by Lemma~\ref{lem:si}. 
First, we show that $y'_{\min} \in \Lif{x}{y}$. 
Since $y''_{\min} \in \Lift(y)$ and $i \in \J$, 
it follows from Remark~\ref{rem:si} that 
$y'_{\min} \in \Lift(y)$. 
Also, since $\pair{x\Lambda}{\alpha_{i}^{\vee}} > 0$, 
we see by Lemma~\ref{lem:si} that $s_{i}x \sige x$. 
Hence we have $y_{\min}' = y_{\min}'' \sige s_{i}x \sige x$ 
if $\pair{ y''_{\min}\Lambda }{ \alpha_{i}^{\vee} } > 0$. 
If $\pair{ y''_{\min}\Lambda }{ \alpha_{i}^{\vee} } < 0$, 
then we deduce from Lemma~\ref{lem:dmd}\,(3) that 
$y_{\min}' = s_{i}y_{\min}'' \sige s_{i}(s_{i}x) = x$ 
since $y_{\min}'' \sige s_{i}x$. 
Thus, in both cases, we obtain 
$y'_{\min} \in \Lif{x}{y}$, as desired. 
Next, we show that 
\begin{equation} \label{eq:min2-2}
\text{$y'_{\min}$ is the minimal element of $\Lif{x}{y}$}.
\end{equation}
Let $y' \in \Lif{x}{y}$. 
If $\pair{y'\Lambda}{\alpha_{i}^{\vee}} < 0$, then 
it follows from Lemma~\ref{lem:dmd}\,(1) that 
$y' \sige s_{i}x$, and hence $y' \in \Lif{s_{i}x}{y}$. 
This implies that $y' \sige y_{\min}''$. 
If $\pair{ y''_{\min}\Lambda }{ \alpha_{i}^{\vee} } > 0$, 
then we have $y' \sige y_{\min}'' = y_{\min}'$ by the definition. 
If $\pair{ y''_{\min}\Lambda }{ \alpha_{i}^{\vee} } < 0$, 
then we see from Lemma~\ref{lem:si} that 
$y' \sige y_{\min}'' \sige s_{i}y_{\min}'' = y_{\min}'$. 
Assume now that $\pair{y'\Lambda}{\alpha_{i}^{\vee}} > 0$. 
Since $\pair{x\Lambda}{\alpha_{i}^{\vee}} > 0$, 
it follows from Lemma~\ref{lem:dmd}\,(3) that 
$s_{i}y' \sige s_{i}x$. 
In addition, since $i \in \J$ and $y' \in \Lift(y)$, 
we see from Remark~\ref{rem:si} that $\PJ(s_{i}y')=y$. 
Hence we obtain $s_{i}y' \in \Lif{s_{i}x}{y}$, 
so that $s_{i}y' \sige y''_{\min}$; note that 
$\pair{s_{i}y'\Lambda}{\alpha_{i}^{\vee}} < 0$ by our assumption. 
If $\pair{ y''_{\min}\Lambda }{ \alpha_{i}^{\vee} } > 0$, then 
we deduce from Lemma~\ref{lem:dmd}\,(2) that 
$y' \sige y''_{\min} = y'_{\min}$.
If $\pair{ y''_{\min}\Lambda }{ \alpha_{i}^{\vee} } < 0$, then 
we deduce from Lemma~\ref{lem:dmd}\,(3) that 
$y' \sige s_{i}y''_{\min} = y'_{\min}$. 
Thus, in all cases, we have shown that 
$y' \sige y'_{\min}$, as desired. 

\paragraph{\bf Case 2.3.}
%
Assume that $i = 0$. In this case, we have
$\pair{y\lambda}{\alpha_{0}^{\vee}}=
\pair{\lambda}{\alpha_{0}^{\vee}} < 0$
since $\alpha_{0}=-\theta+\delta$, 
where $\theta \in \Delta^{+}$ is the highest root. 
By the same argument as that at the beginning of Case 2.2, 
we see that $y \sige \PJ(s_{0}x)$. 
Hence, by the induction hypothesis, 
\begin{equation*}
\Lif{s_{0}x}{y}=
\bigl\{ y'' \in W_{\af} \mid 
 \text{\rm $\PJ(y'') = y$ and $y'' \sige s_{0}x$}
\bigr\}
\end{equation*}
has the minimum element $y''_{\min}$. 
Since $\pair{ x\Lambda }{ \alpha_{0}^{\vee} } > 0$ by \eqref{eq:s2-1}, 
it follows from Lemma~\ref{lem:si} that $s_{0}x \sige x$, 
which implies that $y''_{\min} \in \Lif{x}{y}$. 
Here we claim that 
\begin{equation*}
\text{$y''_{\min}$ is the minimal element of $\Lif{x}{y}$.}
\end{equation*}
Let $y' \in \Lif{x}{y}$. Then we have 
$\pair{ y'\Lambda }{ \alpha_{0}^{\vee} } < 0$. 
Indeed, we deduce from Lemma~\ref{lem:lift} that 
$y'=z t_{\zeta+\gamma}$ for some $z \in \WJ$ and $\gamma \in \QJv$. 
Since $z \in \WJ$ and $\theta \in \Delta^{+} \setminus \DeJ^{+}$ 
(recall that $J \subsetneqq I$), we see that
$z^{-1}\theta \in \Delta^{+} \setminus \DeJ^{+}$, 
and hence that $\pair{y'\Lambda}{\alpha_{0}^{\vee}} = 
\pair{\Lambda}{-z^{-1}\theta^{\vee}} < 0$. 
Since $\pair{ x\Lambda }{ \alpha_{0}^{\vee} } > 0$ by \eqref{eq:s2-1}, 
it follows from Lemma~\ref{lem:dmd}\,(1) that $y' \sige s_{0}x$, 
and hence $y' \in \Lif{s_{0}x}{y}$. 
This shows that $y' \sige y''_{\min}$. \bqed

\paragraph{Step 3.}
%
Let $x \in W_{\af}$, $y \in (\WJu)_{\af}$, 
and write $y$ as $y = v \PJ(t_{\zeta})$ 
for some $v \in \WJu$ and $\zeta \in Q^{\vee}$. 
We show the assertion by induction on $\ell(v)$. If $\ell(v)=0$, 
then the assertion follows from Step 2. Assume that $\ell(v) \ge 1$, 
and take $i \in I$ such that $\pair{v\lambda}{\alpha_{i}^{\vee}} < 0$; 
note that in this case, $v^{-1}\alpha_{i} \in - (\Delta^{+} \setminus \DeJ^{+})$, 
and $s_{i}v \in W^{J}$ (see, for example, \cite[Lemmas~5.8 and 5.9]{LNSSS}), 
and hence that $s_{i}y \in (\WJu)_{\af}$.
Also, for all $y' \in \Lift(y)$, we have 
$\pair{y'\lambda}{\alpha_{i}^{\vee}} = 
\pair{y\lambda}{\alpha_{i}^{\vee}} = 
\pair{v\lambda}{\alpha_{i}^{\vee}} < 0$, 
which implies that 
%
%
\begin{equation} \label{eq:S3-1}
\pair{y'\Lambda}{\alpha_{i}^{\vee}} < 0. 
\end{equation}

\paragraph{Case 3.1.}
%
Assume that $\pair{x\Lambda}{\alpha_{i}^{\vee}} > 0$; 
note that $\pair{\PJ(x)\lambda}{\alpha_{i}^{\vee}} = 
\pair{x\lambda}{\alpha_{i}^{\vee}} \ge 0$. 
Since $\pair{y\lambda}{\alpha_{i}^{\vee}} = \pair{v\lambda}{\alpha_{i}^{\vee}} < 0$, 
it follows from Lemma~\ref{lem:dmd}\,(2) that 
$s_{i}y \sige \PJ(x)$. Hence, by our induction hypothesis, 
\begin{equation*}
\Lif{x}{s_{i}y} =
\bigl\{ y'' \in W_{\af} \mid 
 \text{\rm $\PJ(y'') = s_{i}y$ and $y'' \sige x$}
\bigr\}
\end{equation*}
has the minimum element $y''_{\min}$. 
Since $\pair{y''_{\min}\lambda}{\alpha_{i}^{\vee}} = 
\pair{s_{i}y\lambda}{\alpha_{i}^{\vee}} > 0$, it follows that 
$(y''_{\min})^{-1}\alpha_{i} \in \Delta^{+}$, and hence 
$\pair{y''_{\min}\Lambda}{\alpha_{i}^{\vee}} > 0$.
This implies that $s_{i}y''_{\min} \sige y''_{\min} \sige x$
by Lemma~\ref{lem:si}. In addition, we see by Lemma~\ref{lem:si} 
that $\PJ(s_{i}y''_{\min}) = s_{i}\PJ(y''_{\min}) = s_{i}(s_{i}y) = y$. 
Therefore, we conclude that $s_{i}y''_{\min} \in \Lif{x}{y}$. 
Here we claim that 
%
%
\begin{equation} \label{eq:min3-1}
\text{$s_{i}y''_{\min}$ is the minimum element of $\Lif{x}{y}$}.
\end{equation}
Let $y' \in \Lif{x}{y}$. 
Since $\pair{x\Lambda}{\alpha_{i}^{\vee}} > 0$ by the assumption, and 
$\pair{y'\Lambda}{\alpha_{i}^{\vee}} < 0$ by \eqref{eq:S3-1}, 
we deduce from Lemma~\ref{lem:dmd}\,(2) that 
$s_{i}y' \sige x$. 
In addition, we see by Lemma~\ref{lem:si} that 
$\PJ(s_{i}y') = s_{i} \PJ(y') = s_{i}y$, 
which implies that $s_{i}y' \in \Lif{x}{s_{i}y}$, 
and hence $s_{i}y' \sige y''_{\min}$. 
Because $\pair{y''_{\min}\Lambda}{\alpha_{i}^{\vee}} > 0$ and 
$\pair{s_{i}y'\Lambda}{\alpha_{i}^{\vee}} > 0$, 
it follows from Lemma~\ref{lem:dmd}\,(3) that 
$y' \sige s_{i}y''_{\min}$. This shows \eqref{eq:min3-1}. 

\paragraph{Case 3.2.}
%
Assume that $\pair{x\Lambda}{\alpha_{i}^{\vee}} < 0$; 
note that $\pair{x\lambda}{\alpha_{i}^{\vee}} \le 0$. 
Since $\pair{y\lambda}{\alpha_{i}^{\vee}} < 0$,
it follows from Lemma~\ref{lem:dmd}\,(2) and (3), 
together with Lemma~\ref{lem:si} and Remark~\ref{rem:si},
that $s_{i}y \sige \PJ(s_{i}x)$. 
Hence, by our induction hypothesis, 
\begin{equation*}
\Lif{s_{i}x}{s_{i}y} = 
\bigl\{ y'' \in W_{\af} \mid 
 \text{\rm $\PJ(y'') = s_{i}y$ and $y'' \sige s_{i}x$}
\bigr\}
\end{equation*}
has the minimum element $y''_{\min}$; as in Case 3.1, we obtain 
$\pair{y''_{\min}\Lambda}{\alpha_{i}^{\vee}} > 0$.
Since $\pair{s_{i}x\Lambda}{\alpha_{i}^{\vee}} > 0$, 
we deduce from Lemma~\ref{lem:dmd}\,(3) that 
$s_{i}y''_{\min} \sige x$. 
In addition, we see by Lemma~\ref{lem:si}
that $\PJ(s_{i}y_{\min}'') = s_{i} \PJ(y_{\min}'') = s_{i} (s_{i}y) = y$, 
which implies that $s_{i}y''_{\min} \in \Lif{x}{y}$. 
Here we claim that 
%
%
\begin{equation} \label{eq:min3-2}
\text{$s_{i}y''_{\min}$ is the minimum element of $\Lif{x}{y}$}.
\end{equation}
Let $y' \in \Lif{x}{y}$. 
Since $\pair{y'\lambda}{\alpha_{i}^{\vee}} = 
\pair{y\lambda}{\alpha_{i}^{\vee}} < 0$, 
it follows that $\pair{y'\Lambda}{\alpha_{i}^{\vee}} < 0$.  
Also, since $\pair{x\Lambda}{\alpha_{i}^{\vee}} < 0$ 
by the assumption, we deduce from Lemma~\ref{lem:dmd}\,(3) that 
$s_{i}y' \sige s_{i}x$. 
In addition, we see by Lemma~\ref{lem:si} that
$\PJ(s_{i}y') = s_{i}\PJ(y') = s_{i}y$, 
which implies that 
$s_{i}y' \in \Lif{s_{i}x}{s_{i}y}$, 
and hence $s_{i}y' \sige y''_{\min}$. 
Because $\pair{y''_{\min}\Lambda}{\alpha_{i}^{\vee}} > 0$ and 
$\pair{s_{i}y'\Lambda}{\alpha_{i}^{\vee}} > 0$, 
it follows from Lemma~\ref{lem:dmd}\,(3) that 
$y' \sige s_{i}y''_{\min}$. This shows \eqref{eq:min3-2}. 

This completes the proof of Proposition~\ref{prop:Deo}. 

%
\section{Crystal structure on $\SLS(\lambda)$.}
\label{sec:crystal}

We fix $\lambda \in P^{+} \subset P_{\af}^{0}$ 
(see \eqref{eq:P-fin} and \eqref{eq:P}). Let 
\begin{equation} \label{eq:pi2}
\pi = (\bx \,;\, \ba) 
     = (x_{1},\,\dots,\,x_{s} \,;\, a_{0},\,a_{1},\,\dots,\,a_{s}) \in \SLS(\lambda). 
\end{equation}
Define $\ol{\pi}:[0,1] \rightarrow \BR \otimes_{\BZ} P_{\af}$ 
to be the piecewise-linear, continuous map 
whose ``direction vector'' on the interval 
$[a_{u-1},\,a_{u}]$ is $x_{u}\lambda \in P_{\af}$ 
for each $1 \le u \le s$, that is, 
%
%
\begin{equation} \label{eq:olpi}
\ol{\pi} (t) := 
\sum_{k = 1}^{u-1}(a_{k} - a_{k-1}) x_{k}\lambda + (t - a_{u-1}) x_{u}\lambda
\quad
\text{for $t \in [a_{u-1},\,a_u]$, $1 \le u \le s$}. 
\end{equation}
We know from \cite[Proposition~3.1.3]{INS} that $\ol{\pi}$ 
is an (ordinary) LS path of shape $\lambda$, introduced in \cite[Sect.~4]{Lit95}. 
We set
%
%
\begin{equation} \label{eq:wt}
\wt (\pi):= \ol{\pi}(1) = \sum_{u = 1}^{s} (a_{u}-a_{u-1})x_{u}\lambda \in P_{\af}.
\end{equation}

Now, we define root operators $e_{i}$, $f_{i}$, $i \in I_{\af}$. 
Set 
%
%
\begin{equation} \label{eq:H}
\begin{cases}
H^{\pi}_{i}(t) := \pair{\ol{\pi}(t)}{\alpha_{i}^{\vee}} \quad 
\text{for $t \in [0,1]$}, \\[1.5mm]
m^{\pi}_{i} := 
 \min \bigl\{ H^{\pi}_{i} (t) \mid t \in [0,1] \bigr\}. 
\end{cases}
\end{equation}
As explained in \cite[Remark~2.4.3]{NS16}, 
all local minima of the function $H^{\pi}_{i}(t)$, $t \in [0,1]$, 
are integers; in particular, 
the minimum value $m^{\pi}_{i}$ is a nonpositive integer 
(recall that $\ol{\pi}(0)=0$, and hence $H^{\pi}_{i}(0)=0$).

We define $e_{i}\pi$ as follows. 
If $m^{\pi}_{i}=0$, then we set $e_{i} \pi := \bzero$, 
where $\bzero$ is an additional element not 
contained in any crystal. 
If $m^{\pi}_{i} \le -1$, then we set
%
%
\begin{equation} \label{eq:t-e}
\begin{cases}
t_{1} := 
  \min \bigl\{ t \in [0,\,1] \mid 
    H^{\pi}_{i}(t) = m^{\pi}_{i} \bigr\}, \\[1.5mm]
t_{0} := 
  \max \bigl\{ t \in [0,\,t_{1}] \mid 
    H^{\pi}_{i}(t) = m^{\pi}_{i} + 1 \bigr\}; 
\end{cases}
\end{equation}
notice that $H^{\pi}_{i}(t)$ is 
strictly decreasing on the interval $[t_{0},\,t_{1}]$. 
Let $1 \le p \le q \le s$ be such that 
$a_{p-1} \le t_{0} < a_p$ and $t_{1} = a_{q}$. 
Then we define $e_{i}\pi$ to be
%
%
\begin{equation} \label{eq:epi}
\begin{split}
& e_{i} \pi := ( 
  x_{1},\,\ldots,\,x_{p},\,s_{i}x_{p},\,s_{i}x_{p+1},\,\ldots,\,
  s_{i}x_{q},\,x_{q+1},\,\ldots,\,x_{s} ; \\
& \hspace*{40mm}
  a_{0},\,\ldots,\,a_{p-1},\,t_{0},\,a_{p},\,\ldots,\,a_{q}=t_{1},\,
\ldots,\,a_{s});
\end{split}
\end{equation}
if $t_{0} = a_{p-1}$, then we drop $x_{p}$ and $a_{p-1}$, and 
if $s_{i} x_{q} = x_{q+1}$, then we drop $x_{q+1}$ and $a_{q}=t_{1}$.

Similarly, we define $f_{i}\pi$ as follows. 
Note that $H^{\pi}_{i}(1) - m^{\pi}_{i}$ is a nonnegative integer. 
If $H^{\pi}_{i}(1) - m^{\pi}_{i} = 0$, then we set $f_{i} \pi := \bzero$. 
If $H^{\pi}_{i}(1) - m^{\pi}_{i}  \ge 1$, 
then we set
%
%
\begin{equation} \label{eq:t-f}
\begin{cases}
t_{0} := 
 \max \bigl\{ t \in [0,1] \mid H^{\pi}_{i}(t) = m^{\pi}_{i} \bigr\}, \\[1.5mm]
t_{1} := 
 \min \bigl\{ t \in [t_{0},\,1] \mid H^{\pi}_{i}(t) = m^{\pi}_{i} + 1 \bigr\};
\end{cases}
\end{equation}
notice that $H^{\pi}_{i}(t)$ is 
strictly increasing on the interval $[t_{0},\,t_{1}]$. 
Let $0 \le p \le q \le s-1$ be such that $t_{0} = a_{p}$ and 
$a_{q} < t_{1} \le a_{q+1}$. Then we define $f_{i}\pi$ to be
%
%
\begin{equation} \label{eq:fpi}
\begin{split}
& f_{i} \pi := ( x_{1},\,\ldots,\,x_{p},\,s_{i}x_{p+1},\,\dots,\,
  s_{i} x_{q},\,s_{i} x_{q+1},\,x_{q+1},\,\ldots,\,x_{s} ; \\
& \hspace{40mm} 
  a_{0},\,\ldots,\,a_{p}=t_{0},\,\ldots,\,a_{q},\,t_{1},\,
  a_{q+1},\,\ldots,\,a_{s});
\end{split}
\end{equation}
if $t_{1} = a_{q+1}$, then we drop $x_{q+1}$ and $a_{q+1}$, and 
if $x_{p} = s_{i} x_{p+1}$, then we drop $x_{p}$ and $a_{p}=t_{0}$.
In addition, we set $e_{i} \bzero = f_{i} \bzero := \bzero$ 
for all $i \in I_{\af}$.
%
%
\begin{thm}[{see \cite[Theorem~3.1.5]{INS}}] \label{thm:SLS}
\mbox{}
\begin{enu}
\item The set $\SLS(\lambda) \sqcup \{ \bzero \}$ is 
stable under the action of the root operators 
$e_{i}$ and $f_{i}$, $i \in I_{\af}$.

\item For each $\pi \in \SLS(\lambda)$ 
and $i \in I_{\af}$, we set 
\begin{equation*}
\begin{cases}
\ve_{i} (\pi) := 
 \max \bigl\{ n \ge 0 \mid e_{i}^{n} \pi \neq \bzero \bigr\}, \\[1.5mm]
\vp_{i} (\pi) := 
 \max \bigl\{ n \ge 0 \mid f_{i}^{n} \pi \neq \bzero \bigr\}.
\end{cases}
\end{equation*}
Then, the set $\SLS(\lambda)$, 
equipped with the maps $\wt$, $e_{i}$, $f_{i}$, $i \in I_{\af}$, 
and $\ve_{i}$, $\vp_{i}$, $i \in I_{\af}$, 
defined above, is a crystal with weights in $P_{\af}$.
\end{enu}
\end{thm}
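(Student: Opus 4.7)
The plan is to reprove Theorem~\ref{thm:SLS} (which is essentially \cite[Theorem~3.1.5]{INS}) by adapting Littelmann's original strategy for ordinary LS paths \cite{Lit95} to the semi-infinite setting, leveraging the machinery of the parabolic semi-infinite Bruhat graph developed in Section~\ref{subsec:SiBG} and the basic lemmas collected in Appendix~\ref{sec:basic}. The content of part (1) is that the modified pair $(\bx', \ba')$ output by the formulas \eqref{eq:epi} or \eqref{eq:fpi} still satisfies the three defining conditions of $\SLS(\lambda)$: each entry lies in $(\WJu)_{\af}$, the sequence is strictly $\sig$-decreasing, and consecutive entries admit a directed path in $\SBb{a_u}$.

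First I would verify membership in $(\WJu)_{\af}$. By construction of $t_0$ and $t_1$ in \eqref{eq:t-e} (resp.\ \eqref{eq:t-f}), the function $H^{\pi}_i(t)$ is strictly decreasing (resp.\ strictly increasing) on $[t_0, t_1]$, so $\pair{x_k \lambda}{\alpha_i^\vee}$ has a definite strict sign for the indices $k$ on which $s_i$ is applied; by Lemma~\ref{lem:si}, this forces $s_i x_k \in (\WJu)_{\af}$. For the strict $\sig$-decrease, adjacent pairs of the form $(s_i x_k, s_i x_{k+1})$ are handled by Lemma~\ref{lem:dmd}\,(3), applied to the directed edges that witness $x_k \sig x_{k+1}$ in the original path. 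The "boundary" pairs $(x_p, s_i x_p)$ and $(s_i x_q, x_{q+1})$ become cover relations in $\SBJ$ by the explicit description \eqref{eq:simple} combined with the sign of $\pair{x_k \lambda}{\alpha_i^\vee}$ at the endpoints of $[t_0, t_1]$.

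The main obstacle is the third (directed-path) condition. Unlike in the finite-type case, $\SBJ$ has infinitely many ascending edges at each vertex, so one must carefully track the new breakpoints $t_0, t_1$ and verify that the inserted reflection $s_i$ and the transported edges satisfy the integrality requirement $a \pair{x\lambda}{\beta^\vee} \in \BZ$ defining $\SBa$. The key input is that local extrema of $H^{\pi}_i(t)$ take integer values (as observed after \eqref{eq:H}); this forces both $t_0$ and $t_1$ to be "turning rationals" for the modified path. For adjacent blocks, one uses the relation $s_\beta x = x s_{x^{-1}\beta}$ in $W_{\af}$, together with Lemma~\ref{lem:SiB}\,(3), to transport a witnessing directed path $x_{k+1} \to x_k$ in $\SBb{a_k}$ to a directed path $s_i x_{k+1} \to s_i x_k$ in the same subgraph. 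The integrality for the new blocks adjoining $t_0, t_1$ is then forced by the extremum/integrality observation above.

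Once part (1) is established, part (2) is straightforward. The weight identity $\wt(e_i \pi) = \wt(\pi) + \alpha_i$ follows by a direct computation: the total "directional change" over the flipped interval $[t_0, t_1]$ equals $(t_1 - t_0)(s_i - \id)$ applied to the relevant direction, and $H^{\pi}_i(t_1) - H^{\pi}_i(t_0) = 1$ by definition. The involutive relations $f_i(e_i \pi) = \pi$ when $e_i \pi \neq \bzero$ (and $e_i(f_i \pi) = \pi$ when $f_i \pi \neq \bzero$) are immediate from the symmetric form of \eqref{eq:epi} and \eqref{eq:fpi}: applying $f_i$ to $e_i\pi$ reverses the roles of $t_0$ and $t_1$ on the flipped path and reflips the intermediate block. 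Finally, $\ve_i(\pi) = -m^{\pi}_i$ and $\vp_i(\pi) = H^{\pi}_i(1) - m^{\pi}_i$ follow by iterating the constructions, which lowers (resp.\ raises) $m^{\pi}_i$ by $1$ at each application, so the remaining crystal axioms reduce to checking the additivity $\ve_i(\pi) - \vp_i(\pi) = -\pair{\wt(\pi)}{\alpha_i^\vee}$, which is $-H^{\pi}_i(1)$ by construction.
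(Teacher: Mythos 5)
The paper itself does not prove this theorem; it is quoted verbatim from \cite[Theorem~3.1.5]{INS}, so there is no internal proof to compare against. Your outline follows the correct overall strategy (Littelmann's argument adapted to the semi-infinite Bruhat graph, which is indeed what [INS] does), and parts (2) and the membership/monotonicity checks in part (1) are essentially right. But the step you yourself flag as ``the main obstacle'' --- preservation of the directed-path condition in $\SBb{a}$ --- is not actually closed by the tools you invoke, and this is precisely where the real work of the [INS] proof lives.

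Concretely: Lemma~\ref{lem:SiB}\,(3) transports edges of $\SBJ$ under \emph{right translation} by $t_{\xi}$; it says nothing about \emph{left multiplication} by $s_{i}$, so it cannot be used to carry a witnessing chain $x_{u+1}\rightarrow x_{u}$ in $\SBb{a_{u}}$ to a chain $s_{i}x_{u+1}\rightarrow s_{i}x_{u}$. Likewise Lemma~\ref{lem:dmd}\,(3) only yields the inequality $s_{i}x\sile s_{i}y$ in the unlabeled order, not a directed path in the subgraph $\SBb{a_{u}}$ with the integrality constraint $a_{u}\pair{\cdot\,\lambda}{\beta^{\vee}}\in\BZ$ at every edge. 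What is actually needed is an ``$a$-chain diamond lemma'': if $x\sile y$ via a directed path in $\SBb{a}$ and $\pair{x\lambda}{\alpha_{i}^{\vee}}$, $\pair{y\lambda}{\alpha_{i}^{\vee}}$ have the appropriate signs, then $s_{i}x\sile s_{i}y$ via a directed path in $\SBb{a}$. Its proof is an induction on the length of the chain using single-edge diamond lemmas, with a genuine case analysis (the edge labeled $\beta=\alpha_{i}$ must be excised rather than conjugated, since $s_{i}\alpha_{i}\notin\prr$; intermediate vertices $z$ of the chain with $\pair{z\lambda}{\alpha_{i}^{\vee}}=0$ or of the opposite sign must be handled separately; and one must check $\sell(s_{i}s_{\beta}x)=\sell(s_{i}x)+1$, which is not automatic). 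Similarly, your assertion that the new breakpoints $t_{0},t_{1}$ automatically satisfy the integrality requirement for the boundary edges (e.g.\ $t_{0}\pair{x_{p}\lambda}{\alpha_{i}^{\vee}}\in\BZ$ for the edge $s_{i}x_{p}\edge{\alpha_{i}}x_{p}$) does not follow directly from $H^{\pi}_{i}(t_{0})\in\BZ$ and needs its own computation. None of these points is fatal to the strategy --- they are exactly the lemmas [INS] supplies --- but as written your argument has no proof of the one claim on which everything hinges.
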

%
%
\begin{rem} \label{rem:SLS}
Let $\pi \in \SLS(\lambda)$, and $i \in I_{\af}$. 
If $e_{i}\pi \ne \bzero$, then we deduce from the definition of 
the root operator $e_{i}$ that $m^{e_{i}\pi}_{i} = m^{\pi}_{i}+1$. 
Hence it follows that $\ve_{i}(\pi)=-m^{\pi}_{i}$. 
Similarly, we have $\vp_{i}(\pi)=H^{\pi}_{i}(1)-m^{\pi}_{i}$. 
\end{rem}
%
%
\section{A formula for graded characters of Demazure submodules.}
\label{sec:tx-gch}
%
%
\begin{prop} \label{prop:gch-tx}
For each $x \in W_{\af}$ and $\xi \in Q^{\vee}$, 
there holds the equality
\begin{equation}
\gch V_{xt_{\xi}}^{-}(\lambda) = q^{-\pair{\lambda}{\xi}}\gch V_{x}^{-}(\lambda).
\end{equation}
\end{prop}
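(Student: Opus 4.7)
The plan is to establish the identity at the level of graded characters via the combinatorial expression \eqref{eq:gch2}, $\gch V_{y}^{-}(\lambda) = \sum_{\pi \in \SLS_{\sige y}(\lambda)} e^{\fwt(\wt(\pi))} q^{\qwt(\wt(\pi))}$, and produce a weight-shifting bijection between $\SLS_{\sige x}(\lambda)$ and $\SLS_{\sige xt_{\xi}}(\lambda)$. The obvious candidate is the translation operator $T_{\xi}$ introduced in \eqref{eq:Txi}, which acts on $\pi = (x_{1},\dots,x_{s};\ba)$ by $T_{\xi}\pi = (\PJ(x_{1}t_{\xi}),\dots,\PJ(x_{s}t_{\xi});\ba)$ and which, by \eqref{eq:Tx}, satisfies $\wt(T_{\xi}\pi) = \wt(\pi) - \pair{\lambda}{\xi}\delta$, hence $\fwt(\wt(T_{\xi}\pi)) = \fwt(\wt(\pi))$ and $\qwt(\wt(T_{\xi}\pi)) = \qwt(\wt(\pi)) - \pair{\lambda}{\xi}$.

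The core step is to verify that $T_{\xi}$ restricts to a bijection $\SLS_{\sige x}(\lambda) \stackrel{\sim}{\to} \SLS_{\sige xt_{\xi}}(\lambda)$. For $\pi \in \SLS_{\sige x}(\lambda)$, we have $\kappa(\pi) \sige \PJ(x)$ in $(\WJu)_{\af}$, and $\kappa(T_{\xi}\pi) = \PJ(\kappa(\pi)t_{\xi})$. Using $\PJ(yt_{\xi}) = \PJ(y)\PJ(t_{\xi})$ from Lemma~\ref{lem:PiJ}\,(1) (so that $\PJ(\kappa(\pi)t_{\xi}) = \kappa(\pi)\PJ(t_{\xi})$ and $\PJ(xt_{\xi}) = \PJ(x)\PJ(t_{\xi})$), the required inequality $\kappa(T_{\xi}\pi) \sige \PJ(xt_{\xi})$ reduces to $\kappa(\pi)\PJ(t_{\xi}) \sige \PJ(x)\PJ(t_{\xi})$, which is precisely an instance of Lemma~\ref{lem:SiB}\,(3). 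Invertibility via $T_{-\xi}$ is immediate, and the same argument applied to $T_{-\xi}$ yields the opposite inclusion.

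Granting this bijection, the substitution $\pi = T_{\xi}\pi'$ in $\gch V_{xt_{\xi}}^{-}(\lambda)$ immediately gives
\begin{equation*}
\gch V_{xt_{\xi}}^{-}(\lambda)
= \sum_{\pi' \in \SLS_{\sige x}(\lambda)} e^{\fwt(\wt(\pi'))} q^{\qwt(\wt(\pi')) - \pair{\lambda}{\xi}}
= q^{-\pair{\lambda}{\xi}} \gch V_{x}^{-}(\lambda),
\end{equation*}
finishing the proof. There is no serious obstacle: the only non-formal step is the compatibility of the parabolic semi-infinite Bruhat order with right translation by $t_{\xi}$, which is already packaged in Lemma~\ref{lem:SiB}\,(3); everything else is direct bookkeeping with \eqref{eq:Tx} and \eqref{eq:gch2}.
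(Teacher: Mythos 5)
Your proof is correct and follows essentially the same route as the paper: establish that $T_{\xi}$ restricts to a bijection $\SLS_{\sige x}(\lambda)\stackrel{\sim}{\to}\SLS_{\sige xt_{\xi}}(\lambda)$ using Lemma~\ref{lem:SiB}\,(3) together with the multiplicativity of $\PJ$ from \eqref{eq:PiJ2}, and then substitute into \eqref{eq:gch2} with the weight shift from \eqref{eq:Tx}. The only cosmetic difference is the order in which you invoke \eqref{eq:PiJ2} and Lemma~\ref{lem:SiB}\,(3); the paper applies the lemma first and then simplifies $\PJ(\PJ(x)t_{\xi})=\PJ(xt_{\xi})$, while you rewrite both sides first.
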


\begin{proof}
Let $\pi = (x_{1},\,\dots,\,x_{s}\,;\,\ba) \in \SLS(\lambda)$. We see that
\begin{align*}
\pi \in \SLS_{\sige x}(\lambda) 
 & \Rightarrow x_{s} \sige \PJ(x) 
   \Rightarrow \PJ(x_{s}t_{\xi}) \sige \PJ(\PJ(x)t_{\xi}) 
   \quad \text{by Lemma~\ref{lem:SiB}\,(3)} \\
 & \Rightarrow \PJ(x_{s}t_{\xi}) \sige \PJ(xt_{\xi}) 
   \quad \text{by \eqref{eq:PiJ2}} \\
 & \Rightarrow T_{\xi}(\pi) \in \SLS_{\sige xt_{\xi}}(\lambda);
\end{align*}
for the definition of $T_{\xi}$, see \eqref{eq:Tx}. 
From this, we conclude that $T_{\xi}(\SLS_{\sige x}(\lambda)) \subset
\SLS_{\sige xt_{\xi}}(\lambda)$. Replacing $x$ by $xt_{\xi}$, 
and $T_{\xi}$ by $T_{-\xi}$, we obtain 
$T_{-\xi}(\SLS_{\sige xt_{\xi}}(\lambda)) \subset
\SLS_{\sige x}(\lambda)$, and hence 
$\SLS_{\sige xt_{\xi}}(\lambda) \subset T_{\xi}(\SLS_{\sige x}(\lambda))$. 
Combining these, we conclude that 
$T_{\xi}(\SLS_{\sige x}(\lambda)) = \SLS_{\sige xt_{\xi}}(\lambda)$. 
Therefore, using \eqref{eq:gch2}, 
we compute: 
\begin{align*}
\gch V_{xt_{\xi}}^{-}(\lambda) 
 & = \sum_{\pi \in \SLS_{\sige xt_{\xi}}(\lambda)} e^{\fwt(\wt(\pi))} q^{\qwt(\wt(\pi))}
   = \sum_{\pi \in \SLS_{\sige x}(\lambda)} 
      e^{\fwt(\wt(T_{\xi}(\pi)))} 
      q^{\qwt(\wt(T_{\xi}(\pi)))} \\[3mm]
 & = \sum_{\pi \in \SLS_{\sige x}(\lambda)} 
      e^{\fwt(\wt(\pi)-\pair{\lambda}{\xi}\delta)} 
      q^{\qwt(\wt(\pi)-\pair{\lambda}{\xi}\delta)} 
      \quad \text{by \eqref{eq:Tx}} \\[3mm]
 & = \sum_{\pi \in \SLS_{\sige x}(\lambda)} 
      e^{\fwt(\wt(\pi))} 
      q^{\qwt(\wt(\pi))-\pair{\lambda}{\xi}}
   = q^{-\pair{\lambda}{\xi}} \gch V_{x}^{-}(\lambda). 
\end{align*}
This proves the proposition.
\end{proof}
%
%
{\small
 \setlength{\baselineskip}{10pt}

}

\end{document}